\documentclass[a4paper,reqno]{amsart}

\textheight 220mm
\textwidth 150mm
\hoffset -16mm
\usepackage{graphicx}
\usepackage{tikz}
\usetikzlibrary{patterns,shapes,decorations.pathmorphing,decorations.pathreplacing,calc,arrows}
\usepackage{verbatim}
\usepackage{amssymb}
\usepackage{amstext}
\usepackage{amsmath}
\usepackage{amscd}
\usepackage{amsthm}
\usepackage{amsfonts}
\usepackage{enumerate}
\usepackage{latexsym}
\usepackage{mathrsfs}
\usepackage{hyperref}
\usepackage{mathtools}
\usepackage[all]{xy}
\usepackage{pgfplots}

\xyoption{all}

\date{\today}

\usepackage{pstricks}
\usepackage{lscape}
\usepackage{comment}

\newtheorem{theorem}{Theorem}[section]

\newtheorem{corollary}[theorem]{Corollary}
\newtheorem{lemma}[theorem]{Lemma}
\newtheorem{proposition}[theorem]{Proposition}
\newtheorem{definition-proposition}[theorem]{Definition-Proposition}

\newtheorem{conjecture}[theorem]{Conjecture}

\theoremstyle{definition}
\newtheorem{definition}[theorem]{Definition}

\newtheorem{remark}[theorem]{Remark}
\newtheorem{example}[theorem]{Example}

\newcommand{\CC}{\mathcal{C}}

\newcommand{\DDD}{\mathsf{D}}

\newcommand{\KKK}{\mathsf{K}}

\newcommand{\FF}{\mathcal{F}}

\newcommand{\TT}{\mathcal{T}}

\newcommand{\WW}{\mathcal{W}}

\newcommand{\Z}{\mathbb{Z}}

\newcommand{\Q}{\mathbb{Q}}
\newcommand{\R}{\mathbb{R}}
\newcommand{\N}{\mathbb{N}}

\newcommand{\h}{\operatorname{h}\nolimits}
\newcommand{\bo}{\operatorname{b}\nolimits}
\newcommand{\cone}{\operatorname{cone}\nolimits}

\renewcommand{\top}{\operatorname{top}\nolimits}

\DeclareMathOperator{\Hom}{\operatorname{Hom}}
\DeclareMathOperator{\cHom}{\mathcal{H}{\rm om}}
\newcommand{\End}{\operatorname{End}\nolimits}
\newcommand{\Aut}{\operatorname{Aut}\nolimits}

\newcommand{\op}{\operatorname{op}\nolimits}
\newcommand{\RHom}{\mathbf{R}\strut\kern-.2em\operatorname{Hom}\nolimits}
\newcommand{\Lotimes}{\mathop{\stackrel{\mathbf{L}}{\otimes}}\nolimits}

\newcommand{\Kernel}{\operatorname{Ker}\nolimits}
\newcommand{\Cokernel}{\operatorname{Cok}\nolimits}

\newcommand{\Cone}{\mathsf{Cone}}
\newcommand{\Wall}{\mathsf{Wall}}

\DeclareMathOperator{\moduleCategory}{\mathsf{mod}} \renewcommand{\mod}{\moduleCategory}

\DeclareMathOperator{\proj}{\mathsf{proj}}

\DeclareMathOperator{\ind}{{\rm ind}}
\DeclareMathOperator{\Sub}{\mathsf{Sub}}
\DeclareMathOperator{\thick}{\mathsf{thick}}

\DeclareMathOperator{\serre}{\mathsf{serre}}
\DeclareMathOperator{\add}{\mathsf{add}}
\DeclareMathOperator{\fl}{\mathsf{fl}}

\DeclareMathOperator{\TF}{\mathsf{TF}}

\DeclareMathOperator{\dimv}{\underline{dim}}

\DeclareMathOperator{\tors}{\mathsf{tors}}
\DeclareMathOperator{\ftors}{\mathsf{f-tors}}
\DeclareMathOperator{\torf}{\mathsf{torf}}
\DeclareMathOperator{\ftorf}{\mathsf{f-torf}}
\DeclareMathOperator{\wide}{\mathsf{wide}}
\DeclareMathOperator{\twosilt}{2\mathsf{-silt}}
\DeclareMathOperator{\twopresilt}{2\mathsf{-psilt}}

\DeclareMathOperator{\indtwopresilt}{\mathsf{ind-}2\mathsf{-psilt}}

\newcommand{\cut}{\ar@{-}@[|(5)]}

\DeclareMathOperator{\brick}{\mathsf{brick}}
\DeclareMathOperator{\Filt}{\mathsf{Filt}}
\DeclareMathOperator{\vecFilt}{\operatorname{\overrightarrow{\mathsf{Filt}}}}
\DeclareMathOperator{\Fac}{\mathsf{Fac}}
\DeclareMathOperator{\T}{\mathsf{T}}
\DeclareMathOperator{\F}{\mathsf{F}}

\DeclareMathOperator{\Mat}{Mat}

\renewcommand{\subset}{\subseteq}
\renewcommand{\supset}{\supseteq}




\numberwithin{equation}{section}

\begin{document}
\title{Semistable~torsion~classes and
Canonical~decompositions in Grothendieck groups}

\author{Sota Asai} 
\address{Sota Asai: Graduate School of Mathematical Sciences,
University of Tokyo,  
3-8-1 Komaba, Meguro-ku, Tokyo-to, 153-8914, Japan}
\email{sotaasai@g.ecc.u-tokyo.ac.jp}

\author{Osamu Iyama}
\address{Osamu Iyama: Graduate School of Mathematical Sciences,
University of Tokyo,  
3-8-1 Komaba, Meguro-ku, Tokyo-to, 153-8914, Japan}
\email{iyama@ms.u-tokyo.ac.jp}

\begin{abstract}
We study two classes of torsion classes which generalize functorially finite torsion classes, that is, semistable torsion classes and morphism torsion classes.
Semistable torsion classes are parametrized by the elements in the real Grothendieck group up to TF equivalence.
We give a close connection between TF equivalence classes and the cones given by canonical decompositions of the spaces of projective presentations due to Derksen-Fei. 
More strongly, for $E$-tame algebras and hereditary algebras, we prove that TF equivalence classes containing lattice points are exactly the cones given by canonical decompositions.
One of the key steps in our proof is a general description of semistable torsion classes in terms of morphism torsion classes.
We also answer a question by Derksen-Fei negatively by giving examples
of algebras which do not satisfy the ray condition.
As an application of our results, we give an explicit description of TF equivalence classes of preprojective algebras of type $\widetilde{\mathbb{A}}$.
\end{abstract}

\maketitle 
\setcounter{tocdepth}{1}
\tableofcontents

\section{Introduction}
Derived categories are basic in homological algebra and appear in many branches of mathematics,
and tilting theory is a powerful tool to study equivalences of the derived categories. There are two important notions in tilting theory, that is, tilting/silting complexes and t-structures.
Two rings $A$ and $B$ are derived equivalent if and only if there exists a tilting complex of $A$ whose endomorphism ring is isomorphic to $B$ \cite{Rickard}. The class of silting complexes is a generalization of the class of tilting complexes from the point of view of mutation, which is a categorical operation to construct a new silting complex from a given one by replacing a direct summand.
A t-structure is a pair of two full subcategories satisfying certain axioms, and intermediate t-structures correspond bijectively with torsion classes in the module category \cite{HRS}.
There is a bijection between silting complexes and algebraic t-structures \cite{KY}, which give bijections between 2-term silting complexes, intermediate algebraic t-structures and functorially finite torsion classes \cite{AIR}.
It plays a key role in the additive categorification of cluster algebras \cite{FZ} (e.g.\ \cite{BY,CKLP}).
There are a large number of works on torsion classes. It is known that a finite dimensional algebra $A$ is $g$-finite (i.e.\ $A$ has only finitely many basic 2-term silting complexes up to isomorphism) if and only if all torsion classes are functorially finite \cite{DIJ,ZZ}. If $A$ is not $g$-finite, then most torsion classes are not functorially finite.

The aim of this paper is to study two classes of torsion classes containing all functorially finite torsion classes.
The first one is the class of torsion classes determined by stability conditions, i.e.\ elements $\theta$ in the real Grothendieck group $K_0(\proj A)_\R:=K_0(\proj A) \otimes_\Z \R$ of the category $\proj A$ of finitely generated projective $A$-modules.
The notion of $\theta$-semistable modules naturally appears in geometric invariant theory of quiver representations \cite{K}. Each $\theta $ gives two torsion pairs $(\overline{\TT}_\theta,\FF_\theta)$ and $(\TT_\theta,\overline{\FF}_\theta)$ \cite{BKT,Bridgeland}, which we call \textit{semistable torsion pairs}. 
They satisfy $\overline{\TT}_\theta\supseteq\TT_\theta$ and $\FF_\theta\subseteq\overline{\FF}_\theta$, and the intersection $\overline{\TT}_\theta\cap\overline{\FF}_\theta$ is the wide subcategory of $\theta$-semistable modules.
The semistable torsion classes $\overline{\TT}_\theta,\TT_\theta$ of $\theta=[U]$ for a 2-term presilting complex $U$ are functorially finite \cite{Y,BST} and well-studied in tilting theory.

Using the semistable torsion pairs, the first author \cite{A} introduced an equivalence relation on the real Grothendieck group $K_0(\proj A)_\R$ as follows: We call $\theta,\eta\in K_0(\proj A)_\R$ \textit{TF equivalent} if
\begin{align*}\overline{\TT}_\theta=\overline{\TT}_\eta\ \mbox{ and }\ \TT_\theta=\TT_\eta.\end{align*}
We denote by $[\theta]_{\rm TF}$ the TF equivalence class of $\theta$. It is an important problem to give an explicit description of TF equivalence classes of an arbitrary element in $K_0(\proj A)$. 
For a subset $X$ of $K_0(\proj A)_\R$, let $X^\circ$ be the relative interior of $X$, and let
\begin{align*}
 \cone X:=\sum_{\theta \in X}\R_{\ge 0}\theta\supseteq\cone^\circ X:=(\cone X)^\circ=\sum_{\theta \in X}\R_{>0}\theta
\end{align*}
with $\cone^\circ\emptyset=\cone\emptyset:=\{0\}$.
If there exists a 2-term presilting complex $U$ such that $\theta=[U]$, then $[\theta]_{\rm TF}$ can be described as
\begin{equation}\label{TF of rigid}
[\theta]_{\rm TF}=\cone^\circ\{[U_1],\ldots,[U_\ell]\}
\end{equation}
where $U=U_1\oplus\cdots\oplus U_\ell$ is a decomposition into indecomposable direct summands.

In this paper, we show that there is a close connection between TF equivalence classes and the canonical decomposition of a space of projective presentations introduced by Derksen-Fei \cite{DF}. It is an analogue of the canonical decomposition of a space of representations of quivers (with relations) \cite{Kac,S,CS}, and played an important role in categorification of cluster algebras \cite{P}.
For example, if $U$ is a 2-term presilting complex in the homotopy category $\KKK^{\bo}(\proj A)$ and $U=U_1\oplus\cdots\oplus U_\ell$ is a decomposition into indecomposable direct summands, then $[U]=[U_1]\oplus\cdots\oplus[U_\ell]$ is a canonical decomposition.

Let $A$ be a finite dimensional algebra over an algebraically closed field $k$. For $\theta\in K_0(\proj A)$, we take a canonical decomposition $\theta=\theta_1\oplus\cdots\oplus\theta_\ell$, and set
\[\ind\theta:=\{\theta_1,\ldots,\theta_\ell\},\ |\theta|:=\#\ind\theta\ \mbox{ and }\ \ind\N\theta:=\bigcup_{\ell\ge1}\ind\ell\theta.\]
For example, if $\theta$ is rigid, then $\ind\theta=\ind\N\theta$ and $\dim\cone(\ind\theta)=|\theta|$ hold.
Our first main result shows that all elements in the cone given by a canonical decomposition are TF equivalent.

\begin{theorem}[{Theorem \ref{decomposition and T}}]\label{decomposition and T in intro}
Let $A$ be a finite dimensional algebra over an algebraically closed field $k$. For each $\theta\in K_0(\proj A)$, we have
\begin{align*}
[\theta]_{\rm TF}\supseteq\cone^\circ(\ind\theta).
\end{align*}
\end{theorem}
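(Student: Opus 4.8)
The plan is to pin down $\overline{\TT}_\theta$ and $\TT_\theta$ in terms of the summands $\theta_1,\dots,\theta_\ell$ of a canonical decomposition of $\theta$, by working with the general projective presentations underlying that decomposition, and then to show that these descriptions do not change when $\theta=\theta_1+\cdots+\theta_\ell$ is replaced by an arbitrary strictly positive combination $\sum_i c_i\theta_i$. First observe that $\overline{\TT}_{\theta'}$ and $\TT_{\theta'}$ are determined by the signs of the pairings $\langle\theta',[N]\rangle$ with $N$ ranging over quotient modules in $\mod A$; these conditions are positively homogeneous in $\theta'$, so $\overline{\TT}_{c\theta'}=\overline{\TT}_{\theta'}$ and $\TT_{c\theta'}=\TT_{\theta'}$ for all $c>0$. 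Hence $[\theta]_{\rm TF}$ is a cone and $\cone^\circ(\ind\theta)=\{\sum_i c_i\theta_i : c_i\in\R_{>0}\}$, so it suffices to prove $\sum_i c_i\theta_i\in[\theta]_{\rm TF}$ for every choice of $c_i>0$; after rescaling we may assume the $c_i$ are rational.

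The engine is a description of the semistable torsion classes through morphism torsion classes, of the kind announced in the abstract and close in spirit to work of Derksen--Fei. For $\alpha\in K_0(\proj A)$, let $f^\alpha=(P_{-1}\arr{f}P_0)$ be a general projective presentation with $[f^\alpha]=\alpha$; I would use that $\overline{\TT}_\alpha=\{M\in\mod A : \Hom_A(f^\alpha,M)\text{ is surjective}\}$ and that $\TT_\alpha=\Filt\Fac(\Cokernel f^\alpha)$ is the smallest torsion class containing $\Cokernel f^\alpha$. Granting this, the canonical decomposition theory of \cite{DF} gives $f^\alpha=\bigoplus_j f^{\alpha_j}$ with each $f^{\alpha_j}$ general for $\alpha_j$ whenever $\alpha=\alpha_1\oplus\cdots\oplus\alpha_m$ is a canonical decomposition. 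Since $\Hom_A(\bigoplus_j f^{\alpha_j},M)$ is surjective exactly when each $\Hom_A(f^{\alpha_j},M)$ is, and since $\Filt\Fac$ of a direct sum is the torsion class generated by the summands, this yields, for every such $\alpha$: $(\ast)$ $\overline{\TT}_\alpha=\bigcap_j\overline{\TT}_{\alpha_j}$, and $\TT_\alpha$ is the smallest torsion class containing $\TT_{\alpha_1},\dots,\TT_{\alpha_m}$.

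Now fix positive rationals $c_i$ and an integer $q>0$ with all $qc_i\in\Z$; put $\eta=\sum_i c_i\theta_i$, so that $q\eta=\sum_i(qc_i)\theta_i\in K_0(\proj A)$. A standard semicontinuity argument -- using that $\dim\Hom_{\KKK^{\bo}(\proj A)}(-,-[1])$ is upper semicontinuous and that $(f^{\theta_i})^{\oplus qc_i}$ is a presentation of $(qc_i)\theta_i$ -- shows that the vanishing of $\Hom_{\KKK^{\bo}(\proj A)}$ in degree $1$ between general presentations of weights $\theta_i$ and $\theta_j$ (which holds since $\theta_1\oplus\cdots\oplus\theta_\ell$ is canonical) persists for weights $(qc_i)\theta_i$ and $(qc_j)\theta_j$, and hence that the canonical decomposition of $q\eta$ is the concatenation of the canonical decompositions of the $(qc_i)\theta_i$. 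Applying $(\ast)$ to $q\eta$, then $(\ast)$ to each $(qc_i)\theta_i$, then the homogeneity identities $\TT_{(qc_i)\theta_i}=\TT_{\theta_i}$ and $\overline{\TT}_{(qc_i)\theta_i}=\overline{\TT}_{\theta_i}$, and finally $(\ast)$ to $\theta=\theta_1+\cdots+\theta_\ell$, gives $\TT_{q\eta}=\TT_\theta$ and $\overline{\TT}_{q\eta}=\overline{\TT}_\theta$; by homogeneity once more, $\TT_\eta=\TT_\theta$ and $\overline{\TT}_\eta=\overline{\TT}_\theta$, so $\eta\in[\theta]_{\rm TF}$. When $\theta$ is rigid this recovers \eqref{TF of rigid}.

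For an arbitrary, possibly irrational, $\eta\in\cone^\circ(\ind\theta)$, fix $M\in\mod A$: the classes in $K_0(\mod A)$ of the quotient modules of $M$ form a finite set (their dimension vectors are bounded by $\dimv M$), so the conditions $M\in\TT_{\theta'}$ and $M\in\overline{\TT}_{\theta'}$ are respectively an open and a closed condition on $\theta'\in K_0(\proj A)_\R$, each cut out by finitely many rational half-spaces. Since the rational points of the relatively open rational cone $\cone^\circ(\ind\theta)$ are dense and, by the previous paragraph, all lie in $[\theta]_{\rm TF}$, while $\eta$ is a strictly positive combination of the $\theta_i$, a routine approximation argument forces $\TT_\eta=\TT_\theta$ and $\overline{\TT}_\eta=\overline{\TT}_\theta$, completing the proof. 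I expect the main difficulty to lie entirely in the engine -- showing that $\overline{\TT}_\theta$ and $\TT_\theta$ are read off from one general projective presentation of $\theta$, equivalently their identification with suitable morphism torsion classes; after that, only bookkeeping with canonical decompositions, where the needed stability under multiples and direct sums is routine semicontinuity, and elementary convex geometry remain.
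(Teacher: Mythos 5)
Your approach has a genuine gap: the ``engine'' you rely on is false.
You claim that for a general $f^\alpha\in\Hom(\alpha)$ one has
$\overline{\TT}_\alpha=\{M : \Hom_A(f^\alpha,M)\text{ surjective}\}$
and $\TT_\alpha=\Filt\Fac(\Cokernel f^\alpha)$, i.e.\ that the semistable torsion classes
$\overline{\TT}_\alpha$, $\TT_\alpha$ coincide with the morphism torsion classes
$\overline{\TT}_{f^\alpha}$, $\TT_{f^\alpha}$ of a single general presentation.
However, the paper's Proposition~\ref{W_f and W_theta} only gives the one-sided inclusions
$\TT_{f}\supseteq\TT_\alpha$ and $\overline{\TT}_{f}\subseteq\overline{\TT}_\alpha$, and these are strict in general.
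Theorem~\ref{cup T_f}, which is the precise version of the abstract's
``description of semistable torsion classes via morphism torsion classes,''
shows only that $\overline{\TT}_\alpha=\bigcup_{\ell\ge 1}\bigcup_{f\in\Hom(\ell\alpha)}\overline{\TT}_f$,
a union over all $\ell$ and all $f$, and Example~\ref{kronecker} exhibits a concrete $X$
with $X\in\overline{\TT}_\theta$ but $X\notin\overline{\TT}_f$ for every $f\in\Hom(\theta)$.
Even for tame non-rigid $\alpha$ the identification fails for a single $f$:
for instance Proposition~\ref{string brick band}(c) produces band modules $M(b,\lambda_f)$
whose isomorphism type (and hence perp) genuinely depends on the general $f$ chosen.
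So steps~1 and~4 of your derivation of $(\ast)$ need the opposite inclusions to those that actually hold,
and the argument does not go through.

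The paper's proof succeeds precisely because it never asserts equality.
It only uses Proposition~\ref{W_f and W_theta} (the easy inclusions),
combined with the crucial observation (Proposition~\ref{signs of X}, via Serre duality and Proposition~\ref{P Q[1]})
that $E(\eta,\theta)=0$ forces the cross inclusions
$\TT_\theta\subseteq\overline{\TT}_\eta$ and $\FF_\eta\subseteq\overline{\FF}_\theta$:
one chains $\TT_\theta\subseteq\TT_g\subseteq\overline{\TT}_f\subseteq\overline{\TT}_\eta$.
Then Lemmas~\ref{additivity} and~\ref{E and T} handle arbitrary real coefficients directly,
bypassing both the density/approximation step and any need to understand $\overline{\TT}_\theta$ exactly.
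This is why Theorem~\ref{decomposition and T in intro} does not depend on Theorem~\ref{cup T_f} at all.
A secondary point: your ``routine approximation'' for irrational $\eta$ is not quite immediate for $\TT$
(open conditions do not pass to limits); one needs either a small linear-algebra argument
using that $\eta$ lies in the relative interior, or, more cleanly, deduce the $\TT$ and $\FF$
statements by torsion-pair duality from the $\overline{\TT}$ and $\overline{\FF}$ statements, as the paper effectively does.
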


Since $[\theta]_{\rm TF}=[\ell\theta]_{\rm TF}$ holds for each $\ell\ge1$, Theorem \ref{decomposition and T in intro} implies $[\theta]_{\rm TF}\supseteq\cone^\circ(\ind\N\theta)$. Notice that $\cone(\ind\N\theta)\supseteq\cone(\ind\theta)$ holds clearly, but the equality does not necessarily hold, see Theorem \ref{counter to ray intro} below. 
It is natural to pose the following as a large generalization of \eqref{TF of rigid}.

\begin{conjecture}\label{canonical and TF}
For each $\theta\in K_0(\proj A)$, we have
\begin{align*}
[\theta]_{\rm TF}=\cone^\circ(\ind\N\theta).
\end{align*}
\end{conjecture}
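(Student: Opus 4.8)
The plan is to establish the two inclusions separately. The inclusion $[\theta]_{\rm TF}\supseteq\cone^\circ(\ind\N\theta)$ is already recorded above, as a consequence of Theorem~\ref{decomposition and T in intro} and the scale-invariance $[\theta]_{\rm TF}=[\ell\theta]_{\rm TF}$; so the content of the conjecture lies in the reverse inclusion $[\theta]_{\rm TF}\subseteq\cone^\circ(\ind\N\theta)$, equivalently in its contrapositive: if $\eta\notin\cone^\circ(\ind\N\theta)$, then $\overline{\TT}_\eta\neq\overline{\TT}_\theta$ or $\TT_\eta\neq\TT_\theta$.

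The first step is to prove a general description of the semistable torsion classes in terms of morphism torsion classes. A morphism $\phi\colon P_1\to P_0$ in $\proj A$, viewed as a $2$-term complex, gives a full subcategory
\[
\TT_\phi:=\{X\in\mod A\mid\Hom_{\KKK^{\bo}(\proj A)}(\phi,X[1])=0\}=\{X\in\mod A\mid\Hom_A(\phi,X)\colon\Hom_A(P_0,X)\to\Hom_A(P_1,X)\text{ is surjective}\},
\]
which is a torsion class (closure under quotients uses projectivity of $P_1$; closure under extensions is a short diagram chase) and which recovers the functorially finite torsion classes coming from $2$-term presilting complexes. I would then show that, for $\theta\in K_0(\proj A)$, both $\overline{\TT}_\theta$ and $\TT_\theta$ are intersections of such $\TT_\phi$, where $\phi$ ranges over generic presentations of the classes in $\ind\N\theta$, and more precisely that passing from $\overline{\TT}$ to $\TT$ and crossing walls is governed by the linear inequalities $\langle\eta,[\Cokernel\phi]\rangle\ge0$ (strict on part of the boundary) cut out by these presentations. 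Granting this, an $\eta$ with $\overline{\TT}_\eta=\overline{\TT}_\theta$ and $\TT_\eta=\TT_\theta$ satisfies exactly the system of inequalities defining $\cone(\ind\N\theta)$, with the strict ones forcing $\eta$ into the relative interior.

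The main obstacle is carrying out the previous paragraph for a class $\theta$ whose generic presentations are not rigid, and this is why I expect a complete proof only for $E$-tame algebras and for hereditary algebras. For an $E$-tame algebra, $\ind\N\theta$ is finite and $\cone(\ind\N\theta)$ is a rational polyhedral cone whose facets should match the walls of the wall-and-chamber structure; one can then try to reduce to the rigid case \eqref{TF of rigid} by a perturbation argument that approximates each generic presentation in $\ind\N\theta$ by $2$-term presilting complexes and passes to the limit, the delicate point being the behaviour on the walls. The failure of the ray condition (Theorem~\ref{counter to ray intro}) shows that this procedure genuinely must involve all of $\ind\N\theta$ rather than only $\ind\theta$; it also indicates that for algebras which are neither $E$-tame nor hereditary a generic presentation of $\theta$ need not detect the whole of $\overline{\TT}_\theta$, so the strategy may break down and the conjecture should be regarded as open in that generality. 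For hereditary algebras I would instead use the dictionary between canonical decompositions of $g$-vectors and Kac's canonical decomposition of dimension vectors, together with the explicit description of $\theta$-semistable representations, to identify $\cone(\ind\N\theta)$ directly with the corresponding cone in the wall-and-chamber structure.
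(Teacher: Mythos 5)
You correctly recognise that this statement is a conjecture, that the inclusion $[\theta]_{\rm TF}\supseteq\cone^\circ(\ind\N\theta)$ is established in general (Theorem~\ref{decomposition and T} together with $[\theta]_{\rm TF}=[\ell\theta]_{\rm TF}$), and that the paper proves the full equality only for $E$-tame and hereditary algebras (Theorems~\ref{describe TF for E-tame} and~\ref{describe TF for hereditary}); your framing is faithful. However, the central technical claim in your middle paragraph --- that both $\overline{\TT}_\theta$ and $\TT_\theta$ are \emph{intersections} of the morphism torsion classes $\TT_\phi$ (your notation; $\overline{\TT}_\phi$ in the paper's) coming from generic presentations of classes in $\ind\N\theta$ --- is false. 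What actually holds (Theorem~\ref{cup T_f}) is asymmetric: $\TT_\theta=\bigcap_{\ell\ge1}\TT^{\h}_{\ell\theta}$, where $\TT^{\h}_{\ell\theta}$ is an intersection of $\TT_f=\T(C_f)$ and \emph{not} of $\overline{\TT}_f$, while $\overline{\TT}_\theta=\bigcup_{\ell\ge1}\overline{\TT}^{\h}_{\ell\theta}$ is a \emph{union}. For non-rigid $\theta$ a single generic $f\in\Hom(\theta)$ never recovers $\overline{\TT}_\theta$: Example~\ref{kronecker} produces $X\in\overline{\TT}_\theta$ with $X\notin\overline{\TT}_f$ for \emph{every} $f\in\Hom(\theta)$. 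So reading the inequalities defining $\cone(\ind\N\theta)$ off generic presentations, as you propose, does not pin down the TF class, and this is precisely where the argument would break.

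The two proven special cases also do not proceed as you sketch. For $E$-tame $A$ the paper does not approximate generic presentations by $2$-term presilting complexes (a non-rigid $\theta$ has no presilting lift, so there is nothing to approximate with); instead it proves the characterisation that $A$ is $E$-tame if and only if TF equivalence coincides with equality of $\ind(-)$ (Theorem~\ref{summand not TF}), using the rational polyhedral cones $D_f$ and $D_\eta$ of Definition~\ref{D f}, the infinitesimal perturbation of the stability parameter $\theta$ to $\theta-\epsilon\eta$ (Lemma~\ref{extend opposite}), and a reduction of arbitrary $\theta$ to maximal ones via Lemma~\ref{extend to maximal} and Proposition~\ref{closure of TF}. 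Only for hereditary $A$ does your sketch line up with the paper, which indeed passes to Kac's and Derksen--Weyman's canonical decomposition of dimension vectors and a codimension count $\dim_\R W_\theta=|A|-|\theta|$ via Example~\ref{real Schur}, Lemma~\ref{Schur face} and Proposition~\ref{wall codim}.
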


Our second main result shows that Conjecture \ref{canonical and TF} is true for two classes of algebras. 
The first one is the class of hereditary algebras (that is, algebras whose global dimension is at most one), which contains the path algebras of acyclic quivers. 
The second one is defined in terms of \textit{$E$-invariants}: For $\theta,\eta\in K_0(\proj A)$, let
\begin{align*}
E(\eta,\theta)&:=\min\{\dim_k\Hom_{\KKK^{\bo}(\proj A)}(P_f,P_g[1])\mid (f,g)\in\Hom(\eta)\times\Hom(\theta)\}.
\end{align*}
An algebra $A$ is called \textit{$E$-tame} if $E(\theta,\theta)$ is zero for all $\theta\in K_0(\proj A)$. 
This class contains all $g$-finite algebras as well as representation-tame algebras \cite{GLFS,PY}.
We refer to Figure \ref{tame figure} in Section \ref{Section_ray} for relationship between some variations of finiteness and tameness, where there have been many recent works on these notions including
\cite{AAC,AMY,AHMW,AHIKM,AMV2,ArS,A-semi,AMN,Au,DIJ,FG,HW,IZ,KM,M,Mo,MP,Mu,P-finite,PY,STTVW,ST,STV,W,Z}.

Now we are ready to state our second main result.

\begin{theorem}[{Theorems \ref{describe TF for E-tame} and  \ref{describe TF for hereditary}}]\label{converse in intro}
Let $A$ be a finite dimensional algebra over an algebraically closed field $k$, and
$\theta\in K_0(\proj A)$.
If $A$ is either hereditary or $E$-tame, then
\begin{align*}[\theta]_{\rm TF}=\cone^\circ(\ind\theta).
\end{align*}
\end{theorem}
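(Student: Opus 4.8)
The plan is to prove the reverse containment $[\theta]_{\rm TF}\subseteq\cone^\circ(\ind\theta)$, since Theorem \ref{decomposition and T in intro} already gives $[\theta]_{\rm TF}\supseteq\cone^\circ(\ind\theta)$ (note that for $E$-tame and hereditary algebras one expects $\ind\N\theta=\ind\theta$, so Conjecture \ref{canonical and TF} collapses to this statement; I would first record this equality, which should follow from the fact that in these cases a canonical decomposition $\theta=\theta_1\oplus\cdots\oplus\theta_\ell$ has $E(\theta_i,\theta_j)=0$ for all $i,j$, hence remains canonical after scaling). The real content is therefore: if $\eta\notin\cone^\circ(\ind\theta)$, then either $\overline{\TT}_\eta\ne\overline{\TT}_\theta$ or $\TT_\eta\ne\TT_\theta$.

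First I would reduce, via the ``general description of semistable torsion classes in terms of morphism torsion classes'' advertised in the abstract (presumably proved in an earlier section), to a statement purely about projective presentations: the semistable torsion pair of $\theta$ is built from the morphism torsion classes of the presentations $P_f$ realizing the canonical decomposition, so $[\theta]_{\rm TF}$ is governed by which presentations $P_g$ (for $g\in\Hom(\eta)$) have vanishing $\Hom_{\KKK^{\bo}(\proj A)}(P_f,P_g[1])$ and $\Hom_{\KKK^{\bo}(\proj A)}(P_g,P_f[1])$ against the pieces $\theta_i$. Concretely, the key finite-dimensional input is that membership of a module $M$ in $\TT_\theta$ resp.\ $\overline{\TT}_\theta$ is detected by the signs of the pairings $\langle\theta_i, \dimv M\rangle$ together with $E$-vanishing conditions, so the TF class is cut out by the linear functionals $\theta_i$ and their associated hyperplane arrangement. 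The canonical-decomposition cone $\cone^\circ(\ind\theta)$ is exactly the set of positive combinations of the $\theta_i$; an element $\eta$ outside it must, by a separating-hyperplane / Carathéodory argument on the cone, witness a strict change in one of these sign conditions — either it fails to lie in the span, or it crosses one of the walls — producing a module that distinguishes the torsion classes.

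For the $E$-tame case specifically, the crucial extra ingredient is $E$-tameness itself: $E(\theta,\theta)=0$ for all $\theta$ forces the canonical decomposition to behave additively and the ``generic'' $\Hom$ and $E$ values to compute the pairing $\langle-,-\rangle$ on the nose (this is the Derksen–Fei theory of $g$-vectors for $E$-tame algebras), so that $\TT_\theta$ and $\overline{\TT}_\theta$ depend on $\theta$ only through the closed, resp.\ open, cone spanned by $\ind\theta$. For the hereditary case I would instead exploit that $\KKK^{\bo}(\proj A)$ is controlled by the Euler form and the Coxeter transformation: canonical decompositions of presentations correspond to Schur roots / the Kac canonical decomposition of dimension vectors, and one can transport the classical results of Schofield and Derksen–Weyman on general representations to read off the TF class directly. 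In both cases one then checks the two containments match.

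The main obstacle I anticipate is the hereditary case's handling of the ``imaginary'' directions: when $\theta$ has a summand $\theta_i$ that is not rigid (a genuine imaginary Schur root situation), the morphism torsion class of $P_{\theta_i}$ is no longer functorially finite, and one must show the entire relatively open ray-cone it contributes is constant for $\TT$ and for $\overline{\TT}$ without the convenient rigidity crutch of \eqref{TF of rigid}. This requires a careful genericity argument — that a generic presentation in the family of total $g$-vector $\eta$, for $\eta$ in the open cone, has the same indecomposable $E$-orthogonal summand decomposition as $\theta$ — which is exactly where the uniqueness of canonical decompositions (Derksen–Fei) and, for the hereditary case, Kac's and Schofield's theory must be invoked in full strength. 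I would isolate this as a separate lemma and expect it to be the technical heart of both theorems.
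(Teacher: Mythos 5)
Your proposal correctly identifies the task as the reverse containment and correctly flags non-rigid summands as the technical heart, but there is a genuine gap in the $E$-tame case. The claim that ``$\TT_\theta$ and $\overline{\TT}_\theta$ depend on $\theta$ only through the closed, resp.\ open, cone spanned by $\ind\theta$'' is exactly the statement to be proved, not a consequence of Derksen--Fei theory, so this part of the argument is circular. The ``separating hyperplane / Carath\'eodory'' picture also does not apply directly: TF equivalence classes are cut out by the walls $\Theta_X$ ranging over all modules $X$, not by a hyperplane arrangement in the functionals $\theta_1,\ldots,\theta_\ell$ alone, so there is no a priori reason an $\eta\notin\cone^\circ(\ind\theta)$ must cross one of those walls. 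The paper's actual mechanism is a characterization (Theorem \ref{summand not TF}): for $E$-tame $A$, two lattice points are TF equivalent if and only if they have the same set of indecomposable canonical summands. The key move is a perturbation argument (Lemma \ref{extend opposite}) applied inside the cones $D_\eta=\bigcup_{f\in\Hom(\eta)}D_f$: if $\theta$ is TF equivalent to $\eta$, then for a generic $f$ one has $C_f\in\overline{\TT}_\theta$ and $K_{\nu f}\in\overline{\FF}_\theta$; perturbing $\theta$ slightly in the $-\eta$ direction keeps this true, which forces $\eta\oplus(\ell\theta-\eta)$ for large $\ell$ and hence $\ind\eta\subseteq\ind\theta$ by uniqueness of canonical decompositions. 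Nothing in your sketch produces this ``summand'' conclusion from mere membership of $\eta$ in $[\theta]_{\rm TF}$; that is the missing idea.

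For the hereditary case your instincts are sound --- the paper does invoke Schofield and Derksen--Weyman --- but the actual argument is a specific dimension count: one shows $\dim_\R W_\theta=|A|-|\theta|$ where $W_\theta\subset K_0(\mod A)_\R$ is spanned by $\WW_\theta$, combining Derksen--Weyman's theory of Schur sequences (Proposition \ref{wall codim}) with a duality relating the walls $\Theta_d$ to $W_\theta$ (Proposition \ref{W_f C_f K_f 2}); under the ray condition (Proposition \ref{linear independence3}) this equality yields the theorem via Proposition \ref{dim of wide}. Also note that your justification for $\ind\N\theta=\ind\theta$ --- that $E(\theta_i,\theta_j)=0$ for all $i,j$ --- fails for hereditary wild summands, where $E(\theta_i,\theta_i)\neq 0$; what actually holds there (Proposition \ref{linear independence}(b)) is that $\ind\ell\theta$ consists of the tame $\theta_i$ and of $\ell\theta_i$ for wild $i$, giving $\cone(\ind\N\theta)=\cone(\ind\theta)$ but not $\ind\N\theta=\ind\theta$.
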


In the proof of Theorem \ref{converse in intro} for $E$-tame algebras, we prove the following characterization of $E$-tame algebras, which is interesting by itself.

\begin{theorem}[Theorem \ref{summand not TF}]
For a finite dimensional algebra $A$ over an algebraically closed field $k$, the following conditions are equivalent.
\begin{enumerate}[\rm(a)]
\item $A$ is $E$-tame. 
\item Let $\eta, \theta \in K_0(\proj A)$. Then $\eta$ and $\theta$ are TF equivalent if and only if $\ind\eta=\ind\theta$.
\end{enumerate}
\end{theorem}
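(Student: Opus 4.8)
The plan is to prove the two implications separately, after a reduction that isolates the real content of (b).

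\textbf{Reduction.} For an arbitrary finite dimensional algebra the implication ``$\ind\eta=\ind\theta$ implies that $\eta$ and $\theta$ are TF equivalent'' is automatic: writing a canonical decomposition $\theta=\bigoplus_i m_i\theta_i$ with all $m_i\ge1$ gives $\theta\in\cone^\circ(\ind\theta)$, and likewise $\eta\in\cone^\circ(\ind\eta)$; hence if $\ind\eta=\ind\theta$ then $\eta,\theta$ lie in the common open cone $\cone^\circ(\ind\theta)\subseteq[\theta]_{\rm TF}$ by Theorem \ref{decomposition and T in intro}. Thus the theorem asserts exactly that the implication ``$\eta\in[\theta]_{\rm TF}$ implies $\ind\eta=\ind\theta$'' holds for all $\eta,\theta\in K_0(\proj A)$ if and only if $A$ is $E$-tame.

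\textbf{(b)$\Rightarrow$(a), by contraposition.} Suppose $A$ is not $E$-tame. Since the distinct indecomposable summands of a canonical decomposition are $E$-orthogonal and $E$ is subadditive in each argument, one has $E(\theta,\theta)\le\sum_i m_i^2E(\theta_i,\theta_i)$ for a canonical decomposition $\theta=\bigoplus_i m_i\theta_i$; therefore some summand $\theta_i$ is an indecomposable class with $E(\theta_i,\theta_i)>0$, and replacing $\theta$ by $\theta_i$ we may assume $\ind\theta=\{\theta\}$ and $E(\theta,\theta)>0$. For this $\theta$, no canonical decomposition of $2\theta$ can contain $\theta$: were $2\theta=\theta\oplus\rho$ such a decomposition, then $\rho=2\theta-\theta=\theta$, so $\theta$ would occur with multiplicity at least two, which is impossible since $E(\theta,\theta)>0$. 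Hence $\ind(2\theta)\ne\{\theta\}=\ind\theta$, whereas $[2\theta]_{\rm TF}=[\theta]_{\rm TF}$; so $2\theta\in[\theta]_{\rm TF}$ while $\ind(2\theta)\ne\ind\theta$, and (b) fails.

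\textbf{(a)$\Rightarrow$(b).} Assume $A$ is $E$-tame. The decisive step is to prove the inclusion $[\theta]_{\rm TF}\subseteq\cone(\ind\theta)$ for every $\theta\in K_0(\proj A)$, which I denote $(\star)$. Granting $(\star)$, let $\eta\in[\theta]_{\rm TF}$; applying $(\star)$ to $\theta$ and to $\eta$ together with Theorem \ref{decomposition and T in intro} gives $\cone^\circ(\ind\theta)\subseteq[\theta]_{\rm TF}=[\eta]_{\rm TF}\subseteq\cone(\ind\eta)$ and, symmetrically, $\cone^\circ(\ind\eta)\subseteq\cone(\ind\theta)$; passing to closures yields $\cone(\ind\theta)=\cone(\ind\eta)$. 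Since $A$ is $E$-tame every class is $E$-rigid, so the indecomposable summands of a canonical decomposition are pairwise $E$-orthogonal $E$-rigid classes and, using moreover that for $E$-tame algebras such summands are linearly independent, $\cone(\ind\theta)$ is a simplicial rational cone whose extreme rays are exactly the $\R_{\ge0}\theta_i$. Furthermore $\ind(m\beta)=\{\beta\}$ for any indecomposable $\beta$ and $m\ge1$ (because $E(\beta,\beta)=0$), so the indecomposable class lying on a given ray is unique. Therefore $\ind\theta$ is recovered from the simplicial cone $\cone(\ind\theta)$, and $\cone(\ind\theta)=\cone(\ind\eta)$ forces $\ind\theta=\ind\eta$.

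\textbf{Proof of $(\star)$ --- the main obstacle.} Here Theorem \ref{decomposition and T in intro} already supplies the opposite inclusion $\cone^\circ(\ind\theta)\subseteq[\theta]_{\rm TF}$, so the issue is to show that $[\theta]_{\rm TF}$ does not spill out of $\cone(\ind\theta)$. I would argue this through the description of semistable torsion classes in terms of morphism torsion classes: for $\theta'\notin\cone(\ind\theta)$, separate $\theta'$ from the simplicial cone $\cone(\ind\theta)$ by a supporting hyperplane of the facet spanned by all summands but one, say $\theta_i$, and then use the generic projective presentation of $\theta_i$ --- whose relevant $\Hom$-spaces in $\KKK^{\bo}(\proj A)$ are controlled because $A$ is $E$-tame and the canonical decomposition of $\theta$ is $E$-orthogonal --- to exhibit a module lying in $\overline{\TT}_{\theta'}$ but not $\overline{\TT}_\theta$, or in $\overline{\FF}_{\theta'}$ but not $\overline{\FF}_\theta$; this gives $\theta'\notin[\theta]_{\rm TF}$. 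The step I expect to be hardest is carrying this out for \emph{arbitrary} $\theta$ --- in particular when $\theta$ is not rigid, so that one cannot invoke $2$-term silting theory and must instead read off the required information about $\overline{\TT}_{\theta'}$ and $\overline{\FF}_{\theta'}$ from the canonical decomposition of $\theta'$ and $E$-tameness; making the morphism-torsion-class comparison precise and uniform over all $\theta$ is the crux of the argument.
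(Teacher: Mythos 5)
Your proof splits naturally into three parts; let me address each.

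Your \textbf{(b)$\Rightarrow$(a)} argument is correct and close in spirit to the paper's: the reduction to an indecomposable $\theta$ with $E(\theta,\theta)>0$ via subadditivity and $E$-orthogonality of canonical summands is fine, and the observation that $\theta\notin\ind(2\theta)$ while $[2\theta]_{\rm TF}=[\theta]_{\rm TF}$ contradicts (b) is exactly right. (The paper phrases it even more tersely: (b) gives $\ind(2\theta)=\ind\theta=\{\theta\}$, forcing $2\theta=\theta\oplus\theta$, i.e.\ $E(\theta,\theta)=0$.)

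The \textbf{(a)$\Rightarrow$(b)} direction, however, has a genuine gap, and it is a structural one. You reduce everything to the assertion $(\star)\colon[\theta]_{\rm TF}\subseteq\cone(\ind\theta)$, which (combined with the reverse inclusion of Theorem \ref{decomposition and T in intro}) is precisely the content of Theorem \ref{describe TF for E-tame}. But in the paper's logic, Theorem \ref{describe TF for E-tame} is proved \emph{after} the theorem you are asked to prove and \emph{uses} it: the proof goes through Lemma \ref{TF in subspace}, whose ``$\subseteq$'' direction cites Theorem \ref{summand not TF}(a)$\Rightarrow$(b). In other words, the step you call ``the crux'' — showing that $[\theta]_{\rm TF}$ does not leak out of $\cone(\ind\theta)$ for \emph{non-maximal} $\theta$ — is, in the paper, a consequence of the very statement you are trying to prove, so your reduction is circular as stated. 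Your sketch (separating hyperplane plus a generic presentation of one summand) would need to be made precise in a way that does not secretly invoke $\ind$-equality of TF-equivalent elements, and you have not done that. For maximal $\theta$ the containment $[\theta]_{\rm TF}\subseteq D_\theta\subseteq\cone(\ind\theta)$ does hold without the theorem, but the reduction from general to maximal $\theta$ is exactly where the circularity enters.

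The paper avoids this by proving (a)$\Rightarrow$(b) directly, without first establishing the full cone description. Given $\theta\in[\eta]_{\rm TF}$, one has $\theta\in D_\eta$ by Proposition \ref{direct sum D eta}(b), so there is an $f\in\Hom(\eta)$ with $C_f\in\overline{\TT}_\theta$ and $K_{\nu f}\in\overline{\FF}_\theta$; the perturbation Lemma \ref{extend opposite}(c) then puts $\theta-\eta/\ell$ in $D_f\subseteq D_\eta$ for large $\ell$, giving $\eta\oplus(\ell\theta-\eta)$, and comparing with $\ell\theta=\theta^{\oplus\ell}$ yields $\ind\eta\subseteq\ind\theta$; symmetry finishes. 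This is a genuinely different, and shorter, route than what you propose: it never separates $\theta'$ from the cone by a hyperplane or compares $\overline{\TT}$'s module-by-module, and it sidesteps the maximal/non-maximal dichotomy entirely. I'd recommend adopting that argument, or at minimum proving your $(\star)$ without appealing (even implicitly) to Theorem \ref{describe TF for E-tame} or Lemma \ref{TF in subspace}.
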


Another class of torsion classes studied in this paper is given by morphisms between projective modules. For each morphism $f$ in the category $\proj A$, we obtain torsion pairs $(\TT_f,\overline{\FF}_f)$ and $(\overline{\TT}_f,\FF_f)$ which we call \textit{morphism torsion pairs}.
If $f$ is presilting as a 2-term complex, then the morphism torsion classes are functorially finite and well studied in tilting theory (e.g.~\cite{ASS,AIR,AMV}).
In this paper, we will show that semistable torsion classes of $\theta\in K_0(\proj A)$ can be described by using morphism torsion classes. More explicitly, by unifying morphism torsion pairs of each morphism $f$ in $\proj A$ satisfying $[f]=\theta$, we define torsion pairs $(\TT^{\h}_{\theta},\overline{\FF}^{\h}_{\theta})$ and $(\overline{\TT}^{\h}_{\theta},\FF^{\h}_{\theta})$.
We prove the equalities below, which are also used in the proof of Theorem \ref{converse in intro}. Note that they were obtained by Fei \cite{Fei} independently.

\begin{theorem}[{Theorem \ref{cup T_f}}]\label{cup T_f in intro}
Let $A$ be a finite dimensional algebra over an algebraically closed field $k$. For $\theta \in K_0(\proj A)$, we have
\begin{align*}
\TT_\theta=\bigcap_{\ell\ge1}\TT^{\h}_{\ell\theta},\quad
\FF_\theta=\bigcap_{\ell\ge1}\FF^{\h}_{\ell\theta},\quad
\overline{\TT}_\theta=\bigcup_{\ell\ge1}\overline{\TT}^{\h}_{\ell\theta},\quad
\overline{\FF}_\theta=\bigcup_{\ell\ge1}\overline{\FF}^{\h}_{\ell\theta},\quad
\WW_\theta=\bigcup_{\ell\ge1}\WW^{\h}_{\ell\theta}.
\end{align*}
Moreover, we can let $\ell=1$ above if $\theta$ is tame.
\end{theorem}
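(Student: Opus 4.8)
The plan is to deduce the five equalities from the relationship between semistable torsion classes and morphism torsion classes at the level of individual morphisms, combined with the canonical decomposition. Recall that for a single morphism $f$ with $[f] = \eta$ in $K_0(\proj A)$ we have the morphism torsion pairs $(\TT_f, \overline{\FF}_f)$ and $(\overline{\TT}_f, \FF_f)$, and the subcategories $\TT^{\h}_\eta, \overline{\TT}^{\h}_\eta$ are obtained by intersecting (resp.\ unioning) the $\TT_f$ (resp.\ $\overline{\TT}_f$) over all $f$ with $[f] = \eta$. First I would establish the pointwise comparison: for a fixed $\theta \in K_0(\proj A)$ and any module $M$, membership of $M$ in $\TT_\theta$ (the $\theta$-semistable torsion class) should be reformulated as a condition on $\Hom$- and $\Ext$-vanishing that can be tested against generic projective presentations. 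Concretely, $M \in \TT_\theta$ iff every quotient $N$ of $M$ satisfies $\langle \theta, \dimv N \rangle > 0$ unless $N = 0$, and this pairing can be computed via $\dim\Hom_{\KKK^{\bo}(\proj A)}(P_f, M) - \dim\Hom_{\KKK^{\bo}(\proj A)}(P_f, M[1])$ for $f$ a projective presentation with $[P_f] = \theta$; the point is that the \emph{minimum} of $\Hom(P_f, M[1])$ over generic $f$ is what controls semistability, exactly as $E$-invariants are defined by such minima.

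The key step is then the scaling/additivity argument. Since $\TT_\theta = \TT_{\ell\theta}$ is \emph{not} in general true at the level of morphism torsion classes (a generic presentation of class $\ell\theta$ need not be a direct sum of $\ell$ generic presentations of class $\theta$ unless $\theta$ is rigid, or more generally its canonical decomposition is compatible with scaling), we must take the intersection $\bigcap_{\ell \ge 1} \TT^{\h}_{\ell\theta}$ to recover $\TT_\theta$ exactly. For the inclusion $\TT_\theta \subseteq \bigcap_\ell \TT^{\h}_{\ell\theta}$, I would show $\TT_\theta \subseteq \TT_f$ whenever $[f] = \ell\theta$: this follows because $\langle \theta, \dimv N\rangle > 0$ implies $\langle \ell\theta, \dimv N \rangle > 0$, and $\TT_f$ is the torsion class of modules $M$ with $\Hom(P_f, M[1]) = 0$, which is cut out by the weaker of the two semistability inequalities. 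For the reverse inclusion, given $M \in \TT^{\h}_{\ell\theta}$ for all $\ell$, I would pick for each $\ell$ a morphism $f_\ell$ of class $\ell\theta$ realizing the relevant minimum and conclude $M \in \TT_{f_\ell} $, then let $\ell \to \infty$ so that the inequalities $\langle \ell\theta, \dimv N\rangle \ge 0$ force $\langle \theta, \dimv N\rangle > 0$ for all nonzero quotients $N$ (a nonnegative integer that stays nonnegative after arbitrary positive scaling of the linear functional must be positive). The dual statements for $\FF_\theta$, and the statements for $\overline{\TT}_\theta, \overline{\FF}_\theta, \WW_\theta$ with unions in place of intersections, follow by the same mechanism applied to the opposite (weaker/stronger) semistability inequality, using that $\overline{\TT}^{\h}_{\ell\theta}$ collects modules admitting \emph{some} good presentation rather than all of them.

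For the final sentence — that one may take $\ell = 1$ when $\theta$ is tame — I would invoke Theorem~\ref{cup T_f in intro}'s own hypothesis machinery: tameness of $\theta$ means $E(\theta, \theta) = 0$, so a generic projective presentation $f$ of class $\theta$ is already "indecomposable-friendly" in the sense that $\ell f$ is generic of class $\ell\theta$ for all $\ell$, hence $\TT^{\h}_{\ell\theta} = \TT^{\h}_\theta$ and the intersection/union collapses to a single term. The main obstacle I expect is the first step: making precise and correct the dictionary between the two semistability inequalities defining $(\TT_\theta, \overline{\FF}_\theta)$ and $(\overline{\TT}_\theta, \FF_\theta)$ on the one hand, and the $\Hom/\Ext$-vanishing conditions defining $\TT_f, \overline{\TT}_f$ on the other — in particular getting the strict-versus-weak inequalities matched to the intersection-versus-union on the nose, and handling the genericity (the "min" in the definition of $E$) carefully enough that the limiting argument in $\ell$ is valid. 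Everything after that is the elementary observation about positivity of linear functionals under scaling.
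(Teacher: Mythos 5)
Your proposal has a genuine gap, and it stems from a misunderstanding of what the scaling by $\ell$ is doing in the statement.

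First, a small but telling slip: you characterize $\TT_f$ as ``the torsion class of modules $M$ with $\Hom(P_f,M[1])=0$.'' That condition characterizes $\overline{\TT}_f$ (by Lemma~\ref{Nakayama T_f}: $\Hom_A(f,X)$ is surjective iff $\Hom_{\DDD(A)}(P_f,X[1])=0$), whereas $\TT_f=\T(C_f)$ is the \emph{smallest} torsion class containing $\Cokernel f$. Getting the open versus closed torsion class wrong here propagates through the rest of the argument.

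The more fundamental problem is your ``limiting argument.'' You write that as $\ell\to\infty$, the inequalities $\langle\ell\theta,\dimv N\rangle\ge 0$ ``force'' $\langle\theta,\dimv N\rangle>0$, on the grounds that a nonnegative quantity stable under arbitrary positive scaling must be positive. That is false: $\theta(N)\ge 0$ holds iff $\ell\theta(N)\ge 0$ holds, for every $\ell>0$, and if $\theta(N)=0$ then $\ell\theta(N)=0$ for every $\ell$. Positive scaling of a linear form never converts a weak inequality into a strict one. The role of $\ell$ in the theorem is therefore not what you think it is. The $\ell$ is needed because, for non-rigid $\theta$, a generic presentation in $\Hom(\theta)$ may genuinely fail to witness membership of a module $X$ in $\WW_\theta$ (or $\overline{\TT}_\theta$), even though some presentation in $\Hom(\ell\theta)$ for a larger $\ell$ succeeds. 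This is not a strict-versus-weak inequality issue but an existence issue, and the paper exhibits it explicitly in Example~\ref{kronecker}: for the $3$-Kronecker algebra and $\theta=[P(1)]-[P(2)]$ there is a module $X\in\overline{\TT}_\theta$ with $X\notin\overline{\TT}^{\h}_\theta$ but $X\in\overline{\TT}^{\h}_{2\theta}$, so the union at $\ell=1$ is already strictly too small.

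This points at the real missing content. Your sentence ``the point is that the minimum of $\Hom(P_f,M[1])$ over generic $f$ is what controls semistability'' is stated as if it were a triviality, but it is precisely the heart of the theorem: one must show that whenever $X\in\WW_\theta$ there exists $\ell\ge1$ and $f\in\Hom(\ell\theta)$ with $\Hom_A(f,X)$ an isomorphism. The paper proves this (Lemma~\ref{Theta sub ThetaH}) by a genuinely nontrivial route: (i) King's GIT criterion identifying $\theta$-semistable modules with $\chi_\theta$-semistable points of the module variety, (ii) Derksen--Weyman's theorem that the space of semi-invariants is spanned by the determinantal functions $a_f$ coming from projective presentations $f$, applied to generalized Kronecker quivers; (iii) a reduction to the rank-two (Kronecker) situation via passing to $eAe$, and (iv) an induction over $\dim_k X$ using the wall-chamber structure (rays of $\Theta_X$) together with the subadditivity properties of $\WW^{\h}_{\N\theta}$. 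On top of that, the passage from the $\WW_\theta$ statement to the $\overline{\TT}_\theta$ statement uses the Harder--Narasimhan filtration of Proposition~\ref{W filtration}. None of these ingredients appear in your proposal, and they cannot be replaced by the ``elementary observation about positivity of linear functionals under scaling'' you propose to close with — that observation is vacuous here, and the actual content of the theorem is exactly the existence statement you are assuming.
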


As an application of our results, we study 
the behavior of canonical decomposition under multiplication by a positive integer.
We say that an algebra $A$ satisfies the \emph{ray condition} if for each indecomposable wild element $\theta$ and $\ell\ge1$, the element $\ell\theta$ is indecomposable. 
We show that the ray condition is satisfied by $E$-tame algebras and hereditary algebras (Propositions \ref{linear independence2}, \ref{linear independence3}), and also give an example of an algebra which does not satisfy the ray condition, answering a question \cite[Question 4.7]{DF} negatively.

\begin{theorem}[{Example \ref{counter to ray 2}}]\label{counter to ray intro}
There exists a finite dimensional algebra $A$ and an indecomposable wild element $\theta\in K_0(A)$ such that $\cone(\ind\theta)\subsetneq\cone(\ind\N\theta)$. In particular, $\theta$ does not satisfy the ray condition.
\end{theorem}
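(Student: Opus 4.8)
The plan is to construct an explicit finite dimensional algebra $A$ together with an element $\theta \in K_0(\proj A)$ which is indecomposable (so $\ind\theta = \{\theta\}$ and $\cone(\ind\theta) = \R_{\ge 0}\theta$ is a ray), but such that some multiple $\ell\theta$ decomposes nontrivially as $\ell\theta = \theta_1 \oplus \cdots \oplus \theta_m$ with the $\theta_i$ spanning a cone of dimension at least $2$; then $\cone(\ind\N\theta) \supseteq \cone(\ind\ell\theta) = \cone\{\theta_1, \dots, \theta_m\}$ strictly contains the ray $\R_{\ge 0}\theta$. By Theorem \ref{decomposition and T in intro} and the discussion following it, $\theta$ is necessarily wild (if it were $E$-tame-like, Propositions \ref{linear independence2}, \ref{linear independence3} would forbid this), so the search should be among algebras with wild representation type and among brick-like elements $\theta$ whose general projective presentation has large self-$E$-invariant.

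First I would look for a small wild quiver with relations — something like a generalized Kronecker-type situation or a modification of a known wild example — and pick $\theta = [P_f]$ for a generic presentation $f$ with $\dim_k \Hom_{\KKK^{\bo}(\proj A)}(P_f, P_f[1]) > 0$, so that $\theta$ fails to be rigid. The key computation is to determine the canonical decomposition of $\ell\theta$ for $\ell \ge 2$: concretely, one writes down the variety of projective presentations with class $\ell\theta$ (an affine space of homomorphisms between the relevant projectives) and analyzes which decomposition type is generic, using the Derksen-Fei machinery and the numerical criterion via $E$-invariants. The goal is to arrange that for $\ell \theta$ the generic point is a direct sum of presentations whose classes are \emph{not} all proportional to $\theta$ — for instance by having two "competing" rigid or near-rigid summands whose sum is $\ell\theta$ but which individually point in different directions of $K_0(\proj A)_\R$.

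The main obstacle I expect is the explicit computation of the canonical decomposition: verifying that $\theta$ itself is genuinely indecomposable (not already a nontrivial canonical sum) while simultaneously $\ell\theta$ splits requires controlling the $E$-invariant $E(\theta, \theta)$ and the pairwise $E$-invariants of candidate summands of $\ell\theta$ — one must show $E$ vanishes on the pairs appearing in the claimed decomposition of $\ell\theta$ but that no analogous splitting is forced for $\theta$. This is where a carefully chosen small example is essential, since in general these computations are intractable; I would aim for an algebra where the projectives are few and the presentation spaces are low-dimensional enough that the generic decomposition can be read off by hand or by a short matrix-rank argument. Once such an $A$ and $\theta$ are in hand, the conclusion $\cone(\ind\theta) \subsetneq \cone(\ind\N\theta)$ is immediate, and the failure of the ray condition follows by definition since $\ell\theta$ is decomposable while $\theta$ is indecomposable and wild.
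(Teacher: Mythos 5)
Your proposal correctly identifies the target: an indecomposable wild $\theta$ for which some multiple $\ell\theta$ splits canonically into summands that are not all proportional to $\theta$. You also correctly observe that $\theta$ must be wild and the algebra must be neither $E$-tame nor hereditary (so Propositions \ref{linear independence2} and \ref{linear independence3} do not apply). But this is the easy part of the statement; the content is the actual construction, and your proposal stops exactly where the work begins. Saying ``I would look for a small wild quiver with relations\ldots'' and ``the main obstacle is the explicit computation of the canonical decomposition'' describes the problem without solving it. There is no algebra, no $\theta$, no verification that $\theta$ is indecomposable yet $\ell\theta$ decomposes — so this is a plan, not a proof.

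The paper's construction (Example \ref{counter to ray 2}) is genuinely nonobvious and does not emerge from a generic search. It proceeds in two stages. First, over the generalized Kronecker algebra $A = k(1 \rightrightarrows \cdots \rightrightarrows 2)$ with $n\ge 3$ odd arrows, it takes $\theta = [P(1)]-[P(2)]$ and a very specific module $X$ whose structure maps are the skew-symmetric matrices $F_i = E_{i,i+1}-E_{i+1,i}$. The crucial linear-algebra fact is that a nonzero skew-symmetric matrix of odd size has vanishing determinant, which forces $X \notin \overline{\TT}^{\h}_\theta$ (i.e.\ the presentation monoid $S_{X,\theta}$ misses $1$), while explicit $2\times 2$ and $3\times 3$ block matrices show $2,3 \in S_{X,\theta}$, yielding $S_{X,\theta}=\Z_{\ge 0}\setminus\{1\}$. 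Second, the paper packages this into the triangular matrix algebra $B = \left[\begin{smallmatrix}k & X\\ 0 & A\end{smallmatrix}\right]$ and uses the bridge result Proposition \ref{compare monoid} to translate the membership statement ``$X\in\overline{\TT}^{\h}_{\ell\theta}$'' into the $E$-invariant statement ``$E(\ell[P(1)]-\ell[P(2)], [P(0)])=0$''. Setting $\eta = [P(0)]+[P(1)]-[P(2)]$, this immediately gives that $\eta$ is indecomposable (because the relevant $E$-invariant is nonzero at $\ell=1$) yet $\ell\eta = [P(0)]^{\oplus\ell}\oplus(\ell[P(1)]-\ell[P(2)])$ for every $\ell\ge 2$, so $\cone(\ind\eta)$ is a ray strictly inside the two-dimensional $\cone(\ind\N\eta)$. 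Neither the skew-symmetric determinant trick, nor the triangular-matrix-algebra construction, nor the transfer principle of Proposition \ref{compare monoid} appears in your proposal, and without something of this kind the argument cannot be completed.
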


In Section \ref{Section_preproj}, we give the following explicit descriptions of TF equivalence classes of the complete preprojective algebra $\Pi$ of type $\widetilde{\mathbb{A}}_{n-1}$, where $h:=\sum_{i=1}^n [S(i)] \in K_0(\fl \Pi)$ and $H:=\Kernel \langle ?,h \rangle\subset K_0(\proj\Pi)_\R$. We refer to Section \ref{Section_preproj} for details and other notations.

\begin{theorem}[Proposition \ref{half planes}, Theorem \ref{preproj TF}]\label{type A tilde}
Let $\Pi$ be the complete preprojective algebra $\Pi$ of type $\widetilde{\mathbb{A}}_{n-1}$. As subsets of $K_0(\proj \Pi)_\R$,
\begin{align*}
H^+ \cup H^-=\bigsqcup_{U \in (\twopresilt \Pi) \setminus \{0\}}C^\circ(U)
=\bigsqcup_{J \subset \{1,2,\ldots,n\}, \ w \in W/W_J}\sigma^*_w
(C^\circ(P_J))
\end{align*} 
is the decomposition into the TF equivalence classes.
The decomposition of $H \subset K_0(\proj \Pi)$ into the TF equivalence classes is
\begin{align*}
H=\bigsqcup_{U \in \twopresilt \Pi'}\iota(C^\circ(U))
=\bigsqcup_{J \subset \{1,2,\ldots,n-1\}, \ w \in W'/W'_J}\sigma^*_w
\left(\sum_{j \in J} \cone^\circ\{[P(j)]-[P(n)] \mid j \in J\}\right).
\end{align*}
\end{theorem}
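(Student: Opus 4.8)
The plan is to identify the TF equivalence classes explicitly by combining the general results of the paper with the special structure of the preprojective algebra $\Pi$ of type $\widetilde{\mathbb{A}}_{n-1}$. Since $\Pi$ is not $g$-finite, we cannot expect all torsion classes to be functorially finite, so the description must genuinely use semistable torsion classes. The key geometric input is that $\Pi$ carries a natural action of the Weyl group $W$ of type $\widetilde{\mathbb{A}}_{n-1}$ on $K_0(\proj\Pi)_\R$ (via the $\sigma_w$), which permutes the chambers $C^\circ(U)$ associated to $2$-term (pre)silting complexes, and that the $g$-vector fan of $\Pi$, while infinite, is well understood: it is the Coxeter fan together with its "limit hyperplane" $H=\Kernel\langle?,h\rangle$, and the open half-spaces $H^+,H^-$ are exactly swept out by the cones $C^\circ(U)$ for $U$ a nonzero $2$-term presilting complex. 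So the first half of the statement, the decomposition of $H^+\cup H^-$, should follow once we know (i) every nonzero $\theta\in H^+\cup H^-$ is rigid, hence by \eqref{TF of rigid} its TF class is $C^\circ(U)$ for the presilting complex $U$ with $[U]=\theta$, and (ii) the $\sigma^*_w$-orbits of the $C^\circ(P_J)$, $J\subsetneq\{1,\dots,n\}$, exhaust all such cones. Point (i) is a known feature of preprojective algebras of affine type (the $g$-fan is dense in one half-space pattern and every interior lattice point of a chamber is a $g$-vector of a presilting complex), and point (ii) is a standard orbit count: $W$ acts transitively on chambers of a fixed type, and the types are indexed by proper subsets $J$, with $C^\circ(P_J)$ a representative.

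The harder and genuinely new part is the second displayed equation, the decomposition of the limit hyperplane $H$ itself. Here the elements $\theta\in H$ are typically \emph{not} rigid — indeed $h$ is an isotropic/imaginary class in the terminology coming from the underlying Cartan datum — so \eqref{TF of rigid} does not apply and we must invoke Theorem \ref{converse in intro}. For this we need $\Pi$ restricted to $H$ to behave like an $E$-tame or hereditary situation; the mechanism is that the slice of $\Pi$ transverse to the ray $\R h^{\vee}$ (or rather the relevant reduction) is governed by a preprojective algebra $\Pi'$ of finite type $\mathbb{A}_{n-1}$, which is $g$-finite hence $E$-tame, and the embedding $\iota\colon K_0(\proj\Pi')_\R\hookrightarrow H$ identifies $\twopresilt\Pi'$ with the canonical-decomposition cones living inside $H$. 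Concretely: for $\theta\in H$, one shows $\ind\theta$ consists of elements again lying in $H$, each of the form $\iota$ of an indecomposable $2$-term presilting $\Pi'$-complex, so by Theorem \ref{converse in intro} applied to $\Pi'$ (or directly by $E$-tameness combined with Theorem \ref{cup T_f in intro}), $[\theta]_{\rm TF}=\cone^\circ(\ind\theta)=\iota(C^\circ(U))$ for the corresponding $U\in\twopresilt\Pi'$. The combinatorial bookkeeping — that the $\sigma^*_w$-orbits of $\sum_{j\in J}\cone^\circ\{[P(j)]-[P(n)]\}$ over $J\subset\{1,\dots,n-1\}$ and $w\in W'/W'_J$ reproduce exactly $\iota(C^\circ(U))$ over all $U\in\twopresilt\Pi'$ — is then the known chamber-versus-Coxeter-fan dictionary for type $\mathbb{A}_{n-1}$, transported through $\iota$.

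The main obstacle I expect is establishing rigorously that the canonical decomposition of an element $\theta\in H$ stays inside $H$ and is carried by the finite-type subalgebra $\Pi'$ — i.e.\ that the "wild direction" $h$ never appears as a summand of a canonical decomposition of a lattice point on $H$, and conversely that no summand pokes out of $H$. This requires a careful analysis of $E$-invariants $E(\theta_i,\theta_j)$ for prospective summands, using that the symmetrized Euler/Ringel form of $\Pi$ is the (affine) Cartan form whose radical is exactly $\R h$; pairings within $H$ are then controlled by the positive-semidefinite finite-type form, forcing $E(\theta_i,\theta_i)=0$ and reducing to the $E$-tame case of Theorem \ref{converse in intro}. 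A secondary technical point is checking that $\iota$ is compatible with both semistable torsion pairs and with canonical decompositions — this should follow from functoriality of the constructions in Theorem \ref{cup T_f in intro} under the relevant restriction/reduction functor relating $\Pi$ and $\Pi'$, but the precise form of that functor (a silting reduction in the sense of the $\reduc$ operator, or an idempotent truncation) must be pinned down. Once these compatibilities are in place, the two decompositions assemble from Theorems \ref{decomposition and T in intro}, \ref{converse in intro}, and \ref{cup T_f in intro} together with the purely Coxeter-theoretic orbit descriptions, and the two right-hand sides match by the transitivity of the $W$- and $W'$-actions on chambers of each combinatorial type.
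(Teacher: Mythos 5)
Your outline is structurally similar to the paper's on the easy half (the decomposition of $H^+\sqcup H^-$ via the Tits cone and the $W$-orbit count on chambers of each type, with rigid lattice points), but on the hard half (the decomposition of $H$) it misses the two technical pillars on which the actual proof rests, and the substitute you propose does not obviously close the gap.

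First, you never address the fact that $\Pi$ is infinite dimensional, so none of the general machinery of the paper — semistable torsion pairs, morphism torsion pairs, $E$-invariants, canonical decompositions, Theorem \ref{describe TF for E-tame} — applies directly to $\Pi$. The paper's proof passes through the finite-dimensional string algebra $A=\Pi/\langle x,y,z\rangle$ (and $A'=\Pi'/\langle x,y,z\rangle$) using Kimura's reduction theorem (Proposition \ref{infinite reduction}), which identifies torsion classes in $\fl\Pi$ with torsion classes in $\mod A$ and hence TF equivalence on $K_0(\proj\Pi)_\R$ with TF equivalence on $K_0(\proj A)_\R$. Without something like this step, your appeal to Theorem \ref{converse in intro} and Theorem \ref{cup T_f in intro} for ``$\Pi$ restricted to $H$'' is not grounded. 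The $E$-tameness that powers the argument is that of the \emph{string} algebra $A$ (which is representation-tame), not of $\Pi$ itself.

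Second, and more seriously, the crux of the proof of the $H$-decomposition is showing that $\theta,\theta'\in H$ are TF equivalent if and only if their projections $\pi(\theta),\pi(\theta')$ are (Lemma \ref{TF for Pi and Pi'}(c)). The nontrivial ``if'' direction is what forces the whole decomposition to descend from $\Pi'$. You identify the right qualitative goal — that nothing on $H$ ``pokes out'' — and you correctly flag the main obstacle, but your proposed resolution (``a careful analysis of $E$-invariants using the affine Cartan form, whose radical is $\R h$'') is not developed and is not what makes the paper's argument go through. The paper instead uses the classification of bricks over the string algebra $A$: every indecomposable non-rigid element of $K_0(\proj A)$ lying on $H$ comes from a brick band (Proposition \ref{string brick band}), and the decisive combinatorial fact is Proposition \ref{1111}, that such brick bands have dimension vector $(1,1,\ldots,1)$. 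It is exactly this ``all coordinates equal to $1$'' feature — not a Cartan form computation — that guarantees the witnessing submodule or factor module kills the vertex $n$ and hence lives in $\mod A'$, so that a failure of TF equivalence over $A$ on $H$ propagates down to a failure over $A'$. A Cartan-form argument alone would not produce a concrete brick with that property, and it is unclear how to make your sketch rigorous without it. (The easier claim $\ind\theta\subset H$ in Lemma \ref{TF for Pi and Pi'}(b) is also proved geometrically from the $H^\pm$ decomposition and $E$-tameness of $A$, not from the Cartan form.) These two omissions — the Kimura reduction and the string-algebra/brick-band analysis culminating in $\dimv=(1,\ldots,1)$ — constitute a genuine gap.
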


It will be interesting to understand a connection between our Theorem \ref{type A tilde} and a realization of crystal due to Baumann-Kamnitzer-Tingley \cite{BKT,KS}.

\subsection{Convention}
In this paper, $k$ is an algebraically closed field,
and $A$ is a finite dimensional $k$-algebra.
We write $\mod A$ for the category of finitely generated right $A$-modules,
and $\proj A$ for the category of finitely generated projective right $A$-modules.
The bounded derived category of $\mod A$ is denoted by $\DDD(A):=\DDD^{\bo}(\mod A)$, and 
the homotopy category of the bounded complexes over $\proj A$ 
is denoted by $\KKK^{\bo}(\proj A)$.

Unless otherwise stated, any subcategory is assumed to be a full subcategory.

For any subcategory $\CC \subset \mod A$,
we set
\begin{align*}
\CC^\perp&:=\{ X \in \mod A \mid \Hom_A(\CC,X)=0 \}, \\
{^\perp \CC}&:=\{ X \in \mod A \mid \Hom_A(X,\CC)=0 \}, \\
\add \CC&:=\{ X \in \mod A \mid \text{$X$ is a direct summand of $C^{\oplus m}$ for some $C \in \CC$ and $m \ge 1$}\}, \\
\Fac \CC&:=\{ X \in \mod A \mid \text{$X$ is isomorphic to a factor module of some $C \in \add \CC$}\}, \\
\Sub \CC&:=\{ X \in \mod A \mid \text{$X$ is isomorphic to a submodule of some $C \in \add \CC$}\}, \\
\Filt \CC&:=\{ X \in \mod A \mid \text{there exist $0=X_0 \subset X_1 \subset \cdots \subset X_\ell=X$ such that $X_i/X_{i-1} \in \add \CC$} \}.
\end{align*}

\subsection*{Acknowledgments}
The authors thank Laurent Demonet for useful discussions at the first stage of this project.
They also thank Jiarui Fei for informing us of his paper \cite{Fei} on Theorem \ref{cup T_f in intro}.
S.A. was supported by JSPS KAKENHI Grant Numbers JP16J02249, JP19K14500 and JP20J00088. O.I. was supported by JSPS KAKENHI Grant Numbers JP15H05738, JP16H03923 and JP18K03209.

\section{Preliminaries}

\subsection{Torsion pairs and silting theory}

We first recall some terminology on torsion pairs.
Let $\TT,\FF$ be full subcategories of $\mod A$.
We call the pair $(\TT,\FF)$ a \textit{torsion pair} in $\mod A$
if and only if $\FF=\TT^\perp$ and $\TT={^\perp \FF}$.
This is equivalent to that the following two conditions hold:
\begin{itemize}
\item
$\Hom_A(\TT,\FF)=0$;
\item
for any $X \in \mod A$, there exists a short exact sequence 
$0 \to X' \to X \to X'' \to 0$ for some $X' \in \TT$ and $X'' \in \FF$.
\end{itemize}
A subcategory $\TT \subset \mod A$ is called a \textit{torsion class} if
there exists $\FF \subset \mod A$ such that $(\TT,\FF)$ is a torsion pair in $\mod A$.
We can check that $\TT \subset \mod A$ is a torsion class
if and only if $\TT$ is closed under taking factor modules and extensions.
Similarly, we can define \textit{torsion-free classes}.

We write $\tors A$ (resp.~$\torf A$) for the set of torsion classes (resp.~torsion-free) classes in $\mod A$.
$\tors A$ and $\torf A$ are lattices with respect to inclusions,
so we write $\vee$ for the joins in these lattices.

For any subcategory $\CC \in \mod A$, we can check that
$\T(\CC):={^\perp(\CC^\perp)}$ is the smallest torsion class containing $\CC$, and 
$\F(\CC):=({^\perp \CC})^\perp$ is the smallest torsion-free class containing $\CC$.

Here we also recall the definition of wide subcategories.
A subcategory $\WW \subset \mod A$ is called a \textit{wide subcategory} if $\WW$ is 
closed under taking kernels, cokernels and extensions.
We define $\wide A$ as the set of wide subcategories in $\mod A$.
If $\TT \in \tors A$ and $\FF \in \torf A$, then $\TT \cap \FF$ is closed under taking images.
It is not necessarily a wide subcategory, but we will often deal with wide subcategories obtained as the intersections of torsion classes and torsion-free classes in this paper.

Let $X \in \mod A$ and $\CC \subset \mod A$.
Then a homomorphism $f \colon X \to C$ is called a
\textit{left $\CC$-approximation} of $X$ if $C \in \CC$
and $f$ induces a surjection $\Hom_A(C,C') \to \Hom_A(X,C')$ for any $C' \in \CC$.
Dually, \textit{right $\CC$-approximations} of $X$ are also defined.
A full subcategory $\CC \subset \mod A$ is said to be \textit{functorially finite} in $\mod A$
if any $X \in \mod A$ admits a left $\CC$-approximation and a right $\CC$-approximation.

Thus we can consider functorially finite torsion(-free) classes in $\mod A$.
For any torsion pair $(\TT,\FF)$ in $\mod A$,
$\TT$ is functorially finite if and only if $\FF$ is functorially finite \cite{Smalo},
so we call such a torsion pair a \textit{functorially finite torsion pair} in $\mod A$.
We define $\ftors A$ (resp.~$\ftorf A$) as the set of functorially finite
torsion (resp. torsion-free) classes in $\mod A$.

Functorially finite torsion(-free) classes are strongly related to 
silting theory established by \cite{KV}.

In the definition below,
we say that a complex $U \in \KKK^{\bo}(\proj A)$ is \textit{2-term}
if its terms except $-1$st and $0$th ones vanish,
and a full subcategory of a triangulated category is said to be \textit{thick}
if it is closed under taking direct summands.

\begin{definition}
Let $U$ be a 2-term complex in $\KKK^{\bo}(\proj A)$.
\begin{enumerate}[\rm(a)]
\item
A 2-term complex $U$ in $\KKK^{\bo}(\proj A)$ is called 
\textit{presilting} if $\Hom_{\KKK^{\bo}(\proj A)}(U,U[1])=0$. 
We write $\twopresilt A$ for the set of isomorphism classes of 
basic 2-term presilting complexes in $\KKK^{\bo}(\proj A)$.
We set $\indtwopresilt A \subset \twopresilt A$ as the subset of indecomposable 2-term presilting complexes.
\item
A 2-term presilting complex $T$ is called \textit{silting} if 
the smallest thick subcategory containing $T$ is $\KKK^{\bo}(\proj A)$ itself.
We write $\twosilt A$ for the set of isomorphism classes of 
basic 2-term silting complexes in $\KKK^{\bo}(\proj A)$.
\end{enumerate}
\end{definition}

Any 2-term presilting complex is a direct summand of some 2-term silting complex by Bongartz-type Lemma \cite[Proposition 2.16]{Ai}\cite[Theorem 5.4]{DF}. 
Therefore a 2-term presilting complex $U$ is silting if and only if $|U|=|A|$ \cite[Proposition 3.3]{AIR},
where $|\cdot|$ denotes the number of nonisomorphic indecomposable direct summands.

For any 2-term presilting complex $U$,
\cite[Lemma 3.4]{AIR} and \cite{AS} tell us that we have two torsion pairs
$(\overline{\TT}_U,\FF_U)$ and $(\TT_U,\overline{\FF}_U)$ given by
\begin{align}\notag
\overline{\TT}_U&:={^\perp H^{-1}(\nu U)}, & \FF_U&:=\Sub H^{-1}(\nu U), \\ \label{define T_U}
\TT_U&:=\Fac H^0(U), & \overline{\FF}_U&:={H^0(U)}^\perp,
\end{align}
which are all functorially finite.
In general, these functorially finite torsion pairs $(\overline{\TT}_U,\FF_U)$ and $(\TT_U,\overline{\FF}_U)$
do not coincide; they coincide if and only if $U$ is 2-term silting by
\cite[Theorems 2.12, 3.2, Propositions 2.16, 3.6]{AIR}.

Now we can refer to the following important result by Adachi-Iyama-Reiten.

\begin{proposition}\label{AIR}\cite[Theorems 2.7, 3.2]{AIR}
We have bijections
\begin{align*}
\twosilt A \to \ftors A, \quad \twosilt A \to \ftorf A
\end{align*}
given by
\begin{align*}
T \mapsto \overline{\TT}_T=\TT_T, \quad T \mapsto \overline{\FF}_T=\FF_T.
\end{align*}
\end{proposition}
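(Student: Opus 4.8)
The plan is to follow the standard silting-theoretic strategy: first establish that the two assignments are well-defined, then construct inverse maps, and finally verify the bijection on both sides. Since Proposition \ref{AIR} is literally \cite[Theorems 2.7, 3.2]{AIR}, the honest approach is to cite it, but if one were to reconstruct the argument, here is how I would proceed. First I would show that for a $2$-term \emph{silting} complex $T$ the two torsion pairs in \eqref{define T_U} collapse to one, i.e. $\overline{\TT}_T=\TT_T$ and $\overline{\FF}_T=\FF_T$; this is exactly the content cited from \cite[Theorems 2.12, 3.2, Propositions 2.16, 3.6]{AIR} in the paragraph preceding the statement, so I would invoke it. Consequently $T\mapsto\overline{\TT}_T=\TT_T=\Fac H^0(T)$ lands in $\ftors A$ by \eqref{define T_U}, and dually $T\mapsto\overline{\FF}_T=\FF_T=\Sub H^{-1}(\nu T)$ lands in $\ftorf A$. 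Well-definedness on isomorphism classes is clear since isomorphic complexes have isomorphic cohomology.

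Next I would construct the inverse to $T\mapsto\TT_T$. Given a functorially finite torsion class $\TT$ with torsion pair $(\TT,\FF)$, the key step is to produce a $2$-term silting complex $T$ with $\Fac H^0(T)=\TT$. I would use the \emph{Ext-projective} objects: let $P(\TT)$ be the direct sum of one copy of each indecomposable Ext-projective object in $\TT$ (these exist and are finite in number precisely because $\TT$ is functorially finite), take a minimal projective presentation $P_1\xrightarrow{f} P_0\to P(\TT)\to 0$ together with a minimal right $\add P(\TT)$-approximation-type correction term, and form the $2$-term complex $T=(P_1\xrightarrow{f}P_0)$ in degrees $-1,0$. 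The heart of the argument is then: (i) $T$ is presilting, which follows by computing $\Hom_{\KKK^{\bo}(\proj A)}(T,T[1])$ via the presentation and using that $\Hom_A(P(\TT),X)\to \Hom_A(P_1,X)$ behaves well for $X\in\TT$ together with Ext-projectivity; (ii) $T$ is in fact silting, i.e. $|T|=|A|$, which one gets by a counting/Bongartz-completion argument — any presilting complex completes to a silting complex by \cite[Proposition 2.16]{Ai}\cite[Theorem 5.4]{DF}, and the torsion class cannot grow, forcing $|T|=|A|$; and (iii) $H^0(T)=\Cokernel f\cong$ a generator of $\TT$ in the sense that $\Fac H^0(T)=\TT$, which is where minimality of the presentation and the characterization of Ext-projectives enter.

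Finally I would check the two composites are identities. For $T\mapsto\TT_T\mapsto T'$: one shows $T'\cong T$ because the Ext-projective objects of $\Fac H^0(T)$ are exactly $\add H^0(T)$ (using that $T$ is silting, hence $H^0$ and $H^{-1}(\nu-)$ control the torsion pair tightly), and minimal projective presentations are unique. For $\TT\mapsto T\mapsto\TT_T$: this is immediate once (iii) above is established, since $\TT_T=\Fac H^0(T)=\TT$. The torsion-free version is entirely dual, using $H^{-1}(\nu-)$, $\Sub$, and Ext-injectives; alternatively, one transports it across the first bijection using the fact that $(\TT,\FF)$ is a torsion pair and $\FF=\TT^\perp$ is determined by $\TT$. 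I expect the main obstacle to be step (ii)–(iii): proving that the complex built from Ext-projectives is silting (not merely presilting) and recovers $\TT$ on the nose. This requires the precise interplay between functorial finiteness, Bongartz completion, and the cohomological description of the torsion pairs in \eqref{define T_U}, and is exactly the technical core of \cite{AIR}; in the present paper it is legitimate simply to quote it.
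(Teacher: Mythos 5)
The paper gives no proof of Proposition \ref{AIR}: it is stated as a citation to \cite[Theorems 2.7, 3.2]{AIR}, exactly as you observe, so the honest move (which you make) is simply to quote it. Your reconstruction of the AIR argument — collapsing the two torsion pairs for silting $T$ using the equivalences already recorded before the statement, sending $T\mapsto\Fac H^0(T)$, and building the inverse from the Ext-projective generator $P(\TT)$ of a functorially finite torsion class via minimal projective presentations plus Bongartz completion — is the correct route through support $\tau$-tilting modules. One small imprecision: the ``correction term'' added to the $(-1)$-degree of the $2$-term complex is not an $\add P(\TT)$-approximation; in AIR it is the maximal projective summand $eA$ of $A$ satisfying $\Hom_A(eA,P(\TT))=0$, so that $(P(\TT),eA)$ is a support $\tau$-tilting pair with $|P(\TT)|+|eA|=|A|$, and the associated silting complex is $(P_1\oplus eA\to P_0)$. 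This is the mechanism that guarantees step (ii), $|T|=|A|$, without needing a separate Bongartz argument at that point. Otherwise your outline matches the cited proof.
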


Based on this, the first named author of this paper proved the following properties.

\begin{lemma}\label{func fin presilt}
Let $U,V \in \twopresilt A$.
\begin{enumerate}[\rm(a)]
\item\cite[Lemma 3.13]{A} $U \in \add V$ if and only if 
$\TT_U\subseteq\TT_V\subseteq\overline{\TT}_V\subseteq\overline{\TT}_U$. In particular, $U=V$ holds if and only if $(\overline{\TT}_U,\FF_U)=(\overline{\TT}_V,\FF_V)$ and 
$(\TT_U,\overline{\FF}_U)=(\TT_V,\overline{\FF}_V)$.
\item If $U\oplus V\in\twopresilt A$, then $\TT_U\subset\overline{\TT}_V$, $\TT_V\subset\overline{\TT}_U$, $\FF_U\subset\overline{\FF}_V$ and $\FF_V\subset\overline{\FF}_U$.
\end{enumerate}
\end{lemma}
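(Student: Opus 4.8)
The statement to prove is Lemma~\ref{func fin presilt}(b): if $U\oplus V\in\twopresilt A$, then $\TT_U\subset\overline{\TT}_V$, $\TT_V\subset\overline{\TT}_U$, $\FF_U\subset\overline{\FF}_V$ and $\FF_V\subset\overline{\FF}_U$. By the symmetry $U\leftrightarrow V$ it suffices to establish $\TT_U\subset\overline{\TT}_V$ together with one of the torsion-free inclusions, and the remaining two follow by swapping the roles of $U$ and $V$. In fact $\FF_V\subset\overline{\FF}_U$ is the torsion-free counterpart of $\TT_U\subset\overline{\TT}_V$, so the real content is a single inclusion.

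First I would complete $U\oplus V$ to a basic 2-term silting complex: by the Bongartz-type Lemma cited just above the statement, there is $W$ with $T:=U\oplus V\oplus W\in\twosilt A$ (here $W$ may be zero). Since $U\in\add T$, Lemma~\ref{func fin presilt}(a) applied to the pair $(U,T)$ gives $\TT_U\subseteq\TT_T\subseteq\overline{\TT}_T\subseteq\overline{\TT}_U$, and likewise applied to $(V,T)$ gives $\TT_V\subseteq\TT_T\subseteq\overline{\TT}_T\subseteq\overline{\TT}_V$, together with the dual chains $\overline{\FF}_U\supseteq\overline{\FF}_T\supseteq\FF_T\supseteq\FF_U$ and similarly for $V$; recall that for the silting complex $T$ one has $\overline{\TT}_T=\TT_T$ and $\overline{\FF}_T=\FF_T$ by Proposition~\ref{AIR}. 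Chaining these, $\TT_U\subseteq\TT_T=\overline{\TT}_T\subseteq\overline{\TT}_V$, which is exactly the first desired inclusion. The same argument with $U$ and $V$ interchanged yields $\TT_V\subseteq\overline{\TT}_U$. For the torsion-free classes, $\FF_V\subseteq\FF_T=\overline{\FF}_T\subseteq\overline{\FF}_U$, and symmetrically $\FF_U\subseteq\overline{\FF}_V$. (If one wants the inclusions to be strict, note $U\oplus V$ is presilting so $U\notin\add V$, whence by Lemma~\ref{func fin presilt}(a) the chain $\TT_U\subseteq\TT_V\subseteq\overline{\TT}_V\subseteq\overline{\TT}_U$ cannot collapse; but the statement as written only asserts $\subseteq$, so this is not needed.)

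The only subtlety, and what I would treat as the main point to get right, is the direction of the inclusions coming from Lemma~\ref{func fin presilt}(a): it says $U\in\add V$ \emph{iff} $\TT_U\subseteq\TT_V\subseteq\overline{\TT}_V\subseteq\overline{\TT}_U$, so passing to the silting completion $T\supseteq_{\add}U$ forces $\overline{\TT}_T\subseteq\overline{\TT}_U$ but $\TT_U\subseteq\TT_T$ — i.e.\ the \emph{larger} silting object sandwiches more tightly. One must feed $U$ (not $T$) into the ``smaller'' slot and $T$ into the ``larger'' slot; doing it the other way reverses everything and gives nonsense. Once that bookkeeping is fixed, the proof is the three-line chaining above, with no further computation. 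I expect no genuine obstacle: the Bongartz completion plus part (a) plus the coincidence of the two torsion pairs for silting complexes do all the work.
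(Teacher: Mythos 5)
Your proof is correct, but it takes a slightly longer route than the paper's. The paper's proof is a one-liner: apply part (a) with $U\oplus V$ (which is already in $\twopresilt A$ by hypothesis) playing the role of the larger complex, so that $U\in\add(U\oplus V)$ gives $\TT_U\subseteq\TT_{U\oplus V}\subseteq\overline{\TT}_{U\oplus V}$, and $V\in\add(U\oplus V)$ gives $\overline{\TT}_{U\oplus V}\subseteq\overline{\TT}_V$; concatenating yields $\TT_U\subseteq\overline{\TT}_V$ directly. Your detour through a Bongartz completion $T=U\oplus V\oplus W\in\twosilt A$ and the equality $\TT_T=\overline{\TT}_T$ from Proposition~\ref{AIR} is valid, but it is not needed: the middle inclusion $\TT_{U\oplus V}\subseteq\overline{\TT}_{U\oplus V}$ already follows from part (a) (or from the presilting property of $U\oplus V$ via Proposition~\ref{P Q[1]}), so one never has to leave the presilting world. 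Your argument buys nothing extra here, though it does make the role of the silting completion explicit, which is conceptually transparent. One small remark: your parenthetical about strict inclusions is a non-issue, since in this paper $\subset$ is globally redefined to mean $\subseteq$.
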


\begin{proof}
(b) We only prove the first assertion. By (a), we have $\TT_U\subseteq\TT_{U\oplus V}\subseteq\overline{\TT}_{U\oplus V}\subseteq\overline{\TT}_V$.
\end{proof}

\subsection{Semistable torsion pairs}

As we have seen in the previous section, 
functorially finite torsion pairs are important examples of torsion pairs.
We would like to extend the results on functorially torsion pairs to 
some wider class of torsion pairs.
In this paper, we mainly focus on \textit{semistable torsion pairs} introduced by \cite[Section 3.1]{BKT}, which are associated to the elements of 
the (real) Grothendieck group of $\proj A$.

For an exact category $\CC$,
the Grothendieck group of $\CC$ is denoted by $K_0(\CC)$ as usual.
If $\CC=\proj A$, the isoclasses of indecomposable projective modules $P(1),P(2),\ldots,P(n)$
give a canonical $\Z$-basis of $K_0(\proj A)$.
We set $S(i)$ as the simple top of $P(i)$ for each $i$.
Then the isoclasses of simple modules $S(1),S(2),\ldots,S(n)$
form a canonical $\Z$-basis of $K_0(\mod A)$.
In this paper, the \textit{Euler form} is the $\Z$-bilinear form
\begin{align*}\langle -,- \rangle \colon K_0(\proj A) \times K_0(\mod A) \to \Z\end{align*}
satisfying $\langle P(i),S(j) \rangle=\delta_{i,j}$ for any $i,j \in \{1,2,\ldots,n\}$.
Each element $\theta \in K_0(\proj A)$ defines
$\theta:=\langle \theta,- \rangle \colon K_0(\mod A) \to \Z$,
which means that we can regard $K_0(\proj A)$ as the dual of $K_0(\mod A)$.

Grothendieck groups can be defined for triangulated categories in a similar way.
We can check that $K_0(\proj A) \simeq K_0(\KKK^{\bo}(\proj A))$ and 
$K_0(\mod A) \simeq K_0(\DDD^{\bo}(\mod A))$.
The Euler form satisfies
\begin{align*}
\langle P,X \rangle=\sum_{\ell \in \Z}(-1)^\ell
\dim_k\Hom_{\DDD^{\bo}(\mod A)}(P,X[\ell])
\end{align*}
for any $P \in \KKK^{\bo}(\proj A)$ and $X \in \DDD^{\bo}(\mod A)$.

For any Grothendieck group $K_0(\CC)$,
we call $K_0(\CC)_\R:=K_0(\CC) \otimes_\Z \R$ 
the \textit{real Grothendieck group} of $\CC$.
Then $K_0(\proj A)_\R$ and $K_0(\mod A)_\R$ are identified with the Euclidean space $\R^n$.
Thus we can consider $K_0(\proj A)_\R$ and $K_0(\mod A)_\R$ as topological spaces.
Clearly, for each $\theta \in K_0(\proj A)_\R$, 
we have an $\R$-linear form 
$\theta:=\langle \theta,- \rangle \colon K_0(\mod A)_\R \to \R$.

Now we can recall the definition of semistable torsion pairs.

\begin{definition}\label{define T_theta}
Let $\theta\in K_0(\proj A)_\R$.
\begin{enumerate}[\rm(a)]
\item\cite[Section 3.1]{BKT}
We define two \textit{semistable torsion pairs} $(\overline{\TT}_\theta,\FF_\theta)$ and $(\TT_\theta,\overline{\FF}_\theta)$ by
\begin{align*}
\TT_\theta&:=\{X\in\mod A\mid\theta(X')>0\ \text{for all factor modules $X'\neq 0$ of $X$}\},\\
\overline{\TT}_\theta&:=\{X\in\mod A\mid\theta(X')\ge0\ \text{for all factor modules $X'$ of $X$}\},\\
\FF_\theta&:=\{X\in\mod A\mid\theta(X')<0\ \text{for all submodules $X'\neq 0$ of $X$}\},\\
\overline{\FF}_\theta&:=\{X\in\mod A\mid\theta(X')\le0\ \text{for all submodules $X'$ of $X$}\}.
\end{align*}
\item\cite[Subsection 1.1]{K}
We set $\WW_\theta:=\overline{\TT}_\theta \cap \overline{\FF}_\theta$ and call it the \textit{$\theta$-semistable subcategory}.
\end{enumerate}
\end{definition}

We remark that $\WW_\theta$ is a wide subcategory of $\mod A$ (see \cite{HR}); hence, it is an abelian length category.
The simple objects of $\WW_\theta$ are called \textit{$\theta$-stable} modules.
Thus $X$ is $\theta$-stable if and only if $\theta(X)=0$ and $\theta(X')>0$ for all factor modules $X' \ne 0$ of $X$.
If $X$ is a simple object of $\WW_\theta$, then $X$ is a \textit{brick};
that is, $\End_A(X) \simeq k$.
$\WW_\theta$ satisfies the Jordan-H\"older property, so for each $X \in \WW_\theta$, the \textit{composition factors} of $X$ in $\WW_\theta$ are well-defined.

We frequently use the following easy fact.

\begin{lemma}\label{additivity}
For each $\theta,\eta\in K_0(\proj A)_\R$ and $\epsilon>0$, we have
\[\overline{\TT}_{\epsilon\theta}=\overline{\TT}_\theta,\quad \TT_{\epsilon\theta}=\TT_\theta,\quad \overline{\FF}_{\epsilon\theta}=\overline{\FF}_\theta,\quad \FF_{\epsilon\theta}=\FF_\theta,\]
\[\overline{\TT}_{\eta}\cap\overline{\TT}_{\theta}\subseteq\overline{\TT}_{\eta+\theta},\quad \overline{\TT}_{\eta}\cap\TT_{\theta}\subset\TT_{\eta+\theta},\quad \overline{\FF}_{\eta}\cap\overline{\FF}_{\theta}\subseteq\overline{\FF}_{\eta+\theta},\quad \overline{\FF}_{\eta}\cap\FF_{\theta}\subset\FF_{\eta+\theta}.\]
\end{lemma}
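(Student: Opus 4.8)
The plan is to verify each of the eight claimed inclusions directly from the defining inequalities in Definition \ref{define T_theta}, which is entirely elementary once one unwinds the quantifiers.

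First I would dispose of the scaling identities. For $\epsilon>0$ and any module $X$, a factor module $X'$ satisfies $(\epsilon\theta)(X')=\epsilon\cdot\theta(X')$, and since $\epsilon>0$ this has the same sign (and the same status of being $\ge 0$, $>0$, $=0$) as $\theta(X')$. Running this through the four conditions defining $\TT_\theta$, $\overline{\TT}_\theta$, $\overline{\FF}_\theta$, $\FF_\theta$ — where the quantifier ranges over nonzero factor modules, all factor modules, all submodules, nonzero submodules respectively — gives $\overline{\TT}_{\epsilon\theta}=\overline{\TT}_\theta$ and the other three equalities at once.

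Next I would handle the four inclusion statements for sums. Take $X\in\overline{\TT}_\eta\cap\overline{\TT}_\theta$; for every factor module $X'$ of $X$ we have $\eta(X')\ge 0$ and $\theta(X')\ge 0$, hence $(\eta+\theta)(X')=\eta(X')+\theta(X')\ge 0$, so $X\in\overline{\TT}_{\eta+\theta}$. For the strict version, take $X\in\overline{\TT}_\eta\cap\TT_\theta$ and a nonzero factor module $X'$; then $\eta(X')\ge 0$ and $\theta(X')>0$, so the sum is $>0$, giving $X\in\TT_{\eta+\theta}$. The two statements about torsion-free classes are verbatim the same argument with "factor module" replaced by "submodule" and the inequalities reversed: if $X\in\overline{\FF}_\eta\cap\overline{\FF}_\theta$ then every submodule $X'$ has $\eta(X')\le 0$ and $\theta(X')\le 0$, so $(\eta+\theta)(X')\le 0$; and if $X\in\overline{\FF}_\eta\cap\FF_\theta$ then every nonzero submodule $X'$ has $\eta(X')\le 0$, $\theta(X')<0$, so the sum is $<0$.

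There is no real obstacle here — the only thing to be a little careful about is keeping straight which conditions quantify over \emph{all} factor/sub-modules (the closed, non-strict classes $\overline{\TT}_\theta,\overline{\FF}_\theta$, including the zero module, for which the inequality is trivially an equality) versus \emph{nonzero} ones (the open, strict classes $\TT_\theta,\FF_\theta$), so that in the mixed intersections one of the two summands supplies the strict inequality while the other supplies only the weak one, and strict $+$ weak $=$ strict. The additivity of $\langle-,-\rangle$ in the first argument, i.e. $(\eta+\theta)(X')=\eta(X')+\theta(X')$, is immediate from bilinearity of the Euler form. I would simply remark that all claims are "an immediate consequence of the defining inequalities" and include the one- or two-line computation above for the representative cases, leaving the symmetric ones to the reader.
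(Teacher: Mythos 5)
Your proof is correct and is precisely the direct verification from Definition \ref{define T_theta} that the lemma invites; the paper itself gives no proof (it merely labels the statement an ``easy fact''), so your argument fills the gap in exactly the expected way. The only remark worth making is that you correctly isolated the one place where care is needed — the mixed cases $\overline{\TT}_\eta\cap\TT_\theta\subset\TT_{\eta+\theta}$ and $\overline{\FF}_\eta\cap\FF_\theta\subset\FF_{\eta+\theta}$, where one factor contributes a weak inequality and the other a strict one, and their sum remains strict — and you handled the quantifier ranges (all versus nonzero quotients/submodules) correctly.
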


For $\theta,\eta\in K_0(\proj A)_\R$, we write
\begin{align*}\theta\ge\eta\end{align*}
if $\theta-\eta\in \sum_{i=1}^n \R_{\ge 0}[P(i)]$.
The following is clear from Lemma \ref{additivity}.

\begin{lemma}
If $\theta\ge\eta$, then
\begin{align*}\overline{\TT}_\theta\supseteq\overline{\TT}_\eta,\quad\TT_\theta\supseteq\TT_\eta,\quad\overline{\FF}_\theta\subseteq\overline{\FF}_\eta,\quad\FF_\theta\subseteq\FF_\eta.\end{align*}
\end{lemma}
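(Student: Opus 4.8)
The plan is to reduce all four inclusions to Lemma~\ref{additivity} by exploiting that adding a nonnegative combination of the classes $[P(i)]$ can only increase the value of the linear form $\theta$ on a module. Write $\zeta:=\theta-\eta$, so that by the definition of $\geq$ we have $\zeta=\sum_{i=1}^n c_i[P(i)]$ with all $c_i\geq 0$. The first step is the elementary observation that $\zeta$ takes nonnegative values on every module: for any $X\in\mod A$,
\[
\zeta(X)=\langle\zeta,[X]\rangle=\sum_{i=1}^n c_i\dim_k\Hom_A(P(i),X)\geq 0,
\]
since each $P(i)$ is projective. Hence $\zeta(X')\geq 0$ for every factor module and every submodule $X'$ of every $X$, so by Definition~\ref{define T_theta} we get $\overline{\TT}_\zeta=\mod A$ and, using $(-\zeta)(X')=-\zeta(X')\leq 0$, also $\overline{\FF}_{-\zeta}=\mod A$.

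Second, I feed these two equalities into the four inclusions in the second line of Lemma~\ref{additivity}. For the torsion-class statements, apply $\overline{\TT}_\eta\cap\overline{\TT}_\zeta\subseteq\overline{\TT}_{\eta+\zeta}$ and $\overline{\TT}_\zeta\cap\TT_\eta\subseteq\TT_{\zeta+\eta}$; since $\eta+\zeta=\theta$ and $\overline{\TT}_\zeta=\mod A$, these collapse to $\overline{\TT}_\eta\subseteq\overline{\TT}_\theta$ and $\TT_\eta\subseteq\TT_\theta$. For the torsion-free-class statements, apply the same two inclusions of Lemma~\ref{additivity} with the pair $(-\zeta,\theta)$ in place of $(\eta,\theta)$, namely $\overline{\FF}_{-\zeta}\cap\overline{\FF}_\theta\subseteq\overline{\FF}_{-\zeta+\theta}$ and $\overline{\FF}_{-\zeta}\cap\FF_\theta\subseteq\FF_{-\zeta+\theta}$; since $-\zeta+\theta=\eta$ and $\overline{\FF}_{-\zeta}=\mod A$, they collapse to $\overline{\FF}_\theta\subseteq\overline{\FF}_\eta$ and $\FF_\theta\subseteq\FF_\eta$.

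There is essentially no obstacle here; the only points requiring care are the sign bookkeeping (one adds $\zeta$ on the torsion side but $-\zeta$ on the torsion-free side) and the positivity $\dim_k\Hom_A(P(i),X)\geq 0$. Alternatively, one can bypass Lemma~\ref{additivity} and argue directly from Definition~\ref{define T_theta}: for $X\in\overline{\TT}_\eta$ and any factor module $X'$ of $X$ one has $\theta(X')=\eta(X')+\zeta(X')\geq\eta(X')\geq 0$, so $X\in\overline{\TT}_\theta$, and the remaining three inclusions are verified identically with \emph{factor} replaced by \emph{sub} and/or with the inequalities made strict where needed. Routing through Lemma~\ref{additivity} simply keeps each case to a single line.
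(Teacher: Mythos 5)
Your proof is correct and is precisely the argument the paper leaves implicit when it says the lemma is ``clear from Lemma~\ref{additivity}'': you observe that $\zeta=\theta-\eta$ is a nonnegative combination of the $[P(i)]$, hence $\zeta(X)\ge 0$ for all $X$, giving $\overline{\TT}_\zeta=\mod A$ and $\overline{\FF}_{-\zeta}=\mod A$, and then the four inclusions drop out of the additivity lemma (or, equally, directly from Definition~\ref{define T_theta}, as you note).
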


We say that a continuous map $\theta(t) \colon [0,1]\to K_0(\proj A)_\R$ is \textit{decreasing} if $0\notin\theta([0,1])$ and each $t,t'\in[0,1]$ with $t\le t'$ satisfy $\theta(t)\ge\theta(t')$.
By using this, we have the following canonical filtration of each $A$-module.

\begin{definition-proposition}\label{HN filtration}
Let $\theta\in K_0(\proj A)_\R$ and $\theta(t) \colon [0,1]\to K_0(\proj A)_\R$ be a decreasing map with $\theta(0)=\theta$ and $\theta(1)\in\sum_{i=1}^n \R_{<0}[P(i)]$. For each $X\in\overline{\TT}_\theta$, there exists $0\le t_1<t_2<\cdots<t_\ell\le 1$ and a filtration (called \textit{Harder-Narasimhan filtration} of $X$)
\begin{align*}X=X_0\supset X_1\supset\cdots\supset X_\ell=0\end{align*}
such that $X_{i-1}/X_{i}\in\WW_{\theta(t_i)}$ for each $1\le i\le\ell$.
\end{definition-proposition}

\begin{proof}
Assume $X\neq0$ and use the induction on $\dim_kX$. Define a function $f \colon [0,1]\to\R$ by
\begin{align*}f(t):=\min\{\theta(t)(Y)\mid\text{$Y$ is a non-zero factor module of $X$}\}.\end{align*}
Then $f$ is a continuous decreasing function. Since $0\neq X\in\overline{\TT}_\theta$, we have $f(0)\ge0$ and $f(1)<0$. Take $t_1\in[0,1]$ satisfying $f(t_1)=0$. Then there exists a submodule $X_1$ of $X$ such that $\theta(t_1)(X/X_1)=0$ and $X_1\neq X$. Then $X/X_1\in\WW_{\theta(t_1)}$ holds. The induction hypothesis shows that there exists $t_1<t_2<\cdots<t_\ell\le 1$ and $X_1\supset X_2\supset\cdots\supset X_\ell=0$ such that $X_{i-1}/X_{i}\in\WW_{\theta(t_i)}$ for each $2\le i\le\ell$.
Thus the assertion follows.
\end{proof}

Notice that, for given elements $\theta,\eta\in K_0(\proj A)_\R$ and $\eta\in \sum_{i=1}^n \R_{<0}[P(i)]$, Rudakov's Harder-Narashimhan filtration for $(\theta,\eta)$ \cite[Proposition 3.4]{Rudakov} coincides with the one given in Definition-Proposition \ref{HN filtration} for $\theta(t):=(1-t)\theta+t\eta$.

For a subset $I\subset K_0(\proj A)_\R$, let
\begin{align*}
\vecFilt_{\theta\in I} \limits \WW_\theta:=\bigcup_{
\ell\ge0,\ \theta_1<\cdots<\theta_\ell\ {\rm in}\ I
}\WW_{\theta_\ell}*\cdots*\WW_{\theta_2}*\WW_{\theta_1}.
\end{align*}
We immediately obtain the following (cf.\ \cite[Lemma 5.2]{T}).

\begin{proposition}\label{W filtration}
Let $\theta\in K_0(\proj A)_\R$.
\begin{enumerate}[\rm(a)]
\item Let $\theta(t) \colon [0,1]\to K_0(\proj A)_\R$ be a decreasing map with $\theta(0)=\theta$ and $\theta(1)\in \sum_{i=1}^n \R_{<0}[P(i)]$. Then
\begin{align*}\overline{\TT}_\theta=\vecFilt_{\eta\in\theta([0,1])} \limits \WW_{\eta}.\end{align*}
\item For $\theta\in K_0(\proj A)_\Q$, let $K_0(\proj A)_{\Q}^{\le\theta}:=\{\eta\in K_0(\proj A)_\Q\mid \eta\le\theta\}$. Then
\begin{align*}\overline{\TT}_\theta=\vecFilt_{\eta\in K_0(\proj A)_{\Q}^{\le\theta}} \limits \WW_\eta.
\end{align*}
\end{enumerate}
\end{proposition}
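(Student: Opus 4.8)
The plan is to derive both parts from Definition-Proposition \ref{HN filtration} together with Lemma \ref{additivity}, showing the two inclusions separately in each case.

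For part (a), the inclusion $\overline{\TT}_\theta\subseteq\vecFilt_{\eta\in\theta([0,1])}\WW_\eta$ is precisely the content of the Harder-Narasimhan filtration: given $X\in\overline{\TT}_\theta$, Definition-Proposition \ref{HN filtration} produces $0\le t_1<\cdots<t_\ell\le1$ and a filtration $X=X_0\supset X_1\supset\cdots\supset X_\ell=0$ with $X_{i-1}/X_i\in\WW_{\theta(t_i)}$. Since the map $\theta(t)$ is decreasing, $t_1<\cdots<t_\ell$ gives $\theta(t_1)\ge\cdots\ge\theta(t_\ell)$; after discarding repetitions one may arrange a strictly decreasing chain in $\theta([0,1])$, so $X$ lies in $\WW_{\theta(t_\ell)}*\cdots*\WW_{\theta(t_1)}$, which is one of the summands of $\vecFilt$. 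Conversely, for $\vecFilt_{\eta\in\theta([0,1])}\WW_\eta\subseteq\overline{\TT}_\theta$, note first that each $\WW_\eta=\overline{\TT}_\eta\cap\overline{\FF}_\eta\subseteq\overline{\TT}_\eta$, and for $\eta=\theta(t)\in\theta([0,1])$ we have $\theta\ge\theta(t)$ since the map is decreasing, whence $\overline{\TT}_\theta\supseteq\overline{\TT}_{\theta(t)}\supseteq\WW_{\theta(t)}$ by the Lemma preceding Definition-Proposition \ref{HN filtration}. Then one uses that $\overline{\TT}_\theta$ is a torsion class, hence closed under extensions, so it is closed under the operation $*$; this gives $\WW_{\theta_\ell}*\cdots*\WW_{\theta_1}\subseteq\overline{\TT}_\theta$ for any chain in $\theta([0,1])$, and the claim follows by taking the union.

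For part (b), the inclusion $\vecFilt_{\eta\in K_0(\proj A)_\Q^{\le\theta}}\WW_\eta\subseteq\overline{\TT}_\theta$ is the same argument: each such $\eta$ satisfies $\eta\le\theta$, so $\WW_\eta\subseteq\overline{\TT}_\eta\subseteq\overline{\TT}_\theta$, and closure under $*$ finishes it. For the reverse inclusion, the natural move is to reduce to part (a) by choosing a \emph{rational} decreasing path. Concretely, given $\theta\in K_0(\proj A)_\Q$, pick $\eta_0\in\sum_i\Q_{<0}[P(i)]$ and consider the straight-line path $\theta(t)=(1-t)\theta+t\eta_0$; this is decreasing, and every value $\theta(t)$ for rational $t$ lies in $K_0(\proj A)_\Q^{\le\theta}$. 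Applying part (a) realizes every $X\in\overline{\TT}_\theta$ as an iterated extension of objects in $\WW_{\theta(t_i)}$; the only subtlety is that the HN breakpoints $t_i$ need not be rational. To fix this, one observes that $\WW_{\theta(t_i)}$ depends on $\theta(t_i)$ only through the open/closed halfspace data $\theta(t_i)(Y)\gtrless 0$ over the finitely many simple composition factors, so one can perturb each $t_i$ slightly to a nearby rational $t_i'$ with $\theta(t_i')\le\theta$ keeping the relevant inequalities; alternatively, and more cleanly, rescale so that the $\theta_i$ themselves may be taken rational (multiplying a stability function by a positive scalar does not change the wide subcategory, by Lemma \ref{additivity}). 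Either way each wide subcategory appearing is of the form $\WW_\eta$ with $\eta\in K_0(\proj A)_\Q^{\le\theta}$.

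The main obstacle is this rationality issue in part (b): the Harder-Narasimhan breakpoints produced by Definition-Proposition \ref{HN filtration} are a priori real numbers, and one must argue that the finitely many wide subcategories $\WW_{\theta(t_i)}$ occurring in the filtration of a given module can be replaced by $\WW_\eta$ for suitable rational $\eta\le\theta$. This is where the finiteness of the module ($\dim_k X<\infty$, finitely many composition factors) is used: stability of a length category object is governed by finitely many linear inequalities, so the stratification of the segment by the type of $\WW_{\theta(t)}$ is a finite union of intervals, and rational points are dense in each. Everything else is a routine combination of "$\WW_\eta\subseteq\overline{\TT}_\eta$", monotonicity of $\overline{\TT}_\bullet$ in the order $\ge$, and closure of torsion classes under extensions.
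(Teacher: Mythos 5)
Your proof of part (a) and of the inclusion $\vecFilt\subseteq\overline{\TT}_\theta$ in both parts is correct and matches the paper. For the forward inclusion in (b) you correctly identify the key issue — rationality of the Harder--Narasimhan breakpoints — and you correctly choose the straight-line path $\theta(t)=(1-t)\theta+t\eta_0$ with $\eta_0\in\sum_i\Q_{<0}[P(i)]$, which is exactly the paper's reduction. However, both of your attempted resolutions of the rationality issue are flawed. The perturbation argument does not work as stated: membership $X_{i-1}/X_i\in\WW_{\theta(t_i)}$ is not just ``open/closed halfspace data'' but includes the \emph{equality} $\theta(t_i)(X_{i-1}/X_i)=0$, and a small perturbation of $t_i$ generically destroys that equality. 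Your ``cleaner'' rescaling alternative is simply wrong: multiplying by a positive scalar preserves $\WW_\eta$ but cannot turn an irrational vector $\theta(t_i)\in K_0(\proj A)_\R$ into a rational one, so it does not place $\theta(t_i)$ in $K_0(\proj A)_\Q^{\le\theta}$.

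The correct (and simpler) observation, which is what the paper uses, is that no repair is needed: in the proof of Definition-Proposition~\ref{HN filtration} the breakpoint $t_1$ is determined by the vanishing of $f(t)=\min_Y\theta(t)(Y)$, and each $\theta(t)(Y)=(1-t)\theta(Y)+t\eta_0(Y)$ is an affine function of $t$ with \emph{rational} coefficients (because $\theta,\eta_0\in K_0(\proj A)_\Q$ and $Y$ has an integral dimension vector). Hence $t_1$ is either the unique rational root of such an affine function or lies in a nondegenerate interval of solutions, so it can always be chosen in $\Q$; by induction all $t_i$ can be chosen rational, and then $\theta(t_i)$ is a rational vector with $\theta(t_i)\le\theta$. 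You should replace both your attempted fixes with this single observation, which is where the finite-dimensionality of $X$ genuinely enters.
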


\begin{proof}
(a) The assertion is clear from Definition-Proposition \ref{HN filtration}.

(b) Fix $\eta\in \sum_{i=1}^n \Q_{<0}[P(i)]$ and let $\theta(t):=(1-t)\theta+t\eta$. Then the assertion follows from (a). Notice that $t_1$ in the proof of Definition-Proposition \ref{HN filtration} can be taken from $\Q$ since it is a solution of linear equations with rational coefficients.
\end{proof}

As in \cite[Definition 2.13]{A}, 
we consider the TF equivalence class of $\theta$ defined by
\begin{align*}[\theta]_{\rm TF}:=\{\eta\in K_0(\proj A)_\R\mid \overline{\TT}_\theta=\overline{\TT}_{\eta},\ \overline{\FF}_\theta=\overline{\FF}_{\eta}\}.\end{align*}
Its closure has the following description.

\begin{proposition}{\cite[Lemma 2.16]{A}}\label{closure of TF}
For $\theta\in K_0(\proj A)_\R$, we have
\begin{align*}\overline{[\theta]_{\rm TF}}=\{\eta\in K_0(\proj A)_\R\mid\overline{\TT}_\theta\subseteq\overline{\TT}_\eta,\ \overline{\FF}_\theta\subseteq\overline{\FF}_\eta\}.\end{align*}
In particular, $\overline{[\theta]_{\rm TF}}$ is a disjoint union of some TF equivalence classes.
\end{proposition}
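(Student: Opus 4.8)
Write $Z_\theta$ for the subset of $K_0(\proj A)_\R$ on the right-hand side. Since $[\theta]_{\rm TF}\subseteq Z_\theta$ is immediate from the definition of $[\theta]_{\rm TF}$, the first half, $\overline{[\theta]_{\rm TF}}\subseteq Z_\theta$, will follow once $Z_\theta$ is shown to be closed. The plan for this is to observe that for a fixed $X\in\mod A$ the factor modules of $X$ realise only finitely many classes in $K_0(\mod A)$ (their dimension vectors are bounded above by $\dimv X$), and $X\in\overline{\TT}_\eta$ amounts to finitely many inequalities $\langle\eta,\dimv X'\rangle\ge0$; hence $\{\eta\mid X\in\overline{\TT}_\eta\}$ is a closed polyhedral convex cone, and similarly for $\overline{\FF}$. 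Intersecting over $X\in\overline{\TT}_\theta$, resp.\ $X\in\overline{\FF}_\theta$, exhibits $Z_\theta$ as an intersection of two closed convex cones — convexity also using the inclusions $\overline{\TT}_{\eta_1}\cap\overline{\TT}_{\eta_2}\subseteq\overline{\TT}_{\eta_1+\eta_2}$ of Lemma \ref{additivity} — so $Z_\theta$ is a closed convex cone containing $\theta$, whence $\overline{[\theta]_{\rm TF}}\subseteq Z_\theta$.

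The real content is the reverse inclusion $Z_\theta\subseteq\overline{[\theta]_{\rm TF}}$: every $\eta\in Z_\theta$ must be approximated by elements of $[\theta]_{\rm TF}$. Since $[\theta]_{\rm TF}$ is a relatively open cone (which one may take from \cite{A} or establish separately), it suffices to prove $\operatorname{relint}(Z_\theta)\subseteq[\theta]_{\rm TF}$, because then $Z_\theta=\overline{\operatorname{relint}(Z_\theta)}\subseteq\overline{[\theta]_{\rm TF}}$. So I would take $\eta\in\operatorname{relint}(Z_\theta)$; the inclusions $\overline{\TT}_\theta\subseteq\overline{\TT}_\eta$ and $\overline{\FF}_\theta\subseteq\overline{\FF}_\eta$ hold since $\eta\in Z_\theta$, and the task is the reverse inclusions. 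For $\overline{\TT}_\eta\subseteq\overline{\TT}_\theta$, suppose $X\in\overline{\TT}_\eta$; after replacing $X$ by a suitable factor module one reduces to the case in which the inclusion would fail, namely $\theta(X)<0$. Then I would take the Harder--Narasimhan filtration of $X$ along a decreasing path starting at $\eta$ (Definition-Proposition \ref{HN filtration}) and single out a subquotient $S$ lying in some $\WW_{\eta(t)}$ with $\theta(S)<0$; the relative-interior hypothesis supplies elements of $Z_\theta$ on the side of $\eta$ opposite to $\theta$, and these, together with the Jordan--H\"older structure of $\WW_{\eta(t)}$ (and the remark that a factor module $X'$ of some $Y\in\overline{\TT}_\theta$ with $\theta(X')=0$ automatically lies in $\WW_\theta$), should rule out such an $S$. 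The inclusion $\overline{\FF}_\eta\subseteq\overline{\FF}_\theta$ is dual.

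I expect this last step to be the main obstacle. Lemma \ref{additivity} only ever \emph{enlarges} semistable torsion classes, so it is useless for proving an inclusion like $\overline{\TT}_\eta\subseteq\overline{\TT}_\theta$; one genuinely has to understand how $\overline{\TT}_\mu$ degenerates as $\mu\to\eta$. The tempting shortcut of perturbing $\eta$ along the segment towards $\theta$ does \emph{not} work: the limit of $\overline{\TT}_\mu$ along that segment can be a torsion class strictly between $\overline{\TT}_\theta$ and $\overline{\TT}_\eta$ (the difference being accounted for by $\theta$-unstable but $\eta$-semistable modules), so a correct argument must exploit that $\eta$ is interior to the cone $Z_\theta$, i.e.\ that it has room on all sides. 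Granting the hard inclusion, the final assertion is formal: whether $\eta\in Z_\theta$ depends on $\eta$ only through the pair $(\overline{\TT}_\eta,\overline{\FF}_\eta)$, which is constant along TF equivalence classes, so $\overline{[\theta]_{\rm TF}}=Z_\theta$ is a union of TF equivalence classes, and these are pairwise disjoint.
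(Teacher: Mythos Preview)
The paper does not prove this proposition; it is quoted from \cite{A}. Your easy direction is fine, but for the hard direction you have talked yourself out of the correct argument. The ``tempting shortcut'' along the segment from $\theta$ to $\eta$ \emph{does} work, and no relative-interior hypothesis or Harder--Narasimhan machinery is needed.

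Here is the missing step. Let $\eta\in Z_\theta$ and set $\mu_t:=(1-t)\theta+t\eta$ for $t\in[0,1)$. We claim $\mu_t\in[\theta]_{\rm TF}$ for every such $t$; then $\eta=\lim_{t\to1}\mu_t\in\overline{[\theta]_{\rm TF}}$. The inclusion $\overline{\TT}_\theta\subseteq\overline{\TT}_{\mu_t}$ you already have via Lemma~\ref{additivity}. For the reverse, take $X\in\overline{\TT}_{\mu_t}$ and form the canonical sequence $0\to X'\to X\to X''\to 0$ for the torsion pair $(\overline{\TT}_\theta,\FF_\theta)$. If $X''\neq0$ then $X''\in\FF_\theta\subseteq\overline{\FF}_\theta\subseteq\overline{\FF}_\eta$ (the last inclusion is exactly the hypothesis $\eta\in Z_\theta$), so $\theta(X'')<0$ and $\eta(X'')\le0$, whence $\mu_t(X'')=(1-t)\theta(X'')+t\eta(X'')<0$. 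But $X''$ is a factor of $X\in\overline{\TT}_{\mu_t}$, contradiction. Hence $X''=0$ and $X\in\overline{\TT}_\theta$. The equality $\overline{\FF}_{\mu_t}=\overline{\FF}_\theta$ is dual.

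Your worry that ``the limit of $\overline{\TT}_\mu$ along that segment can be a torsion class strictly between $\overline{\TT}_\theta$ and $\overline{\TT}_\eta$'' is a red herring: we never need $\overline{\TT}_{\mu_t}$ to approach $\overline{\TT}_\eta$, only to \emph{equal} $\overline{\TT}_\theta$ for all $t<1$, and the torsion decomposition above gives exactly that. The sketch you offer in its place (HN filtrations plus relative-interior perturbations) is left as an admitted obstacle and, as written, is not a proof.
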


Later we will use the following easy observations.
Note that $\epsilon$ below depends on $X$.

\begin{lemma}\label{extend opposite}
Assume that $\eta,\theta\in K_0(\proj A)_\R$ are TF equivalent.
\begin{enumerate}[\rm(a)]
\item Each $X\in\TT_\theta$ belongs to $\TT_{\theta-\epsilon\eta}$ for sufficiently small $\epsilon>0$.
\item Each $X\in\WW_\theta$ belongs to $\WW_{\theta-\epsilon\eta}$ for sufficiently small $\epsilon>0$.
\item Each $X\in\overline{\TT}_\theta$ belongs to $\overline{\TT}_{\theta-\epsilon\eta}$ for sufficiently small $\epsilon>0$.\end{enumerate}
\end{lemma}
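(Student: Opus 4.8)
The plan is to prove (a), then (b), and finally deduce (c); in all three cases the key identity we exploit is that $\theta - \epsilon\eta$ agrees with $\theta$ ``to first order,'' so that the finitely many strict inequalities defining membership of a \emph{fixed} module $X$ are stable under a small perturbation in the direction $-\eta$. Concretely, for part (a), let $X \in \TT_\theta$. By definition $\theta(X') > 0$ for every nonzero factor module $X'$ of $X$. Since $X$ has only finitely many isomorphism classes of factor modules (up to iso, the factor modules are determined by their kernels, of which there are finitely many as $X$ is a finite-dimensional module over a finite-dimensional algebra), the quantity $m := \min\{\theta(X') \mid 0 \neq X' \text{ a factor of } X\}$ is a positive real number, and $M := \max\{|\eta(X')| \mid 0 \neq X' \text{ a factor of } X\}$ is finite. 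For any $\epsilon > 0$ with $\epsilon M < m$ we get $(\theta - \epsilon\eta)(X') = \theta(X') - \epsilon\eta(X') \geq m - \epsilon M > 0$ for all such $X'$, so $X \in \TT_{\theta - \epsilon\eta}$. This is the whole content of (a), and it uses nothing about TF equivalence; the hypothesis that $\eta, \theta$ are TF equivalent is not even needed for (a).

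For part (b), let $X \in \WW_\theta = \overline{\TT}_\theta \cap \overline{\FF}_\theta$. Here the defining conditions are the \emph{non-strict} inequalities $\theta(X') \geq 0$ for all factor modules $X'$ and $\theta(X'') \leq 0$ for all submodules $X''$, together with $\theta(X) = 0$ (the latter from $X \in \overline{\TT}_\theta \cap \overline{\FF}_\theta$). A naive perturbation argument fails for non-strict inequalities, so this is where TF equivalence enters. The idea is: since $\eta$ is TF equivalent to $\theta$, we have $\WW_\eta = \overline{\TT}_\eta \cap \overline{\FF}_\eta = \overline{\TT}_\theta \cap \overline{\FF}_\theta = \WW_\theta$, so $X \in \WW_\eta$ as well, which in particular gives $\eta(X') \geq 0$ for every factor $X'$ of $X$ and $\eta(X'') \leq 0$ for every submodule $X''$, and $\eta(X) = 0$. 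Now for a factor module $X'$ of $X$ I would distinguish whether $\theta(X') > 0$ or $\theta(X') = 0$. If $\theta(X') > 0$, then $(\theta - \epsilon\eta)(X') > 0$ for $\epsilon$ small (finitely many factors, as above). If $\theta(X') = 0$, then $X'$ is a factor module of $X$ lying in $\overline{\TT}_\theta$ with $\theta(X') = 0$, so $X' \in \WW_\theta = \WW_\eta$ and hence $\eta(X') = 0$, whence $(\theta - \epsilon\eta)(X') = 0 \geq 0$. Either way $(\theta - \epsilon\eta)(X') \geq 0$, so $X \in \overline{\TT}_{\theta - \epsilon\eta}$ for $\epsilon$ small; the dual argument with submodules and $\overline{\FF}$ gives $X \in \overline{\FF}_{\theta - \epsilon\eta}$, and since the two small-$\epsilon$ thresholds can be taken simultaneously, $X \in \WW_{\theta - \epsilon\eta}$.

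For part (c), I would use the Harder--Narasimhan filtration of Definition-Proposition \ref{HN filtration}. Let $X \in \overline{\TT}_\theta$. Choose a decreasing path $\theta(t)$ from $\theta$ to a strictly negative element; then $X$ has an HN filtration $X = X_0 \supset X_1 \supset \cdots \supset X_m = 0$ with $X_{i-1}/X_i \in \WW_{\theta(t_i)}$. The first quotient $X/X_1$ lies in $\WW_{\theta(t_1)}$; since $X \in \overline{\TT}_\theta$ we may take $t_1 = 0$ when $\theta(X/X_1) = 0$ and otherwise the relevant factor satisfies the required inequalities strictly, so the perturbation argument applies analogously. More efficiently: $\overline{\TT}_\theta$ is closed under extensions and each HN quotient $X_{i-1}/X_i$ is, by the construction, either already handled (its ``slope'' direction is strictly negative relative to $-\eta$ after perturbation) or lies in $\WW_\theta$ to which part (b) applies. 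Since there are finitely many quotients, a single small $\epsilon$ works for all of them, and then $X \in \overline{\TT}_{\theta - \epsilon\eta}$ by closure under extensions of $\overline{\TT}_{\theta - \epsilon\eta}$. The main obstacle is organizing the HN argument in (c) cleanly — in particular making sure that the quotients that are \emph{not} in $\WW_\theta$ genuinely satisfy a strict inequality so that the elementary perturbation of (a)-type applies, and that the finitely-many-quotients bound lets one choose $\epsilon$ uniformly. An alternative to avoid HN filtrations entirely would be to run the same factor-module dichotomy as in (b) directly on $X \in \overline{\TT}_\theta$: for a factor $X'$ with $\theta(X') = 0$ one still gets $X' \in \overline{\TT}_\theta$ with $\theta(X') = 0$, hence $X' \in \overline{\FF}_\theta$ as well (every submodule of $X'$ is a submodule-of-a-factor, needing a small argument) — this is exactly where it is less obvious than (b), and the HN route sidesteps it, so I expect the HN-based argument to be the cleanest route and the place to spend care.
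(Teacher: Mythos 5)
Your plan is essentially correct but has one real flaw and several places where it is noticeably heavier than the paper's argument.

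The flaw is in the finiteness justification for part (a). You write that $X$ ``has only finitely many isomorphism classes of factor modules'' because ``the factor modules are determined by their kernels, of which there are finitely many.'' This is false over an infinite field: already $S\oplus S$ for a simple module $S$ has infinitely many submodules (and, over more complicated algebras, a fixed $X$ can have infinitely many isomorphism classes of quotients). What \emph{is} finite is the set of \emph{dimension vectors} of factor modules of $X$, and since $\theta$ and $\eta$ are linear forms on $K_0(\mod A)$ they depend only on the dimension vector. That is the correct finiteness statement (and the one the paper invokes), and with that fix your $m$, $M$ argument for (a) is fine. Your observation that this version of (a) does not need TF equivalence is correct; the paper's variant, which sets $\delta := \min\{\theta(Y)/\eta(Y)\}$, \emph{does} use $\TT_\theta=\TT_\eta$ to know $\eta(Y)>0$, so the denominators are positive.

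Your (b) is correct and is a genuinely different route from the paper's. The paper reduces to the case where $X$ is a simple object of the length category $\WW_\theta=\WW_\eta$ (then every proper non-zero factor has $\theta>0$ and $\eta>0$ and one perturbs, finishing via closure of $\WW_{\theta-\epsilon\eta}$ under extensions). You instead run a dichotomy on each factor $X'$: if $\theta(X')>0$ perturb; if $\theta(X')=0$ then $X'\in\overline{\TT}_\theta$ and $\theta(X')=0$ forces $X'\in\overline{\FF}_\theta$, hence $X'\in\WW_\theta=\WW_\eta$, hence $\eta(X')=0$. Both work; the paper's reduction is a bit shorter because it turns all the non-strict cases into a single extension-closure step.

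For (c) you over-complicate. You hedge between an HN-filtration argument (which could be made to work, but needs care and is more than necessary) and a dichotomy argument that you then talk yourself out of. In fact the dichotomy argument you dismiss works exactly as in (b), and the ``small argument'' you flag is the standard one: if $X'$ is a factor of $X\in\overline{\TT}_\theta$ with $\theta(X')=0$, then for any submodule $Y\subseteq X'$ the quotient $X'/Y$ is also a factor of $X$, so $\theta(X'/Y)\ge 0$, hence $\theta(Y)=\theta(X')-\theta(X'/Y)\le 0$; thus $X'\in\overline{\FF}_\theta$ and so $X'\in\WW_\theta=\WW_\eta$, giving $\eta(X')=0$. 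The paper does something even cleaner and avoids re-running the dichotomy: apply the torsion pair $(\TT_\theta,\overline{\FF}_\theta)$ to $X\in\overline{\TT}_\theta$ to get $0\to T\to X\to W\to 0$ with $T\in\TT_\theta$ and $W\in\overline{\FF}_\theta$; since $W$ is a factor of $X\in\overline{\TT}_\theta$, also $W\in\overline{\TT}_\theta$, so $W\in\WW_\theta$. Then (a) handles $T$, (b) handles $W$, and closure of $\overline{\TT}_{\theta-\epsilon\eta}$ under extensions finishes. I would recommend adopting this exact-sequence reduction for (c), and in any case drop the HN route; it is not needed here.
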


\begin{proof}
(a) Each non-zero factor module $Y$ of $X$ satisfies $\theta(Y)>0$ and $\eta(Y)>0$. Since there are only finitely many dimension vectors of factor modules of $X$, we have
\begin{align*}\delta:=\min\{\theta(Y)/\eta(Y)\mid\text{$Y$ is a non-zero factor module of $X$}\}>0.\end{align*}
The desired inequality is satisfied if $\epsilon<\delta$.

(b) We can assume that $X$ is a simple object in $\WW_\theta=\WW_\eta$.
Then $\theta(X)=\eta(X)=0$, and $\theta(Y)>0$ and $\eta(Y)>0$ hold for each non-zero factor module $Y\neq X$ of $X$. Again we have
\begin{align*}\delta:=\min\{\theta(Y)/\eta(Y)\mid\text{$Y\neq X$ is a non-zero factor module of $X$}\}>0,\end{align*}
and the desired inequality is satisfied if $\epsilon<\delta$.

(c) Take an exact sequence $0\to T\to X\to W\to 0$ with $T\in\TT_\theta$ and $W\in\WW_\theta$. By (a) and (b), we have $T\in\TT_{\theta-\epsilon\eta}\subset\overline{\TT}_{\theta-\epsilon\eta}$ and $W\in\WW_{\theta-\epsilon\eta}\subset\overline{\TT}_{\theta-\epsilon\eta}$ for sufficiently small $\epsilon>0$, and hence $X\in\overline{\TT}_{\theta-\epsilon\eta}$.
\end{proof}

One of the systematic ways to obtain TF equivalence classes is to use 2-term presilting complexes.
For any $U=\bigoplus_{i=1}^m U_i \in \twopresilt A$ with $U_i$ indecomposable, we set \textit{cones}
\begin{align*}
C^\circ(U) := \cone^\circ\{[U_1],\ldots,[U_m]\} \subset
C(U) := \cone\{[U_1],\ldots,[U_m]\}.
\end{align*}
In particular, we set
\begin{align*}C^\circ(0)=C(0):=\{0\}.\end{align*}
These cones appear in many papers including \cite{DIJ,Y,BST}.
The following remark is crucial.

\begin{remark}\label{cone basis}\cite[Theorem 2.27, Corollary 2.28]{AI}.
If $U=\bigoplus_{i=1}^m U_i \in \twopresilt A$ with $U_i$ indecomposable
$[U_1],[U_2],\ldots,[U_m] \in K_0(\proj A)$ can be extended to a $\Z$-basis 
of $K_0(\proj A)$.
\end{remark}

Thus the dimensions of $C^\circ(U)$ and $C(U)$ in $K_0(\proj A)_\R$ are both $|U|$. Let
\begin{align*}
\Cone^\circ:=\bigcup_{T\in\twosilt A}C^\circ(T)\subset \Cone:=\bigcup_{T\in\twosilt A}C(T)=\bigcup_{U\in\twopresilt A}C^\circ(U)\subset K_0(\proj A)_\R.
\end{align*}
The first author proved that each $C^\circ(U)$ gives a TF equivalence class
by using \cite[Proposition 3.3]{Y} and \cite[Proposition 3.27]{BST}.

\begin{proposition}\label{cone-TF}\cite[Proposition 3.11]{A}
For any $U \in \twopresilt A$,
the cone $C^\circ(U)$ is a TF equivalence class satisfying
\begin{align*}
C^\circ(U) &= \{ \theta \in K_0(\proj A)_\R \mid 
\overline{\TT}_\theta=\overline{\TT}_U, \ \overline{\FF}_\theta=\overline{\FF}_U \},\\
C(U) &= \{ \theta \in K_0(\proj A)_\R \mid 
\overline{\TT}_\theta\supset\overline{\TT}_U, \ \overline{\FF}_\theta\supset\overline{\FF}_U \}.
\end{align*}
\end{proposition}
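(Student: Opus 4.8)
The plan is to reduce everything to the single identity
\[
C^\circ(U)=\{\theta\in K_0(\proj A)_\R\mid\overline{\TT}_\theta=\overline{\TT}_U,\ \overline{\FF}_\theta=\overline{\FF}_U\}.
\]
Once this is established, $C^\circ(U)$ is automatically a TF equivalence class: since $(\TT_\theta,\overline{\FF}_\theta)$ is a torsion pair, $\overline{\FF}_\theta$ and $\TT_\theta$ determine each other, so the right-hand side is exactly $[[U]]_{\rm TF}$ (note $[U]=\sum_{i=1}^m[U_i]\in C^\circ(U)$). The formula for $C(U)$ then follows: by Proposition \ref{closure of TF} we have $\overline{[[U]]_{\rm TF}}=\{\theta\mid\overline{\TT}_U\subseteq\overline{\TT}_\theta,\ \overline{\FF}_U\subseteq\overline{\FF}_\theta\}$, which is the claimed right-hand side, and by Remark \ref{cone basis} the classes $[U_1],\dots,[U_m]$ are linearly independent, so $C(U)$ is a simplicial cone with relative interior $C^\circ(U)$, whence $\overline{C^\circ(U)}=C(U)$.

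For the inclusion ``$\subseteq$'' of the displayed identity --- the substantive part --- the key inputs are \cite[Proposition 3.3]{Y} and \cite[Proposition 3.27]{BST}. The latter shows that for a 2-term silting complex $T$ the open cone $C^\circ(T)$ is a chamber of the wall-and-chamber structure of $A$, so that $\overline{\TT}_\theta=\TT_\theta=\TT_T$ and $\overline{\FF}_\theta=\FF_\theta=\FF_T$ hold for every $\theta\in C^\circ(T)$; the former propagates this constancy to the possibly lower-dimensional cone $C^\circ(U)$ attached to an arbitrary 2-term presilting complex $U$. Concretely, one would complete $U$ to 2-term silting complexes and use Proposition \ref{AIR} together with Lemma \ref{func fin presilt}(a) to realize both functorially finite classes $\overline{\TT}_U$ and $\overline{\FF}_U$ as $\overline{\TT}_T$, resp.\ $\overline{\FF}_T$, for such completions $T$; Lemma \ref{additivity} then gives $\overline{\TT}_\theta\supseteq\overline{\TT}_U$ and $\overline{\FF}_\theta\supseteq\overline{\FF}_U$ for $\theta\in C^\circ(U)$, while the reverse inclusions are obtained by approaching $\theta$ by sequences inside chambers $C^\circ(T)$ with $T$ a silting completion of $U$, using that $\TT_\theta$ and $\FF_\theta$ are cut out by conditions open in $\theta$ (a module in $\TT_\theta$ or $\FF_\theta$ stays there after a small perturbation of $\theta$) whereas $\overline{\TT}_\theta$ and $\overline{\FF}_\theta$ are cut out by closed conditions (preserved under limits).

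For the inclusion ``$\supseteq$'', suppose $\overline{\TT}_\theta=\overline{\TT}_U$ and $\overline{\FF}_\theta=\overline{\FF}_U$; then $[\theta]_{\rm TF}=[[U]]_{\rm TF}$, so it suffices to show $[[U]]_{\rm TF}\subseteq C^\circ(U)$. Applying ``$\subseteq$'' to every basic direct summand $V$ of $U$, each $C^\circ(V)$ is a TF equivalence class with data $(\overline{\TT}_V,\overline{\FF}_V)$, and by Lemma \ref{func fin presilt}(a) we have $\overline{\TT}_U\subseteq\overline{\TT}_V$ and $\overline{\FF}_U\subseteq\overline{\FF}_V$ if and only if $V\in\add U$. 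Hence, by Proposition \ref{closure of TF}, the TF equivalence classes of the form $C^\circ(V)$ contained in $\overline{[[U]]_{\rm TF}}$ are precisely the ones indexed by the basic direct summands of $U$, and their union is $C(U)$ by Remark \ref{cone basis}. As $[[U]]_{\rm TF}$ is the TF equivalence class containing the point $[U]$, which lies in the relative interior $C^\circ(U)$ of $C(U)$, invoking the density and structure results for the $g$-vector fan (\cite[Proposition 3.3]{Y}, \cite{DIJ}) to rule out that $[[U]]_{\rm TF}$ reaches outside $C(U)$ forces $[[U]]_{\rm TF}=C^\circ(U)$.

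The main obstacle is the inclusion ``$\subseteq$'' of the displayed identity, i.e.\ the constancy of $\overline{\TT}_\theta$ and $\overline{\FF}_\theta$ along the --- in general not full-dimensional --- cone $C^\circ(U)$; plain additivity (Lemma \ref{additivity}) yields only one of the two inclusions, and the other genuinely uses the wall-and-chamber and mutation input of \cite{BST,Y}. Everything else --- the passage to the stated descriptions of $C^\circ(U)$ and $C(U)$ --- is formal bookkeeping with Remark \ref{cone basis}, Lemma \ref{func fin presilt}, and Proposition \ref{closure of TF}.
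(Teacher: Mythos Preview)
The paper does not prove this proposition; it is quoted from \cite[Proposition 3.11]{A}, with the remark that the key inputs are \cite[Proposition 3.3]{Y} and \cite[Proposition 3.27]{BST}. So there is no proof here to compare against, and I assess your sketch on its own merits.

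Your overall architecture is sound: establishing the first identity and then deducing the second via Proposition~\ref{closure of TF} and $\overline{C^\circ(U)}=C(U)$ is correct, and for the inclusion $C^\circ(U)\subseteq[\,[U]\,]_{\rm TF}$ you correctly point to \cite{Y,BST}. However, the argument for the reverse inclusion $[\,[U]\,]_{\rm TF}\subseteq C^\circ(U)$ has a genuine gap. Your final step reads ``invoking the density and structure results for the $g$-vector fan \dots\ to rule out that $[\,[U]\,]_{\rm TF}$ reaches outside $C(U)$''. Density of $\Cone$ in $K_0(\proj A)_\R$ is \emph{not} available for an arbitrary finite dimensional algebra (this is the definition of $g$-tameness, Definition~\ref{define g-finite}(d)), so nothing prevents the TF class of $[U]$ from containing points lying outside every cone $C(V)$. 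The earlier part of your paragraph only shows that among presilting cones, those contained in $\overline{[\,[U]\,]_{\rm TF}}$ are exactly the faces of $C(U)$; it says nothing about the complement of $\Cone$.

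What actually pins down $[\,[U]\,]_{\rm TF}$ is the dual basis coming from the simple-minded collection. Completing $U$ to a 2-term silting complex $T$, the $n-|U|$ simple objects $S_1,\dots,S_{n-m}$ of $\WW_U$ belong to the 2-term simple-minded collection attached to $T$ (see the proof of Proposition~\ref{W_theta=A-theta}), and together with the remaining members they form a $\Z$-basis of $K_0(\mod A)$ dual to $[U_1],\dots,[U_m],[V_1],\dots,[V_{n-m}]$. If $\theta$ is TF equivalent to $[U]$ then $\WW_\theta=\WW_U$, so each $S_j$ lies in $\WW_\theta$; this forces $\langle\theta,[S_j]\rangle=0$ and hence $\theta\in\langle[U_1],\dots,[U_m]\rangle_\R$. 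The positivity of the coefficients is then obtained from the remaining members of the simple-minded collection, which lie in $\TT_U$ (for the Bongartz completion) or in $\FF_U[1]$ (for the co-Bongartz completion) and detect the sign of each coordinate. This is the missing mechanism; your sketch should replace the density appeal with this duality argument.
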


In particular, $C^\circ(T)$ for $T \in \twosilt A$ is a \textit{full-dimensinonal} TF equivalence class, that is, $C^\circ(T)$ is a TF equivalence class whose interior is not empty.
Set $\TF_n(A)$ as the set of full-dimensional TF equivalence classes. On these notions, there are the following results.

\begin{proposition}\label{cone-wall}\cite[Theorem 3.17]{A}
The following properties hold.
\begin{enumerate}[\rm(a)]
\item
For $\theta \in K_0(\proj A)$, $\theta \in \Cone^\circ$ if and only if $\WW_\theta=\{0\}$.
\item
For $\theta \in K_0(\proj A)_\R$, $\theta \in \Cone^\circ$ if and only if there exists an open neighborhood $V$ of $\theta$ such that $\WW_{\theta'}=\{0\}$ for all $\theta' \in V$.
\item
There exists a bijection $\twosilt A \to \TF_n(A)$ given by $T \mapsto C^\circ(T)$.
\end{enumerate}
\end{proposition}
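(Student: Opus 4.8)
The plan is to derive each of the three statements from the already-established theory, chiefly Proposition \ref{AIR}, Proposition \ref{cone-TF}, and the Harder--Narasimhan filtration of Definition-Proposition \ref{HN filtration}. For part (a), the forward direction is essentially immediate: if $\theta\in\Cone^\circ$ then $\theta\in C^\circ(T)$ for some $T\in\twosilt A$, so by Proposition \ref{cone-TF} we have $\overline{\TT}_\theta=\overline{\TT}_T$ and $\overline{\FF}_\theta=\overline{\FF}_T$, whence $\WW_\theta=\overline{\TT}_\theta\cap\overline{\FF}_\theta=\overline{\TT}_T\cap\overline{\FF}_T=\TT_T\cap\overline{\FF}_T$, which is $0$ since $(\TT_T,\overline{\FF}_T)$ is a torsion pair. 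The converse is the substantial direction: assuming $\WW_\theta=\{0\}$ for $\theta\in K_0(\proj A)$, I want to conclude $\theta$ lies in the interior of some silting cone. The idea is to use Proposition \ref{W filtration}(b): since every $\WW_\eta$ appearing in the $\vecFilt$ for $\overline{\TT}_\theta$ must already be reachable, and $\WW_\theta=0$ forces the top piece of every Harder--Narasimhan filtration to be trivial, one shows $\overline{\TT}_\theta=\TT_\theta$, so $(\overline{\TT}_\theta,\overline{\FF}_\theta)=(\TT_\theta,\overline{\FF}_\theta)$ is a single torsion pair. One must then argue this torsion pair is functorially finite — this is where a rationality/finiteness argument is needed, exploiting that $\theta$ is a lattice point so only finitely many dimension vectors are relevant near it — and then Proposition \ref{AIR} produces $T\in\twosilt A$ with $\overline{\TT}_\theta=\TT_T$ and $\overline{\FF}_\theta=\overline{\FF}_T$; Proposition \ref{cone-TF} then places $\theta\in C^\circ(T)\subset\Cone^\circ$.

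For part (b), the "if" direction follows from (a) applied at any rational (or even real, via the same argument) point, since $\Cone^\circ$ being a union of relatively open cones of full dimension is open; more precisely, if $\WW_{\theta'}=0$ on a neighborhood $V$ of $\theta$, pick a lattice point or use density to find $\theta''\in V\cap\Cone^\circ$, and note that $\Cone^\circ$ is open so $\theta$ itself, being forced into the closure-complement structure, lies in $\Cone^\circ$. For the "only if" direction, if $\theta\in\Cone^\circ$ then $\theta\in C^\circ(T)$ for some $T\in\twosilt A$, and since $C^\circ(T)$ is the relative interior of a full-dimensional cone it is open; by (a) (or Proposition \ref{cone-TF}) every point of $C^\circ(T)$ has vanishing semistable subcategory, so $V:=C^\circ(T)$ works. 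Here I would need to be slightly careful: (a) as stated is for lattice points, so for the "only if" of (b) I should instead invoke Proposition \ref{cone-TF} directly, which gives $\WW_{\theta'}=\overline{\TT}_{\theta'}\cap\overline{\FF}_{\theta'}=\overline{\TT}_T\cap\overline{\FF}_T=0$ for all $\theta'\in C^\circ(T)$, valid for real points.

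For part (c), the map $T\mapsto C^\circ(T)$ lands in $\TF_n(A)$ because $C^\circ(T)$ has dimension $|T|=|A|=n$ by Remark \ref{cone basis}, hence nonempty interior, and it is a TF equivalence class by Proposition \ref{cone-TF}. Injectivity follows from Proposition \ref{AIR}: if $C^\circ(T)=C^\circ(T')$ then $\overline{\TT}_T=\overline{\TT}_{T'}$, i.e. $\TT_T=\TT_{T'}$, and the bijection $\twosilt A\to\ftors A$ gives $T=T'$. Surjectivity is the main point: given a full-dimensional TF equivalence class $[\theta]_{\rm TF}$, its interior is nonempty and open, so it contains a lattice point $\theta'$ with an open neighborhood inside $[\theta]_{\rm TF}$; then $\WW_{\theta'}$ and all nearby $\WW$'s vanish — because a non-trivial wide subcategory would, via the Harder--Narasimhan machinery, force $[\theta']_{\rm TF}$ to be contained in a proper subspace (the common kernel of $\theta'$ on composition factors of a $\theta'$-stable brick), contradicting full-dimensionality — so by (b), $\theta'\in\Cone^\circ$, landing in some $C^\circ(T)$, and by Proposition \ref{cone-TF} that cone equals $[\theta']_{\rm TF}=[\theta]_{\rm TF}$. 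I expect the main obstacle to be the functorial-finiteness step in the converse of (a): showing that $\WW_\theta=0$ genuinely forces $(\overline{\TT}_\theta,\overline{\FF}_\theta)$ to be functorially finite rather than merely a torsion pair, which is precisely where the hypothesis $\theta\in K_0(\proj A)$ (integrality) must be used, presumably through a discreteness argument on the finitely many dimension vectors of relevant modules combined with Proposition \ref{W filtration}(b).
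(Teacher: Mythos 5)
This proposition is stated in the paper as a citation of \cite[Theorem 3.17]{A}; the paper gives no proof of its own, so your reconstruction has to stand on its own merits.

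Your treatment of the easy directions is correct: the forward direction of (a) and the ``only if'' direction of (b) both follow exactly as you say from Proposition \ref{cone-TF}, since for $T\in\twosilt A$ we have $\overline{\TT}_T=\TT_T$ and $\overline{\FF}_T=\FF_T$ so that $\overline{\TT}_T\cap\overline{\FF}_T=0$. Injectivity in (c) via Proposition \ref{AIR} is fine. Your argument that $\WW_\theta=0$ forces $\overline{\TT}_\theta=\TT_\theta$ is also correct (decompose $X\in\overline{\TT}_\theta$ against the torsion pair $(\TT_\theta,\overline{\FF}_\theta)$; the torsion-free quotient lies in $\overline{\TT}_\theta\cap\overline{\FF}_\theta=\WW_\theta=0$). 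For (c) surjectivity the underlying idea is right: $\WW$ is constant on TF equivalence classes, so a simple $S\in\WW_{\theta'}$ would force $[\theta']_{\rm TF}\subseteq\Theta_S$, a set of codimension one by Lemma \ref{simple and interior}(c), contradicting full-dimensionality.

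There are two genuine gaps. The main one is the converse of (a), which is the substantive content of the entire proposition. You correctly reduce to showing that $\WW_\theta=0$ forces $(\TT_\theta,\overline{\FF}_\theta)$ to be functorially finite, and you flag this yourself, but you offer no actual argument: ``a rationality/finiteness argument exploiting that $\theta$ is a lattice point'' is not a proof, and the difficulty is precisely that when $A$ is not $g$-finite the walls can accumulate, so there is no obvious discreteness to exploit. Without this step you do not get a silting complex $T$, and parts (b) and (c) of your proposal inherit the gap since both rely on (a). The second gap is in the ``if'' direction of (b): from $\WW_{\theta'}=0$ on a neighborhood $V$ you produce a lattice point $\theta''\in V\cap\Cone^\circ$, but ``$\Cone^\circ$ is open so $\theta$ itself, being forced into the closure-complement structure, lies in $\Cone^\circ$'' is a non-argument — openness of $\Cone^\circ$ does not place $\theta$ inside it just because a nearby point is. What is missing is an invocation of Proposition \ref{Asai TF}: since $\WW$ is identically zero on (a convex shrink of) $V$, any two points of $V$ are TF equivalent, hence $V\subseteq[\theta]_{\rm TF}=[\theta'']_{\rm TF}=C^\circ(T)$ by Proposition \ref{cone-TF}, putting $\theta$ itself in $C^\circ(T)\subseteq\Cone^\circ$.
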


\subsection{Wall-chamber structures}

In \cite{BST} and \cite{Bridgeland},
they defined a wall-chamber structure on $K_0(\proj A)_\R$
by using $\theta$-semistable subcategories $\WW_\theta$.

\begin{definition}\cite[Definition 3.2]{BST}\cite[Definition 6.1]{Bridgeland}
Let $X \in \mod A$ be a non-zero module.
Then we call
\begin{align*}
\Theta_X:=\{\theta\in K_0(\proj A)_\R\mid X\in\WW_\theta\}
\end{align*}
the \textit{wall} associated to $X$.
By considering the walls $\Theta_X$ for all non-zero modules,
we define a \textit{wall-chamber structure} on $K_0(\proj A)_\R$.
\end{definition}

Since there are only finitely many dimension vectors of factor modules of $X$, $\Theta_X$ is a rational polyhedral cone in the Euclidean space $K_0(\proj A)_\R$. 
The \emph{dimension} $\dim C$ of a cone $C$ in $\R^n$ is the dimension of the subspace generated by $C$. The \emph{codimension} of $C$ is $n-\dim C$. A convex subset $C \subset \R^n$ is called \textit{strongly convex} if $C \cap (-C)=\{0\}$.

The following basic properties are useful. 

\begin{lemma}\label{simple and interior}
Let $X\in\mod A$ and $\theta\in K_0(\proj A)_\R$.
\begin{enumerate}[\rm(a)]
\item \cite[Lemma 2.5]{A} $\Theta_X$ is strongly convex if and only if $X$ is sincere.
\item \cite[Lemma 2.2]{A} Assume $X\in\WW_\theta$. Then $X$ is a simple object in $\WW_\theta$ if and only if $\theta(Y)>0$ holds for each non-zero proper factor module of $X$. 
\item \cite[Lemma 2.7]{A} If $X$ is a simple object in $\WW_\theta$, then $\dim\Theta_X=|A|-1$ and $\theta\in\Theta_X^\circ$ hold. 
\item \cite[Lemma 2.7]{A} Assume $X \in \WW_\theta$. Then $\theta\in F^\circ$ holds for some face $F$ of $\Theta_X$ with $\dim F=|A|-\dim_\R W_{\theta,X}$, where
\begin{align*}
W_{\theta,X}:=\langle[S] \mid 
\text{$S$ is a composition factor of $X$ in $\WW_\theta$}\rangle_\R\subset K_0(\mod A)_\R.
\end{align*}
\item Assume $\dim\Theta_X=|A|-1$. Then $\Theta_X^\circ$ consists of all $\theta\in\Theta_X$ such that $\theta(Y)>0$ holds for each factor module $Y$ of $X$ satisfying $\dimv Y\notin\R\dimv X$.
\end{enumerate}
\end{lemma}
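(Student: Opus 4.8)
The plan is to make the wall $\Theta_X$ completely explicit as a polyhedral cone cut out by linear (in)equalities, and then read off its relative interior by elementary convex geometry.

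First I would record the reformulation that, for $\theta\in K_0(\proj A)_\R$, one has $X\in\WW_\theta$ if and only if $\theta(X)=0$ and $\theta(Y)\ge 0$ for every factor module $Y$ of $X$: indeed $X\in\overline{\TT}_\theta$ forces $\theta(X)\ge 0$ and $X\in\overline{\FF}_\theta$ forces $\theta(X)\le 0$, while conversely, once $\theta(X)=0$, the defining inequalities of $\overline{\FF}_\theta$ for submodules $X'\subseteq X$ follow from those of $\overline{\TT}_\theta$ via $0\to X'\to X\to X/X'\to 0$. Since $\theta(Y)=\langle\theta,\dimv Y\rangle$ depends only on $\dimv Y$ and $X$ has only finitely many dimension vectors of factor modules, this exhibits
\[\Theta_X=H_X\cap\bigcap_{Y}\{\theta\in K_0(\proj A)_\R\mid\theta(Y)\ge 0\},\]
where $H_X:=\{\theta\mid\theta(X)=0\}$ is a hyperplane (note $\dimv X\neq 0$) and $Y$ runs over a finite set of representatives of the factor modules of $X$.

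Next I would use the hypothesis. From $\Theta_X\subseteq H_X$ and $\dim\Theta_X=|A|-1=\dim H_X$ we get $\operatorname{span}(\Theta_X)=H_X$, so $\Theta_X^\circ$ is the topological interior of $\Theta_X$ inside $H_X$. Now split the inequalities $\theta(Y)\ge 0$ according to whether $\dimv Y\in\R\dimv X$. If $\dimv Y=c\,\dimv X$ for some $c\in\R$, then $\theta(Y)=c\,\theta(X)=0$ for every $\theta\in H_X$, so this inequality holds identically with equality on $H_X$ and affects neither $\Theta_X$ nor its interior in $H_X$. If $\dimv Y\notin\R\dimv X$, then $\dimv X$ and $\dimv Y$ are linearly independent in $K_0(\mod A)_\R$; since the Euler pairing is nondegenerate, the linear form $\theta\mapsto\theta(Y)$ does not vanish identically on $H_X$, so $\{\theta\in H_X\mid\theta(Y)\ge 0\}$ is a genuine closed half-space of $H_X$. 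Thus $\Theta_X$ is a full-dimensional polyhedral cone in $H_X$ cut out by finitely many inequalities $\theta(Y)\ge 0$ with $\dimv Y\notin\R\dimv X$, each given by a nonzero linear form on $H_X$; the standard description of the interior of such a polyhedron then gives $\Theta_X^\circ=\{\theta\in H_X\mid\theta(Y)>0\text{ for all factor modules }Y\text{ of }X\text{ with }\dimv Y\notin\R\dimv X\}$ (the inclusion $\supseteq$ is openness plus finiteness; $\subseteq$ holds since at a point of $H_X$ where some such $\theta(Y)=0$ one may move within $H_X$ in a direction making $\theta(Y)$ negative, leaving $\Theta_X$). Finally, any $\theta$ in this set automatically satisfies $\theta(X)=0$ and $\theta(Y)=0\ge0$ for the remaining factor modules, hence lies in $\Theta_X$, so ``$\theta\in H_X$'' may be replaced by ``$\theta\in\Theta_X$'', which is the claimed formula.

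The only genuinely delicate points should be the careful reduction making $\Theta_X$ literally a finite intersection of one hyperplane and finitely many closed half-spaces, and the use of nondegeneracy of the Euler form to ensure each surviving inequality is nonconstant on $H_X$ (this is exactly what forces strictness on those inequalities in the interior); everything else is bookkeeping. As an alternative, one can deduce the statement from Lemma~\ref{simple and interior}(d): for $\theta\in\Theta_X$ one has $\theta\in\Theta_X^\circ$ if and only if the face $F$ of $\Theta_X$ with $\theta\in F^\circ$ equals $\Theta_X$, i.e.\ $\dim_\R W_{\theta,X}=1$, i.e.\ $W_{\theta,X}=\R\dimv X$; and since a factor module $Y$ of $X$ satisfies $\theta(Y)=0$ precisely when $Y\in\WW_\theta$, a short Jordan--H\"older argument in $\WW_\theta$ shows this is equivalent to the asserted positivity condition.
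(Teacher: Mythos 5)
Your proof is correct and takes essentially the same approach as the paper: both express $\Theta_X$ as a full-dimensional polyhedral cone in the hyperplane $\Kernel\langle -,X\rangle$ cut out by the finitely many half-spaces $\theta(Y)\ge 0$ with $\dimv Y\notin\R\dimv X$, and then read off the relative interior by the standard description of the interior of such a polyhedron. The paper records that convex-geometry step as a displayed identity \eqref{X^circ}; you simply make explicit the reformulation of $X\in\WW_\theta$ as ``$\theta(X)=0$ and $\theta(Y)\ge0$ for all factor modules'' and the nondegeneracy argument ensuring each surviving form is nonzero on the hyperplane, both of which the paper leaves tacit.
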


\begin{proof}
(e) We need the following basic fact: For a finite dimensional $\R$-vector space $V$ and non-zero $\R$-linear forms $d_1,\ldots,d_m \colon V \to \R$, let $H_i^{\ge0}:=\{x\in V\mid d_i(x)\ge0\}\supset H_i^{>0}:=\{x\in V\mid d_i(x)>0\}$ and $C:=\bigcap_{i=1}^mH_i^{\ge0}$. If $\R C=V$ holds, then we have
\begin{equation}\label{X^circ}
C^\circ=\bigcap_{i=1}^mH_i^{>0}.
\end{equation}
Now let $V:=\Kernel\langle-,X\rangle\subset K_0(\proj A)_\R$ and $d_1,\ldots,d_m$ the dimension vectors of the factor modules of $X$ which does not belong to $\R\dimv X$. Then \eqref{X^circ} shows the assertion since $C=\Theta_X$.
\end{proof}

There may be some inclusions $\Theta_X \subset \Theta_Y$ for $X,Y \in \mod A$;
for example, $\Theta_{X \oplus X'}=\Theta_X \cap \Theta_{X'}$.
Thus some walls $\Theta_X$ are redundant.
Actually it is enough to consider bricks to obtain the wall-chamber structure.

\begin{proposition}\label{maximal wall}\cite[Proposition 2.8]{A}
Let $X \in \mod A$.
Take $\theta \in \Theta_X^\circ$ and $S \in \WW_\theta$ such that $S$ is a composition factor of $M$ in the abelian length category $\WW_\theta$.
Then $\Theta_S \supseteq \Theta_X$ and the codimension of $\Theta_S$ is one.
\end{proposition}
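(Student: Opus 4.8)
The plan is to pass from an arbitrary module $X$ to one of its composition factors $S$ in $\WW_\theta$, and to show that the wall of such a brick-like object always has codimension one. First I would fix $\theta\in\Theta_X^\circ$; by Lemma \ref{simple and interior}(d), this means $\theta\in F^\circ$ for a face $F$ of $\Theta_X$ of dimension $|A|-\dim_\R W_{\theta,X}$, but since $\theta$ lies in the relative interior of $\Theta_X$ itself we actually have $\dim\Theta_X=|A|-\dim_\R W_{\theta,X}$. Now $S$ is a composition factor of $X$ in the abelian length category $\WW_\theta$, so in particular $S\in\WW_\theta$, which is exactly the statement $\theta\in\Theta_S$; moreover the only composition factor of $S$ in $\WW_\theta$ is $S$ itself, so $W_{\theta,S}=\langle[S]\rangle_\R$ is one-dimensional (note $[S]\neq0$ in $K_0(\mod A)_\R$ since $S$ is non-zero). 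Applying Lemma \ref{simple and interior}(d) to $S$ in place of $X$ gives $\theta\in G^\circ$ for a face $G$ of $\Theta_S$ with $\dim G=|A|-1$; since faces of a cone have dimension at most that of the cone and $\Theta_S$ is a proper cone in $K_0(\proj A)_\R\cong\R^{|A|}$ (it lies in the hyperplane $\langle -,[S]\rangle=0$), we conclude $\dim\Theta_S=|A|-1$, i.e.\ $\Theta_S$ has codimension one.

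For the inclusion $\Theta_S\supseteq\Theta_X$, the key point is that the assignment $\eta\mapsto\WW_\eta$ behaves well under composition series: if $\eta\in\Theta_X$, that is $X\in\WW_\eta$, then since $\WW_\eta$ is a wide subcategory (hence an abelian exact-embedded subcategory closed under subobjects and quotients within it) the composition factors of $X$ computed in $\WW_\eta$ all lie in $\WW_\eta$. The subtlety is that $S$ is a composition factor of $X$ in $\WW_\theta$, not a priori in $\WW_\eta$. To bridge this I would argue that for $\eta$ in a neighbourhood of $\theta$ within $\Theta_X$, the two abelian categories $\WW_\theta$ and $\WW_\eta$ share the relevant subquotients: more precisely, any $\theta$-semistable composition factor $S$ of $X$ is a submodule quotient of $X$ lying on $\Theta_X$, and by Lemma \ref{simple and interior}(e) (applied after establishing $\dim\Theta_X=\dim\Theta_S$ when $S$ is moreover stable, or by a direct Harder--Narasimhan argument) one checks $\eta(S)=0$ and $\eta(S')>0$ for proper quotients, giving $S\in\WW_\eta$. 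This yields $\Theta_X\subseteq\Theta_S$ at least locally near $\theta$; since both are cones and $\Theta_S$ is convex, and since the same $S$ works for the whole relative interior $\Theta_X^\circ$ by connectedness, the inclusion $\Theta_S\supseteq\Theta_X$ follows globally (using that $\Theta_X=\overline{\Theta_X^\circ}$).

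The main obstacle I anticipate is precisely this last compatibility point: ensuring that a single composition factor $S$, chosen at one point $\theta\in\Theta_X^\circ$, witnesses membership $X\in\WW_\eta\Rightarrow S\in\WW_\eta$ uniformly over all of $\Theta_X$, rather than merely near $\theta$. The cleanest route is to use the Harder--Narasimhan filtration of Definition-Proposition \ref{HN filtration}: for $\eta\in\Theta_X$ with $\eta\neq\theta$, interpolate linearly and track how the composition series of $X$ in $\WW_\theta$ refines the HN filtration for the segment from $\eta$ toward a generic negative element; since $X$ itself is $\theta$-semistable, $X$ sits in a single HN piece $\WW_{\theta'}$ for $\theta'$ on the segment, and one deduces $S$ remains a quotient of $X$ with $\eta(S)=0$. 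Everything else—the codimension count, the reduction to bricks via Lemma \ref{simple and interior} parts (b)--(d), and the cone-theoretic bookkeeping—is routine once this is in place.
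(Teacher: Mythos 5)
The paper cites this result from \cite[Proposition~2.8]{A} without reproducing a proof, so I can only assess your argument on its own merits. The codimension part is fine, though Lemma~\ref{simple and interior}(c) applied directly to $S$ (a simple object of $\WW_\theta$) already gives $\dim\Theta_S=|A|-1$ without the detour through (d). The genuine gap is the one you yourself flag as the main obstacle: showing $S\in\WW_\eta$ for all $\eta\in\Theta_X$, and neither repair you sketch closes it. Asking for ``$\eta(S)=0$ and $\eta(S')>0$ for proper quotients'' is asking $S$ to be $\eta$-\emph{stable}, which is more than is needed ($\eta(S')\ge0$ suffices) and typically false as $\eta$ moves away from $\theta$ along $\Theta_X$. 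Lemma~\ref{simple and interior}(e) needs $\dim\Theta_X=|A|-1$, which fails whenever $X$ has more than one distinct composition factor in $\WW_\theta$ (by (d), $\dim\Theta_X=|A|-\dim_\R W_{\theta,X}$), so the premise ``$\dim\Theta_X=\dim\Theta_S$'' is unavailable. And the Harder--Narasimhan sketch never produces the needed inequality: if $X\in\WW_\eta$ then $X$ occupies a single HN piece at time $0$, which gives no control over the Jordan--H\"older filtration of $X$ inside the \emph{different} abelian category $\WW_\theta$.

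The missing ingredient is a convexity argument using $\theta\in\Theta_X^\circ$. Fix a composition series $0=X_\ell\subset\cdots\subset X_0=X$ of $X$ in $\WW_\theta$; each $X_i$ is a submodule of $X$ lying in $\WW_\theta$, so $\theta(X_i)=0$, while $\eta(X_i)\le0$ for every $\eta\in\Theta_X$ because $X\in\overline{\FF}_\eta$. Since $\theta$ lies in the relative interior of the cone $\Theta_X$, for each $\eta\in\Theta_X$ there is $\epsilon>0$ with $\theta':=(1+\epsilon)\theta-\epsilon\eta\in\Theta_X$, and then $\theta$ is a positive convex combination of $\theta'$ and $\eta$. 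Hence $0=\theta(X_i)$ is a positive convex combination of the non-positive numbers $\theta'(X_i)$ and $\eta(X_i)$, forcing $\eta(X_i)=0$ for every $i$. Now $X/X_i$ is a quotient of $X\in\overline{\TT}_\eta$ with $\eta(X/X_i)=0$, hence $X/X_i\in\WW_\eta$: indeed any $M\in\overline{\TT}_\eta$ with $\eta(M)=0$ lies in $\overline{\FF}_\eta$, since for $M'\subset M$ one has $\eta(M')=-\eta(M/M')\le0$. Finally $S=X_{i-1}/X_i$ is a submodule of $X/X_i\in\overline{\FF}_\eta$ with $\eta(S)=0$, so dually $S\in\overline{\TT}_\eta$ and thus $S\in\WW_\eta$, i.e.\ $\eta\in\Theta_S$. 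This holds for all $\eta\in\Theta_X$ simultaneously, so no local-to-global gluing step is needed.
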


Let $\Wall$ be the union of all walls;
\begin{align*}
\Wall&:=\bigcup_{0\neq X\in\mod A}\Theta_X=\{\theta\in K_0(\proj A)_\R\mid\WW_\theta\neq0\}\ \text{ and}\\
\brick_{\rm s}A&:=\{X\in\mod A\mid\text{$X$ is $\theta$-stable for some $\theta\in K_0(\proj A)_\R$}\}.
\end{align*}
Then Proposition \ref{maximal wall} implies
\begin{align*}
\Wall=\bigcup_{X\in\brick_{\rm s}A}\Theta_X.
\end{align*}
Moreover, Proposition \ref{cone-wall} is rewritten as 
\begin{align*}
K_0(\proj A)\subset\Cone^\circ\sqcup\Wall\ \text{ and }\ K_0(\proj A)_\R=\Cone^\circ\sqcup\overline{\Wall}.
\end{align*}

The wall-chamber structure and the TF equivalence classes are related as follows.
For $\theta,\theta' \in K_0(\proj A)_\R$, we set
\begin{align*}
[\theta,\theta']:=\{(1-r)\theta+r\theta' \mid r \in [0,1]\}.
\end{align*}

\begin{proposition}\label{Asai TF}\cite[Theorem 2.17]{A}
For $\theta \ne \theta' \in K_0(\proj A)_\R$, the following are equivalent.
\begin{enumerate}[\rm(a)]
\item $\theta$ and $\theta'$ are TF equivalent.
\item For any $\theta'' \in [\theta,\theta']$, $\WW_\theta''$ is constant.
\item There exists no brick $S$ such that $\Theta_S \cap [\theta,\theta']$ is one point.
\end{enumerate}
\end{proposition}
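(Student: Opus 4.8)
The plan is to prove the chain of equivalences (a) $\Leftrightarrow$ (b) $\Leftrightarrow$ (c) by establishing the cyclic implications (a) $\Rightarrow$ (b) $\Rightarrow$ (c) $\Rightarrow$ (a), exploiting that everything depends only on the line segment $[\theta,\theta']$, which is compact and one-dimensional. The easy direction is (a) $\Rightarrow$ (b): if $\overline{\TT}_\theta=\overline{\TT}_{\theta'}$ and $\overline{\FF}_\theta=\overline{\FF}_{\theta'}$, then for any $\theta''=(1-r)\theta+r\theta'$ Lemma \ref{additivity} gives $\overline{\TT}_\theta=\overline{\TT}_{(1-r)\theta}\cap\overline{\TT}_{r\theta'}\subseteq\overline{\TT}_{\theta''}$, and symmetrically $\overline{\FF}_\theta\subseteq\overline{\FF}_{\theta''}$; since $(\overline{\TT}_{\theta''},\FF_{\theta''})$ and $(\TT_{\theta''},\overline{\FF}_{\theta''})$ are torsion pairs, the two inclusions $\overline{\TT}_\theta\subseteq\overline{\TT}_{\theta''}$ and $\overline{\FF}_\theta\subseteq\overline{\FF}_{\theta''}$ together with the reverse inclusions obtained by running the argument from $\theta''$ toward both endpoints force equality, so $\WW_{\theta''}=\overline{\TT}_{\theta''}\cap\overline{\FF}_{\theta''}$ is constant on the segment. (One must be a little careful here: monotonicity of $\overline{\TT}$ along the segment is what we really use, i.e.\ $\overline{\TT}_\theta\subseteq\overline{\TT}_{\theta''}\subseteq\overline{\TT}_{\theta'}=\overline{\TT}_\theta$, and likewise for $\overline{\FF}$.)

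For (b) $\Rightarrow$ (c) I argue by contraposition: suppose some brick $S$ meets the segment in a single point $\theta''=\Theta_S\cap[\theta,\theta']$. Since $\Theta_S$ is a closed rational polyhedral cone and the intersection with the segment is exactly one point, $S\in\WW_{\theta''}$ but $S\notin\WW_{\theta_1}$ for points $\theta_1$ on the segment arbitrarily close to $\theta''$ on at least one side; hence $\WW$ is not constant on $[\theta,\theta']$, contradicting (b). The only subtlety is the degenerate case $\theta''\in\{\theta,\theta'\}$, but even then $S$ lies in the semistable subcategory at the endpoint and not at interior points near it, so $\WW$ still varies.

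The substantive direction, and the main obstacle, is (c) $\Rightarrow$ (a). Here I would use Proposition \ref{W filtration}(b) (or Definition-Proposition \ref{HN filtration}) to reconstruct the torsion classes from the wall data: $\overline{\TT}_\theta$ is the category generated under $\vecFilt$ by the semistable subcategories $\WW_\eta$ for $\eta\le\theta$, and this in turn is governed by which bricks $S$ have $\theta\in\Theta_S$ versus which have $\theta$ on a prescribed side of $\Theta_S$. The key point is that each wall $\Theta_S$ of a brick has codimension one (Proposition \ref{maximal wall}), and determines a closed half-space $\Theta_S^{\ge}:=\{\eta\mid\eta(Y)\ge0\text{ for all factor modules }Y\text{ of }S\}$ on one side; one shows $\overline{\TT}_\theta$ is determined by the data of, for every brick $S$, whether $\theta$ lies in $\Theta_S^{\ge}$, strictly inside it, or strictly outside. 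If $\Theta_S\cap[\theta,\theta']$ is empty or all of $[\theta,\theta']$ for every brick $S$, then by convexity of the half-spaces $\theta$ and $\theta'$ lie on the same side of (or both on) every brick wall, so all of this local data agrees at $\theta$ and $\theta'$, whence $\overline{\TT}_\theta=\overline{\TT}_{\theta'}$ and $\overline{\FF}_\theta=\overline{\FF}_{\theta'}$. The technical heart is verifying that $\overline{\TT}_\theta$ really is a function of this half-space membership data alone; I expect this to follow by combining Proposition \ref{W filtration}(b) with Lemma \ref{simple and interior}(b),(e) — which characterize when a module is a simple object of $\WW_\eta$ and pin down the relative interior of a codimension-one wall — together with the observation that a module $X$ lies in $\overline{\TT}_\theta$ iff none of its relevant factor modules $Y$ satisfy $\theta(Y)<0$, a condition about which side of finitely many hyperplanes $\theta$ sits on. Finitely many walls are relevant to any given segment (only finitely many brick walls meet a bounded region in the relevant way), so this is a finite case analysis once the framework is set up.
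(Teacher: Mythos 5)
The paper cites this result from \cite{A} without proof, so I am evaluating your argument on its own terms. The directions (a) $\Rightarrow$ (b) and (b) $\Rightarrow$ (c) are essentially right, but the substantive direction (c) $\Rightarrow$ (a) has a genuine gap, and there is also a smaller imprecision in (a) $\Rightarrow$ (b).

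For (a) $\Rightarrow$ (b), the parenthetical ``monotonicity'' claim $\overline{\TT}_\theta\subseteq\overline{\TT}_{\theta''}\subseteq\overline{\TT}_{\theta'}$ has no justification: along a generic line segment $\overline{\TT}$ is not monotone, and you cannot run the argument ``from $\theta''$ toward both endpoints'' because that would already presuppose $\theta''$ is TF equivalent to them. The fix is cheap: Lemma \ref{additivity} also gives $\overline{\TT}_{\theta'}\cap\TT_\theta\subseteq\TT_{\theta''}$ and $\overline{\FF}_{\theta'}\cap\FF_\theta\subseteq\FF_{\theta''}$; since $\TT_\theta\subseteq\overline{\TT}_\theta=\overline{\TT}_{\theta'}$ and $\FF_\theta\subseteq\overline{\FF}_\theta=\overline{\FF}_{\theta'}$, this yields $\TT_\theta\subseteq\TT_{\theta''}$ and $\FF_\theta\subseteq\FF_{\theta''}$ in addition to the inclusions you wrote. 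Then $\overline{\TT}_\theta\subseteq\overline{\TT}_{\theta''}$ forces $\FF_{\theta''}\subseteq\FF_\theta$, which combined with $\FF_\theta\subseteq\FF_{\theta''}$ gives $\FF_\theta=\FF_{\theta''}$, hence $\overline{\TT}_\theta=\overline{\TT}_{\theta''}$; the argument for $\overline{\FF}$ is symmetric. The key is to use both torsion pairs and the additivity for $\TT,\FF$ as well, not monotonicity.

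The real problem is in (c) $\Rightarrow$ (a). You negate (c) as ``$\Theta_S\cap[\theta,\theta']$ is empty or all of $[\theta,\theta']$ for every brick $S$'', but that is strictly stronger than (c). Since $\Theta_S$ is convex, the intersection with the segment is a (possibly empty) subsegment, and condition (c) only forbids it from being a single point; a \emph{proper} subsegment of positive length is allowed. In that case the whole segment $[\theta,\theta']$ lies in the hyperplane $\Kernel\langle-,[S]\rangle$ but only part of it lies in the polyhedral cone $\Theta_S$, so one endpoint $\theta_1$ of the subsegment is an interior point of $[\theta,\theta']$ and a relative boundary point of $\Theta_S$. Your argument then breaks: $S\in\WW_{\theta_1}$ but $S\notin\WW_{\theta''}$ for points $\theta''$ just past $\theta_1$, so $\WW$ is manifestly not constant, yet you have not produced the brick whose wall meets the segment in a single point. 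What needs to be shown is that at such a boundary point $\theta_1$, a composition factor $S'$ of $S$ in $\WW_{\theta_1}$ (which is a simple object, hence has a codimension-one wall $\Theta_{S'}\supseteq\Theta_S$ by Proposition \ref{maximal wall} and Lemma \ref{simple and interior}(c)) must have its wall transverse to the segment, i.e.\ $\Theta_{S'}\cap[\theta,\theta']=\{\theta_1\}$. This is plausible but nontrivial — one has to rule out that all the relevant composition-factor hyperplanes contain the segment — and you have not addressed it. Moreover, even setting this aside, the assertion that $\overline{\TT}_\theta$ ``is determined by the data of, for every brick $S$, whether $\theta$ lies in $\Theta_S^{\ge}$, strictly inside it, or strictly outside'' is itself stated without proof and is the core of the difficulty; invoking Proposition \ref{W filtration}(b) and Lemma \ref{simple and interior} in the abstract does not close this. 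As written, the hard direction is not established.
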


\subsection{Canonical decompositions}

Any element $\theta\in K_0(\proj A)$ can be written uniquely as
\begin{align*}\theta=P_0^\theta-P_1^\theta\end{align*}
for some $P_0^\theta,P_1^\theta\in\proj A$ which do not have non-zero common direct summands. Following \cite{DF}, we write
\begin{align*}\Hom(\theta):=\Hom_A(P_1^\theta,P_0^\theta)\end{align*}
and call it the \textit{presentation space} of $\theta$.
Clearly, $\Hom(\theta)$ is an irreducible algebraic variety, so we consider the Zariski topology there.

For each morphism $f \colon P_1\to P_0$ in $\proj A$,
we set $P_f$ as the 2-term complex given by $f$:
\begin{align*}P_f:=(P_1\xrightarrow{f} P_0).\end{align*}
Also we write
\begin{align*}[f]=[P_f]=[P_0]-[P_1]\in K_0(\proj A).\end{align*}

\begin{definition}\cite[Definition 4.3]{DF}
\begin{enumerate}[\rm(a)]
\item For $\theta_1,\ldots,\theta_\ell\in K_0(\proj A)$, we write $\theta_1\oplus\cdots\oplus\theta_\ell$ if for each general element in $f\in\Hom(\theta_1+\cdots+\theta_\ell)$, there exist $f_1,\ldots,f_\ell\in\Hom(\theta_i)$ such that $P_f\simeq P_{f_1}\oplus\cdots\oplus P_{f_\ell}$ as complexes.
\item
Let $\theta\in K_0(\proj A)$, 
then $\theta$ is said to be \textit{indecomposable} in $K_0(\proj A)$
if $P_f$ is indecomposable for each general element in $f\in\Hom(\theta)$.
\item
We call $\theta_1\oplus\cdots\oplus\theta_\ell$ \textit{a canonical decomposition} 
(of $\theta_1+\cdots+\theta_\ell$)
if all $\theta_i$ are indecomposable in $K_0(\proj A)$. 
\end{enumerate}
\end{definition}

If $\theta=\theta_1+\cdots+\theta_\ell$ and $\theta_1\oplus\cdots\oplus\theta_\ell$,
we write $\theta=\theta_1\oplus\cdots\oplus\theta_\ell$.
We remark that $\theta \in K_0(\proj A)$ is indecomposable if and only if 
$\theta \ne 0$ and $\theta=\theta_1 \oplus \theta_2$ implies $\theta_1=0$ or $\theta_2=0$.
Moreover if $\theta=\theta_1\oplus\cdots\oplus\theta_\ell$ is a canonical decomposition,
then for general $f \in \Hom(\theta)$, 
$P_f$ is isomorphic to $P_{f_1}\oplus\cdots\oplus P_{f_\ell}$ 
with each $P_{f_i}$ is indecomposable. 

The following invariant is useful to understand canonical decompositions, and was originally introduced in \cite{DWZ} for Jacobian algebras of quivers with potential.

\begin{definition}
For morphisms $f \colon P_1\to P_0$ and $g \colon Q_1\to Q_0$ in $\proj A$, 
let
\begin{align*}E(f,g):=\dim_k\Hom_{\KKK^{\bo}(\proj A)}(P_f,P_g[1])\end{align*}
For $\eta,\theta\in K_0(\proj A)$, let
\begin{align*}E(\eta,\theta):=\min\{E(f,g)\mid (f,g)\in\Hom(\eta)\times\Hom(\theta)\}.\end{align*}
Clearly the map
\begin{align*}E \colon K_0(\proj A)\times K_0(\proj A)\to\Z\end{align*}
is subadditive for both entries.
\end{definition}

Notice that $E(-,-)$ is not symmetric, even $E(\eta,\theta)=0$ does not imply $E(\theta,\eta)=0$ in general.

For morphisms $f \colon P_1\to P_0$ and $g \colon Q_1\to Q_0$ in $\proj A$, we have
\begin{equation}\label{Ext 1}
\Hom_{\KKK^{\bo}(\proj A)}(P_f,P_g[1])=\Cokernel(\Hom_A(P_1,Q_1)\oplus\Hom_A(P_0,Q_0)\xrightarrow{(g\circ-\ -\circ f)}\Hom_A(P_1,Q_0)).
\end{equation}
We obtain the following basic observation.

\begin{proposition}\cite[Section 3]{DF}\label{upper semi}
Let $\eta=[P_0]-[P_1]$ and $\theta=[Q_0]-[Q_1]$. The map
\begin{align*}E(-,-) \colon \Hom(\eta)\times\Hom(\theta)\to\Z\end{align*}
is upper semi-continuous, and the subset $\{(f,g)\in\Hom(\eta)\times\Hom(\theta)\mid E(f,g)=E(\eta,\theta)\}$ is open dense in $\Hom(\eta)\times\Hom(\theta)$.
\end{proposition}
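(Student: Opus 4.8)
The final statement is Proposition \ref{upper semi} (attributed to Derksen--Fei), asserting that $E(-,-)\colon\Hom(\eta)\times\Hom(\theta)\to\Z$ is upper semi-continuous and attains its minimum $E(\eta,\theta)$ on an open dense subset.

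\medskip

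The plan is to exhibit $E(f,g)$ as the dimension of the cokernel of a morphism of vector bundles over the irreducible variety $\Hom(\eta)\times\Hom(\theta)$, and then invoke the standard semicontinuity of cokernel rank. First I would fix the data $\eta=[P_0]-[P_1]$ and $\theta=[Q_0]-[Q_1]$, so that $\Hom(\eta)=\Hom_A(P_1,P_0)$ and $\Hom(\theta)=\Hom_A(Q_1,Q_0)$ are affine spaces, hence irreducible. The key input is the formula \eqref{Ext 1}: for a pair $(f,g)$ we have
\begin{align*}
\Hom_{\KKK^{\bo}(\proj A)}(P_f,P_g[1])=\Cokernel\bigl(\Hom_A(P_1,Q_1)\oplus\Hom_A(P_0,Q_0)\xrightarrow{\ (g\circ-\ -\circ f)\ }\Hom_A(P_1,Q_0)\bigr).
\end{align*}
The point is that the domain $\Hom_A(P_1,Q_1)\oplus\Hom_A(P_0,Q_0)$ and codomain $\Hom_A(P_1,Q_0)$ of this map are \emph{fixed} finite-dimensional $k$-vector spaces, independent of $(f,g)$, and the map
\[
\Phi_{(f,g)}(u,v)=g\circ u-v\circ f
\]
depends bilinearly (in particular polynomially, hence algebraically) on $(f,g)\in\Hom(\eta)\times\Hom(\theta)$.

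\medskip

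Next I would package this as a single morphism of trivial vector bundles over $V:=\Hom(\eta)\times\Hom(\theta)$: writing $M_0:=\Hom_A(P_1,Q_1)\oplus\Hom_A(P_0,Q_0)$ and $M_1:=\Hom_A(P_1,Q_0)$, the assignment $(f,g)\mapsto\Phi_{(f,g)}$ defines a morphism $\Phi\colon V\times M_0\to V\times M_1$ of vector bundles over $V$ whose entries (in any bases of $M_0,M_1$) are polynomials in the coordinates of $(f,g)$. Then $E(f,g)=\dim_k\Cokernel\Phi_{(f,g)}=\dim M_1-\operatorname{rank}\Phi_{(f,g)}$. Since the locus where a matrix of polynomials has rank $\le r$ is Zariski closed (it is cut out by the vanishing of all $(r+1)\times(r+1)$ minors), the function $(f,g)\mapsto\operatorname{rank}\Phi_{(f,g)}$ is lower semi-continuous, so $E(f,g)$ is upper semi-continuous. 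For the second assertion, let $r_{\max}$ be the maximal value of $\operatorname{rank}\Phi_{(f,g)}$ on $V$; the locus $\{\operatorname{rank}\Phi_{(f,g)}\ge r_{\max}\}=\{\operatorname{rank}\Phi_{(f,g)}=r_{\max}\}$ is open and nonempty, hence dense because $V$ is irreducible, and on it $E(f,g)$ attains its minimum value, which is by definition $E(\eta,\theta)$.

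\medskip

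There is essentially no hard step here; the only things to be careful about are (i) verifying that $\Phi$ really is algebraic in $(f,g)$ — this is immediate since composition of $A$-linear maps is bilinear and $f,g$ enter linearly — and (ii) that the domain and codomain of the presenting map in \eqref{Ext 1} do not depend on $(f,g)$, which is clear from the shape of the formula, since $P_0,P_1,Q_0,Q_1$ are fixed. One mild subtlety worth a sentence is the reduction from $K_0$-classes $\eta,\theta$ to honest projectives: one should note that $\Hom(\eta)$ is defined using the canonical representatives $P^\eta_0,P^\eta_1$ with no common summand, but the argument above works verbatim for any choice of $P_0,P_1$ with $[P_0]-[P_1]=\eta$ (and likewise for $\theta$), and $E(f,g)$ is the same since $P_f\in\KKK^{\bo}(\proj A)$ only depends on $f$; so no genuine difficulty arises. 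Thus the whole proof is: rewrite $E$ as a cokernel dimension via \eqref{Ext 1}, observe algebraicity, and apply generic rank semicontinuity over an irreducible base.
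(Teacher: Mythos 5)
Your proof is correct and is precisely the standard argument: express $E(f,g)$ via \eqref{Ext 1} as the corank of a linear map between fixed spaces depending polynomially (indeed bilinearly) on $(f,g)$, invoke lower semicontinuity of rank to get upper semicontinuity of $E$, and use irreducibility of the affine space $\Hom(\eta)\times\Hom(\theta)$ for density of the minimum locus. The paper itself gives no proof, deferring to Derksen--Fei, and your reconstruction is exactly the intended one.
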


By using $E(\eta,\theta)$, canonical decompositions are characterised as in (a) below. In particular, the existence of canonical decompositions is guaranteed.

\begin{proposition}\label{decomposition}
The following assertions hold.
\begin{enumerate}[\rm(a)] 
\item \cite[Theorem 4.4]{DF} Let $\theta_1,\theta_2,\ldots,\theta_\ell \in K_0(\proj A)$, then
$\theta_1\oplus\cdots\oplus\theta_\ell$ holds
if and only if $E(\theta_i,\theta_j)=0$ for all $i\neq j$.
\item \cite{DF} For any $\theta \in K_0(\proj A)$, there exists a unique canonical decomposition $\theta_1\oplus\cdots\oplus\theta_\ell$ of $\theta$ up to reordering.
\end{enumerate}
\end{proposition}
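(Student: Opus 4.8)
The plan is to prove (a) together with existence in (b) by a dimension count on presentation spaces in the spirit of Kac and Schofield for representations, and then to read off uniqueness in (b) from the Krull--Schmidt theorem in $\KKK^{\bo}(\proj A)$. For $\theta\in K_0(\proj A)$ set $V_\theta:=\Hom(\theta)=\Hom_A(P^\theta_1,P^\theta_0)$ and let the connected group $G_\theta:=\Aut_A(P^\theta_0)\times\Aut_A(P^\theta_1)$ act on $V_\theta$ by $(a,b)\cdot f:=afb^{-1}$, so that two morphisms lie in one $G_\theta$-orbit if and only if the associated complexes are isomorphic. The first step is the identity
\begin{align*}
\operatorname{codim}_{V_\theta}(G_\theta\cdot f)=\dim_k\Hom_{\KKK^{\bo}(\proj A)}(P_f,P_f[1])=E(f,f)\qquad(f\in V_\theta),
\end{align*}
which holds because the tangent space to $G_\theta\cdot f$ at $f$ is the image of $(\alpha,\beta)\mapsto\alpha f-f\beta$, whose cokernel is exactly $\Hom_{\KKK^{\bo}(\proj A)}(P_f,P_f[1])$ by \eqref{Ext 1}.

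Next, fix a formal decomposition $\theta=\theta_1+\cdots+\theta_\ell$ and, after reducing to the case of no cancellation among the $\theta_i$ (so that $P^\theta_k=\bigoplus_iP^{\theta_i}_k$ for $k=0,1$; splitting off contractible summands $P\xrightarrow{\ \mathrm{id}\ }P$ changes neither complexes up to isomorphism nor the values of $E$), consider the ``block-diagonal'' linear subspace $D:=\prod_iV_{\theta_i}\subset V_\theta$ and the morphism $\mu\colon G_\theta\times D\to V_\theta$, $(g,(f_i))\mapsto g\cdot\bigoplus_if_i$. By definition $\theta=\theta_1\oplus\cdots\oplus\theta_\ell$ holds if and only if $\overline{\operatorname{Im}\mu}=V_\theta$, i.e.\ the generic fibre of $\mu$ has dimension $\dim(G_\theta\times D)-\dim V_\theta$. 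I would take the $f_i\in V_{\theta_i}$ generic and, using Proposition \ref{upper semi}, with $E(f_i,f_j)=E(\theta_i,\theta_j)$ for all $i,j$; over such a block-diagonal point the fibre of $\mu$ is governed by the orbits $G_{\theta_i}\cdot f_i$ and the spaces of chain maps $P_{f_i}\to P_{f_j}$. Expanding the dimension of chain maps between two $2$-term complexes in terms of $\dim\Hom_A$'s and $\dim\Hom_{\KKK}(P_{f_i},P_{f_j}[k])$ for $k\in\{-1,0,1\}$ (again via \eqref{Ext 1}) and using $\langle[P_{f_i}],[P_{f_j}]\rangle=\sum_{k\in\{-1,0,1\}}(-1)^k\dim\Hom_{\KKK}(P_{f_i},P_{f_j}[k])$, all the $\Hom_A$- and $\Hom_{\KKK}[0],[-1]$-terms telescope and one is left with
\begin{align*}
\dim V_\theta-\dim\overline{\operatorname{Im}\mu}=\sum_{i\neq j}E(\theta_i,\theta_j).
\end{align*}
Since $E\ge 0$, this proves (a) whenever the $\theta_i$ are indecomposable (possibly with repetitions); the general case of (a) then follows by refining each $\theta_i$ into its canonical decomposition and combining the additivity $\Hom_{\KKK}(P\oplus P',Q[1])\cong\Hom_{\KKK}(P,Q[1])\oplus\Hom_{\KKK}(P',Q[1])$ (and likewise in the second variable) with the subadditivity of $E$ to pass between the vanishing of $E(\theta_i,\theta_j)$ and that of all cross-$E$'s of the refined pieces.

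For existence in (b): there are only finitely many multisets $\mathbf m=\{b_1\alpha_1,\dots,b_r\alpha_r\}$ with $\alpha_i\in K_0(\proj A)$ and $\sum_ib_i\alpha_i=\theta$, and for each the subset $X_{\mathbf m}\subset V_\theta$ of those $f$ with $P_f\cong\bigoplus_iP_{f_i}^{b_i}$, each $P_{f_i}$ indecomposable and $[f_i]=\alpha_i$, is constructible: the locus of $f'$ with $P_{f'}$ indecomposable is constructible by Chevalley's theorem (it is the complement of the image of the locus of non-trivial idempotents in the endomorphism bundle), and $X_{\mathbf m}$ is an image of a product of such loci under a morphism of the type of $\mu$. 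Since these $X_{\mathbf m}$ cover the irreducible variety $V_\theta$, exactly one of them, $X_{\mathbf m_0}$, is dense; its multiset $\mathbf m_0$ gives the canonical decomposition $\theta=\theta_1\oplus\cdots\oplus\theta_\ell$, and each $\theta_i$ is indecomposable because a generic presentation of $\theta_i$ occurs as a summand of the generic $P_f$. Uniqueness is then immediate: a second canonical decomposition yields a second dense $X_{\mathbf m'}$, which forces $\mathbf m'=\mathbf m_0$ by Krull--Schmidt applied to the generic $P_f$.

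I expect the main obstacle to be the dimension count in the second paragraph: correctly enumerating the components of the generic fibre of $\mu$ (over a generic block-diagonal point these are the products $\prod_iG_{\theta_i}\cdot f_i$ together with a finite set of further components permuting any coincident $\theta_i$, all of the same dimension), and carefully bookkeeping the contributions of $\Hom_{\KKK}(P_{f_i},P_{f_j}[-1])$ and of null-homotopic chain maps so that everything collapses to $\sum_{i\neq j}E(\theta_i,\theta_j)$. The reduction to the no-cancellation case and the constructibility statements are routine but must be carried out with care.
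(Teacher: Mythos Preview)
Your outline is correct and follows the original Derksen--Fei argument. Note, however, that the paper itself does not prove (a): it is quoted directly from \cite[Theorem 4.4]{DF}, and the paper's entire proof of (b) is the single sentence ``The existence follows from (a), and the uniqueness is clear.'' So you are reconstructing substantially more than the paper records---namely the dimension count on presentation spaces that underlies (a), and the constructibility/stratification argument that makes existence in (b) precise. Both pieces are sound; the fibre bookkeeping you flag as the main obstacle does collapse to the displayed formula
\[
\dim V_\theta-\dim\overline{\operatorname{Im}\mu}=\sum_{i\ne j}E(\theta_i,\theta_j)
\]
once one computes the stabilizer dimension via the codimension identity $\operatorname{codim}(G_\theta\!\cdot\! f)=E(f,f)$ and the orbit dimensions inside $D$ (with finitely many equal-dimensional components when some of the $\theta_i$ coincide). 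One small point worth making explicit in your existence argument: the finiteness of the set of multisets $\mathbf m$ holds because every indecomposable summand of $P_f$ has its terms among the direct summands of $P_0^\theta$ and $P_1^\theta$, not from any intrinsic finiteness in $K_0(\proj A)$.
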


\begin{proof}
(b) The existence follows from (a), and the uniqueness is clear.
\end{proof}

We immediately have the following properties.

\begin{proposition}\label{decompose_2step}
Let $\eta,\theta_1,\ldots,\theta_\ell \in K_0(\proj A)$ and $\theta:=\theta_1+\cdots+\theta_\ell$.
\begin{enumerate}[\rm(a)]
\item The condition $\theta_1\oplus\cdots\oplus\theta_\ell$ holds if and only if $\theta_i\oplus\theta_j$ holds for each $1\le i\neq j\le\ell$.
\item $E(\eta,\theta)\le E(\eta,\theta_1)+\cdots+E(\eta,\theta_\ell)$ and $E(\theta,\eta)\le E(\theta_1,\eta)+\cdots+E(\theta_\ell,\eta)$ hold.
Both equalities hold if $\theta_1 \oplus\cdots\oplus \theta_\ell$.
\item If $\eta\oplus\theta_i$ for each $1\le i\le\ell$, then $\eta\oplus\theta$. The converse holds if $\theta_i\oplus\theta_j$ for each $1\le i\neq j\le\ell$.
\item Assume $\bigoplus_{i=1}^\ell\theta_i$ and $\theta_i=\bigoplus_{j=1}^{\ell_i}\theta_{ij}$. Then $\bigoplus_{1\le i\le\ell,\ 1\le j\le\ell_i}\theta_{ij}$ holds, that is, $\theta_{ij}\oplus\theta_{i'j'}$ holds for each $(i,j)\neq(i',j')$.
\item For each $m\ge1$, we have $\cone(\ind \theta)\subseteq\cone(\ind m\theta)$.
\end{enumerate}
\end{proposition}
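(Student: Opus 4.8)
The plan is to reduce all five assertions to bookkeeping with the invariant $E(-,-)$, using three inputs from the excerpt: the characterisation of canonical decompositions in Proposition \ref{decomposition}(a), the subadditivity of $E(-,-)$ in each variable, and the genericity statement of Proposition \ref{upper semi}. Part (a) is then immediate: by Proposition \ref{decomposition}(a) the condition $\theta_1\oplus\cdots\oplus\theta_\ell$ means $E(\theta_i,\theta_j)=0$ for all $i\ne j$, while (applying the same proposition with $\ell=2$) $\theta_i\oplus\theta_j$ means $E(\theta_i,\theta_j)=0=E(\theta_j,\theta_i)$; as $(i,j)$ ranges over all ordered pairs of distinct indices the two families of conditions coincide.

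For (b), the inequality $E(\eta,\theta)\le\sum_i E(\eta,\theta_i)$ and its mirror are just subadditivity. For the reverse inequality under the hypothesis $\theta=\theta_1\oplus\cdots\oplus\theta_\ell$, I would let $U\subset\Hom(\theta)$ be the dense locus of those $g$ for which $P_g\simeq P_{g_1}\oplus\cdots\oplus P_{g_\ell}$ with $g_i\in\Hom(\theta_i)$. For every $f\in\Hom(\eta)$ and every $g\in U$, additivity of $\Hom_{\KKK^{\bo}(\proj A)}(P_f,-)$ gives $E(f,g)=\sum_i E(f,g_i)\ge\sum_i E(\eta,\theta_i)$, the last step because $E(\eta,\theta_i)$ is by definition the minimum over $\Hom(\eta)\times\Hom(\theta_i)$. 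Since $\Hom(\eta)\times\Hom(\theta)$ is irreducible, the dense subset $\Hom(\eta)\times U$ meets the dense open subset $\{(f,g)\mid E(f,g)=E(\eta,\theta)\}$ supplied by Proposition \ref{upper semi}, and evaluating the displayed inequality at such a point yields $E(\eta,\theta)\ge\sum_i E(\eta,\theta_i)$, hence equality; the statement for $E(\theta,\eta)$ is symmetric. I expect this genericity step — reconciling ``general $g$ splits as a direct sum'' with ``general $g$ realises the minimum $E$'' inside the irreducible variety $\Hom(\theta)$ — to be the only point needing real care; it rests on the standard fact that two dense subsets of an irreducible variety, one of them open, must intersect.

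Parts (c) and (d) are then formal consequences of (a) and the equality case of (b). For (c): if $\eta\oplus\theta_i$ for all $i$ then $E(\eta,\theta_i)=0=E(\theta_i,\eta)$, so by subadditivity $E(\eta,\theta)=0=E(\theta,\eta)$, i.e.\ $\eta\oplus\theta$; conversely, if moreover $\theta_i\oplus\theta_j$ for all $i\ne j$, then $\theta=\theta_1\oplus\cdots\oplus\theta_\ell$ by (a), so the equality case of (b) gives $0=E(\eta,\theta)=\sum_i E(\eta,\theta_i)$ and $0=E(\theta,\eta)=\sum_i E(\theta_i,\eta)$, forcing each summand to vanish. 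For (d), by Proposition \ref{decomposition}(a) it suffices to check $E(\theta_{ij},\theta_{i'j'})=0$ for all $(i,j)\ne(i',j')$; the case $i=i'$ is contained in the hypothesis $\theta_i=\bigoplus_j\theta_{ij}$, and for $i\ne i'$ I would apply the equality case of (b) twice (once to $\theta_i=\bigoplus_j\theta_{ij}$ in the first variable, once to $\theta_{i'}=\bigoplus_{j'}\theta_{i'j'}$ in the second) to get $0=E(\theta_i,\theta_{i'})=\sum_{j,j'}E(\theta_{ij},\theta_{i'j'})$, so every term is zero, and symmetrically $E(\theta_{i'j'},\theta_{ij})=0$.

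Finally, for (e), write $\ind\theta=\{\theta_1,\dots,\theta_\ell\}$ with $\theta=\theta_1\oplus\cdots\oplus\theta_\ell$. Subadditivity gives $E(m\theta_i,m\theta_j)\le m^2E(\theta_i,\theta_j)=0$ for $i\ne j$, so $m\theta=m\theta_1\oplus\cdots\oplus m\theta_\ell$; refining each $m\theta_i$ into its canonical decomposition and invoking (d) together with the uniqueness in Proposition \ref{decomposition}(b) identifies the union of these refinements with $\ind(m\theta)$, so $\ind(m\theta_i)\subseteq\ind(m\theta)$ for every $i$. Since $m\theta_i$ is the sum of the elements of $\ind(m\theta_i)$, we get $\theta_i=\tfrac1m(m\theta_i)\in\cone(\ind(m\theta_i))\subseteq\cone(\ind(m\theta))$ for each $i$, and therefore $\cone(\ind\theta)=\sum_{i=1}^\ell\R_{\ge0}\theta_i\subseteq\cone(\ind(m\theta))$.
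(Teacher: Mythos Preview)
Your proof is correct and follows essentially the same route as the paper: part (a) is read off from Proposition \ref{decomposition}(a); part (b) combines subadditivity with the density of the splitting locus inside the irreducible presentation space, intersected with the open dense locus from Proposition \ref{upper semi}; parts (c), (d) and (e) are then formal. The only cosmetic difference is that for (d) the paper cites the converse direction of (c) twice rather than unfolding to the equality case of (b), and in (e) your phrase ``$m\theta_i$ is the sum of the elements of $\ind(m\theta_i)$'' should read ``a nonnegative integer combination of'' (there may be repeated summands), but the conclusion $\theta_i\in\cone(\ind(m\theta_i))$ is unaffected.
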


\begin{proof}
(a) is immediate from Proposition \ref{decomposition}(a).

(b) 
The first statement is immediate from definition.
To show the second one, assume $\theta_1 \oplus\cdots\oplus \theta_\ell$.
We only prove $E(\eta,\theta)\ge \sum_{i=1}^\ell E(\eta,\theta_i)$.
Since $\theta=\theta_1\oplus\cdots\oplus \theta_\ell$, 
\begin{align*}
X:=\left\{ g \in \Hom(\theta) \ \middle|\ 
\text{there exists $(g_i)_{i=1}^\ell \in \prod_{i=1}^\ell\Hom(\theta_i)$ such that
$P_g \simeq\bigoplus_{i=1}^\ell P_{g_i}$} \right\}
\end{align*}
is an open dense subset of $\Hom(\theta)$. By Proposition \ref{upper semi}, the subset 
\begin{align*}
Y:=\left\{(f,g)\in\Hom(\eta)\times\Hom(\theta)\ \middle|\  E(f,g)=E(\eta,\theta)\right\}
\end{align*}
is open dense in $\Hom(\eta)\times\Hom(\theta)$. Let $\pi_2:\Hom(\eta)\times\Hom(\theta)\to\Hom(\theta)$ be the projection to the second entry. Take a point $(f,g)$ in an open dense subset $\pi_2^{-1}(X)\cap Y$, and $(g_i)_{i=1}^\ell\in \prod_{i=1}^\ell\Hom(\theta_i)$ such that $P_g\simeq \bigoplus_{i=1}^\ell P_{g_i}$.
Then we obtain the desired inequality
\begin{align*}
E(\eta,\theta)=E(f,g)=\sum_{i=1}^\ell E(f,g_i) \ge\sum_{i=1}^\ell E(\eta,\theta_i).
\end{align*}

(c)
We prove the first statement. By (b), we have $E(\eta,\theta)\le\sum_{i=1}^\ell E(\eta,\theta_i)$, which is zero by Proposition \ref{decomposition}(a). Thus $E(\eta,\theta)=0$ holds, and dually $E(\theta,\eta)=0$ holds. By Proposition \ref{decomposition}(a) again, we obtain $\eta\oplus\theta$.

To show the second one, assume $\eta\oplus\theta$ and $\theta_i \oplus \theta_j$ for each $1 \le i \ne j \le \ell$.
By Proposition \ref{decomposition}(a) and the second statement of (b), we have
$0=E(\eta,\theta)=\sum_{i=1}^\ell E(\eta,\theta_i)$.
Thus $E(\eta,\theta_i)=0$ for each $i$.
Dually $E(\theta_i,\eta)=0$ for each $i$.
Thus Proposition \ref{decomposition}(a) gives $\eta \oplus \theta_i$.

(d) is immediate from (c).

(e) Let $\theta=\bigoplus_{i=1}^\ell\theta_i$ be a canonical decomposition. It suffices to show $\theta_i\in\cone(\ind m\theta)$ for each $i$.
We have $m\theta=\bigoplus_{i=1}^\ell m\theta_i$ by (a) and the first statement of (c).
Let $m\theta_i=\bigoplus_{j=1}^{\ell_i}\theta_{ij}$ be a canonical decomposition. 
Then $m\theta=\bigoplus_{i=1}^\ell\bigoplus_{j=1}^{\ell_i}\theta_{ij}$ is a canonical decomposition by (d).
Therefore each $\theta_{ij}$ 
belong to $\ind(m \theta)$, so we have $ \theta_i=m^{-1}\sum_{j=1}^{\ell_i} \theta_{i\ell_i} \in \cone(\ind m \theta)$, as desired.
\end{proof}

We also need the following observation.

\begin{proposition}\cite[Lemma 2.16]{P}\label{codim=E}
For $P_0,P_1\in\proj A$, let $G=\Aut_A(P_1)\times\Aut_A(P_0)$. 
For $f\in\Hom_A(P_1,P_0)$, the codimension of $Gf$ in $\Hom_A(P_1,P_0)$ is $E(f,f)$.
\end{proposition}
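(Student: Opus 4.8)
The plan is to realize $Gf$ as the orbit of a morphic action of a linear algebraic group on the affine space $\Hom_A(P_1,P_0)$, compute $\dim Gf$ by the orbit--stabilizer theorem, and identify the stabilizer with the kernel of the very map whose cokernel appears in \eqref{Ext 1}.

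First I would set up the action: $G=\Aut_A(P_1)\times\Aut_A(P_0)$ acts on $\Hom_A(P_1,P_0)$ by $(a,b)\cdot f:=bfa^{-1}$, and $Gf$ is exactly the set of morphisms $f'$ with $P_{f'}\simeq P_f$. I will invoke two standard facts. (i) For a finite-dimensional $k$-algebra $\Lambda$, an element is a unit if and only if it is bijective as a $k$-linear endomorphism, so the unit group $\Lambda^{\times}$ is a nonempty principal open, hence dense, subset of the affine space $\Lambda$, and $\dim\Lambda^{\times}=\dim_k\Lambda$. (ii) For a morphic action of a linear algebraic group $G$ on a variety, every orbit $Gf$ is a locally closed subvariety with $\dim Gf=\dim G-\dim G_f$, where $G_f$ is the stabilizer (all fibres of the orbit map are cosets of $G_f$, so this needs no separability hypothesis). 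By (i), $\dim G=\dim_k\End_A(P_1)+\dim_k\End_A(P_0)$.

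Next I would compute the stabilizer. The equation $bfa^{-1}=f$ rewrites as $fu=vf$ with $u:=a$ and $v:=b$, so $G_f$ is precisely the group of automorphisms of $P_f=(P_1\xrightarrow{f}P_0)$ in the category of morphisms of $\proj A$; its endomorphism algebra there is the $k$-subspace
\[
\Lambda_f:=\{(u,v)\in\End_A(P_1)\oplus\End_A(P_0)\mid fu=vf\}=\Kernel(d),\qquad d:=(f\circ-\ -\circ f),
\]
where $d\colon\End_A(P_1)\oplus\End_A(P_0)\to\Hom_A(P_1,P_0)$ is the map in \eqref{Ext 1} with $g=f$. A pair in $\Lambda_f$ is a unit of $\Lambda_f$ iff both of its components are invertible (the componentwise inverse pair then lies automatically in $\Lambda_f$), so $G_f=\Lambda_f^{\times}$ is a nonempty open subset of the affine space $\Lambda_f$, and (i) gives $\dim G_f=\dim_k\Lambda_f=\dim_k\Kernel(d)$.

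Assembling: by orbit--stabilizer and rank--nullity,
\[
\dim Gf=\dim G-\dim G_f=\dim_k\bigl(\End_A(P_1)\oplus\End_A(P_0)\bigr)-\dim_k\Kernel(d)=\dim_k\Image(d),
\]
and hence, using \eqref{Ext 1},
\[
\operatorname{codim}(Gf)=\dim_k\Hom_A(P_1,P_0)-\dim_k\Image(d)=\dim_k\Cokernel(d)=\dim_k\Hom_{\KKK^{\bo}(\proj A)}(P_f,P_f[1])=E(f,f).
\]
I do not expect a real obstacle here: the argument is essentially bookkeeping once (i) and (ii) are granted. The only points needing care are verifying that automorphisms in the morphism category are exactly the pairs $(u,v)$ with $u,v$ both invertible, so that $G_f$ is genuinely the unit group of $\Lambda_f$, and matching the sign conventions with \eqref{Ext 1}; neither is serious.
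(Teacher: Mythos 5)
Your proof is correct, and it is essentially the standard argument used in the cited reference \cite[Lemma 2.16]{P} (the paper itself does not supply a proof, it only cites Plamondon). You correctly observe that the stabilizer equation $bfa^{-1}=f$ identifies $G_f$ with the unit group of the finite-dimensional $k$-algebra $\Lambda_f=\Kernel(d)$, that $\Lambda_f^\times$ is open dense in the affine space $\Lambda_f$, and then orbit--stabilizer plus rank--nullity gives $\operatorname{codim}(Gf)=\dim_k\Cokernel(d)=E(f,f)$ via \eqref{Ext 1}.
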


Following \cite{DF}, we introduce the next notions, where we do \textit{not} assume that $\theta$ is indecomposable.

\begin{definition}\cite[Definition 4.6]{DF}
Let $\theta \in K_0(\proj A)$.
\begin{enumerate}[\rm(a)]
\item \begin{enumerate}[\rm(i)]
\item
$\theta$ is said to be \textit{rigid} if there exists $f \in \Hom(\theta)$ such that $\Hom_{\KKK^{\bo}(\proj A)}(P_f,P_f[1])=0$.
\item
$\theta$ is said to be \textit{tame} if $E(\theta,\theta)=0$.
\item
$\theta$ is said to be \textit{wild} if $E(\theta,\theta) \ne 0$.
\end{enumerate}
\item $\theta$ is said to be \textit{positive} if $P_0^\theta \ne 0$ and $P_1^\theta=0$, and \textit{negative} if $P_0^\theta=0$ and $P_1^\theta\ne0$.
\end{enumerate}
\end{definition}

Therefore $\theta \in K_0(\proj A)$ is rigid if and only if there exists $f \in \Hom(\theta)$ such that $P_f$ is presilting, and $\theta$ is tame if and only if $\theta \oplus \theta$ holds.

Typical examples of direct sums in $K_0(\proj A)$ are given in silting theory.

\begin{example}\label{presilting case}
For $f \in \Hom(\theta)$, $P_f$ is presilting if and only if the orbit of $f$ 
with respect to the action of the group $\Aut(P_1^\theta) \times \Aut(P_0^\theta)$
on $\Hom(\theta)$ is dense by Proposition \ref{codim=E}.
In this case, $P_f \simeq P_{f'}$ holds for each general $f' \in \Hom(\theta)$. 

Let $U=U_1\oplus\cdots\oplus U_\ell\in\twopresilt A$ with $U_i$ indecomposable. 
From the previous paragraph, we have a canonical decomposition $[U]=[U_1] \oplus \cdots \oplus [U_\ell]$ in $K_0(\proj A)$ and the obvious equalities
\begin{align*}\overline{\TT}_U=\bigcap_{i=1}^\ell\overline{\TT}_{U_i}\ \text{ and }\ \TT_U=\bigvee_{i=1}^\ell\TT_{U_i}.\end{align*}
By Remark \ref{cone basis}, any $\eta \in C(U) \cap K_0(\proj A)$ has a canonical decomposition of the form
$\theta=[U_1]^{\oplus s_1} \oplus \cdots \oplus [U_\ell]^{\oplus s_\ell}$ with $s_i \in \Z_{\ge 0}$.
\end{example}

\begin{lemma}\label{silting direct sum}
Let $U_1,U_2$ be 2-term presilting complexes 
in $\KKK^{\bo}(\proj A)$.
Then $U_1 \oplus U_2$ is 2-term presilting 
if and only if $[U_1] \oplus [U_2]$ in $K_0(\proj A)$.
\end{lemma}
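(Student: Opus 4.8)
The statement to prove is Lemma \ref{silting direct sum}: for 2-term presilting complexes $U_1, U_2$, the sum $U_1 \oplus U_2$ is 2-term presilting if and only if $[U_1] \oplus [U_2]$ in $K_0(\proj A)$. The plan is to chase both directions through Proposition \ref{decomposition}(a), which says $[U_1] \oplus [U_2]$ if and only if $E([U_1],[U_2]) = 0 = E([U_2],[U_1])$, together with Example \ref{presilting case}, which tells us that for a presilting complex $U_i = P_{f_i}$, the orbit of $f_i$ under $\Aut(P_1^{[U_i]}) \times \Aut(P_0^{[U_i]})$ on $\Hom([U_i])$ is dense, and $P_{f_i'} \simeq U_i$ for general $f_i' \in \Hom([U_i])$.

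First I would prove the forward direction. Suppose $U_1 \oplus U_2$ is 2-term presilting. Then $\Hom_{\KKK^{\bo}(\proj A)}(U_1 \oplus U_2, (U_1 \oplus U_2)[1]) = 0$, so in particular $\Hom_{\KKK^{\bo}(\proj A)}(U_1, U_2[1]) = 0$ and $\Hom_{\KKK^{\bo}(\proj A)}(U_2, U_1[1]) = 0$. Writing $U_1 = P_{f_1}$ and $U_2 = P_{f_2}$ with $f_i \in \Hom([U_i])$, this says $E(f_1, f_2) = 0 = E(f_2, f_1)$, hence $E([U_1],[U_2]) = 0 = E([U_2],[U_1])$, and Proposition \ref{decomposition}(a) gives $[U_1] \oplus [U_2]$.

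For the converse, suppose $[U_1] \oplus [U_2]$, i.e.\ $E([U_1],[U_2]) = 0 = E([U_2],[U_1])$. By Proposition \ref{upper semi}, the set of pairs $(g_1, g_2) \in \Hom([U_1]) \times \Hom([U_2])$ with $E(g_1, g_2) = 0$ is open dense, and likewise for $E(g_2, g_1) = 0$; intersecting, there is an open dense set of pairs with both vanishing. On the other hand, by Example \ref{presilting case}, for $i = 1, 2$ the set $\{g_i \in \Hom([U_i]) \mid P_{g_i} \simeq U_i\}$ contains a dense orbit, so it is open dense. Hence I can choose $(g_1, g_2)$ in the intersection of all these open dense sets: then $P_{g_1} \simeq U_1$, $P_{g_2} \simeq U_2$, and $E(g_1, g_2) = 0 = E(g_2, g_1)$. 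Since $U_1, U_2$ are individually presilting, $\Hom_{\KKK^{\bo}(\proj A)}(U_i, U_i[1]) = 0$, and combining with the vanishing of the cross terms $\Hom_{\KKK^{\bo}(\proj A)}(P_{g_i}, P_{g_j}[1])$ for $i \neq j$, we get $\Hom_{\KKK^{\bo}(\proj A)}(U_1 \oplus U_2, (U_1 \oplus U_2)[1]) = 0$, i.e.\ $U_1 \oplus U_2$ is 2-term presilting.

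**Main obstacle.** The only delicate point is the use of genericity in the converse: I need to know that requiring $P_{g_i} \simeq U_i$ is an \emph{open dense} condition (not merely dense) so that it can be simultaneously satisfied alongside the two open dense $E$-vanishing conditions; this is exactly what Example \ref{presilting case} supplies via Proposition \ref{codim=E} (a dense orbit is open because its complement is a lower-dimensional, hence closed, union of orbits — or more directly, the presilting locus is open by upper semicontinuity of $E(f,f)$). Beyond this, the argument is a direct translation between the triangulated-category language ($\Hom(-,-[1])$) and the $E$-invariant language, so I expect no substantive difficulty.
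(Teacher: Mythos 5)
Your proof is correct and follows essentially the same route as the paper: the "only if" direction passes through Example \ref{presilting case} (or equivalently the $E$-vanishing criterion of Proposition \ref{decomposition}(a)), and the "if" direction intersects the open dense $E$-vanishing locus from Proposition \ref{upper semi} with the open dense orbits giving $P_{g_i}\simeq U_i$. The extra care you take in the "main obstacle" paragraph — noting that the dense orbit is in fact open, so the three generic conditions can be satisfied simultaneously — is exactly the genericity argument the paper invokes implicitly when it writes "we may assume."
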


\begin{proof}
The ``only if'' part follows from Example \ref{presilting case}.

For the ``if'' part, we have $\Hom_{\KKK^{\bo}(\proj A)}(P_f,P_g[1])=0$ and $\Hom_{\KKK^{\bo}(\proj A)}(P_g,P_f[1])=0$ for any general 
$(f,g) \in \Hom([U_1]) \times \Hom([U_2])$
from Propositions \ref{upper semi} and \ref{decomposition}.
By Example \ref{presilting case}, we may assume that $P_f \simeq U_1$ and $P_g \simeq U_2$.
Then $U_1 \oplus U_2$ is 2-term presilting.
\end{proof}

\section{Morphism torsion pairs and semistable torsion pairs}

The aim of this section is to introduce a class of torsion classes called morphism torsion classes and observe their basic properties.
This class contains the functorially finite torsion classes.
Throughout this section, $A$ is a finite dimensional algebra over an algebraically closed field $k$.

\subsection{Basic properties}

We first define morphism torsion pairs as follows as a generalization of \eqref{define T_U}.

\begin{definition}
For a morphism $f$ in $\proj A$, let
$C_f:=\Cokernel f$ and $K_{\nu f}:=\Kernel \nu f$.
We define \textit{morphism torsion classes}
\begin{align*}\TT_f:=\T(C_f)\ \text{ and }\ \overline{\TT}_f:={^\perp K_{\nu f}}\end{align*}
and \textit{morphism torsion-free classes}
\begin{align*}\FF_f:=\F(K_{\nu f})\ \text{ and }\ \overline{\FF}_f:={C_f}^\perp.\end{align*}
Clearly they give two torsion pairs
\begin{align*}(\overline{\TT}_f,\FF_f)\ \text{ and }\ (\TT_f,\overline{\FF}_f)\end{align*}
called \textit{morphism torsion pairs}. 
We also set $\WW_f:=\overline{\TT}_f\cap\overline{\FF}_f.$
\end{definition}

We will later show that $\WW_f$ is always a wide subcategory of $\mod A$ in Proposition \ref{f category}.

\begin{remark}
In contrary to semistable torsion classes, the inclusions $\TT_f\subseteq\overline{\TT}_f$ do not necessarily hold.
Actually, $\TT_f \subset \overline{\TT}_f$ holds if and only $P_f$ is 2-term presilting; see Proposition \ref{P Q[1]}.
\end{remark}

By definition, for a direct sum $f\oplus g$ of morphisms $f,g$ in $\proj A$, we have
\begin{align*}
\TT_{f\oplus g}=\TT_f\vee\TT_g,\quad\FF_{f\oplus g}=\FF_f\vee\FF_g,&\\
\overline{\TT}_{f\oplus g}=\overline{\TT}_f\cap\overline{\TT}_g,\quad\overline{\FF}_{f\oplus g}=\overline{\FF}_f\cap\overline{\FF}_g,&\quad\WW_{f\oplus g}=\WW_f\cap\WW_g.
\end{align*}

To understand morphism torsion pairs, the Nakayama functor is useful.

\begin{lemma}\label{Nakayama T_f}
Let $f$ be a morphism in $\proj A$, and $X \in \mod A$. Consider the homomorphism
\begin{align*}
\Hom_A(f,X) \colon \Hom_A(P_0,X) \to \Hom_A(P_1,X).
\end{align*}
\begin{enumerate}[\rm(a)]
\item
There exist isomorphisms
\begin{align*}
\Cokernel \Hom_A(f,X) &\simeq \Hom_{\DDD(A)}(P_f,X[1]) 
\simeq D \Hom_A(X,K_{\nu f}),\\
\Kernel \Hom_A(f,X) &\simeq \Hom_{\DDD(A)}(P_f,X)
\simeq \Hom_A(C_f,X).
\end{align*}
\item We have
\begin{align*}
\overline{\TT}_f&=\{X\in\mod A\mid \textup{$\Hom_A(f,X)$ is surjective}\}, \\
\overline{\FF}_f&=\{X\in\mod A\mid \textup{$\Hom_A(f,X)$ is injective}\}, \\
\WW_f&=\{X\in\mod A\mid \textup{$\Hom_A(f,X)$ is isomorphic}\}.
\end{align*}
\item
For $\theta:=[f] \in K_0(\proj A)$, we have 
\begin{align*}
\theta(X)&=\dim_k \Hom_{\DDD(A)}(P_f,X) - \dim_k \Hom_{\DDD(A)}(P_f,X[1])\\
&=\dim_k \Hom_A(C_f ,X) - \dim_k \Hom_A(X,K_{\nu f}).
\end{align*}
Thus if $\theta(X)=0$, then $X \in \WW_f$ is equivalent to $X \in \overline{\TT}_f$ and also to $X \in \overline{\FF}_f$.
\end{enumerate}
\end{lemma}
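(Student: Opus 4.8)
The plan is to prove Lemma \ref{Nakayama T_f} in three parts, starting from part (a), which is the technical heart, and then deriving (b) and (c) as formal consequences.

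\textbf{Part (a).} First I would fix a morphism $f\colon P_1\to P_0$ in $\proj A$ and apply the functor $\Hom_A(-,X)$ to the two-term complex $P_f=(P_1\xrightarrow{f}P_0)$ sitting in degrees $-1,0$. Since $P_0,P_1\in\proj A$, we have $\Hom_{\DDD(A)}(P_f,X[\ell])=\h^\ell(\Hom_A(P_f,X))$, the cohomology of the complex $(\Hom_A(P_0,X)\xrightarrow{\Hom_A(f,X)}\Hom_A(P_1,X))$ placed in degrees $0,1$. This immediately gives $\Kernel\Hom_A(f,X)\simeq\Hom_{\DDD(A)}(P_f,X)$ and $\Cokernel\Hom_A(f,X)\simeq\Hom_{\DDD(A)}(P_f,X[1])$. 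For the identification $\Hom_{\DDD(A)}(P_f,X)\simeq\Hom_A(C_f,X)$, I would use that $P_f$ is quasi-isomorphic in $\DDD(A)$ to the complex representing $C_f=\Cokernel f$ only when $f$ is injective, so instead I argue directly: right-exactness of $\Cokernel$ shows $\Hom_A(C_f,X)=\Kernel(\Hom_A(P_0,X)\to\Hom_A(P_1,X))$, which is exactly what we computed. For the isomorphism $\Cokernel\Hom_A(f,X)\simeq D\Hom_A(X,K_{\nu f})$, I would invoke the standard Auslander–Reiten/derived Nakayama duality: for $P\in\proj A$ and $X\in\mod A$ there is a natural isomorphism $D\Hom_A(X,\nu P)\simeq\Hom_A(P,X)$ (equivalently $\Hom_{\DDD(A)}(P_f,X[1])\simeq D\Hom_{\DDD(A)}(X,\nu P_f)$ by Serre-type duality on $\KKK^{\bo}(\proj A)$ versus $\DDD(A)$). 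Applying this to the complex $\nu P_f=(\nu P_1\xrightarrow{\nu f}\nu P_0)$, whose $(-1)$st cohomology is $K_{\nu f}=\Kernel\nu f$, yields $\Hom_{\DDD(A)}(X,\nu P_f)$ has an exact sequence relating it to $\Hom_A(X,K_{\nu f})$; combined with the $D$ this gives the claim. The main obstacle here is bookkeeping the degree conventions and making the Nakayama duality isomorphism natural enough to pass through cohomology — but this is classical (it is essentially \cite[Lemma 3.4]{AIR} in the presilting case, and the same computation works for arbitrary $f$).

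\textbf{Part (b).} This is now immediate from (a). We have $X\in\overline{\TT}_f={}^\perp K_{\nu f}$ iff $\Hom_A(X,K_{\nu f})=0$ iff (by (a)) $\Cokernel\Hom_A(f,X)=0$ iff $\Hom_A(f,X)$ is surjective. Dually, $X\in\overline{\FF}_f={C_f}^\perp$ iff $\Hom_A(C_f,X)=0$ iff $\Kernel\Hom_A(f,X)=0$ iff $\Hom_A(f,X)$ is injective. The description of $\WW_f=\overline{\TT}_f\cap\overline{\FF}_f$ as the locus where $\Hom_A(f,X)$ is an isomorphism follows by intersecting.

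\textbf{Part (c).} Here I would compute $\theta(X)=\langle[f],[X]\rangle$ using the Euler-form formula $\langle P,X\rangle=\sum_\ell(-1)^\ell\dim_k\Hom_{\DDD(A)}(P,X[\ell])$ recalled in the excerpt. Since $P_f$ is a two-term complex of projectives, only $\ell=0,1$ contribute, giving $\theta(X)=\dim_k\Hom_{\DDD(A)}(P_f,X)-\dim_k\Hom_{\DDD(A)}(P_f,X[1])$; substituting the identifications from (a) gives $\theta(X)=\dim_k\Hom_A(C_f,X)-\dim_k\Hom_A(X,K_{\nu f})$. Finally, if $\theta(X)=0$ then $\dim_k\Hom_A(C_f,X)=\dim_k\Hom_A(X,K_{\nu f})$, so one of these is zero iff the other is; that is, $X\in\overline{\TT}_f$ ($\Hom_A(X,K_{\nu f})=0$) iff $X\in\overline{\FF}_f$ ($\Hom_A(C_f,X)=0$), and either one then forces $X\in\WW_f$. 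I expect no real difficulty in (b) and (c); essentially all the work is the duality computation in (a).
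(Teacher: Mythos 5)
Your approach matches the paper's proof closely: part (a) is obtained by computing $\Hom_{\DDD(A)}(P_f,X[\ell])$ as the cohomology of $\Hom_A(P_f,X)$ together with Serre/Nakayama duality, and parts (b), (c) are formal consequences of (a), exactly as in the paper. One bookkeeping slip, however: the Serre-duality step should read $\Hom_{\DDD(A)}(P_f,X[1])\simeq D\Hom_{\DDD(A)}(X[1],\nu P_f)\simeq D\Hom_{\DDD(A)}(X,\nu P_f[-1])$, whereas your formula $D\Hom_{\DDD(A)}(X,\nu P_f)$ is missing the shift $[-1]$. With the shift in place the argument closes correctly: $\nu P_f[-1]$ is a complex of injectives concentrated in degrees $0,1$, so $\Hom_{\DDD(A)}(X,\nu P_f[-1])=H^0(\Hom_A(X,\nu P_f[-1]))=\Kernel\bigl(\Hom_A(X,\nu f)\bigr)\simeq\Hom_A(X,K_{\nu f})$ by left-exactness of $\Hom_A(X,-)$, which is precisely the chain of isomorphisms the paper records. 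Without the shift, you would instead land on the cokernel of $\nu f$ rather than $K_{\nu f}$, contradicting what you (correctly) want; so you should make the shift explicit.
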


\begin{proof}
(a) The first isomorphisms follow from
\begin{align*}
\Cokernel \Hom_A(f,X) &\simeq \Hom_{\DDD(A)}(P_f,X[1]) \simeq D \Hom_{\DDD(A)}(X,\nu P_f[-1]) \simeq D \Hom_A(X,K_{\nu f}).
\end{align*}
The second isomorphisms are immediate.

(b) and  (c) follow from (a).
\end{proof}

We give an example coming from silting theory.

\begin{example}
Let $U=(P_1\xrightarrow{f}P_0)\in\twopresilt A$. 
Then $C_f=H^0(U)$ and $K_{\nu f}=H^{-1}(\nu U)$.
Thus
\begin{align*}\overline{\TT}_f={}^\perp H^{-1}(\nu U),\  
\overline{\FF}_f=H^0(U)^\perp\ \text{ and }\ 
\WW_f={}^\perp H^{-1}(\nu U)\cap H^0(U)^\perp.\end{align*}
Moreover, if $P_f=P_{f_1}\oplus\cdots\oplus P_{f_\ell}$ with indecomposable $P_{f_i}$, we have
\begin{align*}\overline{\TT}_f=\overline{\TT}_\theta,\ \overline{\FF}_f=\overline{\FF}_\theta,\ \WW_f=\WW_\theta\end{align*}
for all $\theta\in\cone^\circ\{[f_1],\ldots,[f_\ell]\}$ by Proposition \ref{cone-TF}.
\end{example}

We also remark that rigid elements are characterized as follows.

\begin{remark}
Let $\theta \in K_0(\proj A)$.
Then $\theta$ is rigid
if and only if there exists $f \in \Hom(\theta)$ satisfying $\TT_f=\TT_\theta$ and $\FF_f=\FF_\theta$.
\end{remark}

\begin{proof}
The ``only if'' part follows from Proposition \ref{cone-TF}.
For the ``if'' part, $C_f \in \TT_\theta$ and $K_{\nu f} \in \FF_\theta$ implies that
$\Hom_{\KKK^{\bo}(\proj A)}(P_f,\nu P_f[-1])=0$, 
which means that $P_f$ is a presilting complex.
Thus $\theta$ is rigid.
\end{proof}

We have the following relationship between morphism torsion pairs and semistable torsion pairs.

\begin{proposition}\label{W_f and W_theta}
For a morphism $f \colon P_1\to P_0$ in $\proj A$ and $\theta=[f]$, we have
\begin{align*}\TT_f\supseteq\TT_\theta, \quad
\FF_f\supseteq\FF_\theta, \quad
\overline{\TT}_f\subseteq\overline{\TT}_\theta, \quad
\overline{\FF}_f\subseteq\overline{\FF}_\theta, \quad
\WW_f\subseteq\WW_\theta,\ .\end{align*}
\end{proposition}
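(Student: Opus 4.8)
The statement to prove is Proposition~\ref{W_f and W_theta}: for a morphism $f\colon P_1\to P_0$ in $\proj A$ with $\theta=[f]$, we have $\TT_f\supseteq\TT_\theta$, $\FF_f\supseteq\FF_\theta$, $\overline{\TT}_f\subseteq\overline{\TT}_\theta$, $\overline{\FF}_f\subseteq\overline{\FF}_\theta$, and $\WW_f\subseteq\WW_\theta$. The plan is to deduce everything from the numerical identity in Lemma~\ref{Nakayama T_f}(c), namely $\theta(X)=\dim_k\Hom_A(C_f,X)-\dim_k\Hom_A(X,K_{\nu f})$ for all $X\in\mod A$, together with the characterizations of $\overline{\TT}_f$ and $\overline{\FF}_f$ via surjectivity/injectivity of $\Hom_A(f,X)$ in part (b). The five inclusions come in two dual pairs plus the trivial consequence $\WW_f=\overline{\TT}_f\cap\overline{\FF}_f\subseteq\overline{\TT}_\theta\cap\overline{\FF}_\theta=\WW_\theta$, so it suffices to prove, say, $\overline{\TT}_f\subseteq\overline{\TT}_\theta$ and $\TT_f\supseteq\TT_\theta$; the torsion-free statements follow by the Nakayama/duality symmetry.

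\textbf{Key steps.} First I would prove $\overline{\TT}_f\subseteq\overline{\TT}_\theta$. Take $X\in\overline{\TT}_f$ and let $X'$ be any factor module of $X$. Since $\overline{\TT}_f$ is a torsion class it is closed under factor modules, so $X'\in\overline{\TT}_f$ as well; hence $\Hom_A(X',K_{\nu f})=0$ by definition of $\overline{\TT}_f={}^\perp K_{\nu f}$. Then Lemma~\ref{Nakayama T_f}(c) gives $\theta(X')=\dim_k\Hom_A(C_f,X')-\dim_k\Hom_A(X',K_{\nu f})=\dim_k\Hom_A(C_f,X')\ge 0$. As this holds for every factor module $X'$ of $X$, we get $X\in\overline{\TT}_\theta$. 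Dually, for $X\in\overline{\FF}_f={C_f}^\perp$: every submodule $X'$ of $X$ again lies in $\overline{\FF}_f$ (torsion-free classes are closed under submodules), so $\Hom_A(C_f,X')=0$, whence $\theta(X')=-\dim_k\Hom_A(X',K_{\nu f})\le 0$, giving $X\in\overline{\FF}_\theta$.

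\textbf{The remaining inclusions.} For $\TT_f\supseteq\TT_\theta$ I would use that $\TT_f=\T(C_f)$ is the smallest torsion class containing $C_f$, so it suffices to show $C_f\in\TT_\theta$, i.e.\ $\theta(Y)>0$ for every nonzero factor module $Y$ of $C_f$. Here Lemma~\ref{Nakayama T_f}(c) gives $\theta(Y)=\dim_k\Hom_A(C_f,Y)-\dim_k\Hom_A(Y,K_{\nu f})$; the first term is at least $1$ since the canonical surjection $C_f\twoheadrightarrow Y$ is a nonzero homomorphism, but the second term need not vanish a priori. The honest route is instead to invoke the torsion pair $(\TT_\theta,\overline{\FF}_\theta)$ directly: I claim $\overline{\FF}_\theta\subseteq\overline{\FF}_f$, which is exactly the dual inclusion just proved above with the roles of the two torsion pairs $(\overline{\TT}_f,\FF_f)$ and $(\TT_f,\overline{\FF}_f)$ kept straight. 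Given $\overline{\FF}_\theta\subseteq\overline{\FF}_f$ and that $(\TT_f,\overline{\FF}_f)$, $(\TT_\theta,\overline{\FF}_\theta)$ are torsion pairs, taking left-perpendiculars reverses the inclusion: $\TT_f={}^\perp\overline{\FF}_f\supseteq{}^\perp\overline{\FF}_\theta=\TT_\theta$. Symmetrically, $\overline{\TT}_\theta\supseteq\overline{\TT}_f$ yields $\FF_f=\overline{\TT}_f{}^\perp\supseteq\overline{\TT}_\theta{}^\perp=\FF_\theta$ using the torsion pairs $(\overline{\TT}_f,\FF_f)$ and $(\overline{\TT}_\theta,\FF_\theta)$. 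Finally $\WW_f=\overline{\TT}_f\cap\overline{\FF}_f\subseteq\overline{\TT}_\theta\cap\overline{\FF}_\theta=\WW_\theta$.

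\textbf{Main obstacle.} There is no deep obstacle; the one point requiring care is bookkeeping the two distinct torsion pairs attached to $f$ (one with $\overline{\TT}_f$, one with $\TT_f$) and matching them with the correct semistable counterparts before taking perpendiculars — getting the direction of each inclusion right. The genuinely substantive input, Lemma~\ref{Nakayama T_f}, is already available, so the proof reduces to the sign analysis of $\theta(X')$ on factor modules (for $\overline{\TT}$) and submodules (for $\overline{\FF}$) plus formal manipulation of torsion pairs.
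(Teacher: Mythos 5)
Your proof takes the same route as the paper: establish $\overline{\TT}_f\subseteq\overline{\TT}_\theta$ directly from Lemma~\ref{Nakayama T_f} (the paper uses part (b), you use the equivalent part (c)), obtain its dual $\overline{\FF}_f\subseteq\overline{\FF}_\theta$, then deduce the remaining inclusions by taking perpendiculars inside the torsion pairs $(\overline{\TT}_f,\FF_f)$, $(\TT_f,\overline{\FF}_f)$ and their semistable counterparts, and finish with $\WW_f\subseteq\WW_\theta$ by intersection. The argument is correct in substance, and you were right to abandon the attempt to show $C_f\in\TT_\theta$ directly, which can fail when $P_f$ is not presilting.

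One slip to fix, precisely at the bookkeeping step you flagged as the danger: you write ``I claim $\overline{\FF}_\theta\subseteq\overline{\FF}_f$'' and then ``Given $\overline{\FF}_\theta\subseteq\overline{\FF}_f$ \dots taking left-perpendiculars \dots $\TT_f={}^\perp\overline{\FF}_f\supseteq{}^\perp\overline{\FF}_\theta=\TT_\theta$.'' As stated this is internally inconsistent, since $\overline{\FF}_\theta\subseteq\overline{\FF}_f$ would give ${}^\perp\overline{\FF}_\theta\supseteq{}^\perp\overline{\FF}_f$, the opposite of what you need. What you actually proved dually, and what makes the displayed chain valid, is $\overline{\FF}_f\subseteq\overline{\FF}_\theta$; replace the stated hypothesis accordingly and the paragraph is correct.
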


\begin{proof}
We prove $\overline{\TT}_f\subseteq\overline{\TT}_\theta$. Let $X\in\overline{\TT}_f$. Then any factor module $Y$ of $X$ belongs to $\overline{\TT}_f$. Thus the map $\Hom_A(P_0,Y)\to\Hom_A(P_1,Y)$ is surjective by Lemma \ref{Nakayama T_f}, and hence $\theta(Y)=\dim_k\Hom_A(P_0,Y)-\dim_k\Hom_A(P_1,Y)\ge0$ holds. Thus $X\in\overline{\TT}_\theta$.

By definition, we have $\FF_f=(\overline{\TT}_f)^\perp\supseteq(\overline{\TT}_\theta)^\perp=\FF_\theta$.
The dual argument shows $\overline{\FF}_f\subseteq\overline{\FF}_\theta$ and $\TT_f\supseteq\TT_\theta$.
Consequently, $\WW_f=\overline{\TT}_f\cap\overline{\FF}_f\subseteq\overline{\TT}_\theta\cap\overline{\FF}_\theta=\WW_\theta$ hold.
\end{proof}

Now we can prove that $\WW_f$ is a wide subcategory of $\mod A$.

\begin{proposition}\label{f category}
For any morphism $f \colon P_1 \to P_0$ in $\proj A$, we have $\WW_f\in\wide A$.
\end{proposition}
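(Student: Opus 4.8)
The plan is to use the characterization of $\WW_f$ from Lemma \ref{Nakayama T_f}(b): an $A$-module $X$ lies in $\WW_f$ if and only if the map $\Hom_A(f,X)\colon\Hom_A(P_0,X)\to\Hom_A(P_1,X)$ is an isomorphism. Equivalently, using the first isomorphisms of Lemma \ref{Nakayama T_f}(a), $X\in\WW_f$ if and only if both $\Hom_{\DDD(A)}(P_f,X)=0$ and $\Hom_{\DDD(A)}(P_f,X[1])=0$, i.e.\ $\Hom_{\DDD(A)}(P_f,X[i])=0$ for $i=0,1$. Since $P_f$ is a two-term complex of projectives, $\Hom_{\DDD(A)}(P_f,X[i])=0$ automatically for all $i\neq 0,1$, so in fact $X\in\WW_f$ if and only if $\RHom_{\DDD(A)}(P_f,X)=0$. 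This reformulation is the key step, and from it the wide property is essentially formal.

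Concretely, I would argue as follows. First I would record that $\WW_f=\{X\in\mod A\mid \Hom_{\DDD(A)}(P_f,X[i])=0\text{ for }i=0,1\}=\{X\in\mod A\mid \RHom_{\DDD(A)}(P_f,X)=0\}$, using Lemma \ref{Nakayama T_f}(a),(b) together with the vanishing of $\Hom_{\DDD(A)}(P_f,X[i])$ for $i\notin\{0,1\}$ coming from the fact that $P_f$ is concentrated in degrees $-1,0$ and $X$ in degree $0$. Then, given a short exact sequence $0\to X'\to X\to X''\to 0$ in $\mod A$, the induced triangle in $\DDD(A)$ yields a long exact sequence in $\Hom_{\DDD(A)}(P_f,-)$; if $X',X''\in\WW_f$ then all the relevant terms vanish and hence $X\in\WW_f$, giving closure under extensions. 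For closure under kernels and cokernels: given a morphism $\phi\colon X\to Y$ with $X,Y\in\WW_f$, form the triangle $K\to X\xrightarrow{\phi} Y\to K[1]$ where $K$ is the cone shifted appropriately; more cleanly, let $Z=\Kernel\phi$, $C=\Cokernel\phi$, $I=\Image\phi$ and use the two short exact sequences $0\to Z\to X\to I\to 0$ and $0\to I\to Y\to C\to 0$. Applying $\Hom_{\DDD(A)}(P_f,-)$ to the second sequence and using $Y\in\WW_f$ gives $\Hom_{\DDD(A)}(P_f,I[i])\cong\Hom_{\DDD(A)}(P_f,C[i-1])$; applying it to the first and using $X\in\WW_f$ gives $\Hom_{\DDD(A)}(P_f,Z[i+1])\cong\Hom_{\DDD(A)}(P_f,I[i])$. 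Chasing degrees $i=0,1$ through these isomorphisms, and using once more that $P_f$ has amplitude in $[-1,0]$ so that $\Hom_{\DDD(A)}(P_f,M[j])=0$ for $j<0$ or $j>1$ and any module $M$, forces $\Hom_{\DDD(A)}(P_f,Z[i])=0$ and $\Hom_{\DDD(A)}(P_f,C[i])=0$ for $i=0,1$, i.e.\ $Z,C\in\WW_f$.

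I do not expect a serious obstacle here; the only point requiring a little care is the bookkeeping in the kernel/cokernel argument, namely making sure the degree shifts stay inside the window $\{0,1\}$ so that no spurious $\Ext^2$-type terms appear. This is exactly where the hypothesis that $P_f$ is a \emph{two-term} complex of projectives is used: it guarantees $\Hom_{\DDD(A)}(P_f,M[j])=0$ for $j\notin\{0,1\}$ for every $M\in\mod A$, which truncates all the long exact sequences. An alternative, even shorter route is to observe that $\WW_f={^\perp_{\DDD(A)}}\{P_f\text{-coperp}\}$ can be described as the heart-intersection $X^{\perp}$ inside the abelian category $\mod A$ of the aisle condition $\RHom(P_f,X)=0$; since $\{N\in\DDD(A)\mid\RHom_{\DDD(A)}(P_f,N)=0\}$ is a triangulated subcategory of $\DDD(A)$ closed under the relevant operations, its intersection with $\mod A$ (which is the heart of the standard $t$-structure) is automatically closed under kernels, cokernels and extensions, hence wide. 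I would present the explicit long-exact-sequence version for self-containedness but may remark on this conceptual reformulation.
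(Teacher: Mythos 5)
Your proof is correct and is essentially the paper's argument translated into derived-categorical language. The paper characterizes $X\in\WW_f$ by the condition that $\Hom_A(f,X)$ is an isomorphism (Lemma \ref{Nakayama T_f}(b)), applies $\Hom_A(P_0,-)$ and $\Hom_A(P_1,-)$ to a short exact sequence (exactness of the resulting rows comes from the projectivity of $P_0,P_1$), and uses the five lemma on the resulting commutative diagram to get the 2-out-of-3 property; combined with the observation that $\WW_f=\overline{\TT}_f\cap\overline{\FF}_f$ is automatically closed under taking images, this gives wideness. Your reformulation $X\in\WW_f\iff\RHom_{\DDD(A)}(P_f,X)=0$, followed by long exact sequences and degree-chasing, is the same computation: the vanishing $\Hom_{\DDD(A)}(P_f,M[j])=0$ for $j\notin\{0,1\}$ is exactly what the exact rows encode, and the five lemma becomes the truncated long exact sequence. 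The only substantive difference in structure is that the paper gets closure under images for free and then only needs to verify 2-out-of-3 for short exact sequences, whereas you derive kernels and cokernels directly by chasing through the two sequences $0\to Z\to X\to I\to 0$ and $0\to I\to Y\to C\to 0$; both routes work, yours is self-contained but slightly longer.

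One caveat on the ``alternative, even shorter route'' sketched at the end: the assertion that the intersection of any triangulated subcategory of $\DDD(A)$ with the heart $\mod A$ is automatically closed under kernels and cokernels is false. Closure under extensions is indeed automatic, but for a morphism $\phi$ between two heart objects lying in a triangulated subcategory $\mathcal{T}$, the cone of $\phi$ lies in $\mathcal{T}$ while its cohomologies $\Kernel\phi$ and $\Cokernel\phi$ need not (for instance, with $A=k[\epsilon]/(\epsilon^2)$ and $\mathcal{T}=\per A$, one has $\per A\cap\mod A=\add A$, which is not wide). What saves the day here is precisely the amplitude constraint on $P_f$ that your explicit long-exact-sequence argument exploits; the conceptual reformulation alone does not suffice. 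Since you present the explicit version as the actual proof and offer the other only as a remark, this does not affect the correctness of your proposal, but the remark should either be dropped or qualified.
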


\begin{proof}
Note first that $\WW_f$ is closed under taking images.
Thus it is enough to show that $\WW_f$ satisfies the 2-out-of-3 property for short exact sequences in $\mod A$.

Let $0\to X\to Y\to Z\to0$ be an exact sequence in $\mod A$. We have a commutative diagram
\begin{align*}\xymatrix@R1em{
0\ar[r]&\Hom_A(P_0,X)\ar[r]\ar[d]&\Hom_A(P_0,Y)\ar[r]\ar[d]&\Hom_A(P_0,Z)\ar[r]\ar[d]&0\\
0\ar[r]&\Hom_A(P_1,X)\ar[r]&\Hom_A(P_1,Y)\ar[r]&\Hom_A(P_1,Z)\ar[r]&0
}.\end{align*}
If two of $X,Y,Z$ are in $\WW_f$, then the corresponding two vertical maps are isomorphic by Lemma \ref{Nakayama T_f},
so the other vertical map is also isomorphic, which means the remaining one of $X,Y,Z$ also belongs to $\WW_f$ by Lemma \ref{Nakayama T_f} again.
\end{proof}

We also have the following properties on $\WW_f$.

\begin{lemma}\label{W_f and W_theta 2}
Let $f$ be a morphism in $\proj A$ and $\theta=[f]$.
\begin{enumerate}[\rm(a)]
\item
$\overline{\FF}_f\cap\WW_\theta=\WW_f=\overline{\TT}_f\cap\WW_\theta$.
\item
$\WW_f$ is a Serre subcategory of $\WW_\theta$.
\item
If $C_f\simeq K_{\nu f}$ are isomorphic bricks, then $C_f$ is a simple object of $\WW_\theta$. 
\end{enumerate}
\end{lemma}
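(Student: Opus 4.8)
The plan is to prove the three parts of Lemma \ref{W_f and W_theta 2} in order, using Lemma \ref{Nakayama T_f}, Proposition \ref{W_f and W_theta}, and Proposition \ref{f category} as the main tools.

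\textbf{Part (a).} For the equality $\overline{\FF}_f\cap\WW_\theta=\WW_f=\overline{\TT}_f\cap\WW_\theta$, I would argue as follows. Since $\WW_f=\overline{\TT}_f\cap\overline{\FF}_f$ and $\WW_f\subseteq\WW_\theta$ by Proposition \ref{W_f and W_theta}, both inclusions ``$\subseteq$'' are immediate. For the reverse, take $X\in\overline{\TT}_f\cap\WW_\theta$. Every submodule of $X$ lies in $\overline{\FF}_\theta$ since $X\in\overline{\FF}_\theta$, and every factor module of $X$ lies in $\overline{\TT}_\theta$; in particular $\theta(X)=0$. Now apply Lemma \ref{Nakayama T_f}(c): since $\theta(X)=0$ and $X\in\overline{\TT}_f$, we get $X\in\overline{\FF}_f$, hence $X\in\WW_f$. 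The same argument with $\overline{\FF}$ and $\overline{\TT}$ swapped gives the other equality. This step is routine once Lemma \ref{Nakayama T_f}(c) is invoked.

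\textbf{Part (b).} To show $\WW_f$ is a Serre subcategory of $\WW_\theta$, I must check it is closed under subobjects, quotients, and extensions \emph{within} $\WW_\theta$. Closure under extensions in $\mod A$ (hence in $\WW_\theta$) follows from the 2-out-of-3 property established in the proof of Proposition \ref{f category}. For closure under subobjects and quotients in $\WW_\theta$: let $0\to X\to Y\to Z\to 0$ be exact with $Y\in\WW_f$ and $X,Z\in\WW_\theta$ (note kernels and cokernels in $\WW_\theta$ agree with those in $\mod A$ since $\WW_\theta$ is a wide subcategory). Since $X,Z\in\WW_\theta$ we have $\theta(X)=\theta(Z)=0$. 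Using the commutative diagram with exact rows from the proof of Proposition \ref{f category} together with $Y\in\WW_f$ (so the middle vertical map is an isomorphism), a diagram chase shows the map $\Hom_A(P_0,X)\to\Hom_A(P_1,X)$ is injective and $\Hom_A(P_0,Z)\to\Hom_A(P_1,Z)$ is surjective; that is, $X\in\overline{\FF}_f$ and $Z\in\overline{\TT}_f$. Combined with $\theta(X)=\theta(Z)=0$, Lemma \ref{Nakayama T_f}(c) (equivalently part (a)) upgrades these to $X,Z\in\WW_f$. This diagram chase is the technical heart of (b); I expect it to be the main obstacle, though it is elementary.

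\textbf{Part (c).} If $C_f\simeq K_{\nu f}$ is a brick, then $\WW_f=\overline{\TT}_f\cap\overline{\FF}_f$ contains $C_f$: indeed $\Hom_A(C_f,C_f)=k$ and one checks directly that $C_f\in\overline{\TT}_f$ (the cokernel presentation of $C_f$ splits suitably) and $C_f\in\overline{\FF}_f$, using Lemma \ref{Nakayama T_f}(a) which identifies $\Kernel\Hom_A(f,C_f)\simeq\Hom_A(C_f,C_f)$ and $\Cokernel\Hom_A(f,C_f)\simeq D\Hom_A(C_f,K_{\nu f})\simeq D\Hom_A(C_f,C_f)$ — these have equal dimension, forcing $\Hom_A(f,C_f)$ to be an isomorphism precisely when $C_f$ is a brick and the map between equal-dimensional spaces is an isomorphism; a short rank argument via $\theta(C_f)=0$ closes this. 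Then $C_f\in\WW_f\subseteq\WW_\theta$, and since $\WW_f$ is a wide (even Serre) subcategory of $\WW_\theta$ by (b) in which $C_f$ is a brick, $C_f$ is a simple object of $\WW_f$, hence of $\WW_\theta$ by the Serre property. I would present (c) as a quick consequence of (a), (b), and Lemma \ref{Nakayama T_f}.
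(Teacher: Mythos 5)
Your parts (a) and (b) are correct. Part (a) is the paper's argument, just applied to $\overline{\TT}_f$ instead of $\overline{\FF}_f$ (both sides are symmetric). Part (b) is correct but overengineered: the diagram chase is not needed. Once you know that $\overline{\FF}_f$ is a torsion-free class, the submodule $X$ of $Y\in\WW_f\subseteq\overline{\FF}_f$ is automatically in $\overline{\FF}_f$, and then (a) gives $X\in\WW_f$; closure of $\WW_f$ under cokernels (from Proposition \ref{f category}) then gives $Z\in\WW_f$. Your snake-lemma chase reproves closure of $\overline{\FF}_f$ under submodules from scratch.

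Part (c), however, has a genuine error: the claim that $C_f\in\WW_f$ is \emph{false}. Indeed, $\overline{\FF}_f=C_f^\perp$, so $C_f\in\overline{\FF}_f$ would force $\Hom_A(C_f,C_f)=0$, contradicting that $C_f$ is a nonzero brick. Equivalently, by Lemma \ref{Nakayama T_f}(a) both $\Kernel\Hom_A(f,C_f)\simeq\Hom_A(C_f,C_f)\simeq k$ and $\Cokernel\Hom_A(f,C_f)\simeq D\Hom_A(C_f,K_{\nu f})\simeq D\Hom_A(C_f,C_f)\simeq k$ are \emph{one-dimensional}, not zero. Equal dimensions of kernel and cokernel do not make $\Hom_A(f,C_f)$ an isomorphism; on the contrary, both being nonzero means $\Hom_A(f,C_f)$ is neither injective nor surjective, so $C_f\notin\overline{\TT}_f$, $C_f\notin\overline{\FF}_f$, and $C_f\notin\WW_f$. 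Since the rest of your (c) rests on $C_f\in\WW_f$, it collapses.

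The correct route for (c) does not go through $\WW_f$ at all, but shows directly that $C_f$ is $\theta$-stable. One computes $\theta(C_f)=\dim_k\Hom_A(C_f,C_f)-\dim_k\Hom_A(C_f,K_{\nu f})=1-1=0$, and for any nonzero proper factor module $X$ of $C_f$, the brick property gives $\Hom_A(X,K_{\nu f})=\Hom_A(X,C_f)=0$ (a nonzero map $X\to C_f$ would compose with the projection $C_f\twoheadrightarrow X$ to a nonzero, non-invertible endomorphism of $C_f$), so $\theta(X)=\dim_k\Hom_A(C_f,X)>0$ by Lemma \ref{Nakayama T_f}(c). Then Lemma \ref{simple and interior}(b) yields that $C_f$ is simple in $\WW_\theta$.
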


\begin{proof}
(a) We only prove the first equality. The inclusion ``$\supseteq$'' follows from Proposition \ref{W_f and W_theta}. Take $X\in\overline{\FF}_f\cap\WW_\theta$. Then by Lemma \ref{Nakayama T_f}(c),
\begin{align*}0\stackrel{X\in\WW_\theta}{=}\theta(X)=\dim_k \Hom_A(C_f,X) - \dim_k \Hom_A(X,K_{\nu f}).\end{align*}
Since $\Hom_A(C_f,X)=0$ by $X\in\overline{\FF}_f$, 
we have $\Hom_A(X,K_{\nu f})=0$. Thus $X\in\overline{\FF}_f \cap \overline{\TT}_f = \WW_f$.

(b) Let $0\to X\to Y\to Z\to0$ be an exact sequence in $\WW_\theta$. It suffices to show that $Y\in\WW_f$ implies $X\in\WW_f$. Since $Y\in\WW_f\subseteq\overline{\FF}_f$, we have $X\in\overline{\FF}_f$. Thus $X\in\WW_f$ by (a).

(c) For any non-zero proper factor module $X$ of $C_f$, 
we have $\Hom_A(X,K_{\nu f})=\Hom_A(X,C_f)=0$ since $C_f$ is a brick. Then $\theta(X)>0$ by Lemma \ref{Nakayama T_f}(c).
By Lemma \ref{simple and interior}(b), the assertion holds.
\end{proof}

Now we fix $P_0,P_1\in\proj A$ and consider $\WW_f$ for each $f\in\Hom_A(P_1,P_0)$. For the morphism $0 \colon P_1 \to P_0$, we obtain
$\WW_0=(P_0\oplus P_1)^\perp$, which is contained in each $\WW_f$. In fact, we show that $\WW_f$ is bigger if $f$ is more general in $\Hom_A(P_1,P_0)$.

\begin{proposition}\label{closure and T}
For $f,g\in\Hom_A(P_1,P_0)$, assume that $g$ is contained in the Zariski closure of $Gf$, where $G=\Aut_A(P_0)\times\Aut_A(P_1)$. Then we have
\begin{align*}\TT_f\subseteq\TT_g,\quad\FF_f\subseteq\FF_g,\quad\overline{\TT}_f\supseteq\overline{\TT}_g,\quad\overline{\FF}_f\supseteq\overline{\FF}_g,\quad\WW_f\supseteq\WW_g.\end{align*}
\end{proposition}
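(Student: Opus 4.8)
The plan is to reduce everything to the single containment $\overline{\TT}_f \supseteq \overline{\TT}_g$, exactly as in the proof of Proposition \ref{W_f and W_theta}, since the other four statements follow formally: $\FF_f = (\overline{\TT}_f)^\perp \subseteq (\overline{\TT}_g)^\perp = \FF_g$ and dually $\overline{\FF}_f \supseteq \overline{\FF}_g$, $\TT_f \subseteq \TT_g$, and then $\WW_f = \overline{\TT}_f \cap \overline{\FF}_f \supseteq \overline{\TT}_g \cap \overline{\FF}_g = \WW_g$. So the whole content is the behavior of $\overline{\TT}_f$ under specialization of $f$ within a $G$-orbit closure.

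For the containment $\overline{\TT}_f \supseteq \overline{\TT}_g$, I would first observe that $\overline{\TT}_f$ and $\overline{\TT}_g$ only depend on the $G$-orbits of $f$ and $g$: if $g = (\alpha,\beta)\cdot f$ with $\alpha \in \Aut_A(P_0)$, $\beta \in \Aut_A(P_1)$, then $g = \alpha \circ f \circ \beta^{-1}$, and composing with automorphisms does not change $\Cokernel$, $\Kernel \nu(-)$, or the surjectivity locus of $\Hom_A(-,X)$; indeed $\Hom_A(g,X) = \Hom_A(\beta^{-1},X) \circ \Hom_A(f,X) \circ \Hom_A(\alpha,X)$ with the outer maps isomorphisms, so $\Hom_A(f,X)$ is surjective iff $\Hom_A(g,X)$ is. Thus by Lemma \ref{Nakayama T_f}(b) it suffices to prove: for each fixed $X \in \mod A$, the set
\begin{align*}
S_X := \{ h \in \Hom_A(P_1,P_0) \mid \Hom_A(h,X) \text{ is surjective} \}
\end{align*}
is Zariski-open in $\Hom_A(P_1,P_0)$. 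Granting this, $S_X$ is $G$-stable and open, so if it contains $f$ it contains the orbit $Gf$ and hence — being closed under specialization would be the wrong direction, so instead — its complement is closed and $G$-stable, hence contains the closure of $Gf$ whenever it contains $g$... let me restate: if $f \in \overline{\TT}_f$-defining condition holds, i.e. every factor module $Y$ of $X$... Actually the cleanest formulation: $X \in \overline{\TT}_f$ iff $\Hom_A(f,Y)$ is surjective for every factor module $Y$ of $X$ (since $\overline{\TT}_f$ is closed under factor modules and $Y \in \overline{\TT}_f \Leftrightarrow \Hom_A(f,Y)$ surjective, by Lemma \ref{Nakayama T_f}(b) applied... wait, $\overline{\TT}_f$ being a torsion class is closed under factors, and membership of $Y$ is detected by surjectivity of $\Hom_A(f,Y)$ only because $\overline{\TT}_f = {}^\perp K_{\nu f}$ and $\Hom_A(Y, K_{\nu f}) \cong \Cokernel \Hom_A(f,Y)$). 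So $X \in \overline{\TT}_f$ iff $f \in \bigcap_{Y} S_Y$ over all factor modules $Y$ of $X$, a $G$-stable Zariski-open set; its complement being closed and $G$-stable contains $\overline{Gf}$ if it contains... no. If $f$ lies in the open set $\bigcap_Y S_Y$ and $g \in \overline{Gf}$, that does not force $g$ into the open set. This is the wrong direction!

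Let me reconsider: the correct statement must use that $S_X$ (or rather its role) makes $X \in \overline{\TT}_g$ the \emph{more restrictive} condition. Indeed the claim $\overline{\TT}_f \supseteq \overline{\TT}_g$ says membership in $\overline{\TT}_g$ implies membership in $\overline{\TT}_f$, i.e. $g \in \bigcap_Y S_Y \Rightarrow f \in \bigcap_Y S_Y$; since $f \in \overline{Gf}$'s generic part and $g$ is a specialization, and $S_Y$ is \emph{open}, $g \in S_Y$ (a condition that holds on a closed subset only if it holds everywhere in the orbit closure when... ) — the point is $g \in S_Y$ with $S_Y$ open and $g \in \overline{Gf}$ forces $Gf \cap S_Y \neq \emptyset$ (as $S_Y$ open meets every set whose closure contains a point of it), hence $f \in S_Y$ after using $G$-invariance. \textbf{This is the heart of the argument} and it works. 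So the main steps are: (1) $G$-invariance of the conditions; (2) the upper-semicontinuity/openness of $\{h : \Hom_A(h,X) \text{ surjective}\}$, which is the locus where a matrix of linear forms in the entries of $h$ has maximal rank — an open condition — combined with (3) the topological lemma that an open set meeting $\overline{Gf}$ must meet $Gf$, then $G$-invariance upgrades this to $f$ itself. The main obstacle is just bookkeeping the direction of the implication correctly and writing (2) cleanly: $\Hom_A(h,X) \colon \Hom_A(P_0,X) \to \Hom_A(P_1,X)$ has matrix entries depending linearly (in fact polynomially, here linearly) on $h$, and surjectivity is non-vanishing of some maximal minor, hence Zariski-open in $h$; one then intersects over the finitely many dimension vectors of factor modules $Y$ of $X$ to get that $X \in \overline{\TT}_h$ is an open condition on $h$, and concludes.
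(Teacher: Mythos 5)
Your proposal is correct and lands on exactly the argument the paper uses: the set $U_X:=\{h\in\Hom_A(P_1,P_0)\mid X\in\overline{\TT}_h\}$ is Zariski open (the paper's Lemma \ref{open}) and $G$-invariant, so if $g\in U_X$ and $g\in\overline{Gf}$ then $Gf\cap U_X\neq\emptyset$, hence $f\in U_X$; the remaining four inclusions then follow formally from the torsion-pair identities as you observe. One small correction: your closing sentence says to intersect over factor modules $Y$ of $X$, but this detour is both unnecessary (Lemma \ref{Nakayama T_f}(b) already says $X\in\overline{\TT}_h$ iff the single map $\Hom_A(h,X)$ is surjective, since $\overline{\TT}_h={}^\perp K_{\nu h}$ and $\Cokernel\Hom_A(h,X)\simeq D\Hom_A(X,K_{\nu h})$) and, if taken literally, dubious, because $X$ may have infinitely many factor modules and an infinite intersection of open sets need not be open; the paper avoids this by working with the single open condition directly.
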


To prove this, we need the following upper semi-continuous condition obtained similarly to Proposition \ref{upper semi}.

\begin{lemma}\label{open}
For $P_0,P_1\in\proj A$ and $X \in \mod A$, the following subsets of $\Hom_A(P_1,P_0)$ are open:
\begin{align*}
\{ f \in \Hom_A(P_1,P_0) \mid X\in\overline{\TT}_f\} \
\text{and} \ \{ f \in \Hom_A(P_1,P_0) \mid X\in\overline{\FF}_f\}.
\end{align*}
\end{lemma}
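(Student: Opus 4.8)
The plan is to reduce the openness of the two subsets to the upper semi-continuity of the dimensions of the relevant Hom-spaces as $f$ varies over the affine space $\Hom_A(P_1,P_0)$. By Lemma \ref{Nakayama T_f}(b), we have $X\in\overline{\TT}_f$ if and only if the linear map $\Hom_A(f,X)\colon\Hom_A(P_0,X)\to\Hom_A(P_1,X)$ is surjective, equivalently $\dim_k\Cokernel\Hom_A(f,X)=0$, equivalently (since $\Cokernel\Hom_A(f,X)\simeq\Hom_{\DDD(A)}(P_f,X[1])$) the quantity $\dim_k\Hom_{\DDD(A)}(P_f,X[1])$ vanishes; and dually $X\in\overline{\FF}_f$ if and only if $\dim_k\Kernel\Hom_A(f,X)=0$, equivalently $\dim_k\Hom_{\DDD(A)}(P_f,X)=0$.

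So first I would fix finite-dimensional $k$-vector spaces $V:=\Hom_A(P_0,X)$ and $W:=\Hom_A(P_1,X)$, and note that the assignment $f\mapsto\Hom_A(f,X)$ is a $k$-linear (hence polynomial, hence morphism of varieties) map $\Hom_A(P_1,P_0)\to\Hom_k(V,W)$. Next I would invoke the standard fact from linear algebra / algebraic geometry that on the affine space $\Hom_k(V,W)$ the rank function $\phi\mapsto\operatorname{rank}\phi$ is lower semi-continuous — the locus where $\operatorname{rank}\phi\ge r$ is open, being defined by the non-vanishing of some $r\times r$ minor. Consequently $\phi\mapsto\dim_k\Cokernel\phi=\dim_k W-\operatorname{rank}\phi$ and $\phi\mapsto\dim_k\Kernel\phi=\dim_k V-\operatorname{rank}\phi$ are both upper semi-continuous, so the loci $\{\phi\mid\Cokernel\phi=0\}$ (i.e.\ $\operatorname{rank}\phi\ge\dim_k W$) and $\{\phi\mid\Kernel\phi=0\}$ (i.e.\ $\operatorname{rank}\phi\ge\dim_k V$) are open. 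Pulling back along the morphism $f\mapsto\Hom_A(f,X)$, which is continuous for the Zariski topology, the two subsets in the statement are open in $\Hom_A(P_1,P_0)$.

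Alternatively, and perhaps more in the spirit of Proposition \ref{upper semi}, one can phrase this via \eqref{Ext 1}: writing $\Hom_{\KKK^{\bo}(\proj A)}(P_f,Q_g[1])$ as the cokernel of a map depending polynomially on $f$, one sees directly that $\dim_k\Hom_{\DDD(A)}(P_f,X[1])$ and $\dim_k\Hom_{\DDD(A)}(P_f,X)$ are upper semi-continuous in $f$ (a generic point of the parameter space maximizes rank, hence minimizes the dimension of kernel/cokernel). I do not expect any real obstacle here: the only point requiring a little care is to record explicitly that $f\mapsto\Hom_A(f,X)$ is a morphism of varieties (it is $k$-linear after choosing bases), and then the result is exactly the semicontinuity of matrix rank. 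I would present the linear-algebra version as the cleanest route, citing the generic-rank/minor argument rather than grinding through coordinates.
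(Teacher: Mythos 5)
Your proof is correct and is essentially the paper's own argument: the paper also considers the natural linear map $F \colon \Hom_A(P_1,P_0)\to \Hom_k(\Hom_A(P_0,X),\Hom_A(P_1,X))$ and pulls back the open set of surjections (resp.\ injections). You simply spell out the rank-semicontinuity reason for that set being open, which the paper takes as known.
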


\begin{proof}
We prove the assertion for $\overline{\TT}$. Consider the natural map
\begin{align*}F \colon \Hom_A(P_1,P_0)\to H:=\Hom_k(\Hom_A(P_0,X),\Hom_A(P_1,X)),\end{align*}
and let $U$ be the subset of $H$ consisting of all surjections. Then $U$ is an open subset of $H$, and hence $F^{-1}(U)=\{ f \in \Hom_A(P_1,P_0) \mid X\in\overline{\TT}_f\}$ is an open subset of $\Hom_A(P_1,P_0)$.
\end{proof}

We are ready to prove Proposition \ref{closure and T}.

\begin{proof}[Proof of Proposition \ref{closure and T}]
We prove the assertion for $\overline{\TT}$. For $X\in\overline{\TT}_g$, let $U:=\{h\in\Hom_A(P_1,P_0)\mid
X\in\overline{\TT}_h\}$. By Lemma \ref{open}, $U$ is an open subset of $\Hom_A(P_1,P_0)$ containing $g$.
Since $g$ belongs to the Zariski closure of $Gf$, we have $Gf\cap U\neq\emptyset$. Thus $X\in\overline{\TT}_f$.

The assertion for $\overline{\FF}$ is shown similarly, and the remaining assertions follow.
\end{proof}

\subsection{TF equivalence classes and canonical decompositions}

In this subsection, we consider the relationship between TF equivalence classes and canonical decompositions.
By Serre duality, we have
\begin{align}\label{Serre duality}
\Hom_{\DDD(A)}(P_{f},P_{g}[1]) \simeq
D \Hom_{\DDD(A)}(P_{g}[1],\nu P_{f}) \simeq
\Hom_A(C_{g}, K_{\nu f}).
\end{align}
Thus we have the following observation, which will be used frequently.

\begin{proposition}\label{P Q[1]}
For morphisms $f$ and $g$ in $\proj A$, the following conditions are equivalent:
\begin{enumerate}[\rm(a)]
\item $E(f,g)=0$, that is, $\Hom_{\DDD(A)}(P_{f},P_{g}[1])=0$.
\item $\Hom_A(C_{g}, K_{\nu f})=0$,
\item $\FF_{f} \subseteq \overline{\FF}_{g}$ (or equivalently, $K_{\nu f}\in\overline{\FF}_{g}$),
\item $\TT_{g} \subseteq \overline{\TT}_{f}$ (or equivalently, $C_g\in\overline{\TT}_{f}$).
\end{enumerate}
Moreover, if $E(\eta,\theta)=0$, these conditions hold for each general pair 
$(f,g) \in \Hom(\eta)\times\Hom(\theta)$.
\end{proposition}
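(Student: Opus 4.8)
The plan is to prove the chain of equivalences by unwinding the definitions and using Serre duality. First I would establish $(a)\Leftrightarrow(b)$: this is immediate from the isomorphism \eqref{Serre duality}, namely $\Hom_{\DDD(A)}(P_f,P_g[1]) \simeq \Hom_A(C_g, K_{\nu f})$, so the left-hand side vanishes precisely when the right-hand side does. Next I would prove $(b)\Leftrightarrow(c)$ and $(b)\Leftrightarrow(d)$. For $(b)\Rightarrow(c)$, note that $\overline{\FF}_g = {C_g}^\perp$ by definition, so $\Hom_A(C_g, K_{\nu f})=0$ says exactly $K_{\nu f}\in\overline{\FF}_g$; since $\FF_f = \F(K_{\nu f})$ is the smallest torsion-free class containing $K_{\nu f}$ and $\overline{\FF}_g$ is a torsion-free class, this gives $\FF_f \subseteq \overline{\FF}_g$. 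The converse is trivial since $K_{\nu f}\in\FF_f$ always. Dually, $(b)\Leftrightarrow(d)$ follows because $\overline{\TT}_f = {}^\perp K_{\nu f}$, so $\Hom_A(C_g, K_{\nu f})=0$ is the statement $C_g\in\overline{\TT}_f$, and then $\TT_g = \T(C_g)\subseteq\overline{\TT}_f$ by minimality of $\T(C_g)$, with the reverse implication trivial from $C_g\in\TT_g$.

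The parenthetical equivalences inside $(c)$ and $(d)$ are handled the same way, so the only remaining work is the final sentence about general pairs. Here I would invoke Proposition \ref{upper semi}: the function $E(-,-)\colon \Hom(\eta)\times\Hom(\theta)\to\Z$ is upper semi-continuous and attains its minimum value $E(\eta,\theta)$ on an open dense subset. If $E(\eta,\theta)=0$, then $E(f,g)=0$ on this open dense set, and the equivalence $(a)\Leftrightarrow(b)\Leftrightarrow(c)\Leftrightarrow(d)$ just proved applies pointwise to each such $(f,g)$.

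I do not expect any serious obstacle here; the statement is essentially a formal consequence of Serre duality together with the characterizations of the morphism torsion(-free) classes via ${}^\perp(-)$ and $(-)^\perp$ and the minimality of $\T(-)$ and $\F(-)$. The one point requiring a small amount of care is making sure the passage from ``$K_{\nu f}\in\overline{\FF}_g$'' to ``$\FF_f\subseteq\overline{\FF}_g$'' (and its dual) correctly uses that $\overline{\FF}_g$ is genuinely a torsion-free class — which is guaranteed since $(\TT_g,\overline{\FF}_g)$ is one of the morphism torsion pairs by construction — and likewise that $\overline{\TT}_f$ is a torsion class. Everything else is bookkeeping with the definitions already in place.
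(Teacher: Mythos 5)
Your proof is correct and follows the same route as the paper: Serre duality for $(a)\Leftrightarrow(b)$, unwinding of the definitions of $\overline{\FF}_g={C_g}^\perp$, $\FF_f=\F(K_{\nu f})$, $\overline{\TT}_f={}^\perp K_{\nu f}$, $\TT_g=\T(C_g)$ for $(b)\Leftrightarrow(c)\Leftrightarrow(d)$, and Proposition \ref{upper semi} for the statement about general pairs. You simply spell out the "by definition" step in more detail than the paper does.
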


In particular, $P_f$ is presilting if and only if $\TT_f\subseteq\overline{\TT}_f$ if and only if $\FF_f\subseteq\overline{\FF}_f$.

\begin{proof}
(a) and (b) are equivalent by \eqref{Serre duality}.
(b), (c) and (d) are equivalent by definition. The last assertion follows from Proposition \ref{upper semi}.
\end{proof}

Now we prove the following key property, where the part (b) is a generalization of Lemma \ref{func fin presilt}(b).

\begin{proposition}\label{signs of X}
Let $\eta,\theta\in K_0(\proj A)$.
\begin{enumerate}[\rm(a)]
\item If $E(\eta,\theta)=0$, then
\begin{align*}\TT_\theta\subseteq\overline{\TT}_\eta\ \text{ and }\ \FF_\eta\subseteq\overline{\FF}_\theta.\end{align*}
\item If $\eta\oplus\theta$, then
\begin{align*}\TT_\theta\subseteq\overline{\TT}_\eta,\ \TT_\eta\subseteq\overline{\TT}_\theta,\ 
\FF_\theta\subseteq\overline{\FF}_\eta\ \text{ and }\ \FF_\eta\subseteq\overline{\FF}_\theta.
\end{align*}
\end{enumerate}
\end{proposition}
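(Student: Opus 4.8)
\textbf{Proof proposal for Proposition \ref{signs of X}.}

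The plan is to deduce (a) directly from the machinery already in place, namely Proposition \ref{P Q[1]}, the semicontinuity results, and the description of $\overline{\TT}_\theta$ via Harder--Narasimhan/wide filtrations; then (b) is immediate from (a) together with the symmetry of the hypothesis $\eta\oplus\theta$ (which by Proposition \ref{decomposition}(a) gives both $E(\eta,\theta)=0$ and $E(\theta,\eta)=0$). So the real content is (a), and within (a) the real content is the first inclusion $\TT_\theta\subseteq\overline{\TT}_\eta$; the second inclusion $\FF_\eta\subseteq\overline{\FF}_\theta$ then follows either by a dual argument or by taking orthogonals, since $\FF_\eta=(\overline{\TT}_\eta)^\perp$ and $\overline{\FF}_\theta=(\TT_\theta)^\perp$, so $\TT_\theta\subseteq\overline{\TT}_\eta$ gives $\FF_\eta=(\overline{\TT}_\eta)^\perp\subseteq(\TT_\theta)^\perp=\overline{\FF}_\theta$.

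For $\TT_\theta\subseteq\overline{\TT}_\eta$ when $E(\eta,\theta)=0$: by Proposition \ref{P Q[1]}, for each \emph{general} pair $(f,g)\in\Hom(\eta)\times\Hom(\theta)$ we have $\TT_g\subseteq\overline{\TT}_f$, hence $C_g\in\overline{\TT}_f\subseteq\overline{\TT}_\eta$ by Proposition \ref{W_f and W_theta}. The issue is that $\TT_\theta$ is the semistable torsion class of $\theta$, which is generally much smaller than any single $\TT_g$, but is also not literally equal to $\bigcap_g\overline{\TT}_f$ in an obvious way. The clean route is: take a fixed general $f\in\Hom(\eta)$ with $E(f,g)=0$ for general $g\in\Hom(\theta)$ (such $f$ exists by Proposition \ref{upper semi}: the good locus in $\Hom(\eta)\times\Hom(\theta)$ is open dense, so it projects dominantly, and a general $f$ lies under a general $g$). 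Fix such an $f$, and let $X\in\TT_\theta$. I want to show $X\in\overline{\TT}_f$, i.e.\ $\Hom_A(f,Y)$ is surjective for every factor module $Y$ of $X$, equivalently $\Hom_{\DDD(A)}(P_f,Y[1])=0$, equivalently $\Hom_A(Y,K_{\nu f})=0$ by Lemma \ref{Nakayama T_f}(a). Now $K_{\nu f}\in\FF_\eta$ by definition of $\FF_f$ and Proposition \ref{W_f and W_theta}. So it suffices to show $\Hom_A(\TT_\theta,\FF_\eta)$-type vanishing, which is where I should instead argue via the semistable description rather than through a single $f$.

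Here is the argument I expect to actually use. By Lemma \ref{Nakayama T_f}(c), for $\theta=[g]$ and any $A$-module $X$ we have $\theta(X)=\dim_k\Hom_A(C_g,X)-\dim_k\Hom_A(X,K_{\nu g})$. Pick $g\in\Hom(\theta)$ general enough that $E(f,g)=0$ (for the fixed general $f\in\Hom(\eta)$ chosen above), so $\Hom_A(C_g,K_{\nu f})=0$ by Proposition \ref{P Q[1]}(b). Let $X\in\TT_\theta$ and let $Y$ be any factor module of $X$; then $Y\in\TT_\theta$ as well, so every nonzero factor module of $Y$ has strictly positive $\theta$-value, and in particular $\theta(Y')>0$ for all nonzero factors $Y'$ of $Y$. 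I want $\Hom_A(Y,K_{\nu f})=0$. Suppose not, and let $Y\twoheadrightarrow Y'$ be the image of a nonzero map $Y\to K_{\nu f}$, so $Y'\hookrightarrow K_{\nu f}$; then $\Hom_A(C_g,Y')\hookrightarrow\Hom_A(C_g,K_{\nu f})=0$, so $\Hom_A(C_g,Y')=0$, whence $\theta(Y')=-\dim_k\Hom_A(Y',K_{\nu f})\le 0$, contradicting $\theta(Y')>0$. Therefore $\Hom_A(Y,K_{\nu f})=0$ for all factor modules $Y$ of $X$, i.e.\ $X\in\overline{\TT}_f\subseteq\overline{\TT}_\eta$. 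This proves (a); part (b) follows by applying (a) to the pairs $(\eta,\theta)$ and $(\theta,\eta)$, both of which have vanishing $E$ by Proposition \ref{decomposition}(a). The main obstacle is purely the bookkeeping in the previous sentence—choosing $f$ and then $g$ general in the right order so that $E(f,g)=0$ can be invoked as an honest equality for the fixed $f$—and this is handled by Proposition \ref{upper semi} (openness of the good locus plus dominance of the projection).
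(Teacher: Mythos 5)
Your argument is correct in substance, but you took a considerably more roundabout route than necessary and flagged as a "main obstacle" something that is not one. The paper's proof is a three-line chain of inclusions: $E(\eta,\theta)=0$ means, by definition of $E$ as a minimum, that there \emph{exist} $f\in\Hom(\eta)$ and $g\in\Hom(\theta)$ with $\Hom_{\DDD(A)}(P_f,P_g[1])=0$; for that single pair, Proposition \ref{W_f and W_theta} gives $\TT_\theta\subseteq\TT_g$ and $\overline{\TT}_f\subseteq\overline{\TT}_\eta$, while Proposition \ref{P Q[1]} gives $\TT_g\subseteq\overline{\TT}_f$, and composing these yields $\TT_\theta\subseteq\overline{\TT}_\eta$ (with the $\FF$-inclusion dual). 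You had all three ingredients on the table — indeed you remark that $\TT_\theta$ is "much smaller than any single $\TT_g$," which is precisely the inclusion $\TT_\theta\subseteq\TT_g$ you needed — but instead of composing them you dismissed this route and unwound Propositions \ref{P Q[1]} and \ref{W_f and W_theta} into an element-wise argument via Lemma \ref{Nakayama T_f}(c). That longer argument does work (it amounts to reproving the cited inclusions by hand), but two points deserve correction: first, in the contradiction you write $\theta(Y')=-\dim_k\Hom_A(Y',K_{\nu f})$ where it should be $K_{\nu g}$ (the Nakayama formula for $\theta=[g]$ involves $C_g$ and $K_{\nu g}$, not $K_{\nu f}$); the conclusion $\theta(Y')\le 0$ is unaffected. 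Second, the entire discussion of choosing $f$ generally and then $g$ generally relative to $f$ — with Chevalley/dominance bookkeeping — is superfluous: $E(\eta,\theta)=0$ hands you one honest pair $(f,g)$ with $E(f,g)=0$ directly, and one pair is all the chain needs.
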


\begin{proof}
(a) There exist $f\in\Hom(\eta)$ and $g\in\Hom(\theta)$ such that $\Hom_{\DDD(A)}(P_f,P_g[1])=0$. Then
\begin{align*}
\TT_\theta\stackrel{\text{Prop. \ref{W_f and W_theta}}}{\subseteq}\TT_g\stackrel{\text{ Prop. \ref{P Q[1]}}}{\subseteq}\overline{\TT}_f\stackrel{\text{Prop. \ref{W_f and W_theta}}}{\subseteq}\overline{\TT}_\eta.\end{align*}
(b) is immediate from (a) and Proposition \ref{decomposition}.
\end{proof}

We directly obtain the following observation.

\begin{lemma}\label{E and T}
Let $\eta,\theta_i\in K_0(\proj A)$ for $1\le i\le\ell$ such that $E(\eta,\theta_i)=0$ for each $i$.
Then, for each $a\in\R_{>0}$ and $\theta' \in \cone^\circ\{ \theta_1,\ldots,\theta_\ell\}$, 
we have
\begin{align*}\overline{\TT}_{a\eta+\theta'}\subseteq\overline{\TT}_\eta\ \text{ and  }\ \FF_\eta\subseteq\FF_{a\eta+\theta'}.\end{align*}
\end{lemma}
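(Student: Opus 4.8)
The plan is to first reduce to the case where $\theta' $ is an exact positive combination $\theta' = \sum_{i=1}^\ell a_i \theta_i$ with all $a_i > 0$, which is immediate from the definition of $\cone^\circ$. I would then proceed by induction on $\ell$, so the key step is the single-term case: given $E(\eta,\theta_i)=0$, show that for all $a, b \in \R_{>0}$ we have $\overline{\TT}_{a\eta + b\theta_i} \subseteq \overline{\TT}_\eta$ and $\FF_\eta \subseteq \FF_{a\eta + b\theta_i}$. Here I would invoke Proposition \ref{signs of X}(a): since $E(\eta,\theta_i) = 0$ (note $\theta_i$ need not be indecomposable, but part (a) only needs the $E$-vanishing, not a direct sum), we get $\TT_{\theta_i} \subseteq \overline{\TT}_\eta$. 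The subtlety is that $a\eta + b\theta_i$ is not literally of the form handled by Lemma \ref{additivity}'s inclusions $\overline{\TT}_\eta \cap \TT_\theta \subseteq \TT_{\eta+\theta}$, since we want an inclusion in the \emph{other} direction, into $\overline{\TT}_\eta$.

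To get the containment $\overline{\TT}_{a\eta + b\theta_i} \subseteq \overline{\TT}_\eta$, I would argue by way of the Harder--Narasimhan machinery, or more directly as follows. Rescaling by $\epsilon > 0$ (Lemma \ref{additivity}) we may normalize, say take $a = 1$, and write $\psi := \eta + b\theta_i$. Let $X \in \overline{\TT}_\psi$; I want $\eta(Y) \ge 0$ for every factor module $Y$ of $X$. Every such $Y$ again lies in $\overline{\TT}_\psi$, so it suffices to show $\eta(X) \ge 0$ for all $X \in \overline{\TT}_\psi$. Choose $f \in \Hom(\eta)$ and $g \in \Hom(\theta_i)$ achieving $E(f,g) = 0$, i.e.\ $\Hom_{\DDD(A)}(P_f, P_g[1]) = 0$; by Proposition \ref{P Q[1]} this gives $\TT_g \subseteq \overline{\TT}_f$ and $C_g \in \overline{\TT}_f$. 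The hoped-for mechanism is: apply the Harder--Narasimhan filtration of $X$ with respect to a decreasing path from $\psi$ going in the direction that ``peels off'' the $\theta_i$-part first, so that the HN subquotients of $X$ land in semistable subcategories $\WW_{\psi(t)}$ with $\psi(t)$ ranging over a segment on which $\eta$ stays nonnegative. Concretely, use the linear path $\psi(t) = \eta + (1-t) b\theta_i + t\mu$ for $\mu \in \sum_i \R_{<0}[P(i)]$ very negative; each semistable subquotient $W \in \WW_{\psi(t)}$ satisfies $\psi(t)(W) = 0$, and one needs $\eta(W) \ge 0$ for all such $W$ --- this is where $E(\eta,\theta_i)=0$ enters, via $\TT_{\theta_i}\subseteq\overline\TT_\eta$ and Lemma \ref{Nakayama T_f}(c) comparing $\eta(W)$ with Hom-dimensions against $C_g$ and $K_{\nu f}$.

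The induction step is then routine: writing $\theta' = \theta'' + a_\ell\theta_\ell$ with $\theta'' \in \cone^\circ\{\theta_1,\dots,\theta_{\ell-1}\}$, observe $E(\eta,\theta') \le \sum E(\eta,\theta_i) = 0$ by subadditivity of $E$ (Proposition \ref{decompose_2step}(b) needs a direct-sum hypothesis, but the \emph{inequality} $E(\eta,\theta')\le\sum_i E(\eta,\theta_i)$ always holds by subadditivity of $E$ in the second entry), so in fact $E(\eta,\theta')=0$ directly and we are back to the one-term argument with $\theta'$ in place of $\theta_i$, giving $\overline{\TT}_{a\eta+\theta'}\subseteq\overline{\TT}_\eta$ at once. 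The statement for $\FF$ follows by the dual argument, or by taking orthogonals: $\FF_\eta = \overline{\TT}_\eta^{\perp} \subseteq \overline{\TT}_{a\eta+\theta'}^{\perp}$, but since $\overline{\TT}_{a\eta+\theta'}$ is only a torsion class and taking $\perp$ gives its own torsion-free class $\overline\FF_{a\eta+\theta'}$ not $\FF_{a\eta+\theta'}$, I would instead prove it symmetrically: dualize, using $K_{\nu f} \in \overline{\FF}_g$ from Proposition \ref{P Q[1]} and the submodule version of the HN filtration. \textbf{The main obstacle} I anticipate is making the Harder--Narasimhan step fully rigorous: justifying that along the chosen decreasing path the semistable subquotients of $X \in \overline{\TT}_{a\eta+\theta'}$ all satisfy $\eta(\,\cdot\,) \ge 0$, which ultimately rests on translating $E(\eta,\theta')=0$ into the inclusion $\TT_{\theta'}\subseteq\overline\TT_\eta$ and propagating nonnegativity of $\eta$ across the whole filtration rather than just at the endpoints.
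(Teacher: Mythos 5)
Your proposal diverges substantially from the paper's proof and leaves a genuine gap; moreover a small misidentification in torsion-pair duality seems to be what pushed you toward the unnecessary complication.

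The paper's proof is three lines and uses only Proposition~\ref{signs of X}(a) and Lemma~\ref{additivity}: from $E(\eta,\theta_i)=0$ one gets $\FF_\eta\subseteq\overline{\FF}_{\theta_i}$ for each $i$; Lemma~\ref{additivity} then gives $\FF_\eta\subseteq\overline{\FF}_{\theta'}$ for any $\theta'\in\cone^\circ\{\theta_1,\ldots,\theta_\ell\}$; and Lemma~\ref{additivity} again gives $\FF_\eta=\FF_{a\eta}\cap\overline{\FF}_{\theta'}\subseteq\FF_{a\eta+\theta'}$. The first assertion $\overline{\TT}_{a\eta+\theta'}\subseteq\overline{\TT}_\eta$ then follows by taking left orthogonals in the torsion pairs $(\overline{\TT}_{-},\FF_{-})$. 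No Harder--Narasimhan machinery is needed.

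Your proposal correctly locates Proposition~\ref{signs of X}(a) as the entry point, but then:
\begin{enumerate}
\item You launch a Harder--Narasimhan filtration argument for $\overline{\TT}_{a\eta+\theta'}\subseteq\overline{\TT}_\eta$ whose crucial step --- that every HN subquotient $W$ of $X\in\overline{\TT}_{a\eta+\theta'}$ along your chosen path satisfies $\eta(W)\ge0$ --- you explicitly flag as the ``main obstacle'' and do not resolve. This is precisely the content of the lemma; deferring it to an unfinished HN analysis is a gap, not a proof.
\item You misidentify the torsion pair: you write that $\overline{\TT}_{a\eta+\theta'}^\perp = \overline\FF_{a\eta+\theta'}$ rather than $\FF_{a\eta+\theta'}$, and this mistake leads you to abandon the orthogonal-complement route. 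In fact the torsion pairs are $(\overline{\TT}_\theta,\FF_\theta)$ and $(\TT_\theta,\overline{\FF}_\theta)$, so $\overline{\TT}_\theta^\perp=\FF_\theta$. Had you gotten this right, you would have seen that the two assertions are mutually equivalent under taking orthogonals, reducing to one assertion --- which is exactly how the paper begins.
\item Your suggested shortcut via subadditivity, ``$E(\eta,\theta')\le\sum E(\eta,\theta_i)=0$, reducing to the one-term case with $\theta'$ in place of $\theta_i$,'' does not apply: $\theta'\in\cone^\circ\{\theta_1,\ldots,\theta_\ell\}$ lives in $K_0(\proj A)_\R$ with real coefficients, so $E(\eta,\theta')$ is not even defined. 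Lemma~\ref{additivity}, which operates at the level of semistable subcategories for arbitrary real $\theta$, is the tool that handles the real cone; $E$ cannot.
\end{enumerate}
In short, the correct and short argument proceeds entirely on the $\FF$ side via Lemma~\ref{additivity}, and what you present is an incomplete alternative route blocked at its central step.
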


\begin{proof}
It suffices to show the second assertion. 
By Proposition \ref{signs of X}, we have 
$\FF_\eta\subseteq\overline{\FF}_{\theta_i}$ for each $i$.
By Lemma \ref{additivity}, we get
$\FF_\eta\subseteq\overline{\FF}_{\theta'}.$
By Lemma \ref{additivity} again, we have $\FF_{\eta}=\FF_{a\eta}\cap\overline{\FF}_{\theta'}\subseteq\FF_{a\eta+\theta'}$.
\end{proof}

Recall that $\tors A$ and $\torf A$ are lattices,
and that we denote by $\vee$ the join in $\tors A$ and $\torf A$.
The following is the main result in this section, which was obtained in the discussion with Laurent Demonet \cite{D} when the authors were in Nagoya University.

\begin{theorem}\label{decomposition and T}
Let $A$ be a finite dimensional algebra over an algebraically closed field $k$, and $\theta,\theta_1,\ldots,\theta_\ell\in K_0(\proj A)$. Assume that $\theta=\theta_1\oplus\cdots\oplus\theta_\ell$ holds.
\begin{enumerate}[\rm(a)]
\item For each $\eta\in\cone^\circ\{\theta_1,\ldots,\theta_\ell\}$,
we have
\begin{align*}\overline{\TT}_\eta=\bigcap_{i=1}^\ell\overline{\TT}_{\theta_i},\ \TT_\eta=\bigvee_{i=1}^\ell\TT_{\theta_i},\ \overline{\FF}_\eta=\bigcap_{i=1}^\ell\overline{\FF}_{\theta_i},\ \FF_\eta=\bigvee_{i=1}^\ell\FF_{\theta_i}\ \text{ and }\ \WW_\eta=\bigcap_{i=1}^\ell\WW_{\theta_i}.\end{align*}
\item We have
\begin{align*}
[\theta]_{\rm TF}\supseteq\cone^\circ\{\theta_1,\ldots,\theta_\ell\}.
\end{align*}
\end{enumerate}
\end{theorem}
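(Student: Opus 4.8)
The plan is to prove (a) first, since (b) follows immediately: once we know $\overline{\TT}_\eta$ and $\overline{\FF}_\eta$ are independent of the choice of $\eta\in\cone^\circ\{\theta_1,\ldots,\theta_\ell\}$, that entire cone lies in a single TF equivalence class, namely $[\theta]_{\rm TF}$ (noting $\theta=\sum\theta_i$ itself lies in the cone, so the common value is exactly $[\theta]_{\rm TF}$). For (a), by duality it suffices to establish the statements for $\overline{\TT}$, $\TT$ and $\WW$; the $\overline{\FF}$ and $\FF$ assertions are the opposite-algebra versions, and $\WW_\eta=\overline{\TT}_\eta\cap\overline{\FF}_\eta=\bigcap\overline{\TT}_{\theta_i}\cap\bigcap\overline{\FF}_{\theta_i}=\bigcap\WW_{\theta_i}$ follows formally from the $\overline\TT$ and $\overline\FF$ statements. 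Also, by Lemma \ref{additivity} it is enough to treat one fixed $\eta$, e.g.\ $\eta=\theta_1+\cdots+\theta_\ell=\theta$, since rescaling the $\theta_i$ individually does not change any of the torsion/torsion-free classes, and then Lemma \ref{additivity} transports the conclusion to an arbitrary point of the open cone.

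For the inclusion $\overline{\TT}_\theta\subseteq\bigcap_{i}\overline{\TT}_{\theta_i}$: since $\theta=\theta_1\oplus\cdots\oplus\theta_\ell$ is a direct sum, Proposition \ref{decomposition}(a) gives $E(\theta_i,\theta_j)=0$ for all $i\neq j$, hence also $E(\theta_i,\sum_{j\ne i}\theta_j)=0$ by subadditivity (or Proposition \ref{decompose_2step}(b)). Writing $\theta=\theta_i+(\sum_{j\ne i}\theta_j)$ and applying Lemma \ref{E and T} (with $\eta=\theta_i$, the remaining sum playing the role of $\theta'$, and $a=1$) yields $\overline{\TT}_\theta\subseteq\overline{\TT}_{\theta_i}$ for each $i$, so $\overline{\TT}_\theta\subseteq\bigcap_i\overline{\TT}_{\theta_i}$. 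Dually $\overline{\FF}_\theta\subseteq\bigcap_i\overline{\FF}_{\theta_i}$. For the reverse inclusion $\bigcap_i\overline{\TT}_{\theta_i}\subseteq\overline{\TT}_\theta$: this is immediate from Lemma \ref{additivity}, which gives $\overline{\TT}_{\theta_i}\cap\overline{\TT}_{\theta_j}\subseteq\overline{\TT}_{\theta_i+\theta_j}$ and hence inductively $\bigcap_i\overline{\TT}_{\theta_i}\subseteq\overline{\TT}_{\sum_i\theta_i}=\overline{\TT}_\theta$. So the $\overline{\TT}$ and $\overline{\FF}$ equalities in (a) hold.

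It remains to handle $\TT_\eta=\bigvee_i\TT_{\theta_i}$. Since $(\overline{\TT}_\eta,\FF_\eta)$ and $(\TT_\eta,\overline{\FF}_\eta)$ are torsion pairs, $\TT_\eta={}^\perp(\overline{\FF}_\eta)$ and from the already-proved $\overline{\FF}_\eta=\bigcap_i\overline{\FF}_{\theta_i}$ we get $\TT_\eta={}^\perp\bigl(\bigcap_i\overline{\FF}_{\theta_i}\bigr)\supseteq\bigvee_i{}^\perp(\overline{\FF}_{\theta_i})=\bigvee_i\TT_{\theta_i}$. For the reverse inclusion I would use Proposition \ref{signs of X}(b): for $i\ne j$, $\theta_i\oplus\theta_j$ gives $\TT_{\theta_i}\subseteq\overline{\TT}_{\theta_j}$, and hence $\bigvee_i\TT_{\theta_i}\subseteq\bigcap_j\overline{\TT}_{\theta_j}=\overline{\TT}_\eta$; I then want to promote "$\bigvee_i\TT_{\theta_i}\subseteq\overline{\TT}_\eta$ with the $C_{f_i}$ generators lying in $\TT_\eta$" to "$\bigvee_i\TT_{\theta_i}\subseteq\TT_\eta$". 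Concretely, pick general $f_i\in\Hom(\theta_i)$ so that $P_f\simeq\bigoplus_i P_{f_i}$ for general $f\in\Hom(\theta)$; then $C_f=\bigoplus_i C_{f_i}$, so each $C_{f_i}$ is a summand of $C_f\in\TT_f$, hence $C_{f_i}\in\TT_\theta$ once we know $\TT_f$ can be taken inside $\TT_\theta$ — but the cleaner route is: $C_{f_i}$ is a factor of $C_f$, and by Proposition \ref{W_f and W_theta} applied to a general $f$ realizing the canonical decomposition we need $\TT_f\subseteq\TT_\theta$, which holds because $\Hom_{\DDD(A)}(P_{f},\nu P_f[-1])$ controls whether $C_f\in\TT_\theta$; here the direct-sum structure forces $E(f,f)=\sum_i E(f_i,f_i)+\sum_{i\ne j}E(f_i,f_j)$ and the cross terms vanish, so it reduces to the indecomposable pieces. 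Since $\TT_{\theta_i}=\T(C_{g})$ for general $g\in\Hom(\theta_i)$ and $\TT_\theta$ is a torsion class containing each such $C_{f_i}$, we conclude $\TT_{\theta_i}\subseteq\TT_\theta$ for all $i$, hence $\bigvee_i\TT_{\theta_i}\subseteq\TT_\theta$, giving equality.

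The main obstacle I anticipate is exactly this last point: carefully justifying that a \emph{general} $f\in\Hom(\theta)$ can simultaneously be chosen to (i) decompose as $\bigoplus P_{f_i}$ along the canonical decomposition, (ii) have each $P_{f_i}$ general enough in $\Hom(\theta_i)$ that $\TT_{\theta_i}=\T(C_{f_i})$, and (iii) satisfy $\TT_f\subseteq\TT_\theta$ — this requires intersecting finitely many open dense subsets of $\Hom(\theta)$ (as in the proof of Proposition \ref{decompose_2step}(b)) and tracking how $\TT_{(-)}$ behaves under specialization via Proposition \ref{closure and T}. Once that bookkeeping is done, (a) is complete and (b) is a one-line consequence: $\cone^\circ\{\theta_1,\ldots,\theta_\ell\}$ is contained in the set of $\eta$ with $\overline\TT_\eta=\bigcap_i\overline\TT_{\theta_i}$ and $\overline\FF_\eta=\bigcap_i\overline\FF_{\theta_i}$, a fixed pair, which is contained in a single TF class; since $\theta$ itself is in this cone, that class is $[\theta]_{\rm TF}$.
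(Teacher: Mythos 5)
Your argument for the $\overline{\TT}$ and $\overline{\FF}$ equalities via Lemma \ref{E and T} is the same as the paper's and is correct, and the deduction of (b) from (a) is also fine. However, two parts of the write-up need attention.

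First, the reduction to $\eta=\theta$ is not justified: Lemma \ref{additivity} only handles uniform scaling $\eta\mapsto\epsilon\eta$, so it cannot transport the statement from $\theta=\sum_i\theta_i$ to an arbitrary point $\eta=\sum_i a_i\theta_i$ of the open cone, since rescaling the $\theta_i$ individually leaves $\bigcap_i\overline{\TT}_{\theta_i}$ fixed but changes $\eta$, and it is precisely the invariance of the left-hand side $\overline{\TT}_\eta$ that has to be proved. Fortunately this reduction is unnecessary: Lemma \ref{E and T} is stated for arbitrary $a\in\R_{>0}$ and $\theta'\in\cone^\circ\{\theta_j\mid j\neq i\}$, so writing an arbitrary $\eta$ in the cone as $a\theta_i+\theta'$ and applying the lemma directly already gives $\overline{\TT}_\eta\subseteq\overline{\TT}_{\theta_i}$, which is what the paper does.

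Second, and more substantively, the ``main obstacle'' you identify for $\TT_\eta=\bigvee_i\TT_{\theta_i}$ does not exist, and the whole excursion through general presentations, $\TT_f\subseteq\TT_\theta$, and open dense subsets can be deleted. You correctly note $\TT_\eta={}^\perp(\overline{\FF}_\eta)={}^\perp\bigl(\bigcap_i\overline{\FF}_{\theta_i}\bigr)\supseteq\bigvee_i\TT_{\theta_i}$; but the reverse inclusion is equally formal. Since $(\TT_{\theta_i},\overline{\FF}_{\theta_i})$ is a torsion pair for each $i$ and $\TT\mapsto\TT^\perp$ is an order-reversing bijection between torsion classes and torsion-free classes, we have $\bigl(\bigvee_i\TT_{\theta_i}\bigr)^\perp\subseteq\bigcap_i\TT_{\theta_i}^\perp=\bigcap_i\overline{\FF}_{\theta_i}=\overline{\FF}_\eta$, and applying ${}^\perp$ gives $\TT_\eta={}^\perp\overline{\FF}_\eta\subseteq\bigvee_i\TT_{\theta_i}$. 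This, together with the dual statement for $\FF_\eta$ and the observation $\WW_\eta=\overline{\TT}_\eta\cap\overline{\FF}_\eta$, is exactly what the paper means by ``they give the other equalities.'' Once you see this, the proof is just: establish $\overline{\TT}_\eta=\bigcap_i\overline{\TT}_{\theta_i}$ and its dual via Lemma \ref{additivity} and Lemma \ref{E and T}, and read off the remaining three identities from the torsion-pair lattice duality.
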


\begin{proof}
It suffices to prove (a).
We prove the equality for $\overline{\TT}$. Since ``$\supseteq$'' is clear from Lemma \ref{additivity}, it suffices to prove ``$\subseteq$''.
Writing $\eta=a\theta_i+\theta'$ with $a\in\R_{>0}$ and $\theta'\in\cone^\circ\{\theta_j\mid j\neq i\}$ and applying Lemma \ref{E and T}, we obtain $\overline{\TT}_\eta\subseteq\overline{\TT}_{\theta_i}$ for each $i$. Thus the assertion holds.

The dual argument shows the equality for $\overline{\FF}$. They give the other equalities.
\end{proof}

Immediately we obtain the following result.

\begin{corollary}\label{ind and TF}
Let $A$ be a finite dimensional algebra over an algebraically closed field $k$. For $\theta,\eta\in K_0(\proj A)$, if $\ind\theta=\ind\eta$, then $\theta$ and $\eta$ are TF equivalent.
\end{corollary}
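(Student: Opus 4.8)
The plan is to obtain this as an immediate consequence of Theorem~\ref{decomposition and T}(b), together with the elementary observation that any element of $K_0(\proj A)$ lies in the relative interior of the cone spanned by its canonical summands.

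First I would fix a canonical decomposition $\theta = \theta_1 \oplus \cdots \oplus \theta_\ell$ of $\theta$ (allowing repetitions among the $\theta_i$), so that, as a set, $\ind\theta = \{\theta_1,\ldots,\theta_\ell\}$. Theorem~\ref{decomposition and T}(b) applied to this decomposition gives
\[
[\theta]_{\rm TF} \;\supseteq\; \cone^\circ\{\theta_1,\ldots,\theta_\ell\} \;=\; \cone^\circ(\ind\theta),
\]
so that every element of $\cone^\circ(\ind\theta)$ is TF equivalent to $\theta$. It therefore suffices to check that $\eta \in \cone^\circ(\ind\theta)$.

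For this, if $\eta = \eta_1 \oplus \cdots \oplus \eta_m$ is a canonical decomposition, then each $\eta_j$ lies in $\ind\eta$ and $\eta = \sum_{j=1}^m \eta_j$ is a combination in which every summand occurs with a strictly positive integer coefficient; hence $\eta \in \sum_{\zeta \in \ind\eta} \R_{>0}\,\zeta = \cone^\circ(\ind\eta)$. Using the hypothesis $\ind\eta = \ind\theta$ we conclude
\[
\eta \in \cone^\circ(\ind\eta) = \cone^\circ(\ind\theta) \subseteq [\theta]_{\rm TF},
\]
which is precisely the claim.

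There is no serious obstacle here: the entire content lies in Theorem~\ref{decomposition and T}. The only point deserving a moment's attention is that $\ind(-)$ remembers only the underlying set of indecomposable summands and discards their multiplicities; this is harmless, because in a canonical decomposition all multiplicities are positive integers, so both $\theta$ and $\eta$ automatically sit in the relative interior of the common cone $\cone^\circ(\ind\theta) = \cone^\circ(\ind\eta)$, which the displayed inclusion shows is contained in a single TF equivalence class.
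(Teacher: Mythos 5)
Your proof is correct and coincides with what the paper regards as immediate from Theorem~\ref{decomposition and T}: apply part~(b) to a canonical decomposition of $\theta$ to get $\cone^\circ(\ind\theta) \subseteq [\theta]_{\rm TF}$, and observe that $\eta \in \cone^\circ(\ind\eta) = \cone^\circ(\ind\theta)$ since all multiplicities in a canonical decomposition are strictly positive. The paper states the corollary without proof precisely because of this; your write-up simply makes the ``immediate'' step explicit, and it is sound.
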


Note that Conjecture \ref{canonical and TF} means that the two subsets in Theorem \ref{decomposition and T}(b) coincide.

\begin{remark}
The converse of Theorem \ref{decomposition and T}(b) is not  true, that is, for indecomposable elements $\theta_1,\ldots,\theta_\ell\in K_0(\proj A)$, the condition 
$\cone^\circ\{\theta_1,\ldots,\theta_\ell\} \subset [\theta_1+\cdots+\theta_\ell]_{\mathrm{TF}}$
does not imply $\theta_1\oplus\cdots\oplus\theta_\ell$ in general.
For example, if $\theta$ is wild in $K_0(\proj A)$,
then $\theta$ is TF equivalent to itself, but $\theta\oplus \theta$ does not hold.

The converse of Corollary \ref{ind and TF} is not true either, that is, TF equivalence of $\theta$ and $\eta$ does not imply $\ind\theta=\ind\eta$. For example, if $\theta$ is wild in $K_0(\proj A)$, then $\theta$ and $2\theta$ are TF equivalent, but $\ind\theta=\{\theta\}\neq\{2\theta\}=\ind 2\theta$.
\end{remark}

As an application, we obtain the following observation.

\begin{proposition}\label{wall direct summand}
For each $X\in\mod A$, the subset $\Theta_X\cap K_0(\proj A)$ is closed under direct summands of canonical decompositions.\end{proposition}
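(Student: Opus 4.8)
The plan is to derive the statement directly from Theorem \ref{decomposition and T}(a). Let $\theta \in \Theta_X \cap K_0(\proj A)$, and let $\theta = \theta_1 \oplus \cdots \oplus \theta_\ell$ be its canonical decomposition; by definition each $\theta_i$ lies in $K_0(\proj A)$. The crucial (and essentially only) observation is that $\theta = 1\cdot\theta_1 + \cdots + 1\cdot\theta_\ell$ lies in the relative interior $\cone^\circ\{\theta_1,\ldots,\theta_\ell\}$, since all the coefficients are strictly positive. Hence Theorem \ref{decomposition and T}(a), applied with $\eta = \theta$, gives
\[
\WW_\theta = \bigcap_{i=1}^\ell \WW_{\theta_i}.
\]
Since $\theta \in \Theta_X$ means precisely $X \in \WW_\theta$, we conclude $X \in \WW_{\theta_i}$ for every $i$, that is, $\theta_i \in \Theta_X \cap K_0(\proj A)$ for every $i$.

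\textbf{The general sub-sum version.} To cover direct summands that are sub-sums rather than single indecomposables, I would note that for any $S \subseteq \{1,\ldots,\ell\}$ the restricted family $(\theta_i)_{i \in S}$ still satisfies $\theta_i \oplus \theta_j$ for all $i \neq j$ in $S$ by Proposition \ref{decompose_2step}(a), so $\bigoplus_{i \in S}\theta_i$ is again a canonical decomposition; applying Theorem \ref{decomposition and T}(a) once more yields $\WW_{\sum_{i\in S}\theta_i} = \bigcap_{i\in S}\WW_{\theta_i} \supseteq \bigcap_{i=1}^\ell \WW_{\theta_i} = \WW_\theta \ni X$, whence $\sum_{i \in S}\theta_i \in \Theta_X \cap K_0(\proj A)$.

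\textbf{Main obstacle.} There is no genuine difficulty: the entire content is packaged in Theorem \ref{decomposition and T}(a), and the only point requiring a moment's care is the elementary remark that the integral vector $\theta$ itself sits in the open cone spanned by its canonical summands, which is what makes Theorem \ref{decomposition and T}(a) applicable with $\eta = \theta$.
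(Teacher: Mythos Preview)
Your argument is correct and is essentially identical to the paper's own proof: both apply Theorem \ref{decomposition and T}(a) with $\eta=\theta$ to obtain $\WW_\theta=\bigcap_i\WW_{\theta_i}\subseteq\WW_{\theta_i}$, whence $X\in\WW_\theta$ forces $\theta_i\in\Theta_X$. Your added paragraph on general sub-sums is a harmless elaboration that the paper does not spell out.
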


\begin{proof}
For each $\theta\in \Theta_X\cap K_0(\proj A)$, let $\theta=\theta_1\oplus\cdots\oplus\theta_\ell$ be a canonical decomposition. By Theorem \ref{decomposition and T}(a), we have $X\in\WW_\theta\subset\WW_{\theta_i}$ for each $1\le i\le\ell$. Thus $\theta_i\in\Theta_X$.
\end{proof}

Theorem \ref{decomposition and T} recovers the sign-coherence due to \cite{P}:
We say $\theta_1,\ldots,\theta_\ell \in K_0(\proj A)$ are \textit{sign-coherent}
if $\add P_1^{\theta_i} \cap \add P_0^{\theta_j}=\{0\}$ for all $i,j$.

\begin{corollary}\cite[Lemma 2.10]{P}
If $\theta_1\oplus\cdots\oplus\theta_\ell$ holds in $K_0(\proj A)$, 
then $\theta_1,\ldots,\theta_\ell$ are sign-coherent.
\end{corollary}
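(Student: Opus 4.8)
The plan is to deduce the statement directly from Theorem \ref{decomposition and T} together with the definition of sign-coherence, so no new homological input is needed. Suppose $\theta_1 \oplus \cdots \oplus \theta_\ell$ holds; I want to show $\add P_1^{\theta_i} \cap \add P_0^{\theta_j} = \{0\}$ for all $i,j$. The case $i=j$ is immediate from the normalization: $P_1^{\theta_i}$ and $P_0^{\theta_i}$ have no common nonzero direct summand by the very definition of the decomposition $\theta_i = P_0^{\theta_i} - P_1^{\theta_i}$. So the content is in the case $i \neq j$, and by symmetry of the roles it suffices to handle a single such pair.

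For the case $i \neq j$, I would argue by contradiction: suppose $P := P(k)$ is a nonzero indecomposable projective lying in both $\add P_1^{\theta_i}$ and $\add P_0^{\theta_j}$. Consider the simple module $S(k) = \top P(k)$. I claim $S(k) \notin \overline{\TT}_{\theta_i}$ but $S(k) \in \TT_{\theta_j}$, which will contradict Theorem \ref{decomposition and T}(a) (applied to any $\eta \in \cone^\circ\{\theta_1,\ldots,\theta_\ell\}$, which gives $\TT_{\theta_j} \subseteq \TT_\eta \subseteq \overline{\TT}_\eta \subseteq \overline{\TT}_{\theta_i}$) — or, more cleanly, contradict Proposition \ref{signs of X}(b), which directly gives $\TT_{\theta_j} \subseteq \overline{\TT}_{\theta_i}$ from $\theta_i \oplus \theta_j$. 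The key computation is: $\theta_i(S(k)) = \langle \theta_i, [S(k)] \rangle = [P_0^{\theta_i} : P(k)] - [P_1^{\theta_i}:P(k)]$, the difference of multiplicities of $P(k)$ as a summand. Since $P(k) \in \add P_1^{\theta_i}$ and $P(k) \notin \add P_0^{\theta_i}$ (no common summands), this is $\le -1 < 0$, so $\theta_i(S(k)) < 0$; since $S(k)$ is a factor module of itself, $S(k) \notin \overline{\TT}_{\theta_i}$. Symmetrically, $\theta_j(S(k)) = [P_0^{\theta_j}:P(k)] - [P_1^{\theta_j}:P(k)] \ge 1 > 0$; since $S(k)$ is simple, its only nonzero factor module is itself, so $S(k) \in \TT_{\theta_j}$.

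Putting these together: $S(k) \in \TT_{\theta_j} \subseteq \overline{\TT}_{\theta_i}$ by Proposition \ref{signs of X}(b), contradicting $S(k) \notin \overline{\TT}_{\theta_i}$. Hence no such $P(k)$ exists, i.e. $\add P_1^{\theta_i} \cap \add P_0^{\theta_j} = \{0\}$, and combined with the $i=j$ case this shows $\theta_1,\ldots,\theta_\ell$ are sign-coherent.

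I do not anticipate a genuine obstacle here; the only point requiring minor care is ensuring the multiplicity/Euler-form computation $\langle \theta, [S(k)]\rangle = [P_0^\theta : P(k)] - [P_1^\theta : P(k)]$ is justified from $\langle P(i), S(j)\rangle = \delta_{ij}$ and bilinearity, and that the non-common-summand normalization is exactly what forces the inequalities to be strict. One could alternatively phrase the whole argument without passing through simple modules by noting $\theta_i \not\ge 0$ restricted appropriately, but the simple-module test is the most transparent route and makes the use of Proposition \ref{signs of X}(b) immediate.
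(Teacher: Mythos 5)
Your proposal is correct and takes essentially the same approach as the paper: both take an indecomposable projective $P$ in $\add P_1^{\theta_i}\cap\add P_0^{\theta_j}$, pass to the simple top $S=\top P$, and derive a contradiction from the sign of $\theta_i(S)$ and $\theta_j(S)$ together with the torsion-theoretic consequences of $\theta_i\oplus\theta_j$. The only cosmetic difference is the final citation: the paper applies Theorem \ref{decomposition and T}(a) to conclude $S\in\FF_\theta\cap\TT_\theta=\{0\}$, whereas you invoke Proposition \ref{signs of X}(b) to get $\TT_{\theta_j}\subseteq\overline{\TT}_{\theta_i}$ directly, which is the more immediate source of the same contradiction.
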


\begin{proof}
We assume that $\add P_1^{\theta_i} \cap \add P_0^{\theta_j}\ne\{0\}$ for some $i,j$.
Take an indecomposable object $P \in \add P_1^{\theta_i} \cap \add P_0^{\theta_j}$ and the corresponding simple module $S:=\top P \in \mod A$. Then  $S \in \FF_{\theta_i} \cap \TT_{\theta_j}$ holds.
For $\theta:=\sum_{i=1}^\ell \theta_i$, we have
\begin{align*}S \in \FF_{\theta_i} \cap \TT_{\theta_j} \stackrel{\text{Thm.   \ref{decomposition and T}}}{\subseteq} \FF_\theta \cap \TT_\theta = \{0\},\end{align*}
a contradiction. Thus $\add P_1^{\theta_i} \cap \add P_0^{\theta_j}=\{0\}$ as desired.
\end{proof}

\begin{remark}
In Theorem \ref{decomposition and T}, $\WW_\theta$ is not a Serre subcategory of $\WW_{\theta_i}$.
For example, let $A=k(1 \to 2 \to 3)$, $\theta_1=[P(1)]-[P(2)]$ and $\theta_2=[P(1)]-[P(3)]$.
Then $\theta_1 \oplus \theta_2$ holds.
Moreover $\WW_{\theta_1+\theta_2}$ has only one simple object $P(1)$, but it is not simple in $\WW_{\theta_1}$ or $\WW_{\theta_2}$.
\end{remark}

\subsection{Functorial properties}

Let $A$ and $B$ be finite dimensional $k$-algebras, and let $\phi \colon A\to B$ be a $k$-algebra homomorphism which is not necessarily unital. Thus $1_A:=\phi(1_A)$ is an idempotent of $B$, and $\phi$ is a composite of a unital $k$-algebra homomorphism $A\to 1_AB1_A$ and the natural inclusion $1_AB1_A\to B$. We denote by
\begin{align*}(-)1_A \colon \mod B\to\mod A\end{align*}
the restriction functor. We also have an additive functor
\begin{align*}-\otimes_A1_AB \colon \proj A\to\proj B\end{align*}
and a triangle functor
\begin{align*}-\otimes_A1_AB \colon \KKK^{\bo}(\proj A)\to\KKK^{\bo}(\proj B).\end{align*}
We also have morphisms
\begin{align*}-\otimes_A1_AB \colon K_0(\proj A)\to K_0(\proj B)\ \text{ and } \ {-}\otimes_A1_AB \colon K_0(\proj A)_\R\to K_0(\proj B)_\R,\end{align*}
which make the following diagram commutative:
\begin{align*}\xymatrix{
\KKK^{\bo}(\proj A)\ar[d]^{[-]}\ar[rr]^{-\otimes_A1_AB}&&\KKK^{\bo}(\proj B)\ar[d]^{[-]}\\
K_0(\proj A)\ar[rr]^{-\otimes_A1_AB}&&K_0(\proj B).
}\end{align*}
We give the following useful basic properties.

\begin{proposition}\label{W^A and W^B}
Under the above setting, we have the following assertions.
\begin{enumerate}[\rm(a)]
\item For each morphism $f \colon P_1\to P_0$ in $\proj A$, we have
\begin{align*}
\overline{\TT}_{f\otimes 1_AB}^B&=\{X\in\mod B\mid X1_A\in\overline{\TT}_{f}^A\},&\overline{\FF}_{f\otimes 1_AB}^B=\{X\in\mod B\mid X1_A\in\overline{\FF}_{f}^A\},\\
\WW_{f\otimes 1_AB}^B&=\{X\in\mod B\mid X1_A\in\WW_{f}^A\}.\end{align*}
\item For each $\theta\in K_0(\proj A)_\R$, we have
\begin{align*}
\overline{\TT}_{\theta\otimes 1_AB}^B&\supseteq\{X\in\mod B\mid X1_A\in\overline{\TT}_\theta^A\},&\overline{\FF}_{\theta\otimes 1_AB}^B\supseteq\{X\in\mod B\mid X1_A\in\overline{\FF}_\theta^A\},\\
\WW_{\theta\otimes 1_AB}^B&\supseteq\{X\in\mod B\mid X1_A\in\WW_\theta^A\}.
\end{align*}
\item The equalities in (b) hold if there exists an idempotent $e \in A$ satisfying the following conditions.
\begin{enumerate}[\rm(i)]
\item $\phi(eA(1_A-e))=eB(1_A-e)$,
\item $\theta \in \sum_{P_+ \in \add eA}\R_{\ge 0}[P_+]-\sum_{P_- \in \add (1_A-e)A}\R_{\ge 0}[P_-] 
\subset K_0(\proj A)_\R$.
\end{enumerate}
\end{enumerate}
\end{proposition}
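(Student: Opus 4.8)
\textbf{Proof proposal for Proposition \ref{W^A and W^B}.}

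The plan is to prove the three assertions in order, since (b) and (c) build on (a). For part (a), the key is Lemma \ref{Nakayama T_f}(b), which characterizes $\overline{\TT}_f^A$, $\overline{\FF}_f^A$ and $\WW_f^A$ as the modules on which $\Hom_A(f,-)$ is surjective, injective, and bijective respectively. So I would start from the adjunction isomorphism: for $X\in\mod B$ and $P\in\proj A$, there is a natural isomorphism $\Hom_B(P\otimes_A 1_AB, X)\simeq\Hom_A(P, X1_A)$. Applying this functorially to the morphism $f\colon P_1\to P_0$ identifies the $k$-linear map $\Hom_B(f\otimes 1_AB, X)$ with $\Hom_A(f, X1_A)$. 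Hence $\Hom_B(f\otimes 1_AB, X)$ is surjective (resp.\ injective, bijective) if and only if $\Hom_A(f, X1_A)$ is, and Lemma \ref{Nakayama T_f}(b) applied on both sides yields exactly the three displayed equalities of (a).

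For part (b), I would reduce to (a) by a covering/filtration argument on semistable torsion classes. Fix $\theta\in K_0(\proj A)_\R$. Using Lemma \ref{additivity} we may rescale $\theta$, and for the inclusion it suffices to treat $X\in\mod B$ with $X1_A\in\overline{\TT}_\theta^A$ and show $X\in\overline{\TT}_{\theta\otimes 1_AB}^B$ (the arguments for $\overline{\FF}$ and then $\WW$ being dual/formal). The natural tool is Proposition \ref{W filtration}(b): after passing to a rational $\theta$ (the real case follows since $\overline{\TT}_\theta$ only depends on the ray and by density, or more carefully via Definition-Proposition \ref{HN filtration}), we have $\overline{\TT}_\theta^A=\vecFilt_{\eta\le\theta}\WW_\eta^A$ over rational $\eta$. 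Since $(-)1_A$ is exact and $\mod B\to\mod A$ sends extensions to extensions, it is enough to show that each wide-subcategory piece $\WW_\eta^A$ with $\eta\le\theta$ pulls back inside $\overline{\TT}_{\theta\otimes 1_AB}^B$; and for that one uses that $-\otimes_A 1_AB$ is additive so a morphism $f$ realizing $\WW_\eta^A=\WW_f^A$ (obtained by choosing a general or suitable $f\in\Hom(\eta)$, e.g.\ via $C_f\simeq K_{\nu f}$ at a stable point, cf.\ Lemma \ref{W_f and W_theta 2}) maps to $f\otimes 1_AB$ with $[f\otimes 1_AB]=\eta\otimes 1_AB\le\theta\otimes 1_AB$, combined with Proposition \ref{W_f and W_theta} and part (a). Actually the cleanest route avoiding the filtration is: write $\eta\le\theta$ means $\theta-\eta$ is a nonnegative combination of $[P(i)]$'s, these stay nonnegative after $\otimes_A 1_AB$ (since $-\otimes_A1_AB$ sends projectives to projectives), so $\theta\otimes 1_AB\ge\eta\otimes1_AB$, and then Lemma \ref{additivity} plus part (a) give $\WW_{f\otimes1_AB}^B\subseteq\overline{\TT}^B_{[f\otimes1_AB]}\subseteq\overline{\TT}^B_{\theta\otimes1_AB}$. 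This is the main content; I expect the bookkeeping around passing from real to rational $\theta$ and choosing $f$ to be the only delicate point.

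For part (c), the goal is the reverse inclusion in (b) under hypotheses (i)--(ii). I would argue that the extra conditions force the module theory over $B$ to be ``controlled'' by the $e$-part. Concretely, condition (ii) says $\theta$ lies in the cone spanned by $[P_+]$ for $P_+\in\add eA$ on the positive side and $[P_-]$ for $P_-\in\add(1_A-e)A$ on the negative side; this means one can choose a presentation $f\colon P_1\to P_0$ with $[f]=\theta$ (or approximating $\theta$), $P_0\in\add eA$ and $P_1\in\add(1_A-e)A$, so that all maps $f\colon P_1\to P_0$ lie in $eA(1_A-e)$-type hom-spaces — and condition (i) $\phi(eA(1_A-e))=eB(1_A-e)$ guarantees that applying $-\otimes_A1_AB$ is surjective on the relevant hom-spaces, so every general $g\in\Hom_B(\theta\otimes1_AB)$ is of the form $f\otimes1_AB$. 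Then for $X\in\mod B$ with $X\in\overline{\TT}_{\theta\otimes1_AB}^B$: using the characterization of $\overline{\TT}_\theta^B$ as intersection over general presentations (Proposition \ref{W_f and W_theta} / the fact that $\overline{\TT}_f\subseteq\overline{\TT}_\theta$ with equality attained at general $f$, cf.\ the cone picture in Example \ref{presilting case} and Proposition \ref{closure and T}), we get $X\in\overline{\TT}^B_{f\otimes1_AB}$ for a suitable $f$, hence by part (a) $X1_A\in\overline{\TT}_f^A\subseteq\overline{\TT}_\theta^A$, giving the reverse inclusion. The $\overline{\FF}$ and $\WW$ statements follow by the dual argument and intersection.

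\textbf{The main obstacle.} The hardest step is part (c): part (a) is a formal adjunction computation and part (b) is an easy consequence via $\ge$-monotonicity, but in (c) one must leverage hypotheses (i) and (ii) to \emph{compute} the semistable torsion class over $B$ rather than just bound it — i.e.\ to show every general $B$-presentation of $\theta\otimes1_AB$ comes from an $A$-presentation of $\theta$, so that the upper-semicontinuity/generic-presentation description of $\overline{\TT}_\theta^B$ reduces exactly to the $A$-side. Making ``general'' precise here — ensuring the relevant open dense loci in $\Hom_A(\eta)$ and $\Hom_B(\eta\otimes1_AB)$ match up under the surjection from (i) — is where care is needed; I expect this to use Proposition \ref{upper semi}, Lemma \ref{open} and a dimension count à la Proposition \ref{codim=E}.
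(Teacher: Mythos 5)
Your proof of (a) matches the paper's: the adjunction $\Hom_B(P\otimes_A 1_AB, X)\simeq\Hom_A(P, X1_A)$ is exactly the commutative diagram the paper writes down, and Lemma~\ref{Nakayama T_f} then gives the three equalities.

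For (b), however, you are making things much harder than they need to be. You never state the one elementary observation that carries the whole proof: a $B$-submodule $Y\subseteq X$ restricts to an $A$-submodule $Y1_A\subseteq X1_A$, and $(\theta\otimes 1_AB)(Y)=\theta(Y1_A)$ because $\Hom_B(P\otimes_A1_AB,Y)\simeq\Hom_A(P,Y1_A)$ for $P\in\proj A$. Unwinding Definition~\ref{define T_theta}, this gives $\{X\mid X1_A\in\overline{\FF}^A_\theta\}\subseteq\overline{\FF}^B_{\theta\otimes1_AB}$ in two lines, and $\overline{\TT}$ dually. Neither the Harder--Narasimhan filtration nor any choice of morphism $f\in\Hom(\eta)$ is needed. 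If you were to pursue your filtration route you would also need to justify the passage from rational to real $\theta$ and to choose $f$ so that $\WW_f^A$ realizes $\WW_\eta^A$, neither of which is automatic.

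Part (c) contains a genuine gap. Your plan is to show that every \emph{general} $g\in\Hom_B(\theta\otimes1_AB)$ lifts to some $f\in\Hom_A(\theta)$ via hypothesis (i), and then deduce $X1_A\in\overline{\TT}^A_\theta$ for every $X\in\overline{\TT}^B_{\theta\otimes1_AB}$ from part (a). But this presupposes that $\overline{\TT}^B_{\theta\otimes1_AB}$ is computed by generic presentations: by Proposition~\ref{W_f and W_theta} one only has $\overline{\TT}^B_g\subseteq\overline{\TT}^B_{\theta\otimes1_AB}$, and this inclusion is in general strict even for general $g$ — Theorem~\ref{cup T_f} shows one must pass to arbitrary multiples $\ell\theta$ (and Example~\ref{kronecker} shows $\ell=1$ genuinely fails). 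So for $X\in\overline{\TT}^B_{\theta\otimes1_AB}\setminus\overline{\TT}^B_g$ your argument says nothing. Moreover, Proposition~\ref{W^A and W^B} is stated for arbitrary real $\theta$, where the notion of a ``presentation of $\theta$'' is not even available. The paper's actual argument is elementary and of a completely different flavor: given $X\in\overline{\FF}^B_{\theta\otimes1_AB}$ and an $A$-submodule $Y\subseteq X1_A$, hypothesis (i) is used (via Lemma~\ref{sub A and sub B}) to produce a $B$-submodule $Y'\subseteq X$ with $Y'e\supseteq Ye$ and $Y'(1_A-e)\subseteq Y(1_A-e)$; hypothesis (ii) on the sign pattern of $\theta$ then yields $\theta(Y)\le\theta(Y'1_A)=(\theta\otimes1_AB)(Y')\le 0$. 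You would need to discover and prove this submodule-lifting lemma; the ``generic presentation'' route cannot substitute for it.
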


To prove part (c), we need the following observation.

\begin{lemma}\label{sub A and sub B}
Let $\phi \colon A\to B$ be a morphism of $k$-algebras, and $e\in A$ an idempotent. Assume $\phi(eA(1_A-e))=eB(1_A-e)$ holds. Then for each $X\in\mod B$ and an $A$-submodule $Y$ of $X1_A$, there exists a $B$-submodule $Y'$ of $X$ satisfying $Y'e\supseteq Ye$ and $Y'(1_A-e)\subseteq Y(1_A-e)$.
\end{lemma}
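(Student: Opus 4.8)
The plan is to take $Y'$ to be the $B$-submodule of $X$ generated by $Ye$, that is, $Y':=(Ye)B$; note that $Ye=Y\phi(e)\subseteq X1_A\subseteq X$, so this really is a $B$-submodule of $X$ (recall $B$ is unital, even though $\phi$ need not preserve units). As a preliminary step I would record the elementary identities that $\phi(e)$ and $\phi(1_A-e)$ are orthogonal idempotents of $B$ with sum $1_A=\phi(1_A)$, and that each of them is fixed on both sides by $1_A$ (e.g.\ $\phi(e)=\phi(e\cdot 1_A)=\phi(e)1_A$). These are exactly the facts that let one insert or delete the idempotents $\phi(e),\phi(1_A-e),1_A$ freely in the manipulations below, despite the non-unitality of $\phi$.

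Next I would verify the two required inclusions. The inclusion $Y'e\supseteq Ye$ is immediate: since $Ye\,\phi(e)=Y\phi(e)\phi(e)=Ye$, we get $Ye=Ye\,\phi(e)\subseteq (Ye)B\,\phi(e)=Y'e$. For the substantive inclusion $Y'(1_A-e)\subseteq Y(1_A-e)$, I would use $Ye=(Ye)1_A$ (because $Ye\subseteq X1_A$) together with $Ye=(Ye)\phi(e)$ to rewrite
\[
Y'(1_A-e)=(Ye)B\,\phi(1_A-e)=(Ye)\bigl(\phi(e)B\,\phi(1_A-e)\bigr),
\]
and then invoke the hypothesis $\phi(eA(1_A-e))=eB(1_A-e)$, read as $\phi(e)B\,\phi(1_A-e)=\phi\bigl(eA(1_A-e)\bigr)$, to obtain $Y'(1_A-e)=(Ye)\,\phi\bigl(eA(1_A-e)\bigr)\subseteq Y\,\phi\bigl(eA(1_A-e)\bigr)$. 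Finally, since $eA(1_A-e)\subseteq A(1_A-e)$ and $Y$ is an $A$-submodule of $X1_A$, the right-hand side is contained in $Y\phi(1_A-e)=Y(1_A-e)$, which completes the argument.

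I expect the proof to be short, its only genuine input being the single step where the hypothesis $\phi(eA(1_A-e))=eB(1_A-e)$ converts the ``$B$-side'' expression $(Ye)B\,\phi(1_A-e)$ into the ``$A$-side'' expression $(Ye)\,\phi(eA(1_A-e))$; everything else is bookkeeping with the orthogonal idempotents $\phi(e)$ and $\phi(1_A-e)$. Accordingly, the main obstacle is not a conceptual one but the care needed to track which idempotent is acting on which side at each stage — which is precisely what the preliminary identities reduce to routine.
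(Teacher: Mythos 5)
Your proposal is correct, and it takes a genuinely cleaner route than the paper. The paper first factors $\phi$ through the unital map $A\to 1_AB1_A$ followed by the inclusion $1_AB1_A\hookrightarrow B$, and treats the two factors separately; in the unital case it builds $Y'$ as a direct sum $V\oplus W$ with $V:=Y e B e$ and $W:=\{w\in Yf\mid w\,fBe\subseteq V\}$ (where $f=1_B-e$), and then verifies closure under the four corner pieces of $B$ by hand. You instead take the single uniform choice $Y':=(Ye)B$, which is obviously a $B$-submodule, and the two required inclusions follow directly: $Ye\subseteq Y'e$ because $\phi(e)$ is idempotent, and $Y'(1_A-e)\subseteq Y(1_A-e)$ because the hypothesis lets you rewrite $(Ye)B\,\phi(1_A-e)=Ye\cdot\phi(e)B\phi(1_A-e)=Ye\cdot\phi\bigl(eA(1_A-e)\bigr)$ and then pull everything back through the $A$-module structure on $Y$. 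Your argument avoids the case split entirely and makes the role of the hypothesis transparent (it is used exactly once, to convert $\phi(e)B\phi(1_A-e)$ into $\phi(eA(1_A-e))$). One can even observe that in the paper's unital case your $Y'=(Ye)B=V\oplus YeBf$ sits inside the paper's $V\oplus W$, since $YeBf\subseteq Yf$ and $(YeBf)(fBe)\subseteq YeBe=V$; so yours is the ``minimal'' natural choice and the paper's $W$ is a somewhat larger saturated version. Both are valid; yours is shorter and unified.
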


\begin{proof}
Since $\phi\colon A\to B$ is a composite of a unital $k$-algebra homomorphism $A\to 1_AB1_A$ and the natural inclusion $1_AB1_A\to B$, it suffices to consider the following two cases.

(i) Consider the case $A=1_AB1_A$. 
Then $Y':=YB\subset X$ is a $B$-submodule of $X$ satisfying $Y'1_A=Y(1_AB1_A)=YA=Y$. Thus $Y'e=Ye$ and $Y'(1_A-e)=Y(1_A-e)$.

(ii) Consider the case $1_A=1_B$. 

Let $f:=1_B-e$. For an $A$-submodule $Y$ of $X1_A$, let
\begin{align*}V:=YeBe \subseteq Xe\ \mbox{ and }\ W:=\{w\in Yf\mid w \cdot fBe \subseteq V\}\subseteq Yf.\end{align*}
We prove that
\begin{align*}
Y':=V\oplus W
\end{align*}
is a $B$-submodule of $X$, that is, $Y'B\subseteq Y'$ holds. Then the assertion follows from $Y'e=V\supseteq Ye$ and $Y'f=W\subseteq Yf$.
The inclusions
\begin{align*}V \cdot eBe \subseteq V,\ W \cdot fBf\subseteq W\ \textrm{ and }\ 
W \cdot fBe \subseteq V\end{align*}
are clear from the definitions. Thus it suffices to show $V \cdot eBf \subseteq W$.
Since $\phi(eAf)=eBf$ holds and $Y$ is an $A$-submodule of $X$, we have $YeBf=YeAf \subseteq Yf$ and hence
\begin{align*}V \cdot eBf=YeBeBf = YeBf \subseteq Yf.\end{align*}
Since $V \cdot eBf \cdot fBe \subseteq V \cdot eBe=V$, we have $V \cdot eBf\subseteq W$, as desired.
\end{proof}

Now we are ready to prove Proposition \ref{W^A and W^B}.

\begin{proof}[Proof of Proposition \ref{W^A and W^B}]
(a) For each $X\in\mod B$, we have a commutative diagram
\begin{align*}\xymatrix@R1em@C7em{
\Hom_B(P_1\otimes_A1_AB,X)\ar[r]^{\Hom_B(f\otimes 1_AB,X)}\ar[d]^\wr&\Hom_B(P_0\otimes_A1_AB,X)\ar[d]^\wr\\
\Hom_A(P_1,X1_A)\ar[r]^{\Hom_A(f,X1_A)}&\Hom_A(P_0,X1_A).
}\end{align*}
Thus the assertions follow immediately.

(b) We only prove the second equality since the first one is a dual and the third one follows from others. 
Fix $X$ in the right-hand side. Then each $B$-submodule $Y$ of $X$ gives an $A$-submodule $Y1_A$ of $X1_A$. Since $\Hom_B(P \otimes_A1_AB,Y) \simeq \Hom_A(P,Y1_A)$ holds for any $P \in \proj A$, our assumption $X1_A\in\overline{\FF}_\theta^A$ implies $(\theta\otimes 1_AB)(Y)=\theta(Y1_A)\le0$. Thus $X\in\overline{\FF}_{\theta\otimes 1_AB}^B$ holds.

(c) Again we only prove the second equality. Fix $X\in\overline{\FF}_{\theta\otimes 1_AB}^B$. To prove $X1_A\in\overline{\FF}_\theta^A$, let $Y$ be an $A$-submodule of $X1_A$. By our asssumption (i) and Lemma \ref{sub A and sub B}, there exists a $B$-submodule $Y'$ of $X$ satisfying $Y'e\supseteq Ye$ and $Y'(1_A-e)\subseteq Y(1_A-e)$. By our assumption (ii), we have
\begin{align*}\theta(Y)\le\theta(Y'1_A)=(\theta\otimes 1_AB)(Y')\le0.\end{align*}
Thus $X1_A\in\overline{\FF}^A_{\theta}$ holds.
\end{proof}

We apply the results above to some special cases.

\begin{example}\label{W^A and W^B 2}
Let $B$ be a finite dimensional $k$-algebra, $e\in B$ an idempotent and $A=eBe$. Then we have a fully faithful functor
\begin{align*}{-}\otimes_A eB \colon \proj A\to\proj B,\end{align*}
which induces embeddings
\begin{align*}{-}\otimes_A eB \colon K_0(\proj A)\to K_0(\proj B) \ \text{ and } \  
{-}\otimes_A eB \colon K_0(\proj A)_\R\to K_0(\proj B)_\R.\end{align*}
The following observations are special cases of Proposition \ref{W^A and W^B}. 
\begin{enumerate}[\rm(a)]
\item For each morphism $f$ in $\proj A$, we have 
\begin{align*}
\overline{\TT}_{f\otimes eB}^B&=\{X\in\mod B\mid Xe\in\overline{\TT}_{f}^A\},&\overline{\FF}_{f\otimes eB}^B=\{X\in\mod B\mid Xe\in\overline{\FF}_{f}^A\},\\
\WW_{f\otimes eB}^B&=\{X\in\mod B\mid Xe\in\WW_{f}^A\}.\end{align*}
\item For each $\theta\in K_0(\proj A)_\R$, we have
\begin{align*}
\overline{\TT}_{\theta\otimes eB}^B&=\{X\in\mod B\mid Xe\in\overline{\TT}_{\theta}^A\},&\overline{\FF}_{\theta\otimes eB}^B=\{X\in\mod B\mid Xe\in\overline{\FF}_{\theta}^A\},\\\WW_{\theta\otimes eB}^B&=\{X\in\mod B\mid Xe\in\WW_\theta^A\}.\end{align*}
\end{enumerate}
\end{example}

We also consider the following case e.g.~\cite{DIRRT}.

\begin{example}\label{pi theta}
Let $A$ and $B$ be finite dimensional $k$-algebras, and $\phi\colon A\to B$ a surjective $k$-algebra homomorphism. Then the restriction functor $(-)_A \colon \mod B\to\mod A$ is fully faithful,
so we regard $\mod B$ as a full subcategory of $\mod A$.
Moreover, the group homomorphisms ${-}\otimes_AB \colon K_0(\proj A) \to K_0(\proj B)$ and ${-}\otimes_AB \colon K_0(\proj A)_\R \to K_0(\proj B)_\R$ are surjective.
In this case, Proposition \ref{W^A and W^B} becomes the following form. 
\begin{enumerate}[\rm(a)]
\item For each morphism $f$ in $\proj A$, we have
\begin{align*}
\TT_{f\otimes B}^B=\TT_{f}^A\cap\mod B,\quad\FF_{f\otimes B}^B=\FF_{f}^A\cap\mod B,&\\
\overline{\TT}_{f\otimes B}^B=\overline{\TT}_{f}^A\cap\mod B,\quad\overline{\FF}_{f\otimes B}^B=\overline{\FF}_{f}^A\cap\mod B,&\quad \WW_{f \otimes B}^B=\WW_f^A \cap \mod B.
\end{align*}
\item For each $\theta\in K_0(\proj A)_\R$, we have
\begin{align*}
\TT_{\theta\otimes B}^B=\TT_{\theta}^A\cap\mod B,\quad\FF_{\theta\otimes B}^B=\FF_{\theta}^A\cap\mod B,&\\
\overline{\TT}_{\theta\otimes B}^B=\overline{\TT}_{\theta}^A\cap\mod B,\quad\overline{\FF}_{\theta\otimes B}^B=\overline{\FF}_{\theta}^A\cap\mod B,&\quad \WW_{\theta\otimes B}^B=\WW_\theta^A\cap\mod B.
\end{align*}
\end{enumerate}
Moreover, let $P$ be a 2-term presilting (resp.~2-term silting) complex in $\KKK^{\bo}(\proj A)$. 
\begin{enumerate}[\rm(a)]
\setcounter{enumi}{2}
\item $P \otimes_AB$ is a 2-term presilting (resp.~2-term silting) complex in $\KKK^{\bo}(\proj B)$.
\item If $\theta \in C^\circ(P)$, then we have $\theta \otimes_AB \in C^\circ(P \otimes_AB)$. 
\end{enumerate}
\end{example}

\begin{proof}
(c) and (d) are known to experts, but we include the proof for the convenience of the reader.

(c) For each 2-term complex $P$ in $\KKK^{\bo}(\proj A)$, we have a morphism of Hom-complexes
\begin{align*}\cHom_A(P,P)\to\cHom_B(P\otimes_AB,P\otimes_AB)\end{align*}
which is term-wise surjective. Since the degree 1 terms of both complexes are zero, the morphism
\begin{align*} H^1(\cHom_A(P,P)) \to H^1(\cHom_B(P\otimes_AB,P\otimes_AB))\end{align*}
is surjective. Since $P$ is presilting in $\KKK^{\bo}(\proj A)$, $H^1(\cHom_A(P,P))=\Hom_{\KKK^{\bo}(\proj A)}(P,P[1])=0$ hold. Thus $\Hom_{\KKK^{\bo}(\proj B)}(P\otimes_AB,(P\otimes_AB)[1])=H^1(\cHom_B(P\otimes_AB,P\otimes_AB))=0$ as desired.

If $P$ is 2-term silting in $\KKK^{\bo}(\proj A)$, then, since $A\in\thick P$ and $-\otimes_AB$ is a triangle functor, we have $B=A\otimes_AB\in (\thick P) \otimes_AB \subset\thick(P\otimes_AB)$.
Thus $P \otimes_AB$ is 2-term silting in $\KKK^{\bo}(\proj B)$.

(d) follows from (c) immediately.
\end{proof}

We remark that even if $\theta \in K_0(\proj B)$ is indecomposable rigid, 
$\theta \otimes_AB\in K_0(\proj B)$ is not necessarily indecomposable.
For example, if $A$ is the Kronecker quiver algebra $k(1 \rightrightarrows 2)$
and $B$ is $k(1 \to 2)$,
then $\theta:=2[P_A(1)]-[P_A(2)] \in K_0(\proj A)$ is indecomposable rigid,
but $\theta \otimes_AB=2[P_B(1)]-[P_B(2)]=[P_B(1)] \oplus ([P_B(1)]-[P_B(2)])$ is not indecomposable (but rigid).

\section{Constructing semistable torsion pairs from morphism torsion pairs}\label{section 4}

\subsection{Gluing morphism torsion pairs}

In this subsection, we consider the relationship between morphism torsion pairs $(\overline{\TT}_f,\FF_f)$ and $(\TT_f,\overline{\FF}_f)$ and
semistable torsion pairs $(\overline{\TT}_\theta,\FF_\theta)$ and $(\TT_\theta,\overline{\FF}_\theta)$.
We prepare the following symbols.

\begin{definition}\label{uni mor tors}
For $\theta\in K_0(\proj A)$, let
\begin{align*}
\TT^{\h}_\theta:=\bigcap_{[f]=\theta} \TT_f, \quad
\FF^{\h}_\theta:=\bigcap_{[f]=\theta} \FF_f,&\\
\overline{\TT}^{\h}_\theta:=\bigcup_{[f]=\theta} \overline{\TT}_f, \quad
\overline{\FF}^{\h}_\theta:=\bigcup_{[f]=\theta} \overline{\FF}_f,&\quad
\WW^{\h}_\theta:=\bigcup_{[f]=\theta} \WW_f,
\end{align*}
where $f$ runs over all morphisms $f \colon P_1 \to P_0$ in $\proj A$ such that $[f]:=[P_0]-[P_1]=\theta$.
\end{definition}

It is immediate from Proposition \ref{W_f and W_theta} that we have
\begin{equation}\label{W_f and W_theta 3}
\TT^{\h}_\theta\supseteq\TT_\theta, \quad
\FF^{\h}_\theta\supseteq\FF_\theta, \quad
\overline{\TT}^{\h}_\theta\subseteq\overline{\TT}_\theta, \quad
\overline{\FF}^{\h}_\theta\subseteq\overline{\FF}_\theta, \quad
\WW^{\h}_\theta\subseteq\WW_\theta.
\end{equation}

Note that $P_1$ and $P_0$ above may have common indecomposable direct summands, but such cases are redundant.

\begin{proposition}\label{exclude common summand}
For $\theta\in K_0(\proj A)$, we have
\begin{align*}
\TT^{\h}_\theta=\bigcap_{f\in\Hom(\theta)} \TT_f, \quad
\FF^{\h}_\theta=\bigcap_{f\in\Hom(\theta)} \FF_f,&\\
\overline{\TT}^{\h}_\theta=\bigcup_{f\in\Hom(\theta)} \overline{\TT}_f, \quad
\overline{\FF}^{\h}_\theta=\bigcup_{f\in\Hom(\theta)} \overline{\FF}_f,&\quad
\WW^{\h}_\theta=\bigcup_{f\in\Hom(\theta)} \WW_f.
\end{align*}
\end{proposition}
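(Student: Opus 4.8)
The plan is to prove all five equalities by showing that every morphism $h\colon P_1\to P_0$ with $[h]=\theta$ can be perturbed to a nearby morphism lying in $\Hom(\theta)$ without changing any of $\TT_h,\FF_h,\overline{\TT}_h,\overline{\FF}_h,\WW_h$. Write $\theta=[P_0^\theta]-[P_1^\theta]$. Since $[P_0]-[P_1]=\theta$ and $P_0^\theta,P_1^\theta$ have no common indecomposable summand, Krull--Schmidt yields $Q\in\proj A$ with $P_0\simeq P_0^\theta\oplus Q$ and $P_1\simeq P_1^\theta\oplus Q$; fixing such identifications, write $h=\left(\begin{smallmatrix}a&b\\ c&d\end{smallmatrix}\right)$ with $d\in\End_A(Q)$, and for $t\in k$ put $h_t:=\left(\begin{smallmatrix}a&b\\ c&d+t\operatorname{id}_Q\end{smallmatrix}\right)$, so $h_0=h$. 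Left multiplication embeds $\End_A(Q)$ into $\End_k(\End_A(Q))$, sending $d+t\operatorname{id}_Q$ to $L_d+t\,\mathrm{Id}$, whose determinant $\det(L_d+t\,\mathrm{Id})$ is a nonzero polynomial in $t$; hence $d+t\operatorname{id}_Q$ is an isomorphism for all $t$ outside a finite set $Z\subset k$. For $t\notin Z$, elementary row and column operations realized by elements of $G:=\Aut_A(P_0)\times\Aut_A(P_1)$ bring $h_t$ into the form $g_t\oplus(d+t\operatorname{id}_Q)$ with $g_t:=a-b(d+t\operatorname{id}_Q)^{-1}c\in\Hom_A(P_1^\theta,P_0^\theta)=\Hom(\theta)$. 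The $G$-action preserves $\Cokernel(-)$ and $\Kernel\nu(-)$ up to isomorphism, and an isomorphism $Q\to Q$ contributes $\mod A$ to each of $\overline{\TT},\overline{\FF},\WW$, so the direct-sum formulas for morphism torsion (-free) classes give
\begin{align*}
\overline{\TT}_{h_t}=\overline{\TT}_{g_t},\qquad \overline{\FF}_{h_t}=\overline{\FF}_{g_t},\qquad \WW_{h_t}=\WW_{g_t}\qquad(t\notin Z).
\end{align*}

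With this in hand, I would first prove the equalities for $\overline{\TT}^{\h}_\theta,\overline{\FF}^{\h}_\theta,\WW^{\h}_\theta$. Since $\Hom(\theta)\subset\{f\mid[f]=\theta\}$, the inclusions $\overline{\TT}^{\h}_\theta\supseteq\bigcup_{f\in\Hom(\theta)}\overline{\TT}_f$ and its analogues are clear, so it suffices to show that for each $h$ with $[h]=\theta$ and each $X\in\overline{\TT}_h$ there is $g\in\Hom(\theta)$ with $X\in\overline{\TT}_g$ (and the same for $\overline{\FF}$ and $\WW$). By Lemma \ref{open} the set $U:=\{f\in\Hom_A(P_1,P_0)\mid X\in\overline{\TT}_f\}$ is Zariski open and contains $h=h_0$, so $\{t\in k\mid h_t\in U\}$ is a nonempty, hence cofinite, subset of the affine line; as $k$ is infinite we can choose $t_0\notin Z$ with $h_{t_0}\in U$, giving $X\in\overline{\TT}_{h_{t_0}}=\overline{\TT}_{g_{t_0}}$ with $g_{t_0}\in\Hom(\theta)$. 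The identical argument handles $\overline{\FF}$ (again by Lemma \ref{open}) and $\WW$ (where $\{f\mid X\in\WW_f\}$ is the intersection of two open sets since $\WW_f=\overline{\TT}_f\cap\overline{\FF}_f$).

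The equalities for $\TT^{\h}_\theta$ and $\FF^{\h}_\theta$ I would then deduce by duality, since membership in $\TT_f$ is not an open condition on $f$. As $(\TT_f,\overline{\FF}_f)$ is a torsion pair, $\TT_f={}^\perp\overline{\FF}_f$; combining this with $\bigcap_i{}^\perp\CC_i={}^\perp(\bigcup_i\CC_i)$ and the already established equality $\bigcup_{[f]=\theta}\overline{\FF}_f=\bigcup_{f\in\Hom(\theta)}\overline{\FF}_f$ yields
\begin{align*}
\TT^{\h}_\theta=\bigcap_{[f]=\theta}{}^\perp\overline{\FF}_f={}^\perp\Big(\bigcup_{[f]=\theta}\overline{\FF}_f\Big)={}^\perp\Big(\bigcup_{f\in\Hom(\theta)}\overline{\FF}_f\Big)=\bigcap_{f\in\Hom(\theta)}{}^\perp\overline{\FF}_f=\bigcap_{f\in\Hom(\theta)}\TT_f.
\end{align*}
Symmetrically, using $\FF_f=\overline{\TT}_f^\perp$, $\bigcap_i\CC_i^\perp=(\bigcup_i\CC_i)^\perp$, and the $\overline{\TT}$ equality, one obtains $\FF^{\h}_\theta=\bigcap_{f\in\Hom(\theta)}\FF_f$.

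I expect the main obstacle to be the perturbation step: one must check that the change of basis diagonalizing $h_t$ is realized by automorphisms of $P_0$ and $P_1$ (so that cokernels and kernels of $\nu(-)$, hence all five subcategories, are preserved), that $g_t$ indeed lies in $\Hom(\theta)$, and that $d+t\operatorname{id}_Q$ is generically an isomorphism of $Q$. After that, the only further subtlety is that $\TT^{\h}$ and $\FF^{\h}$ require the torsion-pair duality rather than the openness argument used for $\overline{\TT}$, $\overline{\FF}$, $\WW$.
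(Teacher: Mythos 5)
Your proof is correct and follows essentially the same strategy as the paper's: reduce an arbitrary presentation $h$ with $[h]=\theta$ to one in $\Hom(\theta)$ by a generic perturbation, then use the Zariski-openness of $\{f\mid X\in\overline{\TT}_f\}$ (Lemma~\ref{open}) to transfer membership. The paper phrases the reduction more abstractly: it observes that the $G$-saturation of $\{g\oplus 1_Q\mid g\in\Hom(\theta)\}$ contains $\pi^{-1}(\Aut_A(Q))$ (where $\pi$ projects to $\End_A(Q)$), which is dense in $\Hom_A(P_1\oplus Q,P_0\oplus Q)$, and then invokes the openness argument of Proposition~\ref{closure and T}; your one-parameter family $h_t$ with $d\mapsto d+t\,\mathrm{id}_Q$ is a concrete cross-section of exactly that dense set, and the row/column reduction you carry out is precisely the verification that any element of $\pi^{-1}(\Aut_A(Q))$ is $G$-equivalent to some $g\oplus(\text{iso of }Q)$. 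Your treatment of $\TT^{\h}_\theta$ and $\FF^{\h}_\theta$ via torsion-pair perpendicularity is a genuinely useful amplification: the paper dispatches these with ``others can be shown similarly,'' but as you correctly note, membership in $\TT_f$ is \emph{not} an open condition in $f$, so the openness argument does not transfer directly and the duality $\TT_f={}^\perp\overline{\FF}_f$ (together with $\bigcap_i{}^\perp\CC_i={}^\perp\bigl(\bigcup_i\CC_i\bigr)$) is the right way to close the gap.
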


\begin{proof}
We only prove the assertion for $\overline{\TT}$ since others can be shown similarly.
Write $\theta=[P_0]-[P_1]$, where $P_0$ and $P_1$ do not have a non-zero common direct summand.
For any morphism $f$ in $\proj A$ such that $[f]=\theta$, there exists $Q\in\proj A$ such that $f\in\Hom_A(P_1\oplus Q,P_0\oplus Q)$. Let
\begin{align*}\pi \colon \Hom_A(P_1\oplus Q,P_0\oplus Q)\to\End_A(Q)\end{align*}
be a natural projection. For $G=\Aut_A(P_0\oplus Q)\times\Aut_A(P_1\oplus Q)$, let
\begin{align*}U:=G\{g\oplus 1_Q\mid g\in\Hom(\theta)\}\subset\Hom_A(P_1\oplus Q,P_0\oplus Q).\end{align*}
Then $U \supset \pi^{-1}(\Aut_A(Q))$ holds. Since $\Aut_A(Q)$ is an open dense subset of $\End_A(Q)$, $U$ is a dense subset of $\Hom_A(P_1\oplus Q,P_0\oplus Q)$.
Thus Proposition \ref{closure and T} implies
\begin{align*}\overline{\TT}_f\subseteq\bigcup_{g\in\Hom(\theta)}\overline{\TT}_{g\oplus 1_Q}=\bigcup_{g\in\Hom(\theta)}\overline{\TT}_{g}.\end{align*}
Thus the assertion follows.
\end{proof}

We also remark that it does not directly follow from the definition that $\overline{\TT}^{\h}_\theta$ is a torsion class. Our proof of this property will be given in Proposition \ref{T_PHom_tors}.

Now we can state the following.
We remark that this property has been independently proved by Fei \cite[Lemma 3.13]{Fei}.
Our proof is given in the last subsection in this section.

\begin{theorem}\label{cup T_f}
Let $\theta \in K_0(\proj A)$.
\begin{enumerate}[\rm (a)]
\item We have
\begin{align*}
\TT_\theta=\bigcap_{\ell\ge1}\TT^{\h}_{\ell\theta},\quad
\FF_\theta=\bigcap_{\ell\ge1}\FF^{\h}_{\ell\theta},\quad
\overline{\TT}_\theta=\bigcup_{\ell\ge1}\overline{\TT}^{\h}_{\ell\theta},\quad
\overline{\FF}_\theta=\bigcup_{\ell\ge1}\overline{\FF}^{\h}_{\ell\theta},\quad
\WW_\theta=\bigcup_{\ell\ge1}\WW^{\h}_{\ell\theta}.
\end{align*}
\item If $\theta$ is tame, then 
\begin{align*}
\TT_\theta=\TT^{\h}_{\theta},\quad
\FF_\theta=\FF^{\h}_{\theta},\quad
\overline{\TT}_\theta=\overline{\TT}^{\h}_{\theta},\quad
\overline{\FF}_\theta=\overline{\FF}^{\h}_{\theta},\quad
\WW_\theta=\WW^{\h}_{\theta}.
\end{align*}
\end{enumerate}
\end{theorem}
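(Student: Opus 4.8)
The plan is to prove Theorem \ref{cup T_f} by combining the inclusions already available in \eqref{W_f and W_theta 3} with a converse that exploits the canonical filtrations from Proposition \ref{W filtration} and Definition-Proposition \ref{HN filtration}. The inclusions ``$\subseteq$'' (for $\overline{\TT},\overline{\FF},\WW$) and ``$\supseteq$'' (for $\TT,\FF$) in part (a) are immediate from \eqref{W_f and W_theta 3} once one observes that $\bigcup_{\ell\ge1}\overline{\TT}_{\ell\theta}^{\h}\subseteq\overline{\TT}_{\ell\theta}=\overline{\TT}_\theta$ using Lemma \ref{additivity}, so the real content is the reverse inclusions. By the duality between the statements for $\overline{\TT}$ and $\overline{\FF}$ (passing to $A^{\op}$, or using the symmetry $C_f\leftrightarrow K_{\nu f}$ under the Nakayama functor), and since the statements for $\TT$ and $\FF$ follow by taking perpendiculars of the corresponding torsion pairs together with the fact that $(\overline{\TT}_\theta,\FF_\theta)$ and $(\TT_\theta,\overline{\FF}_\theta)$ are genuine torsion pairs, it suffices to prove $\overline{\TT}_\theta\subseteq\bigcup_{\ell\ge1}\overline{\TT}_{\ell\theta}^{\h}$ and, in the tame case, $\overline{\TT}_\theta\subseteq\overline{\TT}_\theta^{\h}$; the equality $\WW_\theta=\bigcup_\ell\WW_{\ell\theta}^{\h}$ then follows from $\WW_f=\overline{\TT}_f\cap\overline{\FF}_f$ together with Lemma \ref{W_f and W_theta}(a) and Lemma \ref{Nakayama T_f}(c).

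The key step for $\overline{\TT}_\theta\subseteq\bigcup_\ell\overline{\TT}^{\h}_{\ell\theta}$ is to realize a given $X\in\overline{\TT}_\theta$ inside some $\overline{\TT}_f$ with $[f]=\ell\theta$. First I would reduce to $\theta\in K_0(\proj A)_\Q$ and even $K_0(\proj A)$ after clearing denominators; note $\overline{\TT}_{\ell\theta}=\overline{\TT}_\theta$. Using Proposition \ref{W filtration}(b), $X$ admits a filtration with subquotients in $\WW_{\eta_j}$ for various $\eta_j\le\theta$ in $K_0(\proj A)_\Q$; after scaling we may take all $\eta_j$ integral and, enlarging $\ell$, arrange $\eta_j\le\ell\theta$ with the relevant signs. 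For each brick-like subquotient $S\in\WW_{\eta_j}$ one has, by Lemma \ref{W_f and W_theta}(c) and the definition of $\WW_f$, a morphism $f_j\in\Hom(\eta_j)$ with $S\in\WW_{f_j}\subseteq\overline{\TT}_{f_j}$. The plan is to take a direct sum of such $f_j$'s — and additional summands accounting for $\ell\theta-\sum\eta_j\in\sum\R_{\ge0}[P(i)]$, i.e.\ trivial presilting summands of the form $(0\to P(i))$ or identities — to build a single $f$ with $[f]=\ell\theta$ such that $X\in\overline{\TT}_f$; here one uses that $\overline{\TT}_{f\oplus g}=\overline{\TT}_f\cap\overline{\TT}_g$ is a torsion class (Proposition \ref{T_PHom_tors}, or directly that $\overline{\TT}_f$ is always a torsion class since it is $^\perp K_{\nu f}$), so it is closed under the extensions appearing in the filtration of $X$.

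For the tame case (part (b)) the improvement to $\ell=1$ comes from the hypothesis $E(\theta,\theta)=0$, i.e.\ $\theta\oplus\theta$, hence $\theta\oplus\theta\oplus\cdots\oplus\theta$ ($\ell$ times) by Proposition \ref{decompose_2step}(a). Then for general $f\in\Hom(\ell\theta)$ we have $P_f\simeq P_{g_1}\oplus\cdots\oplus P_{g_\ell}$ with each $g_i\in\Hom(\theta)$, so $\overline{\TT}_f=\bigcap_i\overline{\TT}_{g_i}\supseteq\overline{\TT}^{\h}_\theta$ is impossible to use directly; instead, the point is that by Proposition \ref{closure and T} and genericity, $\overline{\TT}^{\h}_{\ell\theta}=\bigcup_{f\in\Hom(\ell\theta)}\overline{\TT}_f$ is \emph{attained} at a generic $f$, and for such $f$ the summands $g_i$ can themselves be taken generic in $\Hom(\theta)$, giving $\overline{\TT}_f=\bigcap_i\overline{\TT}_{g_i}$ where the $g_i$ range over a dense subset, whence $\overline{\TT}^{\h}_{\ell\theta}\subseteq\overline{\TT}^{\h}_\theta$; combined with part (a) this yields $\overline{\TT}_\theta=\overline{\TT}^{\h}_\theta$. \textbf{The main obstacle} I anticipate is the direct-sum construction in step two: assembling $f$ with exactly $[f]=\ell\theta$ while simultaneously keeping \emph{every} filtration subquotient of $X$ inside $\overline{\TT}_f$ requires care that adding the ``positive'' padding summands $(0\to P(i))$ does not shrink $\overline{\TT}_f$ below what is needed — this is where the precise bookkeeping of which $\eta_j\le\ell\theta$ (and the choice of $\ell$ large enough that $\ell\theta-\sum_j\eta_j$ is effective) must be done correctly, and where Lemma \ref{additivity} and the monotonicity $\theta\ge\eta\Rightarrow\overline{\TT}_\theta\supseteq\overline{\TT}_\eta$ are used repeatedly.
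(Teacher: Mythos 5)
There is a genuine gap at the key step of your proof of part (a). You write that for each subquotient $S\in\WW_{\eta_j}$ produced by the Harder--Narasimhan filtration (Proposition \ref{W filtration}(b)) one has ``by Lemma \ref{W_f and W_theta}(c) and the definition of $\WW_f$, a morphism $f_j\in\Hom(\eta_j)$ with $S\in\WW_{f_j}$.'' But no such statement follows from the definition: Proposition \ref{W_f and W_theta} only gives the inclusion $\WW_f\subseteq\WW_{[f]}$, which goes the wrong way, and Lemma \ref{Nakayama T_f}(c) only tells you when membership in $\overline{\TT}_f$ and $\overline{\FF}_f$ are equivalent for a fixed $f$, not that a suitable $f$ exists. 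What you are asserting here is precisely the containment $\WW_\eta\subseteq\WW^{\h}_{\N\eta}$, which is the entire technical content of the theorem. Once you grant it, the rest of your filtration/padding argument is a (slightly heavier) version of the paper's final step $\overline{\TT}_\theta=\vecFilt_{\eta\le\theta}\WW_\eta\subseteq\overline{\TT}^{\h}_{\N\theta}$, but the granting is not justified.

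The paper fills this gap with a nontrivial chain of ingredients that do not appear in your sketch: first Lemma \ref{W_theta=W_f} proves $\WW_\theta=\WW^{\h}_{\N\theta}$ for path algebras of acyclic quivers by invoking King's GIT characterization of semistability and Derksen--Weyman's description of the spanning set of semi-invariants (the determinantal functions $a_f$); then Lemma \ref{rank 2 case} handles the case $|A|=2$ by reducing to a generalized Kronecker algebra via Proposition \ref{W^A and W^B}; and finally Lemma \ref{Theta sub ThetaH} runs an induction on $\dim_k X$, reducing to the sincere case, using the strong convexity and rays of $\Theta_X$, the composition series in $\WW_\theta$, and the rank-$2$ case as a base. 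None of this is an immediate consequence of the definitions, and your proposal offers no substitute for it. Your treatment of the tame case (part (b)) from part (a), and the reductions between $\TT$, $\FF$, $\overline{\TT}$, $\overline{\FF}$, $\WW$, are reasonable and match the paper, but they do not rescue the missing core.
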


The equality $\WW^{\h}_\theta=\WW^{\h}_{\ell\theta}$ does not necessarily hold in general if $\theta$ is wild, see Example \ref{kronecker}.

We state some applications of Theorem \ref{cup T_f}.

\begin{corollary}\label{C f T bar}
For $\eta,\theta \in K_0(\proj A)$, the following conditions are equivalent.
\begin{enumerate}[\rm(a)]
\item There exists $f \in \Hom(\eta)$ satisfying $\TT_f \subseteq \overline{\TT}_{\theta}$ and $\FF_f \subseteq \overline{\FF}_{\theta}$.
\item There exists $\ell \in \Z_{\ge 1}$ satisfying $\eta \oplus \ell\theta$.
\end{enumerate}
Moreover, if $\theta$ is tame, then the following condition is also equivalent.
\begin{enumerate}[\rm(c)]
\item $\eta \oplus \theta$.
\end{enumerate}
\end{corollary}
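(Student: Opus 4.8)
The plan is to deduce everything from Theorem \ref{cup T_f} and the basic characterizations of $E$-vanishing in Proposition \ref{P Q[1]} and Proposition \ref{signs of X}. First I would prove the equivalence of (a) and (b). For the implication (b)$\Rightarrow$(a): if $\eta\oplus\ell\theta$ holds, then by Proposition \ref{signs of X}(b) we have $\TT_{\ell\theta}\subseteq\overline{\TT}_\eta$ and $\FF_{\ell\theta}\subseteq\overline{\FF}_\eta$; equivalently, by Proposition \ref{P Q[1]} applied symmetrically, $E(\eta,\ell\theta)=0$, so there is a general pair $(f,g)\in\Hom(\eta)\times\Hom(\ell\theta)$ with $\Hom_{\KKK^{\bo}(\proj A)}(P_g,P_f[1])=0$, i.e.\ $\FF_f\subseteq\overline{\FF}_{g}$ by Proposition \ref{P Q[1]}, and $\TT_g\subseteq\overline{\TT}_f$ — wait, I need the inclusions with $\overline{\TT}_\theta,\overline{\FF}_\theta$ on the right. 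The cleaner route: $E(\eta,\ell\theta)=0$ gives, via Proposition \ref{signs of X}(a) (with the roles $\eta,\ell\theta$), $\TT_{\ell\theta}\subseteq\overline{\TT}_\eta$ and $\FF_\eta\subseteq\overline{\FF}_{\ell\theta}$; but I want a single morphism $f\in\Hom(\eta)$. So instead I take $f\in\Hom(\eta)$ general so that simultaneously $\TT_f=\TT_\eta^{\h}$-type minimality holds — more precisely I choose $f$ general with $E(f,g)=E(\eta,\ell\theta)=0$ for general $g\in\Hom(\ell\theta)$; then $\FF_f\subseteq\overline{\FF}_g\subseteq\overline{\FF}^{\h}_{\ell\theta}\subseteq\overline{\FF}_\theta$ by \eqref{W_f and W_theta 3} (note $\overline{\FF}_{\ell\theta}=\overline{\FF}_\theta$ by Lemma \ref{additivity}), and dually — using the other vanishing $E(\ell\theta,\eta)=0$, which also holds since $\eta\oplus\ell\theta$ — we get $\TT_f\subseteq\overline{\TT}_\theta$. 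I must be a little careful that a single general $f$ works for both $E(f,g)=0$ and $E(g',f)=0$ simultaneously; this holds because each is an open dense condition on $f$ (Proposition \ref{upper semi}).

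For (a)$\Rightarrow$(b): suppose $f\in\Hom(\eta)$ satisfies $\TT_f\subseteq\overline{\TT}_\theta$ and $\FF_f\subseteq\overline{\FF}_\theta$. By Theorem \ref{cup T_f}(a), $\overline{\TT}_\theta=\bigcup_{\ell\ge1}\overline{\TT}^{\h}_{\ell\theta}$ and $\overline{\FF}_\theta=\bigcup_{\ell\ge1}\overline{\FF}^{\h}_{\ell\theta}$. Since $C_f\in\TT_f\subseteq\overline{\TT}_\theta$ and $K_{\nu f}\in\FF_f\subseteq\overline{\FF}_\theta$, and these unions are increasing in $\ell$ (as $\overline{\TT}^{\h}_{\ell\theta}\subseteq\overline{\TT}^{\h}_{\ell'\theta}$ when $\ell\mid\ell'$, which one checks from Definition \ref{uni mor tors} by adding the relevant morphisms), there is a single $\ell$ with $C_f\in\overline{\TT}^{\h}_{\ell\theta}$ and $K_{\nu f}\in\overline{\FF}^{\h}_{\ell\theta}$. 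Thus there are $g,g'\in\Hom(\ell\theta)$ with $C_f\in\overline{\TT}_{g}$ and $K_{\nu f}\in\overline{\FF}_{g'}$. The first, via Proposition \ref{P Q[1]}, says $E(g,f)=0$; the second says $E(f,g')=0$. I then need to upgrade "there exist (possibly different) $g,g'$" to "$E(\eta,\ell\theta)=0$ and $E(\ell\theta,\eta)=0$", which by Proposition \ref{decomposition}(a) is exactly $\eta\oplus\ell\theta$. Since $E(\eta,\ell\theta)=\min_{(f,h)}E(f,h)$ ranges over \emph{all} pairs, $E(f,g')=0$ already forces $E(\eta,\ell\theta)=0$; similarly $E(g,f)=0$ forces $E(\ell\theta,\eta)=0$. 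Hence $\eta\oplus\ell\theta$.

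For the tame case, (c)$\Rightarrow$(a) is the special case $\ell=1$ of (b)$\Rightarrow$(a). For (b)$\Rightarrow$(c): if $\eta\oplus\ell\theta$ for some $\ell\ge1$, then $E(\eta,\ell\theta)=0=E(\ell\theta,\eta)$ by Proposition \ref{decomposition}(a); since $\theta$ is tame, Theorem \ref{cup T_f}(b) applies, and I would argue that $E(\eta,\theta)=0$. Concretely, $E(\eta,\ell\theta)=0$ means there is $f\in\Hom(\eta)$, $h\in\Hom(\ell\theta)$ with $\FF_f\subseteq\overline{\FF}_h\subseteq\overline{\FF}^{\h}_{\ell\theta}=\overline{\FF}_{\ell\theta}=\overline{\FF}_\theta=\overline{\FF}^{\h}_\theta$ (the two middle equalities from \eqref{W_f and W_theta 3} and Theorem \ref{cup T_f}(b)); so $K_{\nu f}\in\overline{\FF}^{\h}_\theta$, i.e.\ $K_{\nu f}\in\overline{\FF}_{g}$ for some $g\in\Hom(\theta)$, which is $E(f,g)=0$, hence $E(\eta,\theta)=0$; dually $E(\theta,\eta)=0$, so $\eta\oplus\theta$ by Proposition \ref{decomposition}(a).

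The main obstacle I anticipate is the bookkeeping around "a single general $f$" versus "some $f$": in direction (a)$\Rightarrow$(b) the given $f$ is fixed and I extract $\ell$ and the morphisms $g,g'$ from the unions in Theorem \ref{cup T_f}, which is straightforward; but in (b)$\Rightarrow$(a) I must produce one morphism $f\in\Hom(\eta)$ witnessing both inclusions at once, which requires intersecting two open dense subsets of $\Hom(\eta)$ coming from the two $E$-vanishings — harmless, but it is the point where the argument is not purely formal. The monotonicity $\overline{\TT}^{\h}_{\ell\theta}\subseteq\overline{\TT}^{\h}_{\ell'\theta}$ for $\ell\mid\ell'$ (needed to collapse the "there exist $\ell_1,\ell_2$" into a single $\ell$) should also be recorded explicitly, as it is used silently.
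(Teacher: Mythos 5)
Your argument is correct and follows essentially the same path as the paper's: translate the torsion-class containments via Theorem~\ref{cup T_f} and Proposition~\ref{P Q[1]} into $E$-vanishings, then descend to $K_0(\proj A)$ using the minimum property of $E$ together with Proposition~\ref{decomposition}(a). The only organizational difference is that the paper factors through the slightly sharper Corollary~\ref{C f T bar 2}, which keeps the morphism $f$ fixed and uses Lemma~\ref{open} to produce a single $g\in\Hom(\ell\theta)$ witnessing both vanishings at once, and handles the tame case by observing $\ell\theta=\theta^{\oplus\ell}$ and applying Proposition~\ref{decompose_2step}; your separate treatment of the two $E$-vanishings and your rerun of the $\overline{\FF}^{\h}$-chain for the tame case are harmless variations.
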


Corollary \ref{C f T bar} follows immediately from the following more explicit result.

\begin{corollary}\label{C f T bar 2}
For $\eta,\theta \in K_0(\proj A)$ and $f\in\Hom(\theta)$, the following conditions are equivalent.
\begin{enumerate}[\rm(a)]
\item $\TT_f \subseteq \overline{\TT}_{\theta}$ and $\FF_f \subseteq \overline{\FF}_{\theta}$.
\item There exist $\ell \in \Z_{\ge 1}$ and $g\in\Hom(\ell\theta)$ satisfying $E(f,g)=0=E(g,f)$.
\end{enumerate}
Moreover, if $\theta$ is tame, then the following condition is also equivalent.
\begin{enumerate}[\rm(c)]
\item There exists $g\in\Hom(\theta)$ satisfying $E(f,g)=0=E(g,f)$.
\end{enumerate}
\end{corollary}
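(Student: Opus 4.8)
The plan is to establish the equivalence of (a) and (b) by proving both implications, and then, in the tame case, to slot (c) between them using the sharper statement Theorem~\ref{cup T_f}(b) (the implication (c) $\Rightarrow$ (b) being trivial with $\ell=1$). The dictionary I would use repeatedly is Proposition~\ref{P Q[1]}: for morphisms $f,g$ in $\proj A$ one has $E(g,f)=0 \iff C_f\in\overline{\TT}_g$ and $E(f,g)=0 \iff K_{\nu f}\in\overline{\FF}_g$. Since $\overline{\TT}_g$ is a torsion class and $\TT_f=\T(C_f)$ is the smallest torsion class containing $C_f$, and dually $\overline{\FF}_g$ is a torsion-free class and $\FF_f=\F(K_{\nu f})$, condition (a) is equivalent to the purely module-theoretic statement $C_f\in\overline{\TT}_\theta$ and $K_{\nu f}\in\overline{\FF}_\theta$. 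Granting this reformulation, (b) $\Rightarrow$ (a) is immediate: if $g\in\Hom(\ell\theta)$ satisfies $E(f,g)=0=E(g,f)$, then $C_f\in\overline{\TT}_g\subseteq\overline{\TT}_{\ell\theta}=\overline{\TT}_\theta$ and $K_{\nu f}\in\overline{\FF}_g\subseteq\overline{\FF}_{\ell\theta}=\overline{\FF}_\theta$ by Proposition~\ref{W_f and W_theta} and Lemma~\ref{additivity}.

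The substantive direction is (a) $\Rightarrow$ (b). Starting from $C_f\in\overline{\TT}_\theta$ and $K_{\nu f}\in\overline{\FF}_\theta$, I would invoke Theorem~\ref{cup T_f}(a) together with Proposition~\ref{exclude common summand} to obtain integers $\ell_1,\ell_2\ge1$ and morphisms $g_1\in\Hom(\ell_1\theta)$, $g_2\in\Hom(\ell_2\theta)$ with $C_f\in\overline{\TT}_{g_1}$ and $K_{\nu f}\in\overline{\FF}_{g_2}$. The crucial step is then to merge these two witnesses into a single morphism whose class is a common multiple of $\ell_1\theta$ and $\ell_2\theta$. For this I would use that replacing a morphism $h$ by $h^{\oplus m}$ multiplies its class by $m$ while leaving $\overline{\TT}_h$ and $\overline{\FF}_h$ unchanged (by the direct-sum formulas for morphism torsion classes). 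Hence, fixing a common multiple $\ell$ of $\ell_1$ and $\ell_2$, both subsets $\{g\in\Hom(\ell\theta)\mid C_f\in\overline{\TT}_g\}$ and $\{g\in\Hom(\ell\theta)\mid K_{\nu f}\in\overline{\FF}_g\}$ of $\Hom(\ell\theta)$ are non-empty; by Lemma~\ref{open} they are open, hence dense in the irreducible variety $\Hom(\ell\theta)$, so they intersect. Any $g$ in the intersection satisfies $E(g,f)=0=E(f,g)$ by Proposition~\ref{P Q[1]}, which is (b). When $\theta$ is tame, Theorem~\ref{cup T_f}(b) gives $\overline{\TT}_\theta=\overline{\TT}^{\h}_\theta$ and $\overline{\FF}_\theta=\overline{\FF}^{\h}_\theta$, so already $\ell_1=\ell_2=1$ can be taken, hence $\ell=1$, and the resulting $g\in\Hom(\theta)$ witnesses (c).

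I expect the merging step to be the only real difficulty: Theorem~\ref{cup T_f}(a) places $C_f$ and $K_{\nu f}$ in the unions $\bigcup_{\ell\ge1}\overline{\TT}^{\h}_{\ell\theta}$ and $\bigcup_{\ell\ge1}\overline{\FF}^{\h}_{\ell\theta}$ with, a priori, incompatible and unbounded indices, whereas (b) requires one morphism $g$ in one presentation space $\Hom(\ell\theta)$ making both $E(f,g)$ and $E(g,f)$ vanish simultaneously. Passing to $g_i^{\oplus m}$ to reach a common $\ell$ and then intersecting two open dense conditions in the irreducible space $\Hom(\ell\theta)$ is what resolves this. The one point to check carefully is the identification $\Hom(m\ell_i\theta)\cong\Hom_A\big((P_1^{\ell_i\theta})^{\oplus m},(P_0^{\ell_i\theta})^{\oplus m}\big)$, which relies on $P_0^\theta$ and $P_1^\theta$ having no common indecomposable summand, so that $g_i^{\oplus m}$ genuinely lies in $\Hom(m\ell_i\theta)$ rather than in a presentation space with redundant summands.
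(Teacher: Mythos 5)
Your proof is correct and follows essentially the same route as the paper's: translating (a) into $C_f\in\overline{\TT}_\theta$, $K_{\nu f}\in\overline{\FF}_\theta$, invoking Theorem~\ref{cup T_f} together with Proposition~\ref{exclude common summand}, then merging the two witnesses by passing to direct sums to reach a common $\Hom(\ell\theta)$ and intersecting two non-empty open (hence dense) subsets of that irreducible variety via Lemma~\ref{open} and Proposition~\ref{P Q[1]}. The only divergence is in the tame case: the paper proves $\text{(b)}\Leftrightarrow\text{(c)}$ directly by observing $\ell\theta=\theta^{\oplus\ell}$ (so a general $g\in\Hom(\ell\theta)$ decomposes into $\Hom(\theta)$-components, and $E(f,-)$, $E(-,f)$ are additive over such a decomposition), whereas you bypass (b) and go from (a) to (c) using the stronger conclusion $\overline{\TT}_\theta=\overline{\TT}^{\h}_\theta$, $\overline{\FF}_\theta=\overline{\FF}^{\h}_\theta$ of Theorem~\ref{cup T_f}(b); both arguments are valid, and yours is arguably a little cleaner here since it avoids the extra decomposition-and-additivity step, at the cost of calling on the refined tame form of Theorem~\ref{cup T_f}.
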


\begin{proof}
(a)$\Leftrightarrow$(b) By Proposition \ref{P Q[1]}, the condition (b) is equivalent to the following condition.

\begin{enumerate}[\rm(b$'$)]
\item There exist $\ell \in \Z_{\ge 1}$ and $g\in\Hom(\ell\theta)$ satisfying $\TT_f\subset\overline{\TT}_g$ and $\FF_f\subset\overline{\FF}_g$.
\end{enumerate}

On the other hand, the condition (a) is equivalent to $C_f\in\overline{\TT}_\theta$ and $K_{\nu f}\in\overline{\FF}_\theta$. By Theorem \ref{cup T_f}, this is equivalent to that there exist $\ell,\ell'\ge1$, $g\in\Hom(\ell\theta)$ and $g'\in\Hom(\ell'\theta)$ satisfying $C_f\in\overline{\TT}_g$ and $K_{\nu f}\in\overline{\FF}_{g'}$. We may assume $\ell=\ell'$ by replacing $g$ and $g'$ by $g^{\oplus\ell'}$ and $(g')^{\oplus\ell}$ respectively. By Lemma \ref{open}, we may assume $g=g'$.
Consequently, the conditions (a) is equivalent to (b$'$).

(b)$\Leftrightarrow$(c) Since $\theta\oplus\theta$, we have $\ell\theta=\theta^{\oplus\ell}$. Thus the assertion follows.
\end{proof}

Theorem \ref{cup T_f} can be extended for an arbitrary element $\theta\in K_0(\proj A)_\R$ as follows.

\begin{corollary}\label{cap T(C_f)R}
For $\theta \in K_0(\proj A)_\R$, take $\theta_i,\theta^i\in K_0(\proj A)_\Q$ for each $i\in\N$ such that $\theta_i\le\theta\le\theta^i$ and $\displaystyle\lim_{i\to\infty}\theta_i=\theta=\lim_{i\to\infty}\theta^i$. Then we have
\begin{align*}
\TT_\theta&=\bigcup_{i \ge 1}\TT_{\theta_i}=\bigcup_{i \ge 1}\TT^{\h}_{\theta_i},&
\FF_\theta&=\bigcup_{i \ge 1}\FF_{\theta^i}=\bigcup_{i \ge 1}\FF^{\h}_{\theta^i},\\
\overline{\TT}_\theta&=\bigcap_{i \ge 1}\overline{\TT}_{\theta^i}=\bigcap_{i \ge 1}\overline{\TT}^{\h}_{\theta^i},&
\overline{\FF}_\theta&=\bigcap_{i \ge 1}\overline{\FF}_{\theta_i}=\bigcap_{i \ge 1}\overline{\FF}^{\h}_{\theta_i}.
\end{align*}
\end{corollary}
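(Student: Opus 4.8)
The plan is to deduce Corollary~\ref{cap T(C_f)R} from Theorem~\ref{cup T_f} by an approximation argument, viewing the real element $\theta$ as a two-sided limit of the rational elements $\theta_i\le\theta\le\theta^i$. First I would establish the ``plain'' equalities $\TT_\theta=\bigcup_i\TT_{\theta_i}$, $\FF_\theta=\bigcup_i\FF_{\theta^i}$, $\overline{\TT}_\theta=\bigcap_i\overline{\TT}_{\theta^i}$ and $\overline{\FF}_\theta=\bigcap_i\overline{\FF}_{\theta_i}$, and only afterwards upgrade these to the versions involving $\TT^{\h}$, $\FF^{\h}$, $\overline{\TT}^{\h}$, $\overline{\FF}^{\h}$.

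For the plain equalities, one inclusion is pure monotonicity: since $\theta_i\le\theta\le\theta^i$, the monotonicity statement recorded after Lemma~\ref{additivity} gives $\TT_{\theta_i}\subseteq\TT_\theta$, $\overline{\TT}_\theta\subseteq\overline{\TT}_{\theta^i}$, $\FF_{\theta^i}\subseteq\FF_\theta$ and $\overline{\FF}_\theta\subseteq\overline{\FF}_{\theta_i}$ for every $i$, hence $\bigcup_i\TT_{\theta_i}\subseteq\TT_\theta$, $\overline{\TT}_\theta\subseteq\bigcap_i\overline{\TT}_{\theta^i}$, and so on. The reverse inclusions follow from a continuity/finiteness argument: a fixed $X\in\mod A$ has only finitely many dimension vectors of factor modules (resp.\ submodules), so membership of $X$ in $\TT_\eta$ is governed by finitely many strict linear inequalities in $\eta$ and membership in $\overline{\TT}_\eta$ by finitely many weak ones. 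Thus if $X\in\TT_\theta$, the inequalities $\eta(X')>0$ persist for $\eta=\theta_i$ once $i\gg0$, so $X\in\TT_{\theta_i}$; and if $X\in\bigcap_i\overline{\TT}_{\theta^i}$, then $\theta(X')=\lim_i\theta^i(X')\ge0$ for every factor module $X'$, so $X\in\overline{\TT}_\theta$. The $\FF$ and $\overline{\FF}$ statements are the submodule versions of the same reasoning.

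To pass to the $\h$-versions, the inclusions $\bigcup_i\TT_{\theta_i}\subseteq\bigcup_i\TT^{\h}_{\theta_i}$, $\bigcap_i\overline{\TT}^{\h}_{\theta^i}\subseteq\bigcap_i\overline{\TT}_{\theta^i}$ and their $\FF$-analogues are immediate from \eqref{W_f and W_theta 3}; combined with the previous paragraph it remains to prove $\bigcup_i\TT^{\h}_{\theta_i}\subseteq\TT_\theta$, $\overline{\TT}_\theta\subseteq\bigcap_i\overline{\TT}^{\h}_{\theta^i}$ and the two dual statements. Here Theorem~\ref{cup T_f} is essential, in the following form: for an integral $\xi$ one has $\TT_f=\TT_{f^{\oplus\ell}}$, so $\Hom(\ell\xi)$ realizes at least every torsion class that $\Hom(\xi)$ does, and hence $\TT^{\h}_{\ell\xi}$ decreases while $\overline{\TT}^{\h}_{\ell\xi}$ increases along divisibility, with $\bigcap_\ell\TT^{\h}_{\ell\xi}=\TT_\xi$ and $\bigcup_\ell\overline{\TT}^{\h}_{\ell\xi}=\overline{\TT}_\xi$ by Theorem~\ref{cup T_f}(a). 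Reading $\TT^{\h}_{\theta_i}$ through a suitably chosen integral representative and sandwiching it between $\TT_{\theta_i'}$ and $\TT_{\theta_i''}$ for nearby rationals $\theta_i'\le\theta_i\le\theta_i''$, one obtains $\bigcup_i\TT^{\h}_{\theta_i}=\TT_\theta$ and dually $\bigcap_i\overline{\TT}^{\h}_{\theta^i}=\overline{\TT}_\theta$; the $\FF$ and $\overline{\FF}$ cases are identical with submodules.

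The main obstacle I expect is precisely this last step: controlling the discrepancy between the morphism torsion classes $\TT^{\h}_{\theta_i}$ and the semistable torsion classes $\TT_{\theta_i}$. Since $\TT^{\h}_\eta$ can be strictly larger than $\TT_\eta$ when $\eta$ is wild, one must ensure that the union over the approximating sequence does not overshoot $\TT_\theta$; the mechanism preventing this is the trade-off, encoded in Theorem~\ref{cup T_f}, between scaling $\theta_i$ up (which shrinks $\TT^{\h}$) and pushing $\theta_i$ towards $\theta$ (which enlarges $\TT_{\theta_i}$ towards $\TT_\theta$). Making this trade-off precise --- choosing the integral representatives of the $\theta_i$ so that $\TT^{\h}_{\theta_i}\subseteq\TT_\theta$ --- is the one genuinely delicate point; the rest is routine bookkeeping with the approximation.
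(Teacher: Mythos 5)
Your handling of the first equality in each line is correct and matches the paper's approach: the inclusions $\bigcup_i\TT_{\theta_i}\subseteq\TT_\theta$ and $\overline{\TT}_\theta\subseteq\bigcap_i\overline{\TT}_{\theta^i}$ (with their $\FF$-analogues) come from the monotonicity recorded after Lemma~\ref{additivity}, and the converses use that a fixed module $X$ has only finitely many dimension vectors of factor (resp.\ sub)modules, so the relevant inequalities pass to the limit as $\theta^i\to\theta$ (resp.\ are eventually satisfied as $\theta_i\to\theta$).

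The passage to the $\h$-versions, which you explicitly leave open, is where there is a real gap, and it comes from a misreading of the notation. Since $\theta_i,\theta^i\in K_0(\proj A)_\Q$ are rational, the symbols $\TT^{\h}_{\theta_i}$, $\overline{\TT}^{\h}_{\theta^i}$, $\FF^{\h}_{\theta^i}$, $\overline{\FF}^{\h}_{\theta_i}$ in the corollary denote the $\N$-stabilised classes $\TT^{\h}_{\N\theta_i}=\bigcap_{\ell\ge1}\TT^{\h}_{\ell\theta_i}$, $\overline{\TT}^{\h}_{\N\theta^i}=\bigcup_{\ell\ge1}\overline{\TT}^{\h}_{\ell\theta^i}$, etc., in accordance with the extension of these definitions to $K_0(\proj A)_\Q$ stated just before Lemma~\ref{cup T_f pre}---not the single-level class $\bigcap_{[f]=m\eta}\TT_f$ for a chosen integral multiple $m\eta$. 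Under the correct reading, Theorem~\ref{cup T_f}(a) gives outright $\TT^{\h}_{\theta_i}=\TT_{\theta_i}$, $\overline{\TT}^{\h}_{\theta^i}=\overline{\TT}_{\theta^i}$, $\FF^{\h}_{\theta^i}=\FF_{\theta^i}$ and $\overline{\FF}^{\h}_{\theta_i}=\overline{\FF}_{\theta_i}$, so the second equality in each line is an immediate restatement of the first; there is no trade-off to make precise. Your proposed sandwich $\TT^{\h}_{m_i\theta_i}\subseteq\TT_{\theta_i''}$ is moreover not something Theorem~\ref{cup T_f} supplies: the identity $\bigcap_\ell\TT^{\h}_{\ell\theta_i}=\TT_{\theta_i}$ yields, for each fixed $X\notin\TT_{\theta_i}$, some $\ell_X$ with $X\notin\TT^{\h}_{\ell_X\theta_i}$, but not one $m_i$ that works uniformly in $X$. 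Finally, under your literal single-level reading the corollary would actually be false: with $\theta\in K_0(\proj A)$ wild and $\theta_i=\theta^i=\theta$ for all $i$, the claim would collapse to $\overline{\TT}_\theta=\overline{\TT}^{\h}_\theta$, contradicted by Example~\ref{kronecker}, where $X\in\overline{\TT}_\theta$ while $1\notin S_{X,\theta}$, i.e.\ $X\notin\overline{\TT}^{\h}_\theta$.
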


\begin{proof}
We prove the equalities for $\overline{\TT}$ since the others can be shown in a similar way.
Since $\displaystyle\theta=\liminf_{i\to\infty}\theta^i$, for each $X\in\mod A$, 
$\theta(X)\ge0$ if and only if $\theta^i(X)\ge0$ for each $i\ge1$. Thus the first equality holds. The second one is immediate from the first one and Theorem \ref{cup T_f}.
\end{proof}

\subsection{Proof of Theorem \ref{cup T_f}}

In this section, we give our proof of Theorem \ref{cup T_f}
connecting morphism torsion pairs $(\overline{\TT}_f,\FF_f)$ and $(\TT_f,\overline{\FF}_f)$ and semistable torsion pairs $(\overline{\TT}_\theta,\FF_\theta)$ and $(\TT_\theta,\overline{\FF}_\theta)$.
Our strategy is that we first prove this theorem for 
generalized Kronecker quivers, and then consider general cases.

We first need to show that $\overline{\TT}^{\h}_\theta$,
which is the union of $\overline{\TT}_f$ for $f \in \Hom(\theta)$, 
is surely a torsion class.

\begin{proposition}\label{T_PHom_tors}
Let $\theta \in K_0(\proj A)$. Then we have torsion pairs
\begin{align*}(\overline{\TT}^{\h}_\theta,\FF^{\h}_\theta)\ \text{ and }\  (\TT^{\h}_\theta,\overline{\FF}^{\h}_\theta)\end{align*}
in $\mod A$, and we have
\begin{align*}\WW^{\h}_\theta\in\serre\WW_\theta\subseteq\wide A.\end{align*}
\end{proposition}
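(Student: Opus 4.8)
The plan is to establish the two torsion-pair statements first and deduce the Serre-subcategory statement from them. For the pair $(\overline{\TT}^{\h}_\theta,\FF^{\h}_\theta)$, recall $\overline{\TT}^{\h}_\theta=\bigcup_{[f]=\theta}\overline{\TT}_f$ and $\FF^{\h}_\theta=\bigcap_{[f]=\theta}\FF_f$. Since each $(\overline{\TT}_f,\FF_f)$ is a torsion pair, we have $\Hom_A(\overline{\TT}_f,\FF_f)=0$ for every $f$; taking the union over $f$ on the left and the intersection over $f$ on the right gives $\Hom_A(\overline{\TT}^{\h}_\theta,\FF^{\h}_\theta)=0$ immediately. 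It therefore suffices to check that $(\overline{\TT}^{\h}_\theta)^\perp\subseteq\FF^{\h}_\theta$ and $^\perp(\FF^{\h}_\theta)\subseteq\overline{\TT}^{\h}_\theta$, equivalently (given the Hom-vanishing) that $\overline{\TT}^{\h}_\theta$ is closed under factor modules and extensions — that is, $\overline{\TT}^{\h}_\theta$ is a torsion class and $\FF^{\h}_\theta$ is its torsion-free class; the argument for $(\TT^{\h}_\theta,\overline{\FF}^{\h}_\theta)$ is dual. Closure under factor modules is easy: by Lemma \ref{Nakayama T_f}(b), $\overline{\TT}_f=\{X\mid \Hom_A(f,X)\text{ surjective}\}$ is closed under factor modules (a quotient of $X$ is again a quotient, and in general each $\overline{\TT}_f$ is already a torsion class), hence so is the union. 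The genuinely nonobvious point is closure of the union $\overline{\TT}^{\h}_\theta$ under extensions: given $0\to X'\to X\to X''\to0$ with $X'\in\overline{\TT}_f$ and $X''\in\overline{\TT}_g$ for possibly different $f,g\in\Hom(\theta)$, we must produce a single $h\in\Hom(\theta)$ with $X\in\overline{\TT}_h$.

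The key idea I would use here is the genericity/semicontinuity machinery already set up in the paper, namely Lemma \ref{open} and Proposition \ref{closure and T}. The natural candidate is $h=$ a general morphism in $\Hom(\theta)$: I would show that for a general $h\in\Hom(\theta)$ one has $\overline{\TT}_h\supseteq\overline{\TT}_f$ for \emph{every} $f\in\Hom(\theta)$, i.e.\ that $\overline{\TT}^{\h}_\theta=\overline{\TT}_h$ for general $h$. Indeed, by Lemma \ref{open} the set $\{h\in\Hom(\theta)\mid Y\in\overline{\TT}_h\}$ is open for each $Y$; since $\Hom(\theta)$ is irreducible, any nonempty such open set is dense, and a general $h$ lies in all of them simultaneously provided one only needs to intersect finitely many — which is achieved by passing to a fixed generating situation. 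More precisely: if $X'\in\overline{\TT}_f$ and $X''\in\overline{\TT}_g$, I would first use Proposition \ref{closure and T} to replace $f$ and $g$ by a common general $h$: $f$ and $g$ both lie in the closure of $G h$ for $G=\Aut_A(P_1)\times\Aut_A(P_0)$ and $h$ general (the orbit of a general element is dense, being the unique maximal-dimensional stratum of the semicontinuous function $E(-,-)$, cf.\ Proposition \ref{upper semi}), hence $\overline{\TT}_h\supseteq\overline{\TT}_f$ and $\overline{\TT}_h\supseteq\overline{\TT}_g$. Then $X',X''\in\overline{\TT}_h$, and since $\overline{\TT}_h$ is a torsion class, $X\in\overline{\TT}_h\subseteq\overline{\TT}^{\h}_\theta$. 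This simultaneously shows $\overline{\TT}^{\h}_\theta=\overline{\TT}_h$ for general $h$, so it is a torsion class; dually $\TT^{\h}_\theta=\TT_h$ for general $h$. Combined with the Hom-vanishing, this gives both torsion pairs.

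For the last statement, $\WW^{\h}_\theta=\bigcup_{[f]=\theta}\WW_f=\overline{\TT}^{\h}_\theta\cap\overline{\FF}^{\h}_\theta=\overline{\TT}_h\cap\overline{\FF}_h=\WW_h$ for general $h$ (using that a general $h$ simultaneously realizes $\overline{\TT}^{\h}_\theta$ and $\overline{\FF}^{\h}_\theta$, again by irreducibility of $\Hom(\theta)$ and finitely many openness conditions). By Proposition \ref{f category}, $\WW_h\in\wide A$. By \eqref{W_f and W_theta 3} we have $\WW^{\h}_\theta\subseteq\WW_\theta$, and by Lemma \ref{W_f and W_theta 2}(b) each $\WW_f$ is a Serre subcategory of $\WW_\theta$; since $\WW^{\h}_\theta=\WW_h$ for a suitable single $h$, it is a Serre subcategory of $\WW_\theta$, giving $\WW^{\h}_\theta\in\serre\WW_\theta\subseteq\wide A$. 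The main obstacle I anticipate is the careful bookkeeping needed to pass from "general $h$" statements about individual modules to a single $h$ that works for all of $X',X'',X$ at once — but because all the relevant conditions ($X'\in\overline{\TT}_h$, $X''\in\overline{\TT}_h$, and that $f,g$ lie in $\overline{Gh}$) are open or generic and $\Hom(\theta)$ is irreducible, only finitely many dense opens must be intersected, so a general $h$ suffices; one must just make sure this is phrased so it does not secretly depend on having chosen $X$ in advance when asserting $\overline{\TT}^{\h}_\theta$ is a torsion class, which is handled by the "$\overline{\TT}^{\h}_\theta=\overline{\TT}_h$ for general $h$" reformulation rather than by checking extension-closure module-by-module.
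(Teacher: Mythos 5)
Your underlying idea — use Lemma~\ref{open} to see that for each module $Y$ the locus $\{h\in\Hom(\theta)\mid Y\in\overline{\TT}_h\}$ is open, and then use irreducibility of $\Hom(\theta)$ to intersect the finitely many such loci that arise — is exactly the paper's argument, and run module-by-module it does yield extension-closedness: given $0\to X'\to X\to X''\to 0$ with $X'\in\overline{\TT}_f$, $X''\in\overline{\TT}_g$, the two nonempty open sets $\{h\mid X'\in\overline{\TT}_h\}$ and $\{h\mid X''\in\overline{\TT}_h\}$ meet, and any $h$ in the intersection gives $X\in\overline{\TT}_h\subseteq\overline{\TT}^{\h}_\theta$.

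The problem is the ``more precise'' version you substitute for this. You assert that for a general $h\in\Hom(\theta)$ every $f\in\Hom(\theta)$ lies in $\overline{Gh}$, ``the orbit of a general element is dense,'' and hence that $\overline{\TT}^{\h}_\theta=\overline{\TT}_h$ for a single generic $h$. This is false when $\theta$ is wild. What Proposition~\ref{upper semi} gives is that the \emph{stratum} $\{f\mid E(f,f)=E(\theta,\theta)\}$ is open dense; but this stratum is a union of infinitely many $G$-orbits, each of codimension $E(\theta,\theta)$ in $\Hom(\theta)$ by Proposition~\ref{codim=E}, so no single orbit is dense unless $E(\theta,\theta)=0$ (i.e.\ $\theta$ tame/rigid). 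Concretely, for the $n$-Kronecker algebra with $n\geq 3$ and $\theta=[P(1)]-[P(2)]$, a generic $h$ has orbit of codimension $n-1>0$, two generic choices $h_1,h_2$ in different orbits have non-isomorphic $C_{h_1}\not\simeq C_{h_2}$ and hence distinct $\overline{\TT}_{h_1}\neq\overline{\TT}_{h_2}$, and neither equals $\overline{\TT}^{\h}_\theta$. (This is precisely why the paper later introduces the monoids $S_{X,\theta}$ and shows in Example~\ref{kronecker} that $\overline{\TT}^{\h}_\theta$ can differ from $\overline{\TT}_\theta$ and from $\overline{\TT}^{\h}_{\ell\theta}$ for $\ell\geq 2$.)

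So the torsion-class part of your proof goes through once you drop the global ``$\overline{\TT}^{\h}_\theta=\overline{\TT}_h$ for general $h$'' claim and stick with the finitely-many-open-sets argument applied to the two specific modules in an extension. But the Serre-subcategory part, as you wrote it, relies entirely on the false identification $\WW^{\h}_\theta=\WW_h$ and needs to be redone: for the 2-out-of-3 direction, given $0\to X\to Y\to Z\to 0$ in $\WW_\theta$ with $X,Z\in\WW^{\h}_\theta$, take $h$ in the (nonempty, since each factor is in some $\WW_f$) open intersection $\{h\mid X\in\WW_h\}\cap\{h\mid Z\in\WW_h\}$ and use that $\WW_h$ is wide; for closure under subobjects/quotients, given $Y\in\WW_h$ and $0\to X\to Y\to Z\to 0$ in $\WW_\theta$, apply Lemma~\ref{W_f and W_theta 2}(b) directly to get $X,Z\in\WW_h\subseteq\WW^{\h}_\theta$.
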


\begin{proof}
We prove $\overline{\TT}^{\h}_\theta\in\tors A$. Clearly $\overline{\TT}^{\h}_\theta$ is closed under factor modules. It remains to show that, for each exact sequence $0\to X\to Y\to Z\to0$ with $X,Z\in\overline{\TT}^{\h}_\theta$, we have $Y\in\overline{\TT}^{\h}_\theta$. Take $f,g\in\Hom(\theta)$ such that $X\in\overline{\TT}_f$ and $Z\in\overline{\TT}_g$. By Lemma \ref{open}, both
\begin{align*}\{h\in\Hom(\theta)\mid X\in\overline{\TT}_h\}\ \text{ and }\ \{h\in\Hom(\theta)\mid Z\in\overline{\TT}_h\}\end{align*}
are non-empty open subsets of $\Hom(\theta)$. Since $\Hom(\theta)$ is irreducible, there exists $h\in\Hom(\theta)$ such that both $X,Z$ belong to $\overline{\TT}_h$. Since $\overline{\TT}_h$ is a torsion class, $Y\in\overline{\TT}_h\subset\overline{\TT}^{\h}_\theta$ hold, as desired,

Dually, one can prove $\overline{\FF}^{\h}_\theta\in\torf A$. Finally, one can show $\WW^{\h}_\theta\in\serre\WW_\theta$ in a similar way by using Lemma \ref{W_f and W_theta 2}(b).
\end{proof}

We have the following clear observation.

\begin{lemma}\label{theta and l theta}
Let $\eta,\theta\in K_0(\proj A)$. Then we have
\begin{align*}
\TT^{\h}_{\eta}\vee\TT^{\h}_{\theta}\supseteq\TT^{\h}_{\eta+\theta},\quad
\FF^{\h}_{\eta}\vee\FF^{\h}_{\theta}\supseteq\FF^{\h}_{\eta+\theta},\quad&\\
\overline{\TT}^{\h}_{\eta}\cap\overline{\TT}^{\h}_{\theta}\subseteq\overline{\TT}^{\h}_{\eta+\theta},\quad
\overline{\FF}^{\h}_{\eta}\cap\overline{\FF}^{\h}_{\theta}\subseteq\overline{\FF}^{\h}_{\eta+\theta},\quad&
\WW^{\h}_{\eta}\cap\WW^{\h}_{\theta}\subseteq\WW^{\h}_{\eta+\theta}.
\end{align*}
In particular, for each $\ell\in\Z_{\ge1}$, we have
\begin{align*}\TT^{\h}_\theta\supseteq\TT^{\h}_{\ell\theta}\supseteq\TT_\theta,\quad
\FF^{\h}_\theta\supseteq\FF^{\h}_{\ell\theta}\supseteq\FF_\theta,\quad
\overline{\TT}^{\h}_\theta\subseteq\overline{\TT}^{\h}_{\ell\theta}\subseteq\overline{\TT}_\theta,\quad
\overline{\FF}^{\h}_\theta\subseteq\overline{\FF}^{\h}_{\ell\theta}\subseteq\overline{\FF}_\theta, \quad
\WW^{\h}_\theta\subseteq\WW^{\h}_{\ell\theta}\subseteq\WW_\theta.\end{align*}
\end{lemma}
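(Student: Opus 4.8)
The plan is to establish the three ``$\cap$''-type inclusions (for $\overline{\TT}^{\h}$, $\overline{\FF}^{\h}$, $\WW^{\h}$) directly from Definition \ref{uni mor tors}, then derive the two ``$\vee$''-type inclusions (for $\TT^{\h}$, $\FF^{\h}$) by torsion-pair duality, and finally obtain the ``in particular'' part by an easy induction.

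\emph{Step 1 (the $\cap$-inclusions).} Suppose $X\in\overline{\TT}^{\h}_\eta\cap\overline{\TT}^{\h}_\theta$. By definition there are morphisms $f,g$ in $\proj A$ with $[f]=\eta$, $[g]=\theta$ and $X\in\overline{\TT}_f\cap\overline{\TT}_g$. Then $f\oplus g$ is a morphism in $\proj A$ with $[f\oplus g]=\eta+\theta$, and the identity $\overline{\TT}_{f\oplus g}=\overline{\TT}_f\cap\overline{\TT}_g$ recorded just after the definition of morphism torsion classes gives $X\in\overline{\TT}_{f\oplus g}\subseteq\overline{\TT}^{\h}_{\eta+\theta}$. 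The same argument, using $\overline{\FF}_{f\oplus g}=\overline{\FF}_f\cap\overline{\FF}_g$ and $\WW_{f\oplus g}=\WW_f\cap\WW_g$, proves $\overline{\FF}^{\h}_\eta\cap\overline{\FF}^{\h}_\theta\subseteq\overline{\FF}^{\h}_{\eta+\theta}$ and $\WW^{\h}_\eta\cap\WW^{\h}_\theta\subseteq\WW^{\h}_{\eta+\theta}$.

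\emph{Step 2 (the $\vee$-inclusions).} By Proposition \ref{T_PHom_tors}, $(\overline{\TT}^{\h}_\rho,\FF^{\h}_\rho)$ and $(\TT^{\h}_\rho,\overline{\FF}^{\h}_\rho)$ are torsion pairs for every $\rho\in K_0(\proj A)$. Since $\TT\mapsto\TT^\perp$ is an order-reversing bijection $\tors A\to\torf A$, it carries joins in $\tors A$ to meets in $\torf A$; hence $\TT^{\h}_\eta\vee\TT^{\h}_\theta={}^\perp(\overline{\FF}^{\h}_\eta\cap\overline{\FF}^{\h}_\theta)$. Applying the order-reversing operator ${}^\perp(-)$ to the inclusion $\overline{\FF}^{\h}_\eta\cap\overline{\FF}^{\h}_\theta\subseteq\overline{\FF}^{\h}_{\eta+\theta}$ from Step 1, and using that $(\TT^{\h}_{\eta+\theta},\overline{\FF}^{\h}_{\eta+\theta})$ is a torsion pair, we get $\TT^{\h}_\eta\vee\TT^{\h}_\theta\supseteq{}^\perp(\overline{\FF}^{\h}_{\eta+\theta})=\TT^{\h}_{\eta+\theta}$. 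Dually, $\FF^{\h}_\eta\vee\FF^{\h}_\theta=(\overline{\TT}^{\h}_\eta\cap\overline{\TT}^{\h}_\theta)^\perp\supseteq(\overline{\TT}^{\h}_{\eta+\theta})^\perp=\FF^{\h}_{\eta+\theta}$.

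\emph{Step 3 (the ``in particular'' part).} Induct on $\ell$. Taking $\eta=(\ell-1)\theta$ in Step 1 and using the trivial identity $\overline{\TT}^{\h}_\theta\cap\overline{\TT}^{\h}_\theta=\overline{\TT}^{\h}_\theta$ together with the inductive hypothesis $\overline{\TT}^{\h}_\theta\subseteq\overline{\TT}^{\h}_{(\ell-1)\theta}$ yields $\overline{\TT}^{\h}_\theta\subseteq\overline{\TT}^{\h}_{\ell\theta}$; similarly $\TT^{\h}_\theta\vee\TT^{\h}_\theta=\TT^{\h}_\theta$ and Step 2 give $\TT^{\h}_{\ell\theta}\subseteq\TT^{\h}_\theta$, and the same for $\overline{\FF}^{\h},\FF^{\h},\WW^{\h}$. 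The remaining inclusions $\overline{\TT}^{\h}_{\ell\theta}\subseteq\overline{\TT}_\theta$, $\TT_\theta\subseteq\TT^{\h}_{\ell\theta}$, etc., follow from \eqref{W_f and W_theta 3} applied to $\ell\theta$ combined with $\overline{\TT}_{\ell\theta}=\overline{\TT}_\theta$, $\TT_{\ell\theta}=\TT_\theta$, and the analogous equalities from Lemma \ref{additivity}. I do not expect any serious obstacle here — the lemma is essentially bookkeeping — the only point needing care is in Step 2: one must route the $\vee$-direction through the torsion pairs of Proposition \ref{T_PHom_tors} rather than attempting a direct argument with the $\TT_f$'s (which would give the wrong inclusion), and keep track of the direction reversal when applying $(-)^\perp$ and ${}^\perp(-)$.
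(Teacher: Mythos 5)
Your proof is correct and follows essentially the same route as the paper: the three $\cap$-inclusions are obtained directly from the identity $\overline{\TT}_{f\oplus g}=\overline{\TT}_f\cap\overline{\TT}_g$ (and its analogues), and the two $\vee$-inclusions are then derived via the torsion pairs of Proposition \ref{T_PHom_tors}; the paper compresses your Steps 2 and 3 into one-line remarks, but the underlying argument is the same.
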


\begin{proof}
We first to prove the assertion for $\overline{\TT}^{\h}$.
For each $X\in\overline{\TT}^{\h}_{\eta}\cap\overline{\TT}^{\h}_{\theta}$, take $f\in\Hom(\eta)$ and $g\in\Hom(\theta)$ such that $X\in\overline{\TT}_f\cap\overline{\TT}_g$. Then $X\in\overline{\TT}_{f\oplus g}$ and hence $X\in\overline{\TT}^{\h}_{\eta+\theta}$. Thus the assertion for $\overline{\TT}^{\h}$ follows.
Dually we obtain the assertion for $\overline{\FF}^{\h}$, and the equality for $\WW^{\h}$ also follows.
By Proposition \ref{T_PHom_tors}, the assertions for $\TT^{\h}$ and $\FF^{\h}$ follows.
\end{proof}

We additionally define the following notations for our proof:
\begin{align*}
\TT^{\h}_{\N\theta}:=\bigcap_{\ell\ge1}\TT^{\h}_{\ell\theta},\quad\FF^{\h}_{\N\theta}:=\bigcap_{\ell\ge1}\FF^{\h}_{\ell\theta},&\\
\overline{\TT}^{\h}_{\N\theta}:=\bigcup_{\ell\ge1}\overline{\TT}^{\h}_{\ell\theta},\quad\overline{\FF}^{\h}_{\N\theta}:=\bigcup_{\ell\ge1}\overline{\FF}^{\h}_{\ell\theta},&\quad\WW^{\h}_{\N\theta}:=\bigcup_{\ell\ge1}\WW^{\h}_{\ell\theta}.
\end{align*}
These definitions can be extended to $\theta\in K_0(\proj A)_\Q$ in an obvious way.

\begin{lemma}\label{cup T_f pre}
For $\eta,\theta\in K_0(\proj A)$, we have torsion pairs
\begin{align*}(\overline{\TT}^{\h}_{\N\theta},\FF^{\h}_{\N\theta})\ \text{ and }\ (\TT^{\h}_{\N\theta},\overline{\FF}^{\h}_{\N\theta})\end{align*}
in $\mod A$. Moreover we have
\begin{align*}
\TT^{\h}_{\N\theta}\supseteq\TT_\theta,\quad\FF^{\h}_{\N\theta}\supseteq\FF_\theta,\quad
\overline{\TT}^{\h}_{\N\theta}\subseteq\overline{\TT}_\theta,\quad\overline{\FF}^{\h}_{\N\theta}\subseteq\overline{\FF}_\theta,\quad&\WW^{\h}_{\N\theta}\in\serre\WW_\theta\subseteq\wide A,
\end{align*}
and
\begin{align*}
\TT^{\h}_{\N\eta}\vee\TT^{\h}_{\N\theta}\supseteq\TT^{\h}_{\N(\eta+\theta)},\quad
\FF^{\h}_{\N\eta}\vee\FF^{\h}_{\N\theta}\supseteq\FF^{\h}_{\N(\eta+\theta)},\quad&\\
\overline{\TT}^{\h}_{\N\eta}\cap\overline{\TT}^{\h}_{\N\theta}\subseteq\overline{\TT}_{\N(\eta+\theta)},\quad
\overline{\FF}^{\h}_{\N\eta}\cap\overline{\FF}^{\h}_{\N\theta}\subseteq\overline{\FF}^{\h}_{\N(\eta+\theta)},\quad&
\WW^{\h}_{\N\eta}\cap\WW^{\h}_{\N\theta}\subseteq\WW^{\h}_{\N(\eta+\theta)}.
\end{align*}
\end{lemma}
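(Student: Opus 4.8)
The plan is to deduce everything from two facts already at hand: Proposition \ref{T_PHom_tors}, which gives the torsion pairs $(\overline{\TT}^{\h}_{\ell\theta},\FF^{\h}_{\ell\theta})$ and $(\TT^{\h}_{\ell\theta},\overline{\FF}^{\h}_{\ell\theta})$ and the membership $\WW^{\h}_{\ell\theta}\in\serre\WW_{\ell\theta}$ for each fixed $\ell$, and Lemma \ref{theta and l theta}, which controls how these classes vary with the index. The first step is to record that the index set $\Z_{\ge1}$ may be regarded as ordered by divisibility: applying the ``in particular'' part of Lemma \ref{theta and l theta} with $\ell\theta$ in place of $\theta$ shows that $\ell\mid\ell'$ implies $\overline{\TT}^{\h}_{\ell\theta}\subseteq\overline{\TT}^{\h}_{\ell'\theta}$, $\overline{\FF}^{\h}_{\ell\theta}\subseteq\overline{\FF}^{\h}_{\ell'\theta}$, $\WW^{\h}_{\ell\theta}\subseteq\WW^{\h}_{\ell'\theta}$, $\TT^{\h}_{\ell\theta}\supseteq\TT^{\h}_{\ell'\theta}$ and $\FF^{\h}_{\ell\theta}\supseteq\FF^{\h}_{\ell'\theta}$. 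Since any two positive integers have a common multiple, the $\N$-versions are \emph{directed} unions (resp. intersections). The four inclusions $\TT^{\h}_{\N\theta}\supseteq\TT_\theta$, $\FF^{\h}_{\N\theta}\supseteq\FF_\theta$, $\overline{\TT}^{\h}_{\N\theta}\subseteq\overline{\TT}_\theta$, $\overline{\FF}^{\h}_{\N\theta}\subseteq\overline{\FF}_\theta$ are then immediate from the corresponding inclusions in Lemma \ref{theta and l theta} taken over all $\ell$.

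Next I would establish the torsion pairs. To see $\overline{\TT}^{\h}_{\N\theta}\in\tors A$: closure under quotient modules is clear, since each $\overline{\TT}^{\h}_{\ell\theta}$ already has it; for extensions, given $0\to X\to Y\to Z\to 0$ with $X\in\overline{\TT}^{\h}_{\ell\theta}$ and $Z\in\overline{\TT}^{\h}_{\ell'\theta}$, the directedness from the previous step puts both $X$ and $Z$ in $\overline{\TT}^{\h}_{\ell\ell'\theta}$, which is a torsion class by Proposition \ref{T_PHom_tors}, whence $Y\in\overline{\TT}^{\h}_{\ell\ell'\theta}\subseteq\overline{\TT}^{\h}_{\N\theta}$. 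Dually, $\overline{\FF}^{\h}_{\N\theta}\in\torf A$. Now Proposition \ref{T_PHom_tors} gives $\FF^{\h}_{\ell\theta}=(\overline{\TT}^{\h}_{\ell\theta})^\perp$ and $\TT^{\h}_{\ell\theta}={}^\perp\overline{\FF}^{\h}_{\ell\theta}$; intersecting over $\ell$ and using that $(-)^\perp$ and ${}^\perp(-)$ convert unions into intersections yields $\FF^{\h}_{\N\theta}=(\overline{\TT}^{\h}_{\N\theta})^\perp$ and $\TT^{\h}_{\N\theta}={}^\perp\overline{\FF}^{\h}_{\N\theta}$. Since $\overline{\TT}^{\h}_{\N\theta}$ is a torsion class we also have ${}^\perp\FF^{\h}_{\N\theta}={}^\perp((\overline{\TT}^{\h}_{\N\theta})^\perp)=\overline{\TT}^{\h}_{\N\theta}$, and dually $(\TT^{\h}_{\N\theta})^\perp=\overline{\FF}^{\h}_{\N\theta}$; hence $(\overline{\TT}^{\h}_{\N\theta},\FF^{\h}_{\N\theta})$ and $(\TT^{\h}_{\N\theta},\overline{\FF}^{\h}_{\N\theta})$ are torsion pairs.

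For $\WW^{\h}_{\N\theta}\in\serre\WW_\theta$, note $\WW_{\ell\theta}=\WW_\theta$ by Lemma \ref{additivity}, so Proposition \ref{T_PHom_tors} gives $\WW^{\h}_{\ell\theta}\in\serre\WW_\theta$ for every $\ell$; a directed union of Serre subcategories of the abelian length category $\WW_\theta$ is again Serre (closure under subobjects and quotients is immediate, closure under extensions by the same common-multiple argument as above), and $\serre\WW_\theta\subseteq\wide A$ because $\WW_\theta\in\wide A$. This also gives $\WW^{\h}_{\N\theta}\subseteq\WW_\theta$ from the previous paragraph together with $\WW^{\h}_{\ell\theta}\subseteq\WW_{\ell\theta}$.

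Finally, for the subadditivity inclusions: for $\overline{\TT}^{\h}$, $\overline{\FF}^{\h}$, $\WW^{\h}$ one argues directly — an object of $\overline{\TT}^{\h}_{\N\eta}\cap\overline{\TT}^{\h}_{\N\theta}$ lies in $\overline{\TT}^{\h}_{\ell\eta}\cap\overline{\TT}^{\h}_{\ell\theta}$ for a common multiple $\ell$, hence in $\overline{\TT}^{\h}_{\ell(\eta+\theta)}\subseteq\overline{\TT}^{\h}_{\N(\eta+\theta)}$ by Lemma \ref{theta and l theta}, and likewise for $\overline{\FF}^{\h}$ and $\WW^{\h}$. For $\TT^{\h}$ and $\FF^{\h}$ I would pass through the torsion pairs just constructed: recalling that $\TT_1\vee\TT_2={}^\perp(\TT_1^\perp\cap\TT_2^\perp)$ in $\tors A$, so $(\TT_1\vee\TT_2)^\perp=\TT_1^\perp\cap\TT_2^\perp$, we get $(\TT^{\h}_{\N\eta}\vee\TT^{\h}_{\N\theta})^\perp=\overline{\FF}^{\h}_{\N\eta}\cap\overline{\FF}^{\h}_{\N\theta}\subseteq\overline{\FF}^{\h}_{\N(\eta+\theta)}=(\TT^{\h}_{\N(\eta+\theta)})^\perp$, and applying ${}^\perp(-)$ (which reverses inclusions, and is inverse to $(-)^\perp$ on torsion-free classes) gives $\TT^{\h}_{\N\eta}\vee\TT^{\h}_{\N\theta}\supseteq\TT^{\h}_{\N(\eta+\theta)}$; the $\FF^{\h}$ statement is dual. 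The argument is essentially all formal; the substantive inputs are Proposition \ref{T_PHom_tors} and Lemma \ref{theta and l theta}, and the only point requiring genuine care is the passage to a common multiple, which is exactly what lets the unions and intersections over $\N$ be treated as a single index $\ell$.
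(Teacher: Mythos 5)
Your proposal is correct and follows essentially the same route as the paper's proof: both establish closure under extensions by passing to a common multiple $\ell\ell'$ of the indices (using Lemma \ref{theta and l theta}) so that a single $\overline{\TT}^{\h}_{\ell\ell'\theta}$ from Proposition \ref{T_PHom_tors} applies, then obtain the torsion pairs by taking perpendiculars, and derive the $\TT^{\h},\FF^{\h}$ inclusions from the $\overline{\TT}^{\h},\overline{\FF}^{\h}$ ones by duality. Your write-up is a bit more explicit than the paper's in spelling out that $(-)^\perp$ turns the directed union into an intersection and in deriving the join inclusion via $(\TT_1\vee\TT_2)^\perp=\TT_1^\perp\cap\TT_2^\perp$, but the underlying argument is the same.
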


\begin{proof}
To prove the first statement, it suffices to prove that $\overline{\TT}^{\h}_{\N\theta}\in\tors A$ and $\overline{\FF}^{\h}_{\N\theta}\in\torf A$ thanks to Proposition \ref{T_PHom_tors}.
We only prove $\overline{\TT}^{\h}_{\N\theta}\in\tors A$ since the other one is the dual.
It suffices to show that $\overline{\TT}^{\h}_{\N\theta}$ is closed under extensions. Let $0\to X\to Y\to Z\to0$ be an exact sequence in $\mod A$ such that $X\in\overline{\TT}^{\h}_{\ell\theta}$ and $Z\in\overline{\TT}^{\h}_{m\theta}$ for some $\ell,m\in\Z_{\ge1}$. Then $X,Z\in\overline{\TT}^{\h}_{\ell m\theta}$ holds by Lemma \ref{theta and l theta}, and hence $Y\in\overline{\TT}^{\h}_{\ell m\theta}$.

The second statement is immediate from Proposition \ref{T_PHom_tors} and Lemma \ref{theta and l theta}. 

We prove the third statement. We only show the inclusion for $\overline{\TT}^{\h}$. Then the inclusion for $\overline{\FF}^{\h}$ follows dually, and the other assertions do.
For $X\in\overline{\TT}^{\h}_{\N \eta}\cap\overline{\TT}^{\h}_{\N \theta}$, take $f\in\Hom(\ell\eta)$ and $g\in\Hom(m\theta)$ with $\ell,m\in\Z_{\ge1}$ such that $X\in\overline{\TT}_f\cap\overline{\TT}_g$. Then $X\in\overline{\TT}_{f^{\oplus m}\oplus g^{\oplus\ell}}$ and therefore $X\in\overline{\TT}_{\ell m(\eta+\theta)}$, as desired.
\end{proof}

The following special case of Theorem \ref{cup T_f} is proved by using geometric invariant theory of quiver representations.

\begin{lemma}\label{W_theta=W_f}
Let $Q$ be an acyclic quiver and $A=kQ$. For any $\theta\in K_0(\proj A)$, we have
\begin{align*}\WW_\theta=\WW^{\h}_{\N\theta}.\end{align*}
\end{lemma}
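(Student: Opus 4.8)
The plan is to reduce the general statement $\WW_\theta=\WW^{\h}_{\N\theta}$ for $A=kQ$ with $Q$ acyclic to a statement about stable representations via King's criterion and Schofield-type genericity, exploiting the fact that over a hereditary algebra every element $\theta\in K_0(\proj A)$ is realized by the presentation $P_1^\theta\xrightarrow{f}P_0^\theta$ and that $C_f,K_{\nu f}$ are honest (co)kernels with no homotopy subtleties. By \eqref{W_f and W_theta 3} together with Lemma \ref{theta and l theta} we already have $\WW^{\h}_{\N\theta}\subseteq\WW_\theta$, so the whole content is the reverse inclusion: given $X\in\WW_\theta$, produce $\ell\ge1$ and $g\in\Hom(\ell\theta)$ with $X\in\WW_g$, i.e.\ (by Lemma \ref{Nakayama T_f}(b)) with $\Hom_A(g,X)$ an isomorphism.

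First I would reduce to the case $X$ is a brick that is $\theta$-stable: by Definition-Proposition \ref{HN filtration}/Proposition \ref{W filtration} an arbitrary $X\in\WW_\theta$ is filtered by $\theta$-stable objects $S_1,\dots,S_r$ (the composition factors of $X$ in $\WW_\theta$), and $\WW^{\h}_{\N\theta}$ is closed under extensions by Lemma \ref{cup T_f pre}; so it suffices to treat each $S_j$, and each is a brick with $\theta(S_j)=0$. Next, for a $\theta$-stable brick $S$ with dimension vector $\mathbf d=\dimv S$, King's theorem says $S$ defines a stable point of the representation space $\mathrm{rep}_Q(\mathbf d)$ for the weight $\theta$ (note $Q$ acyclic, so GIT-stability and $\theta$-stability of modules agree). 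The key geometric input I would invoke is Schofield's theory of general presentations over hereditary algebras: for a sincere-enough multiple $\ell\theta$ one can choose $g\in\Hom(\ell\theta)$ so that $C_g$ and $K_{\nu g}$ are, as representations, \emph{general} in their respective dimension vectors, and these dimension vectors can be arranged to lie in the "numerical support" cut out by $\theta=0$; concretely, $C_g$ general with $\theta(C_g)=0$ and with $\Hom_A(C_g,S)$ and $\Hom_A(S,K_{\nu g})$ forced to match up so that $\theta(S)=\dim_k\Hom_A(C_g,S)-\dim_k\Hom_A(S,K_{\nu g})=0$ upgrades to $X\in\overline{\TT}_g\cap\overline{\FF}_g$. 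The point is that $S$ being $\theta$-stable means $S$ has no proper subobject $S'$ with $\theta(S')\ge 0$ and no proper quotient $S''$ with $\theta(S'')\le 0$; one then wants a general $g$ with $[g]$ a large multiple of $\theta$ whose cokernel is a general representation with the same numerical stability data, so that the vanishing $\Hom_A(C_g,X')=0$ for appropriate subquotients is the generic behaviour.

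Concretely the steps are: (1) reduce to $X$ a $\theta$-stable brick as above; (2) translate $X\in\WW_g$ into $\Hom_A(C_g,X)=0=\Hom_A(X,K_{\nu g})$ via \eqref{Serre duality}/Lemma \ref{Nakayama T_f}; (3) over $kQ$, use that for any dimension vector $\mathbf e$ with $\langle \mathbf e,\mathbf d\rangle=0$ (Euler form) a general representation $N$ of dimension $\mathbf e$ satisfies $\Hom_A(N,X)=0=\Hom_A(X,N')$ for the general $N,N'$ of the relevant dimensions — this is Schofield's generic $\hom$/$\mathrm{ext}$ vanishing, available since $X$ being stable makes it rigid enough in the transversal directions; (4) realize such general $N=C_g$, $N'=K_{\nu g}$ simultaneously as the cokernel and kernel-of-$\nu$ of one general presentation $g$ of some $\ell\theta$, using that over a hereditary algebra every short exact sequence of the form $0\to K_{\nu g}\to \nu P_1\to \nu P_0\to \nu C_g\to 0$ is unobstructed and the map $g\mapsto(C_g,K_{\nu g})$ is dominant onto the product of the relevant representation spaces (this is where Derksen–Fei / Schofield genericity enters, cf.\ Proposition \ref{upper semi}); (5) conclude $X\in\WW_g\subseteq\WW^{\h}_{\N\theta}$, and assemble the filtration back to get a single $\ell$ working for $X$ itself via Lemma \ref{cup T_f pre}.

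The main obstacle I anticipate is step (4): coordinating the kernel and cokernel of a \emph{single} presentation to both be generic. Over a hereditary algebra one has the advantage that a minimal projective presentation of a module $C$ is $0\to \syz C\to P_1\to P_0\to C\to 0$ with $\syz C=\Kernel(P_1\to P_0)$ projective, so $K_{\nu g}$ and $C_g$ are not independent — there is one degree of freedom, namely how much "extra" projective one adds when passing from $\theta$ to $\ell\theta$. The crux is to show that as $\ell\to\infty$ and as $g$ varies generically in $\Hom(\ell\theta)$, the pair $(C_g, K_{\nu g})$ sweeps out a dense subset of pairs with the prescribed numerics, so that the generic vanishing $\Hom_A(C_g,X)=0=\Hom_A(X,K_{\nu g})$ can be achieved; this is precisely the content one extracts from the canonical decomposition machinery of Section 2.4 (Proposition \ref{decomposition}, Proposition \ref{upper semi}, Proposition \ref{codim=E}) specialized to $kQ$, and I expect the hereditary hypothesis to make the relevant $\mathrm{Ext}^{\ge 2}$ obstructions vanish outright, which is what ultimately forces everything to be generic.
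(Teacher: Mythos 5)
Your proposal takes a genuinely different route from the paper's, and it has a gap at the crucial step. The paper's proof is short and direct: identify $X\in\WW_\theta$ with a $\chi_\theta$-semistable point $p$ of the module variety $\mod(A,d)$ via King's criterion, then invoke the Derksen--Weyman theorem (\cite[Theorem~1]{DW2}) that the weight space $k[V]^{\chi_\theta^\ell}$ is spanned by the determinantal semi-invariants $a_f(p)=\det(\Hom_A(f,X_p))$ as $f$ ranges over $\Hom(\ell\theta)$. Semistability gives some weight-$\chi_\theta^\ell$ semi-invariant not vanishing at $p$, the spanning theorem upgrades this to some $a_f(p)\neq 0$, and $a_f(p)\neq 0$ is precisely $X\in\WW_f$. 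There is no reduction to $\theta$-stable bricks and no appeal to generic representations at all.

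Your substitute for the Derksen--Weyman input is where the argument breaks. In step (3) you assert that for a $\theta$-stable brick $X$ and suitable dimension vectors $\mathbf e,\mathbf e'$ one has $\Hom_A(N,X)=0=\Hom_A(X,N')$ for general $N,N'$, calling it ``Schofield's generic hom/ext vanishing, available since $X$ being stable makes it rigid enough in the transversal directions.'' But Schofield's generic $\hom/\operatorname{ext}$ theory concerns pairs in which \emph{both} representations are general of their respective dimension vectors, whereas your $X$ is a fixed $\theta$-stable module, not a general point of $\mod(A,\dimv X)$. The identity $\theta(X)=\dim_k\Hom_A(C_g,X)-\dim_k\Hom_A(X,K_{\nu g})=0$ from Lemma~\ref{Nakayama T_f}(c) only gives equality of the two terms, and $\theta$-stability by itself does not force both of them to vanish for generic $g\in\Hom(\ell\theta)$: the implication from ``$X$ is $\theta$-semistable'' to ``some $\Hom_A(g,X)$ is an isomorphism for $g\in\Hom(\ell\theta)$'' is exactly the content of Derksen--Weyman's spanning theorem, and there is no elementary or purely Schofield-theoretic route to it. In effect your step (3) is the theorem you are trying to prove. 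The coordination issue you flag as ``step (4)'' is real but secondary; the primary gap is the unjustified generic vanishing against the \emph{non}-generic $X$. Your reduction to $\theta$-stable bricks via the Harder--Narasimhan filtration and the wideness of $\WW^{\h}_{\N\theta}$ is valid, but it is not needed once the Derksen--Weyman theorem is in hand.
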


\begin{proof}
Fix a dimension vector $d$, and consider the module variety $V:=\mod (A,d)$, its coordinate algebra $k[V]$, and the group $G:={\rm GL}(d)$ acting on $V$. For $p\in V$, we denote by $X_p$ the corresponding $A$-module.
For a character $\chi \colon G\to k^\times$, we denote by
\begin{align*}k[V]^\chi:=\{a\in k[V]\mid a(gp)=\chi(g)a(p)\ \text{ for all $p\in V$}\}\end{align*}
the space of $\chi$-semi-invariants. Recall that a point $p\in V$ is called \textit{$\chi$-semistable} if there exist $\ell\ge 1$ and $a\in k[V]^{\chi^\ell}$ such that $a(p)\neq0$.

For $\theta\in K_0(\proj A)$, we consider the character
\begin{align*}\chi_\theta \colon {\rm GL}(d)\to k^\times\ \text{ given by }\ \chi_\theta(g)=\prod_{i\in Q_0}(\det g_i)^{\theta_i}\end{align*}
given by our $\theta$. Then a point $p\in V$ is $\chi_\theta$-semistable if and only if $X_p\in\WW_\theta$ \cite[Proposition 3.1]{K}.

For each $\theta \in K_0(\proj A)$ with $\theta(d)=0$ and $f \in \Hom(\theta)$, we consider $a_f\in k[V]^{\chi_\theta}$ given by
\begin{align*}a_f(p):={\rm det}(f\circ{-} \colon \Hom_A(P_0,X_p)\to\Hom_A(P_1,X_p)).\end{align*}
Then the $k$-vector space $k[V]^{\chi_\theta}$ is spanned by $a_f$ for all $\theta \in K_0(\proj A)$ with $\theta(d)=0$ and $f \in \Hom(\theta)$ \cite[Theorem 1]{DW2}.

We are ready to prove the assertion. For $X\in\WW_\theta$ with $d:=\dimv X$, take a point $p\in \mod(A,d)$ such that $X\simeq X_p$. Then there exists $\ell\ge1$ and $a\in k[V]^{\chi_\theta^\ell}$ such that $a(p)\neq0$. Thus there exists $f\in\Hom_A(P_1^{\oplus\ell},P_0^{\oplus\ell})$ such that $a_f(p)\neq0$. In particular, $f\circ{-}\colon \Hom_A(P_0^{\oplus\ell},X_p)\to\Hom_A(P_1^{\oplus\ell},X_p)$ is an isomorphism, and hence $X\in\WW_f$.
\end{proof}

As an application of Lemma \ref{W_theta=W_f}, we prove the following linear algebraic statement.

\begin{example}\label{W_f linear alg}
Let $V,W$ be finite dimensional $k$-vector spaces, $H$ be a $k$-vector subspace of $\Hom_k(V,W)$, and $(a,b):=(\dim_kW,\dim_k V)$.
Assume that, for any $k$-vector subspace $V'$ of $V$, the $k$-vector subspace
\begin{align*}
HV':=\left\{\sum_{i=1}^mh_i(v_i) \mid m \in \Z_{\ge 1}, \ 
h_i\in H, \ v_i \in V'\right\}
\end{align*}
of $W$ satisfies $a \dim_k V' \le b \dim_k(HV')$. Then there exists $\ell\ge1$ such that the $k$-vector subspace $\Mat_{b\ell,a\ell}(H)$ of $\Hom_k(V^{\oplus a\ell},W^{\oplus b\ell})$ contains a $k$-linear isomorphism $V^{\oplus a\ell}\to W^{\oplus b\ell}$.
\end{example}

\begin{proof}
Consider the $k$-algebra $A:=\left[\begin{smallmatrix}k&H\\0&k\end{smallmatrix}\right]$. Then $A\simeq kQ$ for some generalized Kronecker quiver $Q$. Using the $k$-bilinear map $V\otimes_kH\to W$, we regard $X:=\left[\begin{smallmatrix}V & W\end{smallmatrix}\right]$ as an $A$-module.
We consider the functor $F:=\Hom_A(-,X) \colon \mod A\to\mod k$. We have obvious identifications
\begin{align*}\Hom_A(P(2),P(1))\simeq H,\ F(P(1))\simeq V\ \text{ and }\ F(P(2))\simeq W.\end{align*}
For each $i,j\ge0$, the composition
\begin{align*}
\Mat_{i,j}(H)&\simeq\Hom_A(P(2)^{\oplus j},P(1)^{\oplus i})\\
&\xrightarrow{F}\Hom_k(F(P(1)^{\oplus i}),F(P(2)^{\oplus j}))\simeq\Hom_k(V^{\oplus i},W^{\oplus j})\simeq\Mat_{j,i}(\Hom_k(V,W))
\end{align*}
coincides with the natural map $\Mat_{i,j}(H)\to\Mat_{j,i}(\Hom_k(V,W))$ induced by the inclusion $H\to\Hom_k(V,W)$ and transposes of matrices.

Now let $\theta:=a[P(1)]-b[P(2)] \in K_0(\proj A)$. Then $\theta(X)=0$ holds. Any $A$-submodule $X'$ of $X$ can be written as
$X'=\left[\begin{smallmatrix}V'&W'\end{smallmatrix}\right]$ for $k$-vector subspaces $V'$ and $W'$ of $V$ and $W$ respectively satisfying $HV' \subseteq W'$.
Our assumption implies that  $\theta(X') = a \dim_k V' - b \dim_k W' \le 0$. Thus $X \in \WW_\theta$ holds.
By Lemma \ref{W_theta=W_f}, there exist $\ell\ge1$ and $f\in\Hom(\ell\theta)$ such that $X \in \WW_f$.
This means that $f \colon P(2)^{\oplus b\ell}\to P(1)^{\oplus a\ell}$ induces a $k$-linear isomorphism $F(f) \colon V^{\oplus a\ell}=F(P(1)^{\oplus a\ell})\to W^{\oplus b\ell}=F(P(2)^{\oplus b\ell})$ in $\Mat_{b\ell,a\ell}(H)$.
\end{proof}

Now we can show the following special case of Theorem \ref{cup T_f}. 

\begin{lemma}\label{rank 2 case}
Let $A$ be a finite dimensional $k$-algebra with $|A|=2$ and $\theta \in K_0(\proj A)$. Then we have
\begin{align*}\WW_\theta=\WW^{\h}_{\N\theta}.\end{align*}
\end{lemma}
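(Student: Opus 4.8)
The plan is to reduce the rank-two case to the generalized Kronecker case treated in Lemma \ref{W_theta=W_f}, which was proved via geometric invariant theory. The inclusion $\WW^{\h}_{\N\theta}\subseteq\WW_\theta$ is already recorded in Lemma \ref{cup T_f pre}, so the work is entirely in proving $\WW_\theta\subseteq\WW^{\h}_{\N\theta}$. I would split the argument according to the sign type of $\theta$.

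First I would dispose of the degenerate cases. If $\theta=0$ then $\WW_0=\WW^{\h}_0$ trivially (any morphism $f$ with $[f]=0$ can be taken to be $0\colon Q\to Q$, and scaling gives all $\ell$). If $\theta$ is positive, say $\theta=a_1[P(1)]+a_2[P(2)]$ with $a_i\ge 0$, then $\WW_\theta=(P(1)^{\oplus a_1}\oplus P(2)^{\oplus a_2})^\perp\cap{}^\perp 0$-type considerations show $\WW_\theta=\WW^{\h}_\theta$ directly from Lemma \ref{Nakayama T_f}, since $\Hom_A(f,X)$ for $f\colon 0\to P_0$ is simply injectivity/surjectivity of the zero map, giving $\overline{\TT}_f=\mod A$ and $\overline{\FF}_f=P_0^\perp=\overline{\FF}_\theta$; dually for negative $\theta$. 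So the heart of the matter is the \emph{sincere mixed} case: $\theta=a[P(1)]-b[P(2)]$ with $a,b>0$ (up to relabelling the two indecomposable projectives), where $P_0^\theta=P(1)^{\oplus a}$ and $P_1^\theta=P(2)^{\oplus b}$.

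In that case I would fix $X\in\WW_\theta$ and aim to produce $\ell\ge 1$ and $f\in\Hom(\ell\theta)$ with $X\in\WW_f$, i.e.\ with $\Hom_A(f,X)\colon\Hom_A(P(1)^{\oplus a\ell},X)\to\Hom_A(P(2)^{\oplus b\ell},X)$ an isomorphism. Set $V:=\Hom_A(P(1),X)$, $W:=\Hom_A(P(2),X)$, and let $H\subseteq\Hom_k(V,W)$ be the image of the natural map $\Hom_A(P(2),P(1))\to\Hom_k(\Hom_A(P(1),X),\Hom_A(P(2),X))$ induced by precomposition. I claim the hypotheses of Example \ref{W_f linear alg} are satisfied with $(\dim_k W,\dim_k V)=(\dim_k W,\dim_k V)$ in the roles of $(a,b)$ there — more precisely, I need that $X\in\WW_\theta$ translates into the inequality $a\dim_k V'\le b\dim_k(HV')$ for every $k$-subspace $V'\subseteq V$. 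This is where the membership $X\in\overline{\FF}_\theta\cap\overline{\TT}_\theta$ gets used: a subspace $V'\subseteq V$ with $HV'\subseteq W'$ corresponds to a ``sub-datum'', and one checks that the relevant factor/sub modules of $X$ controlled by $V',W'$ have $\theta$-weight $a\dim_k V'-b\dim_k W'$, which is $\ge 0$ on factors and $\le 0$ on submodules. The subtlety is that $V$ and $W$ here are $\Hom$-spaces into $X$, not the ``coordinates'' of $X$ as a Kronecker representation, so I would pass to the algebra $B=\bigl[\begin{smallmatrix}k&H\\0&k\end{smallmatrix}\bigr]$ of Example \ref{W_f linear alg}, view $[V\ W]$ as a $B$-module, invoke that example to get a $k$-isomorphism inside $\Mat_{b\ell,a\ell}(H)$, and then lift this isomorphism of matrices over $H$ back to a genuine morphism $f\colon P(2)^{\oplus b\ell}\to P(1)^{\oplus a\ell}$ in $\proj A$ inducing it.

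The main obstacle I anticipate is precisely this dictionary: verifying that $X\in\WW_\theta$ is \emph{equivalent} to the subspace inequality for the pair $(V,H)$ (not merely implied by it), so that Example \ref{W_f linear alg} applies, and then checking that the isomorphism it produces in $\Mat_{b\ell,a\ell}(H)$ genuinely comes from a morphism $f$ in $\proj A$ with $[f]=\ell\theta$ and $X\in\WW_f$ — i.e.\ that $\Hom_A(f,X)$ is the matrix we built. Since $A$ has two simples and $\Hom_A(P(2),P(1))$ surjects onto $H$ essentially by definition of $H$, the lifting should be formal, but it must be stated carefully to be sure no rank is lost. One should also double-check that the case where $\Hom_A(P(2),P(1))=0$ (so $A$ is a product of two local algebras, $H=0$) is consistent: then $\WW_\theta$ for sincere mixed $\theta$ is $\{0\}$ (any $X$ would need $\theta(X)=0$ with $\theta(X)=a\dim_k V$, forcing $V=0$, and dually $W=0$), matching $\WW^{\h}_{\N\theta}=\{0\}$. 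Once the dictionary is pinned down, the conclusion $X\in\WW^{\h}_{\ell\theta}\subseteq\WW^{\h}_{\N\theta}$ is immediate, completing the proof.
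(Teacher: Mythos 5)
Your outline is correct and reaches the same endpoint, but by a genuinely different route than the paper.  The paper's proof of Lemma~\ref{rank 2 case} does not fix a module $X$ at all: it passes to the subalgebra $B:=\left[\begin{smallmatrix}k&eAf\\0&k\end{smallmatrix}\right]\subseteq A$, checks that the hypotheses (i), (ii) of Proposition~\ref{W^A and W^B}(c) hold for the mixed-sign element $\eta=a[eB]-b[fB]$ with the idempotent $e$, and thereby transports the equality $\WW_\theta=\WW^{\h}_{\N\theta}$ from $A$ to $B$ in one uniform stroke; since $B$ is a generalized Kronecker path algebra, Lemma~\ref{W_theta=W_f} finishes.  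You instead work module-by-module, building the linear-algebra datum $(V,W,H)$ with $V=\Hom_A(P(1),X)$, $W=\Hom_A(P(2),X)$, and $H$ the image of $\Hom_A(P(2),P(1))$, and feeding it to Example~\ref{W_f linear alg}.  Both rest on the GIT input Lemma~\ref{W_theta=W_f}, but the paper's reduction is cleaner in that the dictionary between $A$-modules and Kronecker data is handled once for all by functoriality, whereas yours requires the bookkeeping you rightly anticipate.  Two small points in that bookkeeping deserve attention.  First, the concern about needing the subspace inequality to be \emph{equivalent} to $X\in\WW_\theta$ is unnecessary: Example~\ref{W_f linear alg} only consumes the inequality, which does follow from $X\in\overline{\FF}_\theta$ (take the submodule $V'A$ of $X$), and then $X\in\overline{\TT}_\theta$ is automatic once $\theta(X)=0$.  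Second, and more substantively: Example~\ref{W_f linear alg} fixes $(a,b):=(\dim_kW,\dim_kV)$, while your $(a,b)$ are the coefficients of $\theta$.  These two pairs agree only up to a common positive rational factor (the factor being forced by $\theta(X)=a\dim_kV-b\dim_kW=0$), so the morphism $f$ you obtain has $[f]=\ell\bigl((\dim_kW)[P(1)]-(\dim_kV)[P(2)]\bigr)$, a positive rational multiple of $\theta$ but not necessarily an integer multiple.  This is harmless because $\WW^{\h}_{\N\theta}=\WW^{\h}_{\N m\theta}$ for each $m\ge1$ (Lemma~\ref{theta and l theta}), so $\WW^{\h}_{\N\theta}$ is unchanged by rescaling $\theta$ by any positive rational, but you should say so explicitly; as written, the step from ``$f\in\Hom(\ell\theta')$'' to ``$X\in\WW^{\h}_{\N\theta}$'' has a gap.
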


\begin{proof}
We can assume $A$ is basic.
The assertion is clear if $\theta\ge0$ or $\theta\le0$. Thus we can assume $\theta=a[eA]-b[fA]$ for a primitive idempotent $e\in A$, $f=1-e$ and $a,b\in\Z_{\ge0}$.

Consider a subalgebra $B$ of $A$ given by
\begin{align*}B:=\left[\begin{smallmatrix}k&eAf\\0&k\end{smallmatrix}\right]\subseteq A=\left[\begin{smallmatrix}eAe&eAf\\ fAe&fAf\end{smallmatrix}\right].\end{align*}
Let $\eta:=a[eB]-b[fB]\in K_0(\proj B)$.  Then $\theta=\eta\otimes A$ holds.
Applying Proposition \ref{W^A and W^B}, to prove $\WW^A_\theta=\WW^{A,\h}_{\N\theta}$, it suffices to prove $\WW^{B}_{\eta}=\WW^{B,\h}_{\N\eta}$. By replacing $A$ by $B$, we may assume $A=\left[\begin{smallmatrix}k&eAf\\0&k\end{smallmatrix}\right]$, 
and apply Lemma \ref{W_theta=W_f}.
\end{proof}

The following is a crucial step.
 
\begin{lemma}\label{Theta sub ThetaH}
Let $\theta\in K_0(\proj A)$. Then $\WW_\theta=\WW^{\h}_{\N\theta}$.
\end{lemma}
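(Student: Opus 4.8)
The containment $\WW^{\h}_{\N\theta}\subseteq\WW_\theta$ is already part of Lemma \ref{cup T_f pre}, so the plan is to establish the reverse inclusion by translating it into the linear-algebraic statement of Example \ref{W_f linear alg}. Fix $X\in\WW_\theta$ and write $\theta=[P_0^\theta]-[P_1^\theta]$ as usual. I would put $V:=\Hom_A(P_0^\theta,X)$, $W:=\Hom_A(P_1^\theta,X)$, and let $H\subseteq\Hom_k(V,W)$ be the image of the $k$-linear map $\Hom_A(P_1^\theta,P_0^\theta)\to\Hom_k(V,W)$ sending $f$ to $\Hom_A(f,X)=({-})\circ f$. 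Since $P_0^\theta$ and $P_1^\theta$ are projective, $\dim_kV-\dim_kW=\langle\theta,[X]\rangle=\theta(X)$, which is $0$ because $X$ is both a quotient and a submodule of itself and $X\in\WW_\theta$; set $d:=\dim_kV=\dim_kW$. The point of this set-up is that a square matrix $f\in\Mat_{m,m}(\Hom_A(P_1^\theta,P_0^\theta))=\Hom_A((P_1^\theta)^{\oplus m},(P_0^\theta)^{\oplus m})$ represents an element of $\Hom(m\theta)$ (the two projective terms having no common summand, since $P_0^\theta$ and $P_1^\theta$ do not), and under the identifications $\Hom_A((P_0^\theta)^{\oplus m},X)\cong V^{\oplus m}$ and $\Hom_A((P_1^\theta)^{\oplus m},X)\cong W^{\oplus m}$ the induced map $\Hom_A(f,X)$ is just (the transpose of) the corresponding $m\times m$ matrix over $H$. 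By Lemma \ref{Nakayama T_f}(b), $X\in\WW_f$ precisely when this matrix is invertible; so it suffices to produce, for some $m$, an invertible $m\times m$ matrix with entries in $H$, which is exactly the conclusion of Example \ref{W_f linear alg} applied with $a=b=d$.

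Everything therefore comes down to verifying the hypothesis of Example \ref{W_f linear alg}: for every $k$-subspace $V'\subseteq V$ one needs $d\dim_kV'\le d\dim_k(HV')$, i.e.\ $\dim_kV'\le\dim_k(HV')$. Here I would introduce the submodule $N:=\sum_{v\in V'}\Image(v\colon P_0^\theta\to X)$ of $X$. Each $v\in V'$ factors through $N$, so $V'\subseteq\Hom_A(P_0^\theta,N)$ and hence $\dim_kV'\le\langle[P_0^\theta],[N]\rangle$. Next I would show $HV'=\Image\bigl(\Hom_A(P_1^\theta,N)\hookrightarrow\Hom_A(P_1^\theta,X)\bigr)$: the inclusion $\subseteq$ is clear because each $v\circ f$ with $v\in V'$ has image in $N$, while $\supseteq$ follows because any $w\colon P_1^\theta\to X$ factoring through $N$ lifts, by projectivity of $P_1^\theta$, along a surjection $(P_0^\theta)^{\oplus r}\twoheadrightarrow N$ determined by finitely many elements of $V'$, which exhibits $w$ as a sum of maps of the form $v\circ g$ with $v\in V'$. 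Left exactness of $\Hom_A(P_1^\theta,-)$ then gives $\dim_k(HV')=\langle[P_1^\theta],[N]\rangle$. Finally $N$ is a submodule of $X\in\WW_\theta\subseteq\overline{\FF}_\theta$, so $\theta(N)\le0$, i.e.\ $\langle[P_0^\theta],[N]\rangle\le\langle[P_1^\theta],[N]\rangle$; combining the three inequalities yields $\dim_kV'\le\dim_k(HV')$, as needed.

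With the hypothesis in place, Example \ref{W_f linear alg} provides $\ell\ge1$ and an invertible matrix $M\in\Mat_{d\ell,d\ell}(H)$; lifting $M$ to $f\in\Hom((d\ell)\theta)$ makes $\Hom_A(f,X)$ invertible, hence $X\in\WW_f\subseteq\WW^{\h}_{(d\ell)\theta}\subseteq\WW^{\h}_{\N\theta}$, completing the argument. The main obstacle is the middle step: identifying $HV'$ with the image of $\Hom_A(P_1^\theta,N)$ and recognizing that the defining condition $X\in\overline{\FF}_\theta$ of semistability is exactly what forces the numerical inequality required by Example \ref{W_f linear alg}; the remaining bookkeeping is routine. (An alternative would be to reduce to the $|A|=2$ case, Lemma \ref{rank 2 case}, via the idempotent-truncation functors of Example \ref{W^A and W^B 2}, but the direct reduction above seems cleaner and uses Example \ref{W_f linear alg} as it was evidently intended.)
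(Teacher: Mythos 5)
Your proposal is correct, and it takes a genuinely different (and shorter) route than the paper's proof of this lemma.

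The paper proves $\WW_\theta\subseteq\WW^{\h}_{\N\theta}$ by a dimension-shaving argument: reduce to the case of sincere $X$ by passing to $A/\langle e\rangle$ via Example \ref{pi theta}; dispose of $|A|\le2$ by Lemma \ref{rank 2 case}, which passes to a generalized Kronecker subalgebra and invokes Lemma \ref{W_theta=W_f}; for $|A|\ge3$, first handle elements $\theta$ on rays of the wall $\Theta_X$ by showing $X$ cannot be simple in $\WW_\theta$ there (Lemma \ref{simple and interior}(c)) and appealing to the induction hypothesis on $\dim_kX$ for its composition factors, using that $\WW^{\h}_{\N\theta}$ is wide; and finally treat a general $\theta$ by writing a positive multiple of it as a nonnegative combination of the rays of the strongly convex cone $\Theta_X$ (Lemma \ref{simple and interior}(a)) and using the last assertion of Lemma \ref{cup T_f pre}. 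Your proof bypasses the induction and all the wall-chamber geometry: you translate $X\in\WW_\theta\Rightarrow X\in\WW^{\h}_{\N\theta}$ directly into the hypothesis of Example \ref{W_f linear alg} with $V=\Hom_A(P_0^\theta,X)$, $W=\Hom_A(P_1^\theta,X)$, $H$ the image of $\Hom(\theta)\to\Hom_k(V,W)$, $a=b=d$. The key computation — that for $N:=\sum_{v\in V'}\Image(v)$ one has $V'\subseteq\Hom_A(P_0^\theta,N)$, $HV'=\Hom_A(P_1^\theta,N)$ inside $W$ (projectivity of $P_1^\theta$ plus the defining surjection from a basis of $V'$), and $\langle[P_0^\theta],[N]\rangle\le\langle[P_1^\theta],[N]\rangle$ from $N\subseteq X\in\overline{\FF}_\theta$ — is exactly what is needed, and the degenerate cases ($d=0$, or $\theta$ positive/negative) are vacuous. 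This isolates precisely where semistability enters and is, as you note, evidently the intended use of Example \ref{W_f linear alg}; both proofs ultimately rest on Lemma \ref{W_theta=W_f} (semi-invariants for quivers), but yours reaches it without detours. One small remark: your ``alternative'' of reducing to $|A|=2$ via idempotent truncation is not quite the paper's route either (the paper quotients by idempotent ideals to achieve sincerity, then induces on $\dim_kX$ rather than truncating the algebra), but that doesn't affect your main argument.
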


\begin{proof}
Let $X \in \mod A$ and $\theta\in K_0(\proj A)$ such that $X\in\WW_\theta$.
We prove $X\in\WW^{\h}_{\N\theta}$ by using the induction on $\dim_kX$.

Thanks to Proposition \ref{pi theta}, we can assume that $X$ is a sincere $A$-module by replacing $A$ by some $A/\langle e \rangle$. 
Moreover, if $|A|\le 2$, then $X\in\WW^{\h}_{\N\theta}$ holds by Lemma \ref{rank 2 case}. Thus we can assume $|A|\ge3$.

(i) First we assume that $\theta$ is a ray (i.e. one-dimensional subface) of $\Theta_X$.

If $X$ is simple in $\WW_\theta$, then Lemma \ref{simple and interior}(c) implies that $\Theta_X$ has dimension $|A|-1$ and $\theta\in\Theta_X^\circ$. This is not possible since $\theta$ is a ray of $\Theta_X$ and $|A|\ge3$ by our assumption.
Therefore $X$ is not simple in $\WW_\theta$.
Since the composition factors of $X$ in $\WW_\theta$ have smaller dimensions, they belong to $\WW^{\h}_{\N\theta}$ by the induction hypothesis. Since $\WW^{\h}_{\N\theta}$ is wide, we obtain $X\in\WW^{\h}_{\N\theta}$, as desired.

(ii) Now we consider general cases.

Let $\theta_1,\ldots,\theta_m\in K_0(\proj A)$ be the rays of $\Theta_X$. By (i), $X\in\WW^{\h}_{\N\theta_i}$ holds for each $i$.
Since $X$ is sincere by our assumption, $\Theta_X$ is strongly convex by Lemma \ref{simple and interior}(a). 
Thus there exist $a\in\Z_{\ge1}$ and $a_i\in\Z_{\ge0}$ such that $a\theta=\sum_{i=1}^ma_i\theta_i$. By the last assertion in Lemma \ref{cup T_f pre}, we obtain $X\in\WW^{\h}_{\N a\theta}=\WW^{\h}_{\N\theta}$ as desired.
\end{proof}

Now we are ready to prove Theorem \ref{cup T_f}.

\begin{proof}[Proof of Theorem \ref{cup T_f}]
We prove the assertion for $\overline{\TT}_\theta$. We have
\begin{align*}\overline{\TT}_\theta\stackrel{\text{Prop. \ref{W filtration}(b)}}{=}
\vecFilt_{\eta\in K_0(\proj A)_{\Q}^{\le\theta}} \limits \WW_\eta
\stackrel{\text{Lem. \ref{Theta sub ThetaH}}}{=}
\vecFilt_{\eta\in K_0(\proj A)_{\Q}^{\le\theta}} \limits \WW^{\h}_{\N\eta}
\subseteq\overline{\TT}^{\h}_{\N\theta},\end{align*}
where the last inclusion follows from $\WW^{\h}_{\N\eta}\subseteq\overline{\TT}^{\h}_{\N\eta}\subseteq\overline{\TT}^{\h}_{\N\theta}$ for each $\eta\le\theta$.

The assertion for $\FF_\theta$ follows from that for $\overline{\TT}_\theta$ and Lemma \ref{cup T_f pre}.
The remaining assertions are shown dually.
\end{proof}

\section{Ray condition and examples}\label{Section_ray}

Let $A$ be a finite dimensional algebra over an algebraically closed field $k$. In the rest of this subsection, we give information on Conjecture \ref{canonical and TF}.
In particular, we give an example of $\theta\in K_0(\proj A)$ such that $\cone(\ind\N\theta)$ is strictly bigger than $\cone(\ind\theta)$.

\subsection{Ray condition}

We recall that for a canonical decomposition $\theta=\theta_1\oplus\cdots\oplus\theta_\ell$, we set $\ind\theta=\{\theta_1\ldots,\theta_\ell\}$ and $|\theta|=\#\ind\theta$.
We consider the following condition.

\begin{definition}
We say that $A$ satisfies the \textit{ray condition} if, for each indecomposable wild element $\theta\in K_0(\proj A)$ and $\ell\ge1$, the element $\ell\theta$ is indecomposable.
\end{definition}

Later we show that the ray condition is satisfied by $E$-tame algebras and hereditary algebras (see Propositions \ref{linear independence2}, \ref{linear independence3}), and also  give an example of an algebra which does not satisfy the ray condition (see Example \ref{counter to ray 2}(c)).

In this subsection, we apply the ray condition to give more information on Conjecture \ref{canonical and TF}.

\begin{definition}
For $\theta\in K_0(\proj A)_\R$,
we set the $\R$-vector subspace 
\begin{align*}
W_\theta:=\langle[X]\mid X\in\WW_\theta\rangle_\R\subseteq K_0(\mod A)_\R.
\end{align*}
\end{definition}
Clearly, we have
\begin{equation}\label{W_theta}
W_\theta\subset\bigcap_{i=1}^\ell\Kernel\langle\theta_i,-\rangle.\end{equation}
The ray condition implies the following useful properties.

\begin{proposition}\label{linear independence}
Assume that $A$ satisfies the ray condition, and 
let $\theta=\theta_1\oplus\cdots\oplus\theta_\ell$ be a canonical decomposition such that $\theta_i \ne \theta_j$ for each $i \ne j$.
\begin{enumerate}[\rm(a)]
\item  $\theta_1,\ldots,\theta_\ell$ are linearly independent.
\item
Assume that $\theta_i$ for $1\le i\le \ell'$ is tame and $\theta_i$ for $\ell'<i\le\ell$ is wild.
Then for each $m \ge 1$, the canonical decomposition of $m \theta$ is
$m\theta=(\theta_1)^{\oplus m} \oplus \cdots \oplus (\theta_{\ell'})^{\oplus m}
\oplus (m \theta_{\ell'+1}) \oplus \cdots \oplus (m \theta_{\ell})$.
\item We have 
\begin{align*}
|A|\ge|\theta|=\dim\cone(\ind\theta)=\cone(\ind\N\theta)\ \mbox{ and }\ |A|-|\theta|\ge\dim_\R W_\theta.
\end{align*}
\end{enumerate}
\end{proposition}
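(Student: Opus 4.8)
The plan is to prove the three parts in order, using the ray condition together with the results already established in the excerpt, most crucially Theorem \ref{decomposition and T} and its consequences.

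\textbf{Part (a).} The claim is that distinct indecomposable summands $\theta_1,\dots,\theta_\ell$ of a canonical decomposition are linearly independent. First I would observe that if some $\theta_i$ is wild, the ray condition says $m\theta_i$ is indecomposable for all $m\ge 1$, so $\{m\theta_i\mid m\ge1\}$ is a full ray of indecomposable elements; and for the tame summands we already know $\theta_i\oplus\theta_i$ (so $m\theta_i=\theta_i^{\oplus m}$). The strategy is to suppose a nontrivial linear relation $\sum_i c_i\theta_i=0$ with $c_i\in\R$; clearing denominators and splitting into positive and negative parts gives $\sum_{i\in I}a_i\theta_i=\sum_{j\in J}b_j\theta_j$ with $I,J$ disjoint, $a_i,b_j>0$ integers. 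Using Proposition \ref{decompose_2step}(a),(c),(d) the left side has a canonical decomposition refining the $a_i\theta_i$, the right side refining the $b_j\theta_j$; by uniqueness of canonical decompositions (Proposition \ref{decomposition}(b)) these refinements agree, so each indecomposable appearing on the left appears on the right. But the ray condition pins down exactly which indecomposables appear: for a wild $\theta_i$ the only indecomposable ray-multiples are the $m\theta_i$, and $\theta_i\ne\theta_j$ for $i\ne j$ forces a contradiction between $I$ and $J$ being disjoint. The tame case is handled the same way since $a_i\theta_i$ decomposes as $\theta_i^{\oplus a_i}$. This is the main obstacle: making the bookkeeping of "which rays can coincide" fully rigorous, i.e. ruling out $a_i\theta_i$ and $b_j\theta_j$ sharing an indecomposable summand when $i\ne j$. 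The key point is that $E(\theta_i,\theta_j)=0=E(\theta_j,\theta_i)$ (Proposition \ref{decomposition}(a)) together with the ray condition forces the rays $\R_{\ge0}\theta_i$ to be "extremal" in a way incompatible with a relation.

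\textbf{Part (b).} Given that $\theta_i$ is tame for $i\le\ell'$ and wild for $i>\ell'$, we must identify the canonical decomposition of $m\theta$. By Proposition \ref{decompose_2step}(a) and the first statement of (c), from $\theta=\bigoplus_i\theta_i$ we get $m\theta=\bigoplus_i m\theta_i$. Now $m\theta_i=\theta_i^{\oplus m}$ for tame $\theta_i$ (each $\theta_i$ indecomposable, and $\theta_i\oplus\theta_i$), while $m\theta_i$ is indecomposable for wild $\theta_i$ by the ray condition. Applying Proposition \ref{decompose_2step}(d) to refine $\bigoplus_i m\theta_i$ gives that $m\theta=\theta_1^{\oplus m}\oplus\cdots\oplus\theta_{\ell'}^{\oplus m}\oplus(m\theta_{\ell'+1})\oplus\cdots\oplus(m\theta_\ell)$ is a canonical decomposition, and by uniqueness it is the canonical decomposition.

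\textbf{Part (c).} From (a), $\theta_1,\dots,\theta_\ell$ are linearly independent in $K_0(\proj A)$, which has rank $|A|$, so $\ell=|\theta|\le|A|$ and $\dim\cone(\ind\theta)=\ell=|\theta|$. For $\cone(\ind\N\theta)=\cone(\ind\theta)$: the inclusion $\supseteq$ is Proposition \ref{decompose_2step}(e), and for $\subseteq$ I would use (b), which shows every element of $\ind m\theta$ is a positive scalar multiple of some $\theta_i$, hence lies in $\cone(\ind\theta)$; taking the union over $m$ gives $\cone(\ind\N\theta)\subseteq\cone(\ind\theta)$. (Here I should be slightly careful: (b) was stated for the $\theta_i$ pairwise distinct, which is our hypothesis.) Finally, for $|A|-|\theta|\ge\dim_\R W_\theta$: by Theorem \ref{decomposition and T}(a), $\WW_\theta=\bigcap_{i=1}^\ell\WW_{\theta_i}$, so $W_\theta\subseteq\bigcap_{i=1}^\ell\Kernel\langle\theta_i,-\rangle$ by \eqref{W_theta}; since the $\theta_i$ are linearly independent by (a), this intersection of kernels has dimension $|A|-\ell=|A|-|\theta|$, giving the bound. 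The main work is Part (a); Parts (b) and (c) are then essentially formal consequences of it combined with the cited propositions.
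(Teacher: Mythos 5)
Your overall strategy for part (a) is the same as the paper's: derive a relation $\sum_{i\in I}a_i\theta_i=\sum_{j\in J}b_j\theta_j$ with disjoint $I,J$ and $a_i,b_j\in\Z_{>0}$, refine both sides to canonical decompositions using the ray condition, and invoke uniqueness. You correctly flag the remaining obstacle — ruling out an indecomposable summand of the left side matching one on the right — but you do not actually close it, and your hint about $E(\theta_i,\theta_j)=0$ forcing ``extremality'' is too vague to be a proof. The paper closes it with an explicit preliminary step: it first shows $\R\theta_i\neq\R\theta_j$ for $i\neq j$. The argument is short: if $\R\theta_i=\R\theta_j$, write $a\theta_i=b\theta_j$ for nonzero integers (necessarily $a\neq b$ since $\theta_i\neq\theta_j$). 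If $\theta_i$ is wild, then $a\theta_i$ is wild by the ray condition; but $\theta_i\oplus\theta_j$ gives $a\theta_i\oplus b\theta_j$ by Proposition \ref{decompose_2step}(c), i.e.\ $a\theta_i\oplus a\theta_i$, forcing $E(a\theta_i,a\theta_i)=0$, contradicting wildness. So $\theta_i$ is tame, and symmetrically $\theta_j$; then $\theta_i^{\oplus a}=\theta_j^{\oplus b}$ gives two distinct canonical decompositions of $a\theta_i$, contradicting uniqueness. Once this is in place, your bookkeeping in step 6 works verbatim: a coincidence of indecomposable summands across the two sides would force $\R\theta_i=\R\theta_j$ for some $i\in I$, $j\in J$, which is now impossible. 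Parts (b) and (c) of your proposal match the paper's proof.
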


\begin{proof}
(a)(i) First, we prove that $\R\theta_i\neq\R\theta_j$ for each $i\neq j$.

If $\R\theta_i=\R\theta_j$, then $a\theta_i=b\theta_j$ holds for some non-zero integers $a\neq b\in\Z$.
If $\theta_i$ is wild, then $a\theta_i$ is wild by the ray condition.
Since $\theta_i\oplus\theta_j$, we obtain $a\theta_i\oplus b\theta_j=a\theta_i\oplus a\theta_i$, a contradiction.
Thus $\theta_i$ is tame. Similarly, $\theta_j$ is also tame. Thus $\theta_i^{\oplus a}=\theta_j^{\oplus b}$ holds. This contradicts to the uniqueness of canonical decompositions since $\theta_i\neq\theta_j$ by our assumption.

(ii) We prove the assertion.

If they are not linearly independent, a certain non-trivial $\Z$-linear combination is zero. In particular, by changing indices, there is a relation
\begin{align*}\theta':=\sum_{i=1}^ma_i\theta_i=\sum_{i=m+1}^\ell a_i\theta_i\end{align*}
with $a_i\in\Z_{\ge0}$.
It suffices to show $\theta'=0$. Otherwise, thanks to the ray condition, by replacing each $a_i\theta_i$ by $\theta_i^{\oplus a_i}$ (if $\theta_i$ is tame) or $a_i\theta_i$ (if $\theta_i$ is wild), we obtain two canonical decompositions of $\theta'$, which are distinct by (i). This is a contradiction, and we obtain $\theta'=0$.

(b) By Proposition \ref{decompose_2step}(c),
$m \theta=m \theta_1 \oplus \cdots \oplus m \theta_\ell$ holds.
The ray condition implies that the canonical decomposition of $m \theta_i$ is
$(\theta_i)^{\oplus m}$ if $i \le \ell'$ and $m \theta_i$ if $i>\ell'$.
Then Proposition \ref{decompose_2step}(d) gives the assertion.

(c) follows from (a)(b) and \eqref{W_theta}.
\end{proof}

The following gives some relationship between $W_\theta$ and Conjecture \ref{canonical and TF}.

\begin{proposition}\label{dim of wide}
If $A$ satisfies the ray condition, then
\begin{align*}
\textup{(a)} \Longleftrightarrow \textup{(b)} \Longleftarrow \textup{(b)+(d)} 
\Longleftrightarrow \textup{(c)}
\end{align*}
hold, where
\begin{enumerate}[\rm(a)]
\item $[\theta]_{\rm TF}=\cone^\circ(\ind\theta)$ holds for each $\theta\in K_0(\proj A)$. (Equivalently, Conjecture \ref{canonical and TF} holds for $A$.)
\item $\dim_\R\langle[\theta]_{\rm TF}\rangle_\R=|\theta|$ holds for each $\theta\in K_0(\proj A)$.
\item $\dim_\R W_\theta=|A|-|\theta|$ holds for each $\theta\in K_0(\proj A)$.
\item $W_\theta=([\theta]_{\rm TF})^\perp$ holds for each $\theta\in K_0(\proj A)$.
\end{enumerate}
\end{proposition}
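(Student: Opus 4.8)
The plan is to route everything through dimension estimates for the span $\langle[\theta]_{\rm TF}\rangle_\R$, isolating one genuinely geometric step, namely (b)$\Rightarrow$(a). First I would record the inclusions valid for all $\theta\in K_0(\proj A)$ under the ray condition. Write $\theta=\theta_1\oplus\cdots\oplus\theta_\ell$ for the canonical decomposition with the $\theta_i$ pairwise distinct; these are $\R$-linearly independent by Proposition \ref{linear independence}(a), so $|\theta|=\ell$. Theorem \ref{decomposition and T}(b) gives $\cone^\circ(\ind\theta)\subseteq[\theta]_{\rm TF}$. Also, if $\eta$ is TF equivalent to $\theta$ then $\WW_\eta=\overline{\TT}_\eta\cap\overline{\FF}_\eta=\overline{\TT}_\theta\cap\overline{\FF}_\theta=\WW_\theta$, and for $X\in\WW_\theta$ the memberships $X\in\overline{\TT}_\eta$ and $X\in\overline{\FF}_\eta$ force $\eta(X)\ge0$ and $\eta(X)\le0$, so $\eta(X)=0$; hence $[\theta]_{\rm TF}\subseteq W_\theta^{\perp}$, the orthogonal of $W_\theta$ in $K_0(\proj A)_\R$ with respect to the Euler pairing. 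Combining these with Proposition \ref{linear independence}(a) and (c) yields, for all $\theta$,
\begin{align*}
|\theta|\ \le\ \dim_\R\langle[\theta]_{\rm TF}\rangle_\R\ \le\ \dim_\R W_\theta^{\perp}\ =\ |A|-\dim_\R W_\theta,\qquad \dim_\R W_\theta\ \le\ |A|-|\theta| .
\end{align*}
Condition (b) asserts the first inequality is an equality for every $\theta$, and (c) asserts the second displayed inequality is.

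With this in hand the formal equivalences are quick. For (a)$\Rightarrow$(b), just note $\langle\cone^\circ(\ind\theta)\rangle_\R=\langle\ind\theta\rangle_\R$ has dimension $|\theta|$. For (b)$+$(d)$\Longleftrightarrow$(c) I would use the perfect Euler pairing between $K_0(\proj A)_\R$ and $K_0(\mod A)_\R$, for which $U^{\perp\perp}=U$ for every subspace $U$. If (c) holds then $\dim_\R W_\theta^{\perp}=|\theta|$, so the chain above forces $\dim_\R\langle[\theta]_{\rm TF}\rangle_\R=|\theta|=\dim_\R W_\theta^{\perp}$; since $\langle[\theta]_{\rm TF}\rangle_\R\subseteq W_\theta^{\perp}$ this gives $\langle[\theta]_{\rm TF}\rangle_\R=W_\theta^{\perp}$, hence $([\theta]_{\rm TF})^{\perp}=W_\theta^{\perp\perp}=W_\theta$, which is (d), and (b) is visible. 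Conversely (d) reads $W_\theta=([\theta]_{\rm TF})^{\perp}=\langle[\theta]_{\rm TF}\rangle_\R^{\perp}$, so $W_\theta^{\perp}=\langle[\theta]_{\rm TF}\rangle_\R$, and then (b) gives $\dim_\R W_\theta=|A|-\dim_\R\langle[\theta]_{\rm TF}\rangle_\R=|A|-|\theta|$, i.e.\ (c). As (b)$+$(d)$\Rightarrow$(b) is trivial, everything reduces to (b)$\Rightarrow$(a).

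For (b)$\Rightarrow$(a) I would first note that $[\theta]_{\rm TF}$ is a convex cone: it is invariant under positive scaling by Lemma \ref{additivity}, and it is convex because by Proposition \ref{Asai TF} the subcategory $\WW$ is constant along the segment between any two TF-equivalent points, so the whole segment stays in the class; consequently $[\theta]_{\rm TF}$ is closed under $(\xi,\zeta)\mapsto a\xi+b\zeta$ with $a,b>0$, whence $\theta+t\eta\in[\theta]_{\rm TF}$ for all $t\ge0$ and all $\eta\in[\theta]_{\rm TF}$. Assume $\theta\ne0$ and suppose $[\theta]_{\rm TF}\ne\cone^\circ(\ind\theta)$. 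By (b), $\langle[\theta]_{\rm TF}\rangle_\R=\langle\ind\theta\rangle_\R$, so $[\theta]_{\rm TF}$ lies in that space and every element has unique coordinates $\sum_ic_i\theta_i$ in the basis $\theta_1,\dots,\theta_\ell$. Pick $\eta=\sum_ic_i\theta_i\in[\theta]_{\rm TF}\setminus\cone^\circ(\ind\theta)$; then some $c_i\le0$. If some $c_i<0$, replace $\eta$ by $\theta+t_0\eta$ with $t_0:=\min\{-1/c_i\mid c_i<0\}>0$; this still lies in $[\theta]_{\rm TF}$ and its coordinates $1+t_0c_i$ are all $\ge0$ with at least one equal to $0$. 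So after relabelling we may assume $\eta\in\cone^\circ(\{\theta_i\mid i\in I\})$ for some proper subset $I\subsetneq\{1,\dots,\ell\}$, nonempty because $[\theta]_{\rm TF}$ does not contain $0$. Since $\bigoplus_{i\in I}\theta_i$ is a canonical decomposition (a sub-collection of $\bigoplus_i\theta_i$), Theorem \ref{decomposition and T}(b) puts $\eta\in[\sum_{i\in I}\theta_i]_{\rm TF}$; as TF equivalence classes are disjoint and $\eta\in[\theta]_{\rm TF}$, we get $[\theta]_{\rm TF}=[\sum_{i\in I}\theta_i]_{\rm TF}$. Applying (b) to both $\theta$ and $\sum_{i\in I}\theta_i$ then forces $|\theta|=\dim_\R\langle[\theta]_{\rm TF}\rangle_\R=|I|<|\theta|$, a contradiction; hence $[\theta]_{\rm TF}=\cone^\circ(\ind\theta)$, which is (a).

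I expect (b)$\Rightarrow$(a) to be the main obstacle, and the point to be careful about is that it genuinely uses (b) for the \emph{smaller} element $\sum_{i\in I}\theta_i$ rather than only for $\theta$ --- that is, it uses the full ``for every $\theta$'' content of (b) --- together with the simpliciality of $\cone(\ind\theta)$, which is exactly where the ray condition (via Proposition \ref{linear independence}(a)) enters, so that $\eta\notin\cone^\circ(\ind\theta)$ really does place $\eta$ in the relative interior of a proper face after the rescaling.
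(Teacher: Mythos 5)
Your proof is correct and follows essentially the same route as the paper's: the same inequality chain $|\theta| \le \dim_\R\langle[\theta]_{\rm TF}\rangle_\R \le |A| - \dim_\R W_\theta$ governs the formal implications among (b), (c), (d), and the contradiction in (b)$\Rightarrow$(a) is obtained by pushing $\eta$ into a proper face of $\cone(\ind\theta)$ and applying (b) to the corresponding sub-element $\sum_{i\in I}\theta_i$. The only cosmetic difference is that you construct the boundary point $\theta + t_0\eta$ explicitly, whereas the paper invokes convexity of $[\theta]_{\rm TF}$ to reduce directly to showing $[\theta]_{\rm TF}\cap\cone(\ind\theta)\subseteq\cone^\circ(\ind\theta)$.
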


\begin{proof}
(b)+(d)$\Leftrightarrow$(c) Without loss of generality, we can assume $\theta=\theta_1\oplus\cdots\oplus\theta_\ell$ is a canonical decomposition with $\ell=|\theta|$. By Theorem \ref{decomposition and T}, we have $[\theta]_{\rm TF}\supseteq\cone^\circ\{\theta_1,\ldots,\theta_\ell\}$. By Proposition \ref{linear independence}(a), we have $\dim_\R\langle[\theta]_{\rm TF}\rangle_\R\ge\dim_\R\langle\theta_1,\ldots,\theta_\ell\rangle_\R=\ell$. Thus
\begin{align*}\dim_\R W_\theta\le|A|-\dim_\R\langle[\theta]_{\rm TF}\rangle_\R\le|A|-\ell.
\end{align*}
hold. Clearly, (b) holds if and only if the right equality holds, (d) holds if and only if the left equality holds, and the left-hand side equals the right-hand side if and only if (c) holds. Thus the assertion follows.

(a)$\Rightarrow$(b) is a direct consequence of Proposition \ref{linear independence}.

It remains to prove (b)$\Rightarrow$(a).
Without loss of generality, we can assume $\theta=\theta_1\oplus\cdots\oplus\theta_\ell$ is a canonical decomposition with $\ell=|\theta|$.
By Theorem \ref{decomposition and T}, we get $\cone^\circ\{\theta_1,\ldots,\theta_\ell\}\subset [\theta]_{\rm TF}$. 
By Proposition \ref{linear independence}, 
we have $\theta_1,\theta_2,\ldots,\theta_\ell$ are linearly independent,
so the assumption (b) tells us that $[\theta]_{\rm TF} \subset \langle\theta_1,\ldots,\theta_\ell\rangle_\R$.
These and the convexity of $[\theta]_{\rm TF}$ imply that it remains to show
\begin{align*}[\theta]_{\rm TF}\cap \cone\{\theta_1,\ldots,\theta_\ell\}\subseteq\cone^\circ\{\theta_1,\ldots,\theta_\ell\}.\end{align*}
Let $\eta=\sum_{i=1}^\ell a_i\theta_i$ in the left-hand side. By Theorem \ref{decomposition and T}, $\eta$ is TF equivalent to $\sum_{a_i\neq0}\theta_i$, and so is $\theta$. By assumption, we can apply (b) to the direct summand $\sum_{a_i\neq0}\theta_i$ of $\theta$. Then we have $\dim_\R\langle[\eta]_{\rm TF}\rangle_\R=\{i\mid a_i\neq0\}$. Thus, if $[\theta]_{\rm TF}=[\eta]_{\rm TF}$, then $a_i\neq 0$ holds for each $i$. Thus the assertion holds.
\end{proof}

It is natural to pose the following conjecture.

\begin{conjecture}\label{span TF and W_theta}
For each $\theta\in K_0(\proj A)$, we have
\begin{align*}
\dim_\R W_\theta=|A|-\dim\cone(\ind\N\theta).
\end{align*}
\end{conjecture}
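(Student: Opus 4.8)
The plan is to prove Conjecture \ref{span TF and W_theta}, i.e.\ $\dim_\R W_\theta=|A|-\dim\cone(\ind\N\theta)$, by establishing the two inequalities separately. The inequality $\dim_\R W_\theta\le|A|-\dim\cone(\ind\N\theta)$ should follow from general principles available in the excerpt: by Theorem \ref{decomposition and T}(b) and the remark after it, $[\theta]_{\rm TF}\supseteq\cone^\circ(\ind\N\theta)$, and for every $\eta\in[\theta]_{\rm TF}$ and every $X\in\WW_\theta$ we have $\eta(X)=0$ (since $\WW_\theta=\WW_\eta\subseteq\overline{\FF}_\eta$ forces $\eta(X)\le 0$ while $\WW_\theta=\WW_\eta\subseteq\overline{\TT}_\eta$ forces $\eta(X)\ge 0$). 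Hence $W_\theta\subseteq([\theta]_{\rm TF})^\perp\subseteq(\cone(\ind\N\theta))^\perp$, giving $\dim_\R W_\theta\le|A|-\dim\langle\ind\N\theta\rangle_\R=|A|-\dim\cone(\ind\N\theta)$.

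The harder direction is $\dim_\R W_\theta\ge|A|-\dim\cone(\ind\N\theta)$, equivalently producing enough linearly independent classes of $\theta$-semistable modules. The natural approach is to use Theorem \ref{cup T_f in intro}/\ref{cup T_f}, which gives $\WW_\theta=\bigcup_{\ell\ge1}\WW^{\h}_{\ell\theta}=\bigcup_{\ell\ge1}\bigcup_{f\in\Hom(\ell\theta)}\WW_f$. For a general $f\in\Hom(\ell\theta)$ with $P_f\simeq P_{f_1}\oplus\cdots\oplus P_{f_m}$ its decomposition into indecomposables, each $[f_i]\in\ind\ell\theta\subseteq\ind\N\theta$, and Lemma \ref{W_f and W_theta 2} shows each brick $C_{f_i}$ (when $C_{f_i}\simeq K_{\nu f_i}$ is a brick) is a simple object of $\WW_{[f_i]}\subseteq\WW_\theta$ with $[C_{f_i}]$ lying in $\Kernel\langle[f_i],-\rangle$. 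The key point is to choose $\ell$ and a general $f\in\Hom(\ell\theta)$ so that the collection of bricks appearing among the $\WW_f$ spans a subspace of $K_0(\mod A)_\R$ of dimension exactly $|A|-\dim\cone(\ind\N\theta)$; one expects that taking $f$ general realizing the canonical decomposition of $\ell\theta$, together with letting $\ell\to\infty$ so that $\cone(\ind\ell\theta)$ stabilizes to $\cone(\ind\N\theta)$, produces $\WW_f$ whose ``dimension lattice'' $W_{\theta,X}$ (in the notation of Lemma \ref{simple and interior}(d)) fills out the orthogonal complement of $\cone(\ind\N\theta)$.

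The main obstacle I anticipate is controlling $W_\theta$ from below: knowing the indecomposable summands $\theta_i$ of canonical decompositions only tells us that composition factors of objects of $\WW_\theta$ have classes in $\bigcap\Kernel\langle\theta_i,-\rangle$, but a priori the actual span $W_\theta$ of $\{[X]\mid X\in\WW_\theta\}$ could be smaller than this intersection. Closing that gap requires a genuine existence statement: for a general presentation $f$ of $\ell\theta$ (with $\ell$ large), the wide subcategory $\WW_f=\overline{\TT}_f\cap\overline{\FF}_f$ must contain modules whose dimension vectors span $(\cone(\ind\N\theta))^\perp$. I would attempt this by the dimension-counting of Proposition \ref{codim=E}: the orbit of $f$ under $\Aut(P_1)\times\Aut(P_0)$ has codimension $E(f,f)$, which (for $f$ realizing the canonical decomposition) equals $\sum_{i\ne j}\dim\Hom_{\KKK}(P_{f_i},P_{f_j}[1])$, and relating this to $\langle\ell\theta,\ell\theta\rangle$ versus the ranks involved should pin down $\dim\Hom(f,X)$-type quantities that force the required modules into $\WW_f$. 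If that geometric argument is too delicate, a fallback is to prove the conjecture only under the ray condition, where Proposition \ref{linear independence}(b)(c) already controls $\cone(\ind\N\theta)=\cone(\ind\theta)$ and its dimension, reducing matters to the case $\ell=1$ handled by Proposition \ref{dim of wide}(c).
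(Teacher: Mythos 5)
This statement is presented in the paper as an open \emph{conjecture}, not as a theorem; the paper does not prove it in general. What the paper does establish is the easy inclusion $W_\theta\subseteq\bigcap_i\Kernel\langle\theta_i,-\rangle$ (stated as \eqref{W_theta}), Proposition \ref{dim of wide} relating the conjecture to Conjecture \ref{canonical and TF} under the ray condition, and two special cases: rigid elements (Proposition \ref{W_theta=A-theta}) and hereditary algebras (Theorem \ref{describe TF for hereditary}, whose proof goes through Proposition \ref{wall codim} and the duality Lemma \ref{W_f C_f K_f}). So nothing you write here can be a complete proof; the most you can do is what the paper does.

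Your argument for the inequality $\dim_\R W_\theta\le|A|-\dim\cone(\ind\N\theta)$ is correct: for any $\eta$ TF equivalent to $\theta$ and $X\in\WW_\theta=\WW_\eta\subseteq\overline{\TT}_\eta\cap\overline{\FF}_\eta$ one gets $\eta(X)=0$, and since $[\theta]_{\rm TF}\supseteq\cone^\circ(\ind\N\theta)$ by Theorem \ref{decomposition and T}, $W_\theta$ lies in the orthogonal complement of $\langle\ind\N\theta\rangle_\R$. This is essentially the remark attached to \eqref{W_theta} applied across all $\ell\theta$, so you have the easy direction. The other direction is exactly the open content of the conjecture, and your proposal does not close it; you correctly flag the obstacle (one must produce, not merely bound, enough linearly independent $\theta$-semistable classes) but the ``dimension-counting'' plan via Proposition \ref{codim=E} is a heuristic, not an argument.

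Your fallback is also mistaken. Proposition \ref{dim of wide}(c) is not a proved statement about algebras satisfying the ray condition: it is the condition ``$\dim_\R W_\theta=|A|-|\theta|$ holds for each $\theta$,'' i.e.\ the conjecture itself (under the ray condition, $\dim\cone(\ind\N\theta)=|\theta|$ by Proposition \ref{linear independence}(c)). Proposition \ref{dim of wide} merely shows that (c) is equivalent to (b)+(d) and implies (a); it does not establish any of them. The only unconditional results the paper offers toward the hard inequality are the rigid case and the hereditary case, the latter relying on quiver-specific facts (Schur sequences and the dimension formula for $\Theta_d$ from \cite{DW1}) that do not extend to general algebras.
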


Notice that, under the ray condition, this conjecture is equivalent to
\begin{equation}\label{span TF and W_theta 2}
\dim_\R W_\theta=|A|-|\theta|.
\end{equation}
Thus it is equivalent to the equality in \eqref{W_theta}, and implies Conjecture \ref{canonical and TF} by Proposition \ref{dim of wide}. 
When $\theta$ is indecomposable, the validity of the equality \eqref{span TF and W_theta 2} was asked in \cite[Question 5.5]{Fei}. In Example \ref{counter to ray 2}(e) below, we will see that \eqref{span TF and W_theta 2} does not necessarily hold (without assuming the ray condition).

Now we verify Conjecture \ref{span TF and W_theta} for rigid elements.

\begin{proposition}\label{W_theta=A-theta}
Assume that $\theta$ is rigid. Then $\WW_\theta$ has $|A|-|\theta|$ isoclasses of simple objects, which are linearly independent in $K_0(\mod A)$. In particular, Conjecture \ref{span TF and W_theta} holds true.
\end{proposition}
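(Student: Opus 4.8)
The statement asserts that for a rigid $\theta \in K_0(\proj A)$, the wide subcategory $\WW_\theta$ has exactly $|A| - |\theta|$ isoclasses of simple objects, and these are linearly independent in $K_0(\mod A)$; Conjecture \ref{span TF and W_theta} then follows since $W_\theta = \langle [X] \mid X \in \WW_\theta \rangle_\R$ is spanned by the (classes of) simple objects of the length category $\WW_\theta$, so $\dim_\R W_\theta$ equals the number of simple objects, which will be $|A| - |\theta|$, while the ray condition (valid in this case by Proposition \ref{linear independence2} or directly since rigid elements are trivially handled) gives $\dim \cone(\ind \N \theta) = \dim\cone(\ind\theta) = |\theta|$. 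So everything reduces to the count of simple objects of $\WW_\theta$.

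The plan is to realize $\WW_\theta$ as the module category of an explicit algebra. Since $\theta$ is rigid, choose $f \in \Hom(\theta)$ with $P_f$ presilting; by Proposition \ref{cone-TF}, $\WW_\theta = \WW_U$ where $U = P_f = U_1 \oplus \cdots \oplus U_{|\theta|}$ is a basic presilting complex, and $\WW_\theta = \WW_f = \overline{\TT}_f \cap \overline{\FF}_f$ by the examples following Definition \ref{define T_theta} and Lemma \ref{Nakayama T_f}(b). By Bongartz completion (cited after the definition of silting), extend $U$ to a 2-term silting complex $T = U \oplus V$ with $V = V_1 \oplus \cdots \oplus V_{|A|-|\theta|}$. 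The standard description of semistable subcategories for presilting complexes (see the reference to \cite{Y, BST} and Lemma \ref{Nakayama T_f}) identifies $\WW_U$ with $\mod B$ for a suitable algebra: concretely, $\WW_f = \{X : \Hom_A(f,X) \text{ is an isomorphism}\}$, and one shows this equals the essential image of a functor associated to $T$. The cleanest route: $T$ defines a silting object, hence $\End(T)$ is a finite-dimensional algebra with $|A|$ simple modules indexed by $U_1,\dots,U_{|\theta|},V_1,\dots,V_{|A|-|\theta|}$, and $\WW_\theta$ is equivalent to the module category of the quotient $B := \End(T)/\langle e \rangle$, where $e$ is the idempotent corresponding to $U$; this quotient has exactly $|A| - |\theta|$ simple modules, indexed by $V_1,\dots,V_{|A|-|\theta|}$.

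The key steps in order: (1) pick $f \in \Hom(\theta)$ rigid, so $\WW_\theta = \WW_f$, and reduce to the presilting complex $U = P_f$; (2) complete $U$ to a basic 2-term silting complex $T = U \oplus V$ with $|V| = |A| - |\theta|$; (3) exhibit an equivalence $\WW_\theta \simeq \mod B$ for $B$ an algebra with precisely $|A| - |\theta|$ simple modules — this is where I invoke the known structure theory for $\WW_U$ attached to a presilting complex (the simple objects of $\WW_U$ are the $|A|-|U|$ "new" bricks, one for each indecomposable summand of the Bongartz complement), so $\WW_\theta$ has exactly $|A|-|\theta|$ simple objects $S_1,\dots,S_{|A|-|\theta|}$; (4) prove linear independence of $[S_1],\dots,[S_{|A|-|\theta|}]$ in $K_0(\mod A)$: by Lemma \ref{simple and interior}(d) each $S_i$ lies in $\WW_{\theta_j}^{\perp}$ or contributes to faces, but more directly the $[S_i]$ pair by the Euler form with the classes $[V_i] \in K_0(\proj A)$ in a unimodular way — since $\langle [U_k], [S_i] \rangle = 0$ (as $S_i \in \WW_f = \overline{\FF}_f \cap \overline{\TT}_f$ kills $U$) and $T$ is silting so $\{[U_k]\} \cup \{[V_i]\}$ is a $\Z$-basis of $K_0(\proj A)$ by Remark \ref{cone basis}, the matrix $(\langle [V_i], [S_j] \rangle)$ is invertible, forcing linear independence; (5) conclude $\dim_\R W_\theta = |A| - |\theta|$ and hence Conjecture \ref{span TF and W_theta} via the ray condition for rigid $\theta$.

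The main obstacle I anticipate is step (3)–(4): establishing cleanly that $\WW_\theta$ has precisely $|A|-|\theta|$ simple objects with the "right" classes in $K_0$. One must be careful that the simple objects of $\WW_\theta$ are genuinely parametrized by the complement summands $V_i$ and that their dimension vectors are dual to $[V_i]$ under the Euler form — this requires either citing a precise statement from \cite{AIR, DIRRT, DIJ} about the wide subcategory $\overline{\TT}_U \cap \overline{\FF}_U$ for a presilting complex $U$, or giving a short self-contained argument via reduction (e.g., using $\tau$-tilting reduction or the fact that $\WW_U$ is equivalent to $\mod(\End T / \langle e_U\rangle)$, which is itself deduced from the functorial description in Lemma \ref{Nakayama T_f} together with Proposition \ref{AIR}). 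Once the equivalence $\WW_\theta \simeq \mod B$ with $B$ having $|A|-|\theta|$ simples is in hand, both the count and the linear independence follow from the unimodularity in Remark \ref{cone basis}, and no further delicate work is needed.
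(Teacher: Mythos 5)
Your steps (1)--(3) agree in substance with the paper's approach: take $U$ presilting with $[U]=\theta$, pass to its Bongartz completion $T=U\oplus V$, and conclude that $\WW_\theta$ has $|A|-|\theta|$ simple objects (the paper cites [A, Subsection 4.1] for this; you invoke $\tau$-tilting reduction, which is the same content in different clothes). The gap is in step (4). You assert that because $\{[U_k]\}_{k=1}^{|\theta|}\cup\{[V_i]\}_{i=1}^{|A|-|\theta|}$ is a $\Z$-basis of $K_0(\proj A)$ and $\langle[U_k],[S_j]\rangle=0$ for all $k,j$, the matrix $\bigl(\langle[V_i],[S_j]\rangle\bigr)_{i,j}$ must be invertible. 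This does not follow. Writing the $|A|\times(|A|-|\theta|)$ matrix $M=\bigl(\langle-,[S_j]\rangle\bigr)$ in that $g$-vector basis, the $[U_k]$-rows vanish, so $M$ has rank $|A|-|\theta|$ if and only if the $[V_i]$-block is invertible --- but $M$ having rank $|A|-|\theta|$ is exactly the linear independence of $[S_1],\ldots,[S_{|A|-|\theta|}]$, the statement you are trying to prove. Unimodularity of $\{[U_k]\}\cup\{[V_i]\}$ contributes nothing here without an independent identification of the $[S_j]$ inside $K_0(\mod A)$, so the argument as written is circular.

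The missing input is exactly what the paper's proof supplies: the simple objects of $\WW_\theta$ are the degree-$0$ members of the $2$-term simple-minded collection in $\DDD^{\bo}(\mod A)$ corresponding to $T$ under the Br\"ustle--Yang correspondence (this is the content of [A, Subsection 4.1] together with [BY, Corollary 4.3]), and any $2$-term simple-minded collection gives a $\Z$-basis of $K_0(\mod A)$ by [KY, Lemma 5.3]. Being part of such a $\Z$-basis, the $[S_j]$ are linearly independent; your matrix $\bigl(\langle[V_i],[S_j]\rangle\bigr)$ is then indeed invertible, but as a consequence rather than as a premise.
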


\begin{proof}
Let $\theta$ be rigid. 
Take the 2-term presilting complex $U$ with $[U]=\theta$ and its Bongartz completion $T$.
Then Proposition \ref{cone-TF} and the argument in \cite[Subsection 4.1]{A} give that 
the set of isoclasses of simple objects in $\WW_\theta$ has $|A|-|U|=|A|-|\theta|$ elements, and that is contained in the 2-term simple-minded collection in $\DDD^{\bo}(\mod A)$ corresponding to $T$ in \cite[Corollary 4.3]{BY}.
Any 2-term simple-minded collection in $\DDD^{\bo}(\mod A)$ gives a $\Z$-basis of $K_0(\mod A)$ by \cite[Lemma 5.3]{KY}.
Therefore $\dim_\R W_\theta=|A|-|\theta|$ holds.
\end{proof}

In Theorem \ref{describe TF for hereditary}, we will show that Conjecture \ref{span TF and W_theta} holds for hereditary algebras.

\subsection{Example}

It was asked in \cite[Question 4.7]{DF} that if an arbitrary finite dimensional $k$-algebra satisfies the ray condition.
In this section, we show that this is not the case by giving an explicit example. On the other hand, the ray condition is satisfied by $E$-tame algebras and hereditary algebras (see Propositions \ref{linear independence2}, \ref{linear independence3}).

Our example which does not satsify the ray condition is closely related to the comparison of $\WW_\theta$ and $\WW^{\h}_\theta$. Let $A$ be a finite dimensional algebra and $\theta\in K_0(\proj A)$. For $X\in\mod A$, let 
\begin{align*}
S^A_{X,\theta}=S_{X,\theta}:=\{\ell\in\Z_{\ge0}\mid X\in\overline{\TT}^{\h}_{\ell\theta}\}.
\end{align*}
Clearly this is a submonoid of $\Z_{\ge0}$. Moreover, by Theorem \ref{cup T_f}, $X\in\overline{\TT}_\theta$ holds if and only if $S_{X,\theta}$ contains a non-zero element. 
It is in general hard to determine the monoid $S_{X,\theta}$.
The following example is a generalization of \cite[Example 3.7]{Fei} for $n=3$.

\begin{example}\label{kronecker}
Let $n\ge 3$ be an odd integer.
Let $A=k\left[\xymatrix{1\ar@<4pt>[r]^{a_1}\ar@{}[r]|{\cdots}\ar@<-4pt>[r]_{a_n}&2}\right]=\begin{bmatrix}k&k^n\\0&k\end{bmatrix}$, 
$\theta=P(1)-P(2)$, and $X=[X_1\ X_2]$ the $A$-module given by
\begin{align*}
X_1=X_2=V:=k^n,\ X_{a_i}=F_i:=E_{i,i+1}-E_{i+1,i},
\end{align*}
where $E_{ij}$ is a matrix of size $n$ whose $(i,j)$-entry is $1$ and the others are zero, and $n+1:=1$.
Then $X\in\overline{\TT}_\theta$ and $S_{X,\theta}=\Z_{\ge0}\setminus\{1\}$ hold.
\end{example}

\begin{proof}
Since $\theta(X)=0$, $X \in \overline{\TT}_{\ell\theta}^{\h}$ is equivalent to $X \in \overline{\FF}_{\ell\theta}^{\h}$ by Lemma \ref{Nakayama T_f}(c).

(i) We prove $X\in\overline{\FF}_\theta$ directly, that is, $\dim_k U\le\dim_k\sum_{i=1}^nF_i(U)$ holds for any subspace $U$ of $V$. 
Assume the contrary $\dim_k U>\dim_k\sum_{i=1}^nF_i(U)$.
For each $i\in\Z/n\Z$, let $G_i:=\sum_{j=1}^{\frac{n-1}{2}}F_{i+2j-1}$. Then $\Kernel G_i$ is spanned by $e_i$,
where $e_i$ is the element of $k^n$ whose $i$th entry is 1 and the others are zero. Since $\dim_kU > \dim_k\sum_{i=1}^nF_i(U) \ge \dim_kG_i(U)$, we have $e_i\in U$ for each $i$. Thus $U=V$ holds. Since $e_i,e_{i+1}\in F_i(V)$, we have $V=\sum_{i=1}^nF_i(V)$, a contradiction.

(ii) We prove $X \notin \overline{\TT}^{\h}_\theta$. We set $f_x := (x \cdot) \colon P(2) \to P(1)$ for each $x \in e_1Ae_2$.
Any element $\Hom_A(P(2),P(1))$ is of the form $f_x$,
where $x=\sum_{i=1}^np_ia_i$ for some $p_i\in k$. Then
\begin{align*}
\left[\Hom_A(f_x,X)\colon\Hom_A(P(1),X)\to\Hom_A(P(2),X)\right]=\left[
\sum_{i=1}^np_iF_i:V\to V\right],\end{align*}
which is never an isomorphism, since the matrix in the right hand side is a skew symmetric matrix of odd size and hence the determinant is zero. Thus $X\notin\overline{\TT}_{f_x}$ holds.

(iii) We prove $S_{X,\theta}=\Z_{\ge0}\setminus\{1\}$. Since $S_{X,\theta}$ is a monoid, it suffices to show $X\in\WW^{\h}_{\ell\theta}$ for $\ell=2,3$. 
Let $x:=\sum_{i=1}^{\frac{n-1}{2}}a_{2i},\ y:=\sum_{i=1}^{\frac{n-1}{2}}a_{2i-1}\in e_1Ae_2$,
\[z_2:=\left[\begin{smallmatrix}x&a_n\\ a_n&y\end{smallmatrix}\right]\in M_2(e_1Ae_2)\ \mbox{ and }\ z_3:=\left[\begin{smallmatrix}x&a_n&O\\ a_n&y&y\\ O&y+a_n&x\end{smallmatrix}\right]\in M_3(e_1Ae_2).\]
For $\ell=2,3$, the morphism $f_{z_\ell}:=(z_\ell\cdot):P(2)^{\oplus\ell}\to P(1)^{\oplus\ell}$ induces an isomorphism $\Hom_A(f_{z_\ell},X):\Hom_A(P(1)^{\oplus\ell},X)\to\Hom_A(P(2)^{\oplus\ell},X)$. 
In fact, it is easily checked that the matrices $\left[\begin{smallmatrix}G_1&F_n\\ F_n&G_n\end{smallmatrix}\right]\in M_{2n}(k)$ and $\left[\begin{smallmatrix}G_1&F_n&O\\ F_n&G_n&G_n+F_n\\ O&G_n&G_1\end{smallmatrix}\right]\in M_{3n}(k)$ are invertible.
\end{proof}

Now we apply the monoid $S_{X,\theta}$ to construct an example of exotic behavior of canonical decompositions.
For $\eta,\theta\in K_0(\proj A)$, let 
\begin{align*}
S^A_{\eta,\theta}=S_{\eta,\theta}:=\{\ell\in\Z_{\ge0}\mid\eta\oplus(\ell\theta)\}.
\end{align*}
Clearly this is a submonoid of $\Z_{\ge0}$ too.
To explain a connection between this type of monoids $S_{\eta,\theta}$ and the previous one $S_{X,\theta}$, we consider the following setting.

Let $B$ be a finite dimensional $k$-algebra, $e\in B$ an idempotent and $A:=eBe$.
We have a fully faithful functor $-\otimes_A(eB):\proj A\to\proj B$, which induces an inclusion $-\otimes_A(eB):K_0(\proj A)\to K_0(\proj B)$. 

\begin{proposition}\label{compare monoid}
Under the setting above, let $\theta\in K_0(\proj A)$ and $P\in\proj B$. 
\begin{enumerate}[\rm(a)]
\item $Pe\in\overline{\TT}^{A,\h}_\theta$ holds if and only if $E(\theta\otimes eB,[P])=0$ holds if and only if $[P]\oplus(\theta\otimes eB)$ holds.
\item $S^A_{Pe,\theta}=S^B_{[P],\theta\otimes eB}$ holds. Thus for $\ell\in\Z_{\ge0}$, $Pe\in\overline{\TT}^{A,\h}_{\ell\theta}$ holds if and only if $E(\ell(\theta\otimes eB),[P])=0$ holds if and only if $[P]\oplus \ell(\theta\otimes eB)$ holds.
\end{enumerate}
\end{proposition}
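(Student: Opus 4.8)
The plan is to reduce everything to the functorial properties of morphism torsion classes under the idempotent truncation $A = eBe$, together with the characterizations of the vanishing of $E$ from Proposition \ref{P Q[1]}. First I would set $\eta := \theta \otimes_A eB \in K_0(\proj B)$ and recall from Example \ref{W^A and W^B 2} that for a morphism $f$ in $\proj A$ we have the identity $\overline{\TT}^B_{f \otimes eB} = \{X \in \mod B \mid Xe \in \overline{\TT}^A_f\}$, and similarly for $\overline{\FF}$ and $\WW$. Taking unions over all $f$ with $[f] = \theta$ on the $A$-side and noting that $f \mapsto f \otimes_A eB$ gives a bijection $\Hom(\theta) \to \Hom(\eta)$ (since $-\otimes_A eB$ is fully faithful and preserves the projective $P_0^\theta, P_1^\theta$), I would obtain
\begin{align*}
\overline{\TT}^{B,\h}_{\eta} = \{X \in \mod B \mid Xe \in \overline{\TT}^{A,\h}_{\theta}\}.
\end{align*}
In particular, for $P \in \proj B$, we get $Pe \in \overline{\TT}^{A,\h}_{\theta}$ if and only if $P \in \overline{\TT}^{B,\h}_{\eta}$, i.e.\ there exists $g \in \Hom(\eta)$ with $P \in \overline{\TT}_g$.

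For part (a), the chain of equivalences then runs as follows. By the previous paragraph, $Pe \in \overline{\TT}^{A,\h}_{\theta}$ is equivalent to the existence of $g \in \Hom(\eta)$ with $C_g \in {}^\perp K_{\nu \,0_P}$ — but here I should be careful: $P$ itself, viewed as the complex concentrated in degree $0$, has $P_{0_P} = (0 \to P)$, so $C_{0_P} = P$ and $K_{\nu 0_P} = 0$. Thus the condition $P \in \overline{\TT}_g$ reads $P \in {}^\perp K_{\nu g}$, i.e.\ $\Hom_A(P, K_{\nu g}) = 0$, which by Proposition \ref{P Q[1]} (applied with $f = g$ and the morphism $0 \to P$ representing $[P]$) is exactly $E([P], \eta) = 0$ for a general choice of $g$ — equivalently $\Hom_{\DDD(A)}(P_{0_P}, P_g[1]) = 0$. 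Since $E([P], \eta) = 0$ automatically forces $E(\eta, [P]) = 0$ as well (the obstruction group $\Hom_{\DDD}(P_g, P[1])$ vanishes because $P[1]$ has no negative-degree terms and $P_g$ is $2$-term, so this $\Ext$ is always zero), Proposition \ref{decomposition}(a) yields $[P] \oplus \eta$. Conversely $[P] \oplus \eta$ gives $E(\eta, [P]) = 0$, and by semicontinuity (Proposition \ref{upper semi}) a general $g \in \Hom(\eta)$ realizes this, hence $P \in \overline{\TT}_g \subseteq \overline{\TT}^{B,\h}_{\eta}$, so $Pe \in \overline{\TT}^{A,\h}_{\theta}$. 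This closes the loop of three equivalences in (a).

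For part (b), I would simply apply part (a) with $\theta$ replaced by $\ell\theta$, noting that $(\ell\theta)\otimes_A eB = \ell(\theta \otimes_A eB) = \ell\eta$, since $-\otimes_A eB$ is $\Z$-linear on Grothendieck groups. This gives, for each $\ell \in \Z_{\ge 0}$, the equivalence of $Pe \in \overline{\TT}^{A,\h}_{\ell\theta}$, of $E(\ell\eta, [P]) = 0$, and of $[P] \oplus \ell\eta$; the last of these is precisely the defining condition for $\ell \in S^B_{[P], \eta}$, while the first is the defining condition for $\ell \in S^A_{Pe, \theta}$. Hence the two monoids coincide. I expect the only genuinely delicate point to be the bookkeeping around the $E$-vanishing: one must check that $E([P], \eta) = 0$ and $E(\eta, [P]) = 0$ are indeed equivalent here (rather than invoking a general symmetry, which fails — as the paper explicitly warns after Proposition \ref{upper semi}), and this equivalence is special to the situation where one of the two elements is the class of an honest projective module, so that one of the two Hom-spaces in $\KKK^{\bo}(\proj A)$ vanishes for trivial degree reasons. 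Everything else is a routine transcription of the functorial identities already established in Example \ref{W^A and W^B 2} and Proposition \ref{P Q[1]}.
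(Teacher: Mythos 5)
Your strategy matches the paper's: transport between $\mod A$ and $\mod B$ via Example \ref{W^A and W^B 2}, use the bijection $\Hom_A(\theta)\to\Hom_B(\theta\otimes eB)$, observe that one of the two $E$-vanishing conditions is automatic for degree reasons, and then invoke Proposition \ref{decomposition}(a). The overall shape of the argument is correct.

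However, you have systematically swapped $E([P],\eta)$ and $E(\eta,[P])$, and correspondingly the two Hom-spaces, which renders the proof as written incorrect. By the Serre duality formula \eqref{Serre duality}, $\Hom_{\DDD(B)}(P_g,P[1])\simeq\Hom_B(P,K_{\nu g})$, so $P\in\overline{\TT}_g$ is equivalent (for general $g\in\Hom(\eta)$) to $E(\eta,[P])=0$, \emph{not} $E([P],\eta)=0$ as you wrote; note that the statement you are proving itself reads $E(\theta\otimes eB,[P])=0$, so your labeling contradicts the target. The condition that vanishes automatically for degree reasons is the other one, $E([P],\eta)=0$: here one computes $\Hom_{\KKK^{\bo}(\proj B)}(P,P_g[1])$ where $P$ sits in degree $0$ and $P_g[1]$ sits in degrees $-2,-1$, so there is no degree overlap. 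Your claim that ``$\Hom_{\DDD}(P_g,P[1])$ vanishes because $P[1]$ has no negative-degree terms'' is wrong on both counts: $P[1]$ sits precisely in degree $-1$, and $P_g$ has a nonzero degree $-1$ term, so $\Hom_{\DDD}(P_g,P[1])$ is generically nonzero (it computes $\Hom_B(P,K_{\nu g})$, which is exactly the obstruction you need to analyze, not dismiss). If you swap the labels consistently and redo the degree count on the correct Hom-space, the argument closes; as written, the degree argument proves nothing and the chain of equivalences breaks.
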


\begin{proof}
(a) 
Fix $f\in\Hom_A(\theta)$.
By the first equality of Example \ref{W^A and W^B 2}(a), $Pe\in\overline{\TT}^{A}_f$ holds if and only if $Pe\in\overline{\TT}^{B}_{f\otimes eB}$ holds if and only if $\Hom_{\DDD(B)}(P_{f\otimes eB},P[1])=0$.
The map $\Hom_A(\theta)\to\Hom_B(\theta\otimes eB)$, $f\mapsto f\otimes eB$ is bijective. Thus $Pe\in\overline{\TT}^{A}_f$ holds for some $f\in\Hom_A(\theta)$ if and only if $\Hom_{\DDD(B)}(P_g,P[1])=0$ holds for some $g\in\Hom_B(\theta\otimes eB)$, that is, $E(\theta\otimes eB,[P])=0$. This is equivalent to $[P]\oplus(\theta\otimes eB)$ by Proposition \ref{decomposition}(a) since $E([P],\theta\otimes eB)=0$ clearly holds.

(b) Immediate from (a).
\end{proof}

Now we are ready to prove Theorem \ref{counter to ray intro}. A concrete example is given as follows.

\begin{example}\label{counter to ray 2}
Let $A$ and $X$ be the $k$-algebra and the $A$-module given in Example \ref{kronecker} respectively, and let
\[B:=\begin{bmatrix}k&X\\0&A\end{bmatrix}=\begin{bmatrix}k&X_1&X_2\\0&k&k^n\\ 0&0&k\end{bmatrix}\ni e_0:=\left[\begin{smallmatrix}1&0&0\\ 0&0&0\\ 0&0&0\end{smallmatrix}\right],\  e_1:=\left[\begin{smallmatrix}0&0&0\\ 0&1&0\\ 0&0&0\end{smallmatrix}\right],\ 
e_2:=\left[\begin{smallmatrix}0&0&0\\ 0&0&0\\ 0&0&1\end{smallmatrix}\right],\]
and $P(i):=e_iB$ for $i=0,1,2$. Then the following assertions hold.
\begin{enumerate}[\rm(a)]
\item We have
$S_{[P(0)],[P(1)]-[P(2)]}=\Z_{\ge0}\setminus\{1\}$. For $\ell\ge0$, $E(\ell[P(1)]-\ell[P(2)],[P(0)])=0$ holds if and only if $\ell\neq1$.
\item Let $\eta:=[P(0)]+[P(1)]-[P(2)]$. Then for $\ell\ge1$, we have a canonical decomposition 
\[\ell\eta=\left\{\begin{array}{ll}
\eta&\ell=1\\ 
{[P(0)]}^{\oplus\ell}\oplus (\ell[P(1)]-\ell[P(2)])& \ell\ge2.\end{array}\right.\]
\item The algebra $B$ does not satisfy the ray condition. More precisely, $\eta$ is indecomposable wild, but $\ell\eta$ is not indecomposable for each $\ell\ge2$.
\item For each $\ell\ge2$, we have
\[\cone(\ind\eta)\subsetneq\cone(\ind\ell\eta)=\cone(\ind\N\eta).\]
\item The element $\eta$ does not satisfy the equality \eqref{span TF and W_theta 2}. More explicitly, $\WW_{\eta}=\WW_{[P(0)]}\cap \WW_{[P(1)]-[P(2)]}$ holds. \end{enumerate}
\end{example}

\begin{proof}
(a) We apply Proposition \ref{compare monoid}(b) to our $B$ and $e:=e_1+e_2$. Since $P(0)e=e_0Be=X$, we obtain $S^B_{[P(0)],[P(1)]-[P(2)]}=S^A_{X,[P_A(1)]-[P_A(2)]}=\Z_{\ge0}\setminus\{1\}$ by Example \ref{kronecker}.

(b) Assume $\ell=1$. If $\eta$ is not indecomposable, at least one of $[P(0)]$, $[P(1)]$ and $-[P(2)]$ is a direct summand of $\eta$.
But this is impossible since $E([P(1)]-[P(2)],[P(0)])\neq0$ holds by (a), and $E([P(0)]-[P(2)],[P(1)])\neq0$ and $E(-[P(2)],[P(0)]+[P(1)])\neq0$ clearly hold.

Assume $\ell\ge 2$. Then $[P(0)]\oplus(\ell[P(1)]-\ell[P(2)])$ holds by (a). Moreover, since $n\ge3$, $\ell[P_A(1)]-\ell[P_A(2)]$ is indecomposable and so is $\ell[P(1)]-\ell[P(2)]$. Thus the assertion follows.

(c)(d) Immediate from (b).

(e) Since $2\eta=[P(0)]^{\oplus 2}\oplus (2[P(1)]-2[P(2)])$ holds by (b), we have
$\WW_\eta=\WW_{2\eta}=\WW_{[P(0)]}\cap \WW_{2[P(1)]-2[P(2)]}=\WW_{[P(0)]}\cap \WW_{[P(1)]-[P(2)]}$ by Theorem \ref{decomposition and T}(a).
\end{proof}

For example, for $n=3$, the algebra $B$ is $k\left[\xymatrix{0\ar@<4pt>[r]^{a'}\ar[r]|{b'}\ar@<-4pt>[r]_{c'}&1\ar@<4pt>[r]^a\ar[r]|b\ar@<-4pt>[r]_c&2}\right]/\left\langle a'b+b'a, b'c+c'b, c'a+a'c\right\rangle$.

\section{$E$-tame algebras and TF equivalence classes}

\subsection{$g$-tame and $E$-tame algebras}

The following classes of algebras are most basic from the point of  view of tilting theory.

\begin{definition}\label{define g-finite}
Let $A$ be a finite dimensional algebra.
\begin{enumerate}[\rm(a)]
\item \cite[Proposition 3.9]{DIJ} We say that $A$ is \emph{$\tau$-tilting finite} if $\#\twosilt A<\infty$.
\item \cite[Definition 3.23]{BST} We say that $A$ is \emph{$\tau$-tilting tame} if $\overline{\Wall}$ has measure zero.
\item We say that $A$ is \textit{$g$-finite} if $\Cone=K_0(\proj A)_\R$.
\item We say that $A$ is \textit{$g$-tame} if $\Cone$ is dense in $K_0(\proj A)_\R$. 
\end{enumerate}
\end{definition}

The conditions (a) and (c) are known to be equivalent.

\begin{proposition}\label{g-fin}\cite{ZZ}\cite[Theorem 4.7]{A}
A finite dimensional algebra is $\tau$-tilting finite if and only if it is $g$-finite.
\end{proposition}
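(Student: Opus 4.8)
The final statement to prove is Proposition \ref{g-fin}: a finite dimensional algebra $A$ is $\tau$-tilting finite if and only if it is $g$-finite, i.e.\ $\#\twosilt A<\infty$ if and only if $\Cone=K_0(\proj A)_\R$.

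\textbf{Plan of proof.} The plan is to prove the two implications separately, using the cone decomposition machinery recalled in the excerpt. For the implication ``$g$-finite $\Rightarrow$ $\tau$-tilting finite'': by Proposition \ref{cone-wall}(c) there is a bijection $\twosilt A\to\TF_n(A)$, $T\mapsto C^\circ(T)$, and the $C(T)$ for $T\in\twosilt A$ cover $\Cone$. If $\Cone=K_0(\proj A)_\R$, then $\{C(T)\mid T\in\twosilt A\}$ is a covering of $\R^n=K_0(\proj A)_\R$ by a family of full-dimensional (by Remark \ref{cone basis}, each $C(T)$ has dimension $|T|=|A|=n$) polyhedral cones whose relative interiors $C^\circ(T)$ are pairwise disjoint. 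A standard local-finiteness argument for such a cone decomposition shows that only finitely many cones can meet a small sphere around the origin, hence the family is finite; therefore $\#\twosilt A<\infty$. First I would make precise the fact that the $C(T)$ form a \emph{fan}-like structure: two distinct maximal cones $C(T),C(T')$ intersect in a common face of codimension $\ge 1$ (this can be extracted from Proposition \ref{cone-TF}, since any point in $C(T)\cap C(T')$ lies on some wall, and walls have codimension one), so that a compact set meets only finitely many of them.

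For the converse ``$\tau$-tilting finite $\Rightarrow$ $g$-finite'', i.e.\ $\#\twosilt A<\infty\Rightarrow\Cone=K_0(\proj A)_\R$: suppose $\Cone\ne K_0(\proj A)_\R$. Since $K_0(\proj A)_\R=\Cone^\circ\sqcup\overline{\Wall}$ (noted after Proposition \ref{maximal wall}), if $\Cone$ is not everything then $\overline{\Wall}$ has nonempty interior, and in fact $\Wall$ must contain infinitely many distinct walls $\Theta_S$ for bricks $S\in\brick_{\rm s}A$ (a finite union of codimension-one rational cones is nowhere dense, so cannot have interior). I would then argue that infinitely many distinct walls force infinitely many distinct functorially finite torsion classes, hence infinitely many $2$-term silting complexes: indeed if $\twosilt A$ were finite, then $\Cone=\bigcup_{T\in\twosilt A}C(T)$ would be a finite union of rational polyhedral cones, and its complement would be open and dense; but one then shows (using Proposition \ref{cone-wall}(b): $\theta\in\Cone^\circ$ iff $\WW_{\theta'}=0$ near $\theta$) that the complement of $\Cone$ inside $K_0(\proj A)_\R$, being the closure of a set on which some $\WW_\theta\ne 0$, would be contained in the finite union of boundary hyperplanes of the $C(T)$'s, forcing $\overline{\Wall}$ to be nowhere dense and hence $\Cone^\circ$ dense; combined with $\Cone$ being closed (finite union of closed cones) this gives $\Cone=K_0(\proj A)_\R$, a contradiction. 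Equivalently and more cleanly, I would invoke $\ftors A$: by Proposition \ref{AIR}, $\twosilt A\cong\ftors A$, and $\tau$-tilting finiteness ($\#\twosilt A<\infty$) forces all of $\tors A$ to be functorially finite (this is the cited result of \cite{DIJ,ZZ}); then every chamber of the wall-chamber structure is some $C^\circ(T)$, the walls are locally finite, and a compactness/dimension count shows the finitely many top-dimensional cones $C(T)$ must tile $K_0(\proj A)_\R$.

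\textbf{Main obstacle.} The delicate point is the passage from ``finitely many maximal cones $C(T)$'' to ``these cones cover all of $K_0(\proj A)_\R$'' (and conversely, ruling out a ``missing region''). This requires knowing that the cone decomposition $\{C^\circ(T),\ \text{walls}\}$ is genuinely a locally finite polyhedral subdivision with no gaps of positive measure other than the walls themselves — i.e.\ that $\overline{\Cone}=\Cone$ and $\partial\Cone\subseteq\overline{\Wall}$ with everything off $\overline{\Wall}$ lying in some open chamber. In the excerpt this is essentially packaged in Proposition \ref{cone-wall} and the identity $K_0(\proj A)_\R=\Cone^\circ\sqcup\overline{\Wall}$, so I would lean on those; the remaining work is the elementary convex-geometry statement that a locally finite family of full-dimensional rational cones with pairwise-codimension-$\ge 1$ intersections whose union is closed and whose complement is nowhere dense must have union equal to the whole space when the family is finite. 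I expect this convex-geometry bookkeeping, rather than any homological input, to be the part requiring the most care.
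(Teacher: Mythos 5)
The paper does not give its own proof of this proposition; it simply cites \cite{ZZ} and \cite[Theorem 4.7]{A}. Your blind attempt, however, contains a fatal error in the direction ``$g$-finite $\Rightarrow$ $\tau$-tilting finite.'' You argue that if $\Cone=K_0(\proj A)_\R$, then ``only finitely many cones can meet a small sphere around the origin, hence the family is finite.'' This is false: each $C(T)$ is a cone containing the origin, so \emph{every} $C(T)$ meets \emph{every} sphere around the origin, and local finiteness near $0$ gives nothing for cones in the way it would for a bounded cell decomposition. Worse, the desired finiteness cannot be extracted from cone geometry alone: one can build infinite fans of closed rational full-dimensional cones in $\R^n$, with pairwise disjoint interiors and common-face intersections, whose union is all of $\R^n$ (the walls accumulate at a ray which is a face of one single remaining cone). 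What makes the statement true is a specific structural feature of the silting fan, namely that each $(n-1)$-dimensional face $C(U)$, $|U|=n-1$, is shared by exactly two maximal cones (Bongartz completion and silting mutation). Your proposal never invokes this and therefore cannot close the gap; there is no purely convex-geometric finiteness theorem of the type you are appealing to.

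The direction ``$\tau$-tilting finite $\Rightarrow$ $g$-finite'' is also not clean as written. You assert that the complement of $\Cone$, ``being the closure of a set on which some $\WW_\theta\neq0$, would be contained in the finite union of boundary hyperplanes of the $C(T)$'s,'' but this is unjustified and amounts to assuming the conclusion: nothing you have argued bounds the set $\Wall$ by finitely many hyperplanes. The step that actually makes this direction work is the theorem of Demonet--Iyama--Jasso that $\tau$-tilting finiteness is equivalent to finiteness of $\brick A$; this forces $\Wall=\bigcup_{S\in\brick_{\mathrm s}A}\Theta_S$ to be a \emph{finite} union of codimension-one cones, hence $\overline{\Wall}$ nowhere dense, so $\Cone^\circ$ is dense; since $\Cone$ is closed (finite union of closed cones) and $K_0(\proj A)_\R=\Cone^\circ\sqcup\overline{\Wall}$, one gets $\Cone\supseteq\overline{\Cone^\circ}=K_0(\proj A)_\R$. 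So your closing intuition that this is ``convex-geometry bookkeeping, rather than any homological input'' has it backwards: the homological facts (brick-finiteness and the two-to-one incidence at walls coming from mutation) are exactly the load-bearing ingredients, and the convex geometry by itself does not suffice for either direction.
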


The notion of $E$-invariants gives the following similar notions.

\begin{definition}\label{define E-finite}
Let $A$ be a finite dimensional algebra.
\begin{enumerate}[\rm(a)]
\item We say that $A$ is \textit{$E$-finite} if any $\theta\in K_0(\proj A)$ is rigid, that is, there exists a 2-term presilting complex $T$ such that $[T]=\theta$.
\item We say that $A$ is \textit{$E$-tame} if any $\theta\in K_0(\proj A)$ is tame, that is, $E(\theta,\theta)=0$ holds.
\end{enumerate}
These conditions are equivalent to that any indecomposable element is rigid or tame respectively. 
\end{definition}

These properties are preserved under the following operations.

\begin{proposition}
The following assertions hold.
\begin{enumerate}[\rm(a)]
\item If $A$ is $E$-tame (respectively, $E$-finite), then so is $eAe$ for all idempotents $e$ of $A$.
\item If $A$ is $E$-tame (respectively, $E$-finite), then so is $A/I$ for all ideals $I$ of $A$.
\end{enumerate}
\end{proposition}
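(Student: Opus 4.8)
The plan is to prove both statements by a reduction to the corresponding statements about presentation spaces, using the functorial tools developed in Section 3. The two operations—passing to a corner algebra $eAe$ and passing to a quotient $A/I$—are handled by the two functors $-\otimes_A eB$ (Example \ref{W^A and W^B 2}) and $-\otimes_A B$ for a surjection (Example \ref{pi theta}), so the key point in each case is to relate the $E$-invariant computed in $K_0(\proj A)$ to the one computed in the target algebra.

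\textbf{Part (a): corner algebras.} Here it is cleaner to work in the opposite direction: given an idempotent $e \in A$, set $B := A$ and $C := eAe$, so that we have the fully faithful functor $-\otimes_C eA \colon \proj C \to \proj A$ inducing an embedding $\iota \colon K_0(\proj C) \hookrightarrow K_0(\proj A)$. The crucial homological fact is that this functor is fully faithful and exact, hence for morphisms $f,g$ in $\proj C$ it induces an isomorphism
\begin{align*}
\Hom_{\KKK^{\bo}(\proj C)}(P_f, P_g[1]) \xrightarrow{\ \sim\ } \Hom_{\KKK^{\bo}(\proj A)}(P_{f\otimes eA}, P_{g\otimes eA}[1]),
\end{align*}
and the bijection $\Hom_C(\eta) \to \Hom_A(\iota\eta)$, $f \mapsto f\otimes eA$, is an isomorphism of varieties. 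Consequently $E_C(\eta,\eta) = E_A(\iota\eta, \iota\eta)$ for every $\eta \in K_0(\proj C)$. If $A$ is $E$-tame then the right-hand side vanishes for all $\eta$, so $E_C(\eta,\eta)=0$ for all $\eta$, i.e.\ $C = eAe$ is $E$-tame. The $E$-finite case is identical, replacing "$E(\eta,\eta)=0$" by "$\Hom_{\KKK^{\bo}}(P_f,P_f[1])=0$ for some $f$"; one uses that the isomorphism above sends the presilting locus of $\Hom_C(\eta)$ onto that of $\Hom_A(\iota\eta)$.

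\textbf{Part (b): quotients.} Let $\pi \colon A \to A/I =: B$ be the canonical surjection and $-\otimes_A B \colon K_0(\proj A) \to K_0(\proj B)$ the induced surjective map. Given $\theta \in K_0(\proj B)$, lift it to $\eta \in K_0(\proj A)$ with $\eta \otimes_A B = \theta$. As in the proof of Example \ref{pi theta}(c), the term-wise surjective morphism of Hom-complexes $\cHom_A(P_f, P_g) \to \cHom_B(P_f\otimes_A B, P_g\otimes_A B)$ yields a \emph{surjection}
\begin{align*}
\Hom_{\KKK^{\bo}(\proj A)}(P_f, P_g[1]) \twoheadrightarrow \Hom_{\KKK^{\bo}(\proj B)}(P_{f\otimes_A B}, P_{g\otimes_A B}[1])
\end{align*}
for 2-term complexes, hence $E_B(f\otimes_A B, g\otimes_A B) \le E_A(f,g)$, and taking the minimum over a suitable dense subset of $\Hom_A(\eta)\times\Hom_A(\eta)$ (using that $f\mapsto f\otimes_A B$ is a surjective, hence dominant, linear map of presentation spaces together with upper semicontinuity, Proposition \ref{upper semi}) gives $E_B(\theta,\theta) \le E_A(\eta,\eta)$. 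If $A$ is $E$-tame the right side is $0$, so $E_B(\theta,\theta)=0$; as $\theta$ was arbitrary, $B = A/I$ is $E$-tame. For $E$-finiteness, Example \ref{pi theta}(c) already states that $P \otimes_A B$ is 2-term presilting whenever $P$ is, so lifting $\theta$ to a rigid $\eta$ (i.e.\ $\eta = [P]$ with $P$ presilting) produces a presilting complex with class $\theta$, giving the claim.

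\textbf{Main obstacle.} The genuinely delicate point is the inequality $E_B(\theta,\theta) \le E_A(\eta,\eta)$ in part (b): the map on presentation spaces $\Hom_A(\eta) \to \Hom_B(\theta)$ is surjective but \emph{not} injective, so one must argue that the image of a dense open subset where $E_A$ is minimized still meets the dense open subset of $\Hom_B(\theta)$ where $E_B$ is minimized. This follows because a surjective linear map is open (indeed dominant), so the image of a dense open set is dense, hence intersects any nonempty open set; combined with the pointwise bound $E_B(f\otimes_A B, f\otimes_A B) \le E_A(f,f)$ this gives the desired minimum comparison. Everything else is a routine application of the functoriality package already assembled in Section 3, so no serious new ideas are required.
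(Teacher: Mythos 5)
Your proof is correct and follows essentially the same route as the paper: part (a) via full faithfulness of $-\otimes_{eAe}eA$, and part (b) via the term-wise surjection of Hom-complexes induced by $-\otimes_A(A/I)$, as in the proof of Example \ref{pi theta}(c). One remark on your ``Main obstacle'' paragraph: the density/openness argument there is unnecessary. Since $E_B(\theta,\theta)$ is by \emph{definition} the minimum of $E_B$ over the whole of $\Hom_B(\theta)\times\Hom_B(\theta)$, for any single pair one has $E_B(\theta,\theta)\le E_B(f\otimes_A B,\,g\otimes_A B)\le E_A(f,g)$; choosing $(f,g)$ to realize $E_A(\eta,\eta)$ already yields $E_B(\theta,\theta)\le E_A(\eta,\eta)$, with no need for the image of one generic locus to meet another. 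This is exactly what the paper does: it picks a single pair $(f,g)$ with $E(f,g)=E(g,f)=0$ and pushes it forward.
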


\begin{proof}
(a)
This is clear since the functor $- \otimes_{eAe} eA \colon \proj eAe \to \proj A$
is fully faithful. 

(b)
Let $\theta \in K_0(\proj A)$.
It suffices to show $\theta \otimes_A (A/I) \in K_0(\proj (A/I))$ is tame.
Since $A$ is $E$-tame, there exist $f,g \in \Hom_A(\theta)$ such that $E(f,g)=E(g,f)=0$.
By the same argument as the proof of Example \ref{pi theta}(c),
we have $E(f',g')=E(g',f')=0$, where $f':=f \otimes_A (A/I)$ and $g':=g \otimes_A (A/I)$.
Thus $\theta \otimes_A (A/I) \in K_0(\proj (A/I))$.
\end{proof}

Plamondon and Yurikusa proved that any representation-tame algebras (including representation-finite algebras) are both $E$-tame and $g$-tame based on results in  \cite{CB-tame} and \cite[Theorem 3.2]{GLFS}.

\begin{proposition}\label{tameness}
Let $A$ be a representation-tame algebra.
\begin{enumerate}[\rm(a)]
\item \cite[Theorem 4.1]{PY} $A$ is $g$-tame.
\item \cite[Theorem 3.8]{PY} $A$ is $E$-tame. Moreover, if $\theta\in K_0(\proj A)$ is indecomposable non-rigid, then for any general $f\in\Hom(\theta)$, $C_f\simeq K_{\nu f}$ are bricks.
\end{enumerate}
\end{proposition}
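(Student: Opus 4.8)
This proposition is \cite[Theorem 4.1]{PY} for part (a) and \cite[Theorem 3.8]{PY} for part (b); we only outline the strategy, whose substantive input is Crawley-Boevey's structure theory of representation-tame algebras \cite{CB-tame}, in the packaged form \cite[Theorem 3.2]{GLFS}.

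For (b): by subadditivity of $E$ (Proposition \ref{decompose_2step}(b)) and Proposition \ref{decomposition}(a), if $\theta=\theta_1\oplus\cdots\oplus\theta_\ell$ is the canonical decomposition then $E(\theta,\theta)\le\sum_i E(\theta_i,\theta_i)$, so it suffices to treat \emph{indecomposable} $\theta$, where the rigid case is trivial. So assume $\theta$ is indecomposable and non-rigid; then $\theta$ is neither positive nor negative, so $P_0^\theta\ne 0\ne P_1^\theta$, and for general $f\in\Hom(\theta)$ the complex $P_f$ is indecomposable with $C_f:=\Cokernel f\ne 0$. Since $\theta$ is not negative, $P_f$ is then the minimal projective presentation of $C_f$, so $K_{\nu f}\simeq\tau C_f$, and by Serre duality \eqref{Serre duality} we have $E(f,g)=\dim_k\Hom_A(C_g,\tau C_f)$ for all $f,g\in\Hom(\theta)$. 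The geometric input of \cite{CB-tame,GLFS} is that, since $A$ is tame and the generic $C_f$ is indecomposable non-rigid, the image of $\Hom(\theta)$ in the module variety is, up to the $\Aut(P_0^\theta)\times\Aut(P_1^\theta)$-action, a one-parameter family of bricks on which the Auslander--Reiten translate acts as the identity; this gives $K_{\nu f}\simeq\tau C_f\simeq C_f$, a brick, which is the ``moreover'' assertion. Finally, two general elements $f,g$ of the irreducible variety $\Hom(\theta)$ produce $C_f$ and $C_g$ at \emph{distinct} parameters of that family, whence $\Hom_A(C_g,\tau C_f)=0$, i.e.\ $E(f,g)=0$; thus $\theta\oplus\theta$ and $\theta$ is tame.

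For (a): one shows $\overline{\Wall}$ has empty interior, which forces $\Cone^\circ$, a fortiori $\Cone$, to be dense in $K_0(\proj A)_\R$, using $K_0(\proj A)_\R=\Cone^\circ\sqcup\overline{\Wall}$. By Proposition \ref{maximal wall} we have $\Wall=\bigcup_{S\in\brick_{\rm s}A}\Theta_S$, a union of codimension-one rational cones; for a tame algebra the stable bricks split into one-parameter families together with a locally finite family of exceptional bricks, and all bricks in a common one-parameter family share a dimension vector and hence determine a single wall, so the wall arrangement is controlled enough that $\overline{\Wall}$ has empty interior. This last step is exactly the content of \cite[Theorem 4.1]{PY}.

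The real obstacle in both parts is this geometric input from \cite{CB-tame,GLFS}: realizing the generic $C_f$ (resp.\ the generic stable brick) inside a one-parameter family, computing its Auslander--Reiten translate, and knowing that two general presentations of a fixed $\theta$ sit at distinct parameters of the family. Granting this, the remaining arguments are formal, using only Serre duality, the properties of canonical decompositions from Section 2, and the codimension-one description of walls.
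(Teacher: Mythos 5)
The paper does not prove Proposition \ref{tameness}; both parts are taken directly from \cite{PY} as cited, with no argument supplied. Your proposal therefore cannot be ``compared against the paper's proof'' in the literal sense; what you have written is a sensible reconstruction of the strategy behind \cite[Theorems 3.8, 4.1]{PY}, and citing those results (backed by \cite{CB-tame} and \cite[Theorem 3.2]{GLFS}) is exactly what the paper does.

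Two small remarks on the sketch itself. For (b), the reduction to indecomposable $\theta$ via subadditivity and Proposition \ref{decomposition}(a) is correct, and the Serre duality identity $E(f,g)=\dim_k\Hom_A(C_g,K_{\nu f})$ together with $K_{\nu f}\simeq\tau C_f$ (valid once $P_f$ is the minimal presentation, which holds generically for $\theta$ neither positive nor negative) is exactly the right way to set it up; the genuine geometric content, that the generic $C_f$ is a brick in a $\tau$-fixed one-parameter family and that independently generic $(f,g)\in\Hom(\theta)^2$ land at distinct parameters with $\Hom_A(C_g,\tau C_f)=0$, is precisely what \cite{CB-tame,GLFS} supply and you correctly flag it as the crux. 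For (a), be slightly careful with the phrasing ``a countable union of codimension-one rational cones has empty interior'': Baire gives empty interior for $\Wall$ itself, but not automatically for its closure $\overline{\Wall}$, and it is $\overline{\Wall}$ that must have empty interior for $\Cone^\circ$ to be dense (via $K_0(\proj A)_\R=\Cone^\circ\sqcup\overline{\Wall}$). You do defer this step to \cite[Theorem 4.1]{PY}, which is the right move, but the sentence as written could be read as claiming the conclusion follows from local finiteness alone, which it does not; the additional control on how the walls accumulate is the real content of that theorem.
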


Figure \ref{tame figure} shows connections between various finiteness and tameness introduced in Definitions \ref{define g-finite} and \ref{define E-finite}.
We conjecture that the unknown implications $\xymatrix{\ar@{=>}[r]|?&}$ also hold true.
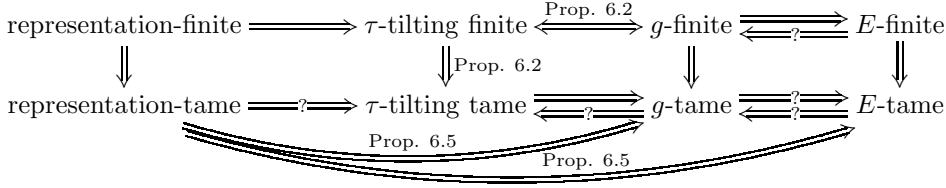
\begin{figure}
\begin{align*}\xymatrix@R=1.5em@C=4em{
\text{representation-finite}\ar@{=>}[r]\ar@{=>}[d]&\text{$\tau$-tilting finite}\ar@{<=>}[r]^-{\text{Prop. \ref{g-fin}}}\ar@{=>}[d]^-{\text{Prop. \ref{g-fin}}}&\text{$g$-finite}\ar@<3pt>@{=>}[r]\ar@{=>}[d]&\text{$E$-finite}\ar@<3pt>@{=>}[l]|?\ar@{=>}[d]\\
\text{representation-tame}\ar@{=>}[r]|-?\ar@/_20pt/@{=>}[rr]^-{\text{Prop. \ref{tameness}}}\ar@<-3pt>@/_25pt/@{=>}[rrr]^(.6){\text{Prop. \ref{tameness}}}&\text{$\tau$-tilting tame}\ar@<3pt>@{=>}[r]&\text{$g$-tame}\ar@<3pt>@{=>}[r]|?\ar@<3pt>@{=>}[l]|-?&\text{$E$-tame}\ar@<3pt>@{=>}[l]|?}
\end{align*}\caption{Relationship between finiteness and tameness}\label{tame figure}\end{figure}

\subsection{TF equivalence classes of $E$-tame algebras}
Let $A$ be a finite dimensional algebra and $\theta=\theta_1 \oplus \cdots \oplus \theta_\ell$ a canonical decomposition.
In Theorem \ref{decomposition and T}, we proved that 
$\cone^\circ\{\theta_1,\ldots,\theta_\ell\}$ is contained in the TF equivalence class $[\theta]_\mathrm{TF}$. In Conjecture \ref{canonical and TF}, we conjectured that these sets coincide. The following main result of this section gives a positive answer for $E$-tame algebras.

\begin{theorem}\label{describe TF for E-tame}
Assume that $A$ is a finite dimensional $E$-tame algebra over an algebraically closed field $k$. 
Let $\theta \in K_0(\proj A)$ and 
$\theta=\bigoplus_{i=1}^\ell \theta_i$ be the canonical decomposition. Then
\begin{align*}[\theta]_{\rm TF}=\cone^\circ\{\theta_1,\ldots,\theta_\ell\}.\end{align*}
\end{theorem}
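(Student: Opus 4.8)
The inclusion $[\theta]_{\rm TF}\supseteq\cone^\circ\{\theta_1,\ldots,\theta_\ell\}$ is already Theorem \ref{decomposition and T}(b), so the entire task is the reverse inclusion. By Proposition \ref{dim of wide}, once we know $A$ satisfies the ray condition (which for $E$-tame algebras will be recorded as Proposition \ref{linear independence2}), it suffices to prove the \emph{dimension} statement (b) of that proposition, namely $\dim_\R\langle[\theta]_{\rm TF}\rangle_\R=|\theta|$, equivalently (by Proposition \ref{dim of wide}, since $E$-tameness will also give (d)) the wide-subcategory dimension count $\dim_\R W_\theta=|A|-|\theta|$. So the plan is: first establish the ray condition for $E$-tame $A$; then establish that $\dim_\R W_\theta=|A|-|\theta|$; then invoke Proposition \ref{dim of wide}.

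For the ray condition: let $\theta$ be indecomposable. Since $A$ is $E$-tame, $\theta$ is tame, i.e.\ $E(\theta,\theta)=0$, so $\theta\oplus\theta$ and hence by Proposition \ref{decompose_2step} also $\ell\theta=\theta^{\oplus\ell}$, whose canonical decomposition has all summands equal to $\theta$; so $\ell\theta$ is indecomposable iff $\ell=1$. Thus an $E$-tame algebra has \emph{no} indecomposable wild elements at all, and the ray condition holds vacuously — and moreover, for $E$-tame $A$, the canonical decomposition of any $\theta$ consists of pairwise comparisons that are all tame, so Corollary \ref{C f T bar} applies with $\ell=1$. This also shows condition (d) of Proposition \ref{dim of wide} will follow from Theorem \ref{cup T_f}(b): when $\theta$ is tame, $\WW_\theta=\WW^{\h}_\theta$, so $W_\theta$ is spanned by the supports of the modules in $\bigcup_{[f]=\theta}\WW_f$.

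The main obstacle is the equality $\dim_\R W_\theta=|A|-|\theta|$. Write $\theta=\bigoplus_{i=1}^\ell\theta_i$ with the $\theta_i$ pairwise non-isomorphic after grouping (but actually in the $E$-tame case repeated summands $\theta_i^{\oplus m}$ can occur, and $|\theta|$ counts distinct ones; by Theorem \ref{decomposition and T}(a) $W_\theta=\bigcap_i W_{\theta_i}$-type behaviour, so we may reduce to the case where $\theta_1,\ldots,\theta_\ell$ are distinct indecomposables). By \eqref{W_theta}, $W_\theta\subseteq\bigcap_{i=1}^\ell\Kernel\langle\theta_i,-\rangle$, and by Proposition \ref{linear independence}(a) the right-hand side has dimension exactly $|A|-\ell$; so the inequality $\dim_\R W_\theta\le|A|-|\theta|$ is free. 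The substance is the reverse inequality $\dim_\R W_\theta\ge|A|-|\theta|$, i.e.\ that $\WW_\theta$ has enough modules to span a subspace of full possible dimension. Here is where $E$-tameness is used essentially: by Theorem \ref{cup T_f}(b), $\WW_\theta=\WW^{\h}_\theta=\bigcup_{[f]=\theta}\WW_f$, and for a \emph{general} $f\in\Hom(\theta)$ we have a canonical decomposition $P_f\simeq P_{f_1}\oplus\cdots\oplus P_{f_\ell}$ into indecomposable complexes. For each $i$, since $\theta_i$ is tame but generically non-rigid, one wants to produce in $\WW_{f_i}$ (hence in $\WW_\theta$ after taking a common general $f$) a collection of modules whose classes, together with the $W_{\theta_j}$ for $j\neq i$, fill out the required dimension; the cleanest route is to run an induction on $\ell$ combined with the $|A|=1,2$ base cases and the functorial-restriction machinery of Example \ref{W^A and W^B 2} and Proposition \ref{pi theta} to reduce to sincere modules over algebras of small rank, exactly as in the proof of Lemma \ref{Theta sub ThetaH}. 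I expect the delicate point to be the indecomposable tame non-rigid case $\ell=1$: here one must show $\dim_\R W_\theta=|A|-1$, i.e.\ that the wall $\Theta_{\theta}$ (equivalently the supports of $\theta$-semistable modules) spans a hyperplane in $K_0(\mod A)_\R$; this should follow from the fact that for tame $\theta$ a general $f\in\Hom(\theta)$ has $E(f,f)=0$, so by Proposition \ref{codim=E} the $G$-orbit of $f$ is dense, forcing $C_f\simeq K_{\nu f}$ to be a brick (cf.\ the representation-tame statement Proposition \ref{tameness}(b)), which by Lemma \ref{W_f and W_theta 2}(c) is a simple object of $\WW_\theta$, and then Lemma \ref{simple and interior}(c) gives $\dim\Theta_{C_f}=|A|-1$ and hence $\dim_\R W_\theta\ge\dim_\R W_{\theta,C_f}=|A|-1$. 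Combining the base case with the inductive step and Proposition \ref{dim of wide} completes the proof.
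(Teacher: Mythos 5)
Your overall plan has a genuine gap: you propose to establish condition (c) of Proposition \ref{dim of wide}, namely $\dim_\R W_\theta=|A|-|\theta|$. But this equality is precisely Conjecture \ref{span TF and W_theta} (equivalently \eqref{span TF and W_theta 2} under the ray condition), which the paper explicitly leaves open for $E$-tame algebras; it is only established for rigid $\theta$ (Proposition \ref{W_theta=A-theta}) and for hereditary algebras (Theorem \ref{describe TF for hereditary}). Note the logical structure: (c) is strictly stronger than the target (a)$\Leftrightarrow$(b), and the paper proves (a) without proving (c).

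The concrete error in your sketch of the $\ell=1$ base case is the claim ``for tame $\theta$ a general $f\in\Hom(\theta)$ has $E(f,f)=0$.'' That is false for tame non-rigid $\theta$: $E(f,f)=0$ for some $f$ is exactly the definition of $\theta$ being \emph{rigid} (Definition 2.24, via Serre duality \eqref{Serre duality}). Tameness means $E(\theta,\theta)=\min_{(f,g)}E(f,g)=0$, which is achieved for a general \emph{pair} $(f,g)$ off the diagonal; it says nothing about $E(f,f)$. Indeed, if $\theta$ is non-rigid then $E(f,f)>0$ for every $f$. So Proposition \ref{codim=E} does not give a dense orbit, and the conclusion that $C_f\simeq K_{\nu f}$ is a brick does not follow. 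In the paper that brick conclusion is obtained only under a stronger hypothesis---representation-tameness---via Proposition \ref{tameness}(b) and used in Proposition \ref{tame brick}(b); for general $E$-tame algebras it is unknown (this is exactly the ``interesting question'' mentioned after Proposition \ref{D f largest 2}).

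The paper's actual proof circumvents all of this. It introduces the cones $D_f$ and $D_\eta=\bigcup_{f\in\Hom(\eta)}D_f$ (Definition \ref{D f}), shows they are rational polyhedral and union of TF classes (Lemma \ref{properties of D_f}), proves the pivotal Theorem \ref{summand not TF} ($A$ is $E$-tame iff TF equivalence coincides with having the same set of indecomposable canonical summands), establishes $[\theta]_{\rm TF}\subseteq D_\theta$ (Proposition \ref{direct sum D eta}), and then, after reducing to a \emph{maximal} $\theta$ via Lemma \ref{extend to maximal}, uses the fact that $D_\theta\cap K_0(\proj A)$ lies in $\cone\{\theta_1,\ldots,\theta_\ell\}$ together with rationality of $D_\theta$ to trap $[\theta]_{\rm TF}$ inside the cone. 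Lemma \ref{TF in subspace} then pins down the exact relative interior. You would need this (or some other) mechanism; the dimension-counting route is not available with current technology.
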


It suffices to prove the ``$\subseteq$'' part. 
We start with the following basic properties.

\begin{proposition}\label{linear independence2}
Let $A$ be a finite dimensional algebra which is $E$-tame.
\begin{enumerate}[\rm(a)]
\item $A$ satisfies the ray condition.
\item Let $\theta=\theta_1\oplus\cdots\oplus\theta_\ell$ be a canonical decomposition such that $\theta_i \ne \theta_j$ for each  $i \ne j$. Then $\theta_1,\ldots,\theta_\ell$ are linearly independent.
In particular, $\ell\le |A|$ holds.
\end{enumerate}
\end{proposition}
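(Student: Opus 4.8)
The plan is to deduce both parts quickly from the ray--condition results of Section~\ref{Section_ray}, the point being that $E$-tameness makes the ray condition hold for a trivial reason.

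First I would prove (a). Recall that $\theta\in K_0(\proj A)$ is wild precisely when $E(\theta,\theta)\ne 0$. Since $A$ is $E$-tame, $E(\theta,\theta)=0$ for every $\theta\in K_0(\proj A)$, so $K_0(\proj A)$ contains no wild element whatsoever, and in particular no indecomposable wild element. Thus the hypothesis appearing in the definition of the ray condition is never satisfied, and the ray condition holds vacuously.

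Then (b) is immediate. By (a) the algebra $A$ satisfies the ray condition, so Proposition~\ref{linear independence}(a) applies directly to the given canonical decomposition $\theta=\theta_1\oplus\cdots\oplus\theta_\ell$ with pairwise distinct summands and shows that $\theta_1,\ldots,\theta_\ell$ are linearly independent in $K_0(\proj A)$. Since $K_0(\proj A)$ is free abelian of rank $|A|$, a linearly independent family of size $\ell$ forces $\ell\le |A|$; equivalently, this is recorded in Proposition~\ref{linear independence}(c) via $|\theta|=\#\ind\theta=\ell$.

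There is no real obstacle here: the only content is the observation that an $E$-tame algebra has no wild elements, so that Proposition~\ref{linear independence} applies on the nose. The substantive work --- identifying $[\theta]_{\rm TF}$ with $\cone^\circ(\ind\theta)$, in particular the ``$\subseteq$'' inclusion --- is left to the proof of Theorem~\ref{describe TF for E-tame} itself, where this linear independence will be used together with Theorem~\ref{cup T_f} and the structure of the semistable torsion pairs.
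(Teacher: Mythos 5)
Your proof is correct and takes the same route as the paper's: part (a) is a vacuous instance of the ray condition because $E$-tameness means no wild elements, and part (b) is just an application of Proposition~\ref{linear independence}(a) (with the bound $\ell\le|A|$ coming from linear independence in a rank-$|A|$ lattice, as Proposition~\ref{linear independence}(c) also records). The paper's proof is exactly this, stated more tersely.
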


\begin{proof}
(a) is clear since there is no wild element in $K_0(\proj A)$. (b) follows from Proposition \ref{linear independence}.
\end{proof}

The main tools in this section are the following subsets of $K_0(\proj A)_\R$. 

\begin{definition}\label{D f}
For each $\eta \in K_0(\proj A)$ and $f \in \Hom(\eta)$, we set
\begin{align*}
D_f&:=\{ \theta \in K_0(\proj A)_\R \mid 
\TT_f \subseteq \overline{\TT}_\theta, \ \FF_f\subseteq \overline{\FF}_\theta\}=\{\theta\in K_0(\proj A)_\R \mid C_f\in\overline{\TT}_\theta,\ K_{\nu f}\in\overline{\FF}_\theta\},\\
D_\eta&:=\bigcup_{f\in\Hom(\eta)}D_f.
\end{align*}
\end{definition}

We collect basic properties of $D_f$.

\begin{lemma}\label{properties of D_f}
For each $\eta \in K_0(\proj A)$ and $f \in \Hom(\eta)$, the following assertions hold.
\begin{enumerate}[\rm(a)]
\item $D_f$ is a union of some TF equivalence classes.
\item $D_f$ is a rational polyhedral cone in $K_0(\proj A)_\R$.
\item The set $\{D_f \mid f \in \Hom(\eta)\}$ is finite.
\item If $C_f\simeq K_{\nu f}$, then $D_f=\Theta_{C_f}$.
\end{enumerate}
\end{lemma}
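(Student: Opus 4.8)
The statement to prove is Lemma \ref{properties of D_f}, describing the basic properties of the sets $D_f$. The plan is to treat the four parts in the order (d), (b), (a), (c), since (d) is the most concrete and (a)--(c) follow from general machinery established earlier.

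First I would prove (d). Assume $C_f \simeq K_{\nu f}$. By Lemma \ref{Nakayama T_f}(c), for any $X \in \mod A$ we have $[f](X) = \dim_k \Hom_A(C_f,X) - \dim_k \Hom_A(X, K_{\nu f})$, so when $C_f \simeq K_{\nu f}$ this reads $[f](X) = \dim_k \Hom_A(C_f,X) - \dim_k \Hom_A(X, C_f)$. Now take $\theta \in D_f$: this means $C_f \in \overline{\TT}_\theta$ and $K_{\nu f} = C_f \in \overline{\FF}_\theta$, i.e. $C_f \in \WW_\theta$, which is precisely $\theta \in \Theta_{C_f}$. Conversely, if $\theta \in \Theta_{C_f}$ then $C_f \in \WW_\theta = \overline{\TT}_\theta \cap \overline{\FF}_\theta$, so $C_f \in \overline{\TT}_\theta$ and $K_{\nu f} = C_f \in \overline{\FF}_\theta$, i.e. $\theta \in D_f$. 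Hence $D_f = \Theta_{C_f}$.

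Next, for (b), I would observe that $D_f = \{\theta \mid C_f \in \overline{\TT}_\theta\} \cap \{\theta \mid K_{\nu f} \in \overline{\FF}_\theta\}$, and by Definition \ref{define T_theta} we have $\{\theta \mid C_f \in \overline{\TT}_\theta\} = \bigcap_{Y} \{\theta \mid \theta(Y) \ge 0\}$ where $Y$ ranges over factor modules of $C_f$, and dually $\{\theta \mid K_{\nu f} \in \overline{\FF}_\theta\} = \bigcap_{Y'}\{\theta \mid \theta(Y') \le 0\}$ where $Y'$ ranges over submodules of $K_{\nu f}$. Since $C_f$ and $K_{\nu f}$ are finite dimensional, their factor modules resp. submodules realize only finitely many dimension vectors in $K_0(\mod A)$, so $D_f$ is an intersection of finitely many rational half-spaces, hence a rational polyhedral cone (conical because each defining inequality is homogeneous). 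For (a), note that from the description $D_f = \{\theta \mid \overline{\TT}_f \subseteq \overline{\TT}_\theta,\ \overline{\FF}_f \subseteq \overline{\FF}_\theta\}$ — using $\TT_f = \T(C_f)$ so $\TT_f \subseteq \overline{\TT}_\theta \iff C_f \in \overline{\TT}_\theta \iff \overline{\TT}_f \subseteq \overline{\TT}_\theta$ is not literally needed; rather, membership in $D_f$ depends only on the pair $(\overline{\TT}_\theta, \overline{\FF}_\theta)$, which is constant on TF equivalence classes by the very definition of TF equivalence. Hence $D_f$ is a union of TF equivalence classes. Finally (c): since each $D_f$ is a rational polyhedral cone defined by inequalities coming from the (finitely many possible) dimension vectors of factor modules of $C_f$ and submodules of $K_{\nu f}$, and since $C_f = \Cokernel f$ and $K_{\nu f} = \Kernel \nu f$ both have dimension bounded in terms of the fixed projectives $P_0^\eta, P_1^\eta$ (their dimension vectors lie in a bounded region as $f$ ranges over $\Hom(\eta)$), there are only finitely many possible sets of defining inequalities, so $\{D_f \mid f \in \Hom(\eta)\}$ is finite. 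Alternatively, one can invoke upper semicontinuity (Lemma \ref{open}) of the conditions $X \in \overline{\TT}_h$ and $X \in \overline{\FF}_h$ together with the fact that $\Hom(\eta)$ is irreducible and noetherian.

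The main obstacle I anticipate is the careful bookkeeping in (c): one must argue rigorously that only finitely many distinct cones $D_f$ can occur. The cleanest route is probably to note that $D_f$ is determined by the finite list of dimension vectors of factor modules of $C_f$ together with that of submodules of $K_{\nu f}$, and that these dimension vectors are all bounded (coordinatewise by the dimension vector of $P_0^\eta$ for $C_f$ and of $\nu P_1^\eta$ for $K_{\nu f}$), so there are finitely many possibilities overall; this avoids any delicate constructibility argument. The other parts are essentially immediate once part (d) is in hand and the homogeneous-linear-inequality description of $D_f$ is written down.
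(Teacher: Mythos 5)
Your proof is correct and follows essentially the same route as the paper: express $D_f$ as an intersection of finitely many rational half-spaces coming from the dimension vectors of factor modules of $C_f$ and submodules of $K_{\nu f}$, which gives (b) and (c) simultaneously since those dimension vectors are bounded independently of $f$; (a) is immediate because membership in $D_f$ depends only on the torsion pair data of $\theta$; and (d) reduces to $C_f \in \WW_\theta$ when $C_f \simeq K_{\nu f}$.
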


\begin{proof}
(a) is clear from definition. To prove (b) and (c), let 
\begin{align*}S_f:=\{\dimv  Y\mid\text{$Y$ is a factor module of $C_f$}\}\ \text{ and }\ S'_f:=\{\dimv  Y\mid\text{$Y$ is a submodule of $K_{\nu f}$}\}.\end{align*}
Then (b) follows from
\begin{equation}\label{describe D_f}
D_f=\{\theta\in K_0(\proj A)_\R\mid\theta(d)\ge0\ge\theta(d')\ \text{ for all }d\in S_f,\ d'\in S'_f\}.
\end{equation}
Since the set $\{\dim_kC_f,\dim_kK_{\nu f}\mid f\in\Hom(\eta)\}$ is finite, the set $\{S_f,S'_f\mid f\in\Hom(\eta)\}$ is also finite. Thus (c) follows from \eqref{describe D_f}.
Finally (d) follows from
\begin{align*}&D_f=\{\theta\in K_0(\proj A)\mid C_f\in\overline{\TT}_\theta,\ K_{\nu f}\in\overline{\FF}_\theta\}=\{\theta\in K_0(\proj A)\mid C_f\in\WW_\theta\}=\Theta_{C_f}.\qedhere\end{align*}
\end{proof}

Using results in Section \ref{section 4}, we are able to prove the following key properties.

\begin{proposition}\label{direct sum D eta}
Assume that $A$ is $E$-tame.  Let $\eta, \theta \in K_0(\proj A)$.
\begin{enumerate}[\rm (a)]
\item $\eta \oplus \theta$ holds if and only if $\theta \in D_\eta$ if and only if $\eta\in D_\theta$.
\item $D_\theta$ is a union of some TF equivalence classes and contains $[\theta]_{\rm TF}$.
\end{enumerate}
\end{proposition}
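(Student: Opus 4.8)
\textbf{Proof plan for Proposition \ref{direct sum D eta}.}

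The plan is to combine the equivalences in Proposition \ref{P Q[1]} with the ``union over morphisms'' description of semistable torsion classes from Theorem \ref{cup T_f}, exploiting tameness to drop the ``$\ell\ge1$'' in that theorem. For part (a), I would first unwind the definition of $D_\eta$: by Definition \ref{D f}, $\theta\in D_\eta$ means there is some $f\in\Hom(\eta)$ with $C_f\in\overline{\TT}_\theta$ and $K_{\nu f}\in\overline{\FF}_\theta$, equivalently $\TT_f\subseteq\overline{\TT}_\theta$ and $\FF_f\subseteq\overline{\FF}_\theta$. Now apply Corollary \ref{C f T bar 2} with the roles $\eta\leftrightarrow\theta$ adjusted: since $A$ is $E$-tame, every element of $K_0(\proj A)$ is tame, so $\theta$ is tame, and condition (a) of Corollary \ref{C f T bar 2} (namely $\TT_f\subseteq\overline{\TT}_\theta$, $\FF_f\subseteq\overline{\FF}_\theta$ for this $f$) is equivalent to condition (c), i.e.\ the existence of $g\in\Hom(\theta)$ with $E(f,g)=0=E(g,f)$. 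By Proposition \ref{decomposition}(a) this is exactly $\eta\oplus\theta$. So $\theta\in D_\eta$ iff there exists $f\in\Hom(\eta)$ satisfying (a) of Corollary \ref{C f T bar 2} iff $\eta\oplus\theta$; and since $\eta\oplus\theta$ is symmetric in $\eta,\theta$ (it just says $E(\eta,\theta)=0=E(\theta,\eta)$ by Proposition \ref{decomposition}(a)), we also get $\eta\in D_\theta$. This gives all three equivalences in (a).

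For part (b), that $D_\theta$ is a union of TF equivalence classes follows immediately from Lemma \ref{properties of D_f}(a): each $D_f$ is such a union, and $D_\theta=\bigcup_{f\in\Hom(\theta)}D_f$ is a union of unions. It remains to show $[\theta]_{\rm TF}\subseteq D_\theta$; since $D_\theta$ is a union of TF classes, it suffices to check $\theta\in D_\theta$, i.e.\ $\theta\in D_\eta$ with $\eta=\theta$. By part (a) this amounts to $\theta\oplus\theta$, which holds precisely because $A$ is $E$-tame (so $E(\theta,\theta)=0$, hence $\theta$ is tame, hence $\theta\oplus\theta$ by the remark after Proposition \ref{decomposition}).

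I do not expect a serious obstacle here: the proposition is essentially a repackaging of Corollary \ref{C f T bar 2} (the tame case) together with the symmetry of $E(-,-)$ vanishing and Lemma \ref{properties of D_f}(a). The one point that needs a little care is making sure the quantifier over $f$ is handled correctly — in Definition \ref{D f}, $D_\eta$ is the union over all $f\in\Hom(\eta)$, and Corollary \ref{C f T bar 2} is stated for a fixed $f$, so I must note that $\theta\in D_\eta$ holds iff \emph{some} $f$ satisfies its condition (a), which is then iff \emph{that} $f$ satisfies (c), which is iff $\eta\oplus\theta$ (a condition not depending on $f$); this is consistent and the existential collapses cleanly. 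The mild asymmetry between $E$ in the two arguments is also a non-issue precisely because $\eta\oplus\theta$ is defined as the conjunction $E(\eta,\theta)=0=E(\theta,\eta)$.
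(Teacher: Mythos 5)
Your proof is correct and takes essentially the same route as the paper: the paper cites Corollary \ref{C f T bar}(a)$\Leftrightarrow$(c) directly (which already packages the existential over $f\in\Hom(\eta)$, so that (a) is literally ``$\theta\in D_\eta$''), whereas you unwind via Corollary \ref{C f T bar 2} and perform the existential collapse by hand — the same argument, just one layer more explicit. Part (b) is handled identically, via Lemma \ref{properties of D_f}(a) and $\theta\in D_\theta$ from $\theta\oplus\theta$.
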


\begin{proof}
(a) Since $A$ is $E$-tame, $\theta$ is tame. Thus the assertion follows from Corollary \ref{C f T bar}(c)$\Leftrightarrow$(a).

(b) The first assertion  is immediate from Lemma \ref{properties of D_f}(a). Since $\theta\in D_\theta$ holds by (a), the second assertion follows.
\end{proof}

The following observation is crucial.

\begin{theorem}\label{summand not TF}
For a finite dimensional algebra $A$, the following conditions are equivalent.
\begin{enumerate}[\rm(a)]
\item $A$ is $E$-tame. 
\item Let $\eta, \theta \in K_0(\proj A)$. Then $\eta$ and $\theta$ are TF equivalent if and only if $\ind\eta=\ind\theta$.
\end{enumerate}
\end{theorem}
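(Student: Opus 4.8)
The plan is to prove the two implications separately, with the bulk of the work going into (a)$\Rightarrow$(b). The implication (b)$\Rightarrow$(a) should be the easy direction: if $A$ is not $E$-tame, there is a wild indecomposable $\theta\in K_0(\proj A)$, and then $\theta$ and $2\theta$ are TF equivalent (since $[\theta]_{\rm TF}=[\ell\theta]_{\rm TF}$ always holds), yet $\ind\theta=\{\theta\}\neq\{2\theta\}=\ind 2\theta$; this contradicts (b), exactly as noted in the Remark following Corollary \ref{ind and TF}. So the contrapositive gives (b)$\Rightarrow$(a) immediately.

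For (a)$\Rightarrow$(b), the ``if'' part ($\ind\eta=\ind\theta\Rightarrow$ TF equivalent) is already Corollary \ref{ind and TF} and needs no $E$-tameness. The substance is the ``only if'' part: assuming $A$ is $E$-tame, if $\eta$ and $\theta$ are TF equivalent then $\ind\eta=\ind\theta$. First I would reduce to showing $\ind\eta\subseteq\ind\theta$, since the reverse inclusion follows by symmetry. Write the canonical decomposition $\eta=\bigoplus_{i=1}^\ell\eta_i$; by Theorem \ref{decomposition and T}(b) and Proposition \ref{linear independence2} each $\eta_i$ is tame, the $\eta_i$ (taken distinct) are linearly independent, and $\cone^\circ(\ind\eta)\subseteq[\eta]_{\rm TF}=[\theta]_{\rm TF}$. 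The key tool is the set $D_\theta=\bigcup_{g\in\Hom(\theta)}D_g$ from Definition \ref{D f}: by Proposition \ref{direct sum D eta}, $D_\theta$ is a union of TF equivalence classes and contains $[\theta]_{\rm TF}$, and for any $\mu\in K_0(\proj A)$ we have $\mu\oplus\theta$ iff $\mu\in D_\theta$. Fix an index $i$. Since $\eta_i\in[\eta]_{\rm TF}=[\theta]_{\rm TF}\subseteq D_\theta$ and $D_\theta$ is a union of TF classes, in fact $[\eta_i]_{\rm TF}\subseteq D_\theta$; hence $\eta_i\oplus\theta$ by Proposition \ref{direct sum D eta}(a). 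Now I want to conclude $\eta_i\in\ind\theta$. Write $\theta=\bigoplus_{j=1}^m\theta_j$ canonically. From $\eta_i\oplus\theta$ and $\theta_j\oplus\theta_{j'}$ for $j\neq j'$, Proposition \ref{decompose_2step}(c) gives $\eta_i\oplus\theta_j$ for every $j$; then $\bigoplus_j\theta_j$ together with $\eta_i\oplus\theta_j$ for all $j$, combined with $\eta_i$ being indecomposable, forces — via Proposition \ref{decomposition}(a) and uniqueness of canonical decompositions (Proposition \ref{decomposition}(b)) — that $\{\eta_i\}\oplus(\theta_1\oplus\cdots\oplus\theta_m)$ is itself a canonical decomposition of $\eta_i+\theta$, so that $\ind(\eta_i+\theta)=\{\eta_i\}\cup\ind\theta$.

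To upgrade ``$\eta_i\oplus\theta$'' to ``$\eta_i\in\ind\theta$'' I still need to rule out that $\eta_i$ is a genuinely new direction. Here is where TF equivalence and $E$-tameness must be combined more carefully. Since $\eta$ and $\theta$ are TF equivalent and $A$ is $E$-tame, Theorem \ref{describe TF for E-tame} applies (its proof is the main content of this subsection, but we may invoke it): $[\theta]_{\rm TF}=\cone^\circ(\ind\theta)$ and $[\eta]_{\rm TF}=\cone^\circ(\ind\eta)$. Therefore $\cone^\circ(\ind\eta)=\cone^\circ(\ind\theta)$. By Proposition \ref{linear independence2}(b) both $\ind\eta$ and $\ind\theta$ are linearly independent sets, and the relative-interior cones they span coincide; a relatively open simplicial cone determines its set of extreme rays, and since the generators are primitive indecomposable elements of $K_0(\proj A)$ (canonical summands), each ray contains exactly one of them, so $\ind\eta=\ind\theta$. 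This completes (a)$\Rightarrow$(b).

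\textbf{Main obstacle.} The delicate point is the last step: passing from the equality of the open cones $\cone^\circ(\ind\eta)=\cone^\circ(\ind\theta)$ to the equality of the generating sets $\ind\eta=\ind\theta$. This uses that canonical summands are \emph{primitive} lattice vectors spanning distinct rays — which is exactly what linear independence (Proposition \ref{linear independence2}(b)) plus the ray condition (Proposition \ref{linear independence2}(a), ruling out collinear tame summands by uniqueness of canonical decomposition) deliver. If one wants a proof independent of Theorem \ref{describe TF for E-tame} (to avoid any appearance of circularity in how the subsection is organized), the alternative is to run the $D_\theta$ argument in both directions: show $\eta_i\oplus\theta$ and $\theta_j\oplus\eta$ for all $i,j$, deduce via Proposition \ref{decompose_2step} that $\eta_i\oplus\theta_j$ for all $i,j$, hence $\ind\eta\cup\ind\theta$ is the canonical decomposition of $\eta+\theta$ by uniqueness; then a dimension count using that $\cone^\circ(\ind\eta),\cone^\circ(\ind\theta)\subseteq[\eta+\theta]_{\rm TF}$ and Proposition \ref{dim of wide} (the ray condition holds by Proposition \ref{linear independence2}(a)) forces $|\eta+\theta|=|\eta|=|\theta|$, so no new summands appear and $\ind\eta=\ind\theta$. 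Either route works; I would present the cleaner one using $D_\theta$ and the uniqueness of canonical decompositions, citing Proposition \ref{direct sum D eta} and Proposition \ref{decompose_2step} as the load-bearing lemmas.
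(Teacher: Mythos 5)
There are genuine gaps in your argument for (a)$\Rightarrow$(b), the "only if" direction, which is the heart of the theorem.

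First, a local error: the assertion $\eta_i\in[\eta]_{\rm TF}$ is false whenever $\eta$ has more than one indecomposable summand. Indeed $[\eta]_{\rm TF}\supseteq\cone^\circ(\ind\eta)$, and $\cone^\circ$ is the \emph{relative interior}: it consists of strictly positive combinations, so the generator $\eta_i$ lies on its boundary, not inside. What is true is that $\eta_i\in\cone(\ind\eta)\subseteq\overline{[\eta]_{\rm TF}}=\overline{[\theta]_{\rm TF}}\subseteq D_\theta$ (the last step since $D_\theta$ is a closed rational polyhedral cone containing $[\theta]_{\rm TF}$, by Theorem \ref{D f largest} and Proposition \ref{direct sum D eta}). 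So $\eta_i\oplus\theta$ can still be salvaged, but by a different route than the one you wrote.

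The more serious problem is the next step. You observe yourself that passing from "$\eta_i\oplus\theta$ for all $i$" to "$\ind\eta\subseteq\ind\theta$" is the delicate point, and your main route invokes Theorem \ref{describe TF for E-tame}. This is circular: the paper's proof of Theorem \ref{describe TF for E-tame} goes through Lemma \ref{TF in subspace}, whose proof explicitly cites Theorem \ref{summand not TF}(a)$\Rightarrow$(b). So that theorem is downstream of the statement you are proving, not available to you. Your sketched alternative, the "dimension count via Proposition \ref{dim of wide}," has the same problem: Proposition \ref{dim of wide} is a statement of equivalences between conjectural conditions (its item (b) is exactly $\dim_\R\langle[\theta]_{\rm TF}\rangle_\R=|\theta|$), so you cannot use it to derive $|\eta+\theta|=|\eta|$ without already knowing one of its equivalent conditions, which again amounts to the thing being proved.

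What you are missing is the perturbation device, Lemma \ref{extend opposite}. The paper argues as follows: since $\theta\in[\eta]_{\rm TF}\subset D_\eta$, there is $f\in\Hom(\eta)$ with $C_f\in\overline{\TT}_\theta$ and $K_{\nu f}\in\overline{\FF}_\theta$. Because $\theta$ and $\eta$ are TF equivalent, Lemma \ref{extend opposite}(c) and its dual let one replace $\theta$ by $\theta-\eta/\ell$ for $\ell\gg0$ while keeping $C_f\in\overline{\TT}_{\theta-\eta/\ell}$ and $K_{\nu f}\in\overline{\FF}_{\theta-\eta/\ell}$. Hence $\theta-\eta/\ell\in D_f\subset D_\eta$, which by Proposition \ref{direct sum D eta}(a) gives $\eta\oplus(\ell\theta-\eta)$. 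Since $\theta$ is tame, $\ell\theta=\theta^{\oplus\ell}$, and then $\ell\theta=\eta\oplus(\ell\theta-\eta)$ combined with uniqueness of canonical decompositions yields $\ind\eta\subseteq\ind\theta$ directly, with no dimension count and no appeal to the later theorem. By symmetry $\ind\eta=\ind\theta$. This step, moving the point $\theta$ strictly into the cone $D_f$ along a TF-equivalent direction, is the mechanism that makes the uniqueness of canonical decompositions bite; your proposal never finds it, which is why you are forced to reach for a circular shortcut.

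A smaller remark on (b)$\Rightarrow$(a): your contrapositive asserts $\ind(2\theta)=\{2\theta\}$ for $\theta$ indecomposable wild, but that equality presupposes the ray condition, which Example \ref{counter to ray 2} shows can fail. You do not need it: if $\ind(2\theta)=\ind\theta=\{\theta\}$ then $2\theta=\theta\oplus\theta$, contradicting wildness, and this is exactly the clean direct argument the paper gives.
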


\begin{proof}
(a)$\Rightarrow$(b) The ``if part'' is Corollary \ref{ind and TF}. We prove the ``only if'' part.
Since $\theta \in [\eta]_{\rm TF}\subset D_\eta$ by Proposition \ref{direct sum D eta}(b), there exists $f \in \Hom(\eta)$ such that $\theta\in D_f$, 
that is, $C_f \in \overline{\TT}_\theta$ and $K_{\nu f} \in \overline{\FF}_\theta$.
By Lemma \ref{extend opposite}(c) and its dual, there exists sufficiently large $\ell\in\N$ such that $C_f \in \overline{\TT}_{\theta-\eta/\ell}$ and $K_{\nu f} \in \overline{\FF}_{\theta-\eta/\ell}$.
Thus $\theta-\eta/\ell \in D_f\subset D_\eta$ and hence $\eta\oplus (\ell\theta-\eta)$ by Proposition \ref{direct sum D eta}(a). The equality
\begin{align*}\theta^{\oplus\ell}=\ell\theta=\eta\oplus (\ell\theta-\eta)\end{align*}
and the uniqueness of canonical decompositions implies $\ind\eta\subset\ind\theta$. By symmetry, we have $\ind\eta=\ind\theta$.

(b)$\Rightarrow$(a) Let $\theta\in K_0(\proj A)$ be indecomposable. Since $\theta$ and $2\theta$ are TF equivalent, $2\theta$ has $\theta$ as a direct summand. Thus $2\theta=\theta\oplus\theta$ holds, and $A$ is $E$-tame.
\end{proof}

Immediately we obtain the following result.

\begin{lemma}\label{TF in subspace}
Assume that $A$ is $E$-tame. 
Let $\bigoplus_{i=1}^m\theta_i$ be a canonical decomposition such that $\theta_i\neq\theta_j$ for each $i\neq j$, and $\theta=\bigoplus_{i=1}^\ell \theta_i$ for $0\le \ell\le m$.
Then we have
\begin{align*}[\theta]_{\rm TF}\cap \cone\{\theta_1,\ldots,\theta_m\}=\cone^\circ\{\theta_1,\ldots,\theta_\ell\}.\end{align*}
\end{lemma}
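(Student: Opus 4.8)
\textbf{Proof proposal for Lemma \ref{TF in subspace}.}

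The plan is to prove the two inclusions separately, using Theorem \ref{decomposition and T} for one direction and Theorem \ref{summand not TF} together with Proposition \ref{linear independence2} for the other. First I would establish ``$\supseteq$''. Since $\bigoplus_{i=1}^m\theta_i$ is a canonical decomposition, by Proposition \ref{decompose_2step}(a) its sub-sum $\theta=\bigoplus_{i=1}^\ell\theta_i$ is again a canonical decomposition; hence Theorem \ref{decomposition and T}(b) gives $\cone^\circ\{\theta_1,\ldots,\theta_\ell\}\subseteq[\theta]_{\rm TF}$, and this cone obviously lies in $\cone\{\theta_1,\ldots,\theta_m\}$. So $\cone^\circ\{\theta_1,\ldots,\theta_\ell\}$ is contained in the intersection on the left-hand side.

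For the reverse inclusion ``$\subseteq$'', take $\eta=\sum_{i=1}^m a_i\theta_i$ with all $a_i\ge0$, lying in $[\theta]_{\rm TF}$. I would first use Theorem \ref{decomposition and T}(b) (or rather Theorem \ref{decomposition and T}(a) applied to the sub-sum $\bigoplus_{a_i\ne0}\theta_i$, noting this is a canonical decomposition by Proposition \ref{decompose_2step}(a)) to conclude that $\eta$ is TF equivalent to $\eta':=\sum_{a_i\ne0}\theta_i$, whose canonical decomposition is $\bigoplus_{a_i\ne0}\theta_i$. Thus $\ind\eta=\{\theta_i\mid a_i\ne0\}$. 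Since $\eta\in[\theta]_{\rm TF}$, transitivity of TF equivalence gives that $\theta$ and $\eta'$ are TF equivalent, so by Theorem \ref{summand not TF} (using that $A$ is $E$-tame) we get $\ind\theta=\ind\eta'$, i.e.\ $\{\theta_1,\ldots,\theta_\ell\}=\{\theta_i\mid a_i\ne0\}$. Because the $\theta_i$ for $1\le i\le m$ are pairwise distinct, this forces $a_i\ne0$ exactly when $1\le i\le\ell$, whence $\eta=\sum_{i=1}^\ell a_i\theta_i$ with all $a_i>0$, i.e.\ $\eta\in\cone^\circ\{\theta_1,\ldots,\theta_\ell\}$.

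The only subtle point — and the main thing to get right rather than a genuine obstacle — is the bookkeeping that $\ind\eta=\{\theta_i\mid a_i\ne0\}$: one must invoke Proposition \ref{linear independence2}(b) to know the $\theta_i$ are linearly independent, so that different choices of support $\{i\mid a_i\ne0\}$ really do give combinatorially different (and, via distinctness of the $\theta_i$, genuinely different as multisets) canonical decompositions, and hence the equality of index sets pins down the support uniquely. Once that is in place, combining the two inclusions yields $[\theta]_{\rm TF}\cap\cone\{\theta_1,\ldots,\theta_m\}=\cone^\circ\{\theta_1,\ldots,\theta_\ell\}$, as claimed. Note the edge cases $\ell=0$ (where $\theta=0$, $[\theta]_{\rm TF}=\{0\}$, and $\cone^\circ\emptyset=\{0\}$) and $\ell=m$ are handled uniformly by this argument.
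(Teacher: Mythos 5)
Your proof is correct and follows the paper's own argument: ``$\supseteq$'' from Theorem \ref{decomposition and T}, and ``$\subseteq$'' by noting $\eta$ is TF equivalent to $\eta':=\sum_{a_i\ne 0}\theta_i$ (Theorem \ref{decomposition and T}), then comparing $\ind$-sets via Theorem \ref{summand not TF} and using pairwise distinctness of the $\theta_i$. Two tiny quibbles: you write $\ind\eta$ where you mean $\ind\eta'$ (the $a_i$ are real, so $\ind\eta$ need not be defined), and the appeal to Proposition \ref{linear independence2}(b) is superfluous, since distinctness of the $\theta_i$ already forces $\{i\mid a_i\ne 0\}=\{1,\ldots,\ell\}$ once $\ind\theta=\ind\eta'$.
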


\begin{proof}
Since ``$\supseteq$'' holds by Theorem \ref{decomposition and T}, it suffices to prove ``$\subseteq$''.
Take any element $\eta=\sum_{i=1}^m a_i\theta_i$ with $a_i\ge0$ in the left-hand side. By Theorem \ref{decomposition and T}, $\eta$ is TF equivalent to $\bigoplus_{a_i\neq0}\theta_i$, and so is $\theta$. By Theorem \ref{summand not TF}(a)$\Rightarrow$(b), $a_i\neq0$ holds if and only if $1\le i\le\ell$. Thus $\eta\in\cone^\circ\{\theta_1,\ldots,\theta_\ell\}$.
\end{proof}

We call $\theta\in K_0(\proj A)$ \textit{maximal} if, for each $\eta\in K_0(\proj A)$ such that $\theta\oplus\eta$, each indecomposable direct summand of $\eta$ appears in a canonical decomposition of $\theta$.

The following is clear from Proposition \ref{linear independence2}(b).

\begin{lemma}\label{extend to maximal}
Assume that $A$ is $E$-tame. Then for any $\theta\in K_0(\proj A)$, there exists $\theta'\in K_0(\proj A)$ such that $\theta\oplus\theta'$ holds and $\theta\oplus\theta'$ is maximal.
\end{lemma}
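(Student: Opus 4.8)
The plan is to leverage the uniform bound on the length of canonical decompositions that $E$-tameness provides. By Proposition \ref{linear independence2}(b), for every $\zeta \in K_0(\proj A)$ the number $\#\ind\zeta$ of pairwise distinct indecomposable direct summands in its canonical decomposition is at most $|A|$. Consequently the quantity
\[
m := \max\{\, \#\ind(\theta + \nu) \mid \nu \in K_0(\proj A),\ \theta \oplus \nu \,\}
\]
is well defined: the set of values is non-empty (take $\nu = 0$, so that $\theta \oplus 0$ holds trivially) and is contained in $\{0, 1, \ldots, |A|\}$. The first step is simply to fix some $\theta' \in K_0(\proj A)$ with $\theta \oplus \theta'$ realizing this maximum, i.e.\ $\#\ind(\theta \oplus \theta') = m$.

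The substance of the argument is to check that this $\theta \oplus \theta'$ is maximal in the sense of the definition preceding the statement. So I would take an arbitrary $\eta$ with $(\theta \oplus \theta') \oplus \eta$ and choose canonical decompositions $\theta = \bigoplus_i a_i$, $\theta' = \bigoplus_j b_j$, $\eta = \bigoplus_k c_k$. Applying Proposition \ref{decompose_2step}(d) to the two-term direct sum $(\theta \oplus \theta') \oplus \eta$, refined by these decompositions, shows that $\bigoplus_i a_i \oplus \bigoplus_j b_j \oplus \bigoplus_k c_k$ is a canonical decomposition of $\theta + \theta' + \eta$; in particular $\ind(\theta + \theta' + \eta) = \ind\theta \cup \ind\theta' \cup \ind\eta$, and by Proposition \ref{decompose_2step}(a) the $a_i, b_j, c_k$ are pairwise direct summands. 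From $b_j \oplus c_k$ for all $j,k$ one gets $\theta' \oplus \eta$, and then from $a_i \oplus b_j$, $a_i \oplus c_k$ together with Proposition \ref{decompose_2step}(c) one gets $a_i \oplus (\theta' + \eta)$ for each $i$, hence $\theta \oplus (\theta' + \eta)$. Thus $\theta' + \eta$ is an admissible $\nu$, so $\#(\ind\theta \cup \ind\theta' \cup \ind\eta) \le m = \#(\ind\theta \cup \ind\theta')$; since the left-hand set contains the right-hand one, this forces $\ind\eta \subseteq \ind\theta \cup \ind\theta' = \ind(\theta \oplus \theta')$, which is exactly the required conclusion.

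I do not expect a genuine obstacle here: the only real input is the finiteness bound of Proposition \ref{linear independence2}(b) (which rests on the ray condition and linear independence for $E$-tame algebras), and everything else is bookkeeping in the calculus of direct sums in $K_0(\proj A)$. The one point that warrants a little care is the passage from $(\theta \oplus \theta') \oplus \eta$ to $\theta \oplus (\theta' + \eta)$ together with the identification of $\ind(\theta + \theta' + \eta)$ with $\ind\theta \cup \ind\theta' \cup \ind\eta$; both are routine once Proposition \ref{decompose_2step}(a), (c), (d) are invoked in the right order, as above.
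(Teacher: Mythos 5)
Your argument is correct and is precisely the spelled-out version of the proof that the paper leaves to the reader: the paper simply says the lemma is "clear from Proposition~\ref{linear independence2}(b)", and the only content of that reference is the bound $\#\ind\zeta\le|A|$, which you use to define $m$ and take a maximizer. The bookkeeping you do with Proposition~\ref{decompose_2step}(a),(c),(d) to pass from $(\theta\oplus\theta')\oplus\eta$ to $\theta\oplus(\theta'+\eta)$ and to identify $\ind(\theta+\theta'+\eta)=\ind\theta\cup\ind\theta'\cup\ind\eta$ is exactly the routine verification being suppressed, and is carried out correctly.
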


Now we are ready to prove Theorem \ref{describe TF for E-tame}.

\begin{proof}[Proof of Theorem \ref{describe TF for E-tame}]
(i) First, we prove the assertion for the case $\theta\in K_0(\proj A)$ is maximal.

By Proposition \ref{direct sum D eta}(a), we have $D_\theta\cap K_0(\proj A)=\{\eta\in K_0(\proj A)\mid \eta\oplus\theta\}$. Since $\theta$ is maximal, the right-hand side is contained in $\cone\{\theta_1,\ldots,\theta_\ell\}$. Since $D_\theta$ is a union of rational polyhedral cones by Lemma \ref{properties of D_f}(b), we have $D_\theta\subset\cone\{\theta_1,\ldots,\theta_\ell\}$.
By Proposition \ref{direct sum D eta}(b), we have inclusions
\begin{align*}[\theta]_{\rm TF}\subseteq D_\theta\subseteq \cone\{\theta_1,\ldots,\theta_\ell\}.\end{align*}
By Lemma \ref{TF in subspace}, we obtain $[\theta]_{\rm TF}=\cone^\circ\{\theta_1,\ldots,\theta_\ell\}$.

(ii) We prove the assertion for general cases.

By Lemma \ref{extend to maximal}, there exists $\theta'=\theta_{\ell+1}\oplus\cdots\oplus\theta_m$ such that $\theta\oplus\theta'$ is maximal. Applying (i) to $\theta\oplus\theta'$, we have
$\overline{[\theta\oplus\theta']_{\rm TF}}=\cone\{\theta_1,\ldots,\theta_m\}\ni\theta$.
By Proposition \ref{closure of TF}, we have
\begin{align*}[\theta]_{\rm TF}\subset\overline{[\theta\oplus\theta']_{\rm TF}}=\cone\{\theta_1,\ldots,\theta_m\}.\end{align*}
By Lemma \ref{TF in subspace}, we obtain $[\theta]_{\rm TF}=\cone^\circ\{\theta_1,\ldots,\theta_\ell\}$.
\end{proof}

Next we prove the following result which gives a useful description of the set $D_\eta$. 

\begin{theorem}\label{D f largest}
Assume that $A$ is $E$-tame. Let $\eta \in K_0(\proj A)$. 
\begin{enumerate}[\rm (a)]
\item For any general $f\in \Hom(\eta)$, we have $D_f=D_\eta$.
Thus $D_\eta$ is a rational polyhedral cone.
\item $D_\eta$ depends only on the TF equivalence of $\eta$.
\end{enumerate}
\end{theorem}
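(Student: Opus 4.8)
The plan is to reduce everything to the finiteness already recorded in Lemma \ref{properties of D_f}(c) together with the semicontinuity machinery from Section \ref{section 4}. For part (a), recall that by definition $D_\eta=\bigcup_{f\in\Hom(\eta)}D_f$ and that $\{D_f\mid f\in\Hom(\eta)\}$ is a finite set by Lemma \ref{properties of D_f}(c); write it as $\{D_{f_1},\ldots,D_{f_r}\}$. For each $i$ and each $\theta$ in a fixed finite set of rational generators of the cones involved, the condition ``$\theta\in D_f$'' is by \eqref{describe D_f} equivalent to finitely many inequalities $\theta(\dimv Y)\ge0$ (resp.\ $\le 0$) where $Y$ ranges over factor modules of $C_f$ (resp.\ submodules of $K_{\nu f}$), i.e.\ to $C_f\in\overline{\TT}_\theta$ and $K_{\nu f}\in\overline{\FF}_\theta$. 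So what I need is that the set $\{f\in\Hom(\eta)\mid \theta\in D_f\}$ is open for each fixed $\theta$; this is exactly Lemma \ref{open}, since $\theta\in D_f$ means $C_f\in\overline{\TT}_\theta$ and $K_{\nu f}\in\overline{\FF}_\theta$, each of which is an open condition on $f$ (the cokernel and kernel constructions are the specializations of $\overline{\TT}_f,\overline{\FF}_f$ to this situation via Lemma \ref{Nakayama T_f}). Since $\Hom(\eta)$ is irreducible, a finite intersection of nonempty open subsets is nonempty and dense; taking the intersection over all $\theta$ in the (finite) set of generators of $D_\eta$ that lie in some $D_{f_i}$, we find a dense open $V\subset\Hom(\eta)$ such that for $f\in V$ we have $D_f\supseteq D_{f_i}$ for every $i$, hence $D_f\supseteq D_\eta$; the reverse inclusion $D_f\subseteq D_\eta$ is automatic. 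This forces $D_f=D_\eta$ for general $f$, and then $D_\eta$ is a rational polyhedral cone by Lemma \ref{properties of D_f}(b).

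There is one subtlety I need to be careful about: Lemma \ref{open} is stated for fixed $P_0,P_1$, while $\Hom(\eta)=\Hom_A(P_1^\eta,P_0^\eta)$ with $P_0^\eta,P_1^\eta$ having no common summand. The various $f$ realizing the $D_{f_i}$ may live on $\Hom_A(P_1^\eta\oplus Q,P_0^\eta\oplus Q)$ for some $Q$. But by Proposition \ref{exclude common summand} and its proof, we can pull everything back to $\Hom(\eta)$: the sets $D_f$ with $f$ on such a larger space that arise as values of $D_\eta$ are already achieved by $f\in\Hom(\eta)$. So I may and do assume all the $f_i$ lie in $\Hom(\eta)$, and then Lemma \ref{open} applies directly on $\Hom(\eta)$ itself. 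Actually a cleaner route: for any $f\in\Hom(\eta)$ and a general $g\in\Hom(\eta)$, $g$ lies in the closure of the $\Aut\times\Aut$-orbit structure and Proposition \ref{closure and T} would give $D_f\subseteq D_g$ — but I should avoid circularity, so I'll stick with the direct openness argument above, which is self-contained.

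For part (b), suppose $\eta$ and $\eta'$ are TF equivalent. By Proposition \ref{direct sum D eta}(a), for any $\theta\in K_0(\proj A)$ we have $\theta\in D_\eta$ iff $\eta\oplus\theta$, and $\theta\in D_{\eta'}$ iff $\eta'\oplus\theta$; so it suffices to show $\eta\oplus\theta\iff\eta'\oplus\theta$ whenever $[\eta]_{\rm TF}=[\eta']_{\rm TF}$. Now $\eta$ and $\eta'$ being TF equivalent and $A$ being $E$-tame, Theorem \ref{summand not TF}(a)$\Rightarrow$(b) gives $\ind\eta=\ind\eta'$; hence by Proposition \ref{decompose_2step}(a),(c) (a direct sum $\eta\oplus\theta$ is detected summand-by-summand on the canonical decomposition of $\eta$, and $\ell\eta$ has the same indecomposable summands up to the tame/wild dichotomy — but here there are no wild elements), we get $\eta\oplus\theta$ iff each indecomposable summand of $\theta$ is $\oplus$-compatible with each element of $\ind\eta=\ind\eta'$ iff $\eta'\oplus\theta$. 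Since $D_\eta$ and $D_{\eta'}$ are unions of (closed) rational polyhedral cones determined by their lattice points — both equal $D_f$ for general $f$ by part (a), hence are single rational polyhedral cones, and a rational polyhedral cone is the closure of its set of rational points — the equality of lattice points $D_\eta\cap K_0(\proj A)=D_{\eta'}\cap K_0(\proj A)$ upgrades to $D_\eta=D_{\eta'}$.

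The main obstacle I anticipate is the bookkeeping in part (a): making the openness argument genuinely uniform, i.e.\ producing a single dense open $V$ that works simultaneously for all finitely many $D_{f_i}$ and all finitely many generating rays, and correctly handling the passage between $\Hom(\eta)$ and the larger morphism spaces $\Hom_A(P_1\oplus Q,P_0\oplus Q)$ via Proposition \ref{exclude common summand}. The algebraic inputs (Lemma \ref{open}, Lemma \ref{properties of D_f}, Proposition \ref{direct sum D eta}, Theorem \ref{summand not TF}) are all already available, so once the finiteness-plus-irreducibility argument is set up cleanly the rest is routine.
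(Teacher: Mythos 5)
Your overall strategy for part (a) — reduce to the finite set $\{D_{f_1},\dots,D_{f_r}\}$, fix the finitely many rational generators of these cones, and use irreducibility of $\Hom(\eta)$ to find a dense open on which all generators simultaneously lie in $D_f$ — is the same architecture as the paper's proof (which packages this into Lemma \ref{D f larger}). Your treatment of part (b) is also essentially the paper's argument. But there is a genuine gap in the key step of (a).

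You assert that for fixed $\theta$, the set $\{f\in\Hom(\eta)\mid \theta\in D_f\}$ is open ``exactly by Lemma \ref{open}.'' This misapplies that lemma. Lemma \ref{open} says: for a \emph{fixed} module $X$, the sets $\{f\mid X\in\overline{\TT}_f\}$ and $\{f\mid X\in\overline{\FF}_f\}$ are open. The condition $\theta\in D_f$, i.e.\ $C_f\in\overline{\TT}_\theta$ and $K_{\nu f}\in\overline{\FF}_\theta$, is of the opposite shape: the torsion class $\overline{\TT}_\theta$ is fixed and the module $C_f$ varies with $f$. If you unwind via Lemma \ref{Nakayama T_f} — as you gesture at — you get $C_f\in\overline{\TT}_\theta$ iff $\Hom_A(C_f,Y)=0$ for every $Y\in\FF_\theta$, iff $Y\in\overline{\FF}_f$ for every $Y\in\FF_\theta$. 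Each $\{f\mid Y\in\overline{\FF}_f\}$ is open by Lemma \ref{open}, but you are intersecting over \emph{all} $Y\in\FF_\theta$, which is an infinite intersection, not a finite one; this is not a priori open, and Lemma \ref{open} alone does not close the gap.

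The conclusion is nevertheless correct, but requires the semistable-to-morphism bridge from Section \ref{section 4}. Either use Theorem \ref{cup T_f} to write $\overline{\TT}_\theta=\bigcup_{\ell\ge1}\bigcup_{g}\overline{\TT}_g$; then $C_f\in\overline{\TT}_\theta$ iff $E(g,f)=0$ for some such $g$ (Proposition \ref{P Q[1]}), which exhibits $\{f\mid C_f\in\overline{\TT}_\theta\}$ as a \emph{union} of the open sets $\{f\mid E(g,f)=0\}$ (Proposition \ref{upper semi}), hence open. Or follow the paper: by Corollary \ref{C f T bar 2} (using that $\theta$ is tame, from $E$-tameness), $\theta\in D_f$ is equivalent to the existence of $g\in\Hom(\theta)$ with $E(f,g)=E(g,f)=0$, so $\{f\mid\theta\in D_f\}=\pi_1(X_{\eta,\theta})$, which contains a dense open by Chevalley's theorem (Lemma \ref{pi X}); this is exactly the paper's Lemma \ref{D f larger}. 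Either route works, but both are substantively different from ``exactly Lemma \ref{open},'' and you need one of them to make the argument stand. The aside about Proposition \ref{exclude common summand} is unnecessary, since by Definition \ref{D f} the union defining $D_\eta$ is already taken over $f\in\Hom(\eta)$; and the ``cleaner route'' you mention via Proposition \ref{closure and T} has the inclusion reversed and would need more care to rescue.
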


To prove Theorem \ref{D f largest}, we need to define the following set.

\begin{definition}
Let $\eta,\theta \in K_0(\proj A)$.
We define an open subset of $\Hom(\eta) \times \Hom(\theta)$ by
\begin{align*}
X_{\eta,\theta}:=\{(f,g) \in \Hom(\eta) \times \Hom(\theta)
\mid E(f,g)=E(g,f)=0\}.
\end{align*}
\end{definition}

We denote by $\pi_1\colon \Hom(\eta) \times \Hom(\theta) \to \Hom(\eta)$ the first projection.

\begin{lemma}\label{pi X}
Let $\eta,\theta \in K_0(\proj A)$. 
Then $\eta\oplus\theta$ if and only if $X_{\eta,\theta} \ne \emptyset$. In this case, $\pi_1(X_{\eta,\theta})$ contains an open dense subset of $\Hom(\eta)$. 
\end{lemma}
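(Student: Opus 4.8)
The plan is to deduce both assertions directly from the characterization of canonical decompositions via $E$-invariants together with the upper semi-continuity of $E$. First I would recall that by Proposition \ref{decomposition}(a), $\eta \oplus \theta$ holds if and only if $E(\eta,\theta) = 0 = E(\theta,\eta)$, i.e. if and only if there exist $f \in \Hom(\eta)$ and $g \in \Hom(\theta)$ with $E(f,g) = 0 = E(g,f)$; this is precisely the condition $X_{\eta,\theta} \neq \emptyset$. Conversely, if $X_{\eta,\theta} \neq \emptyset$, picking any $(f,g)$ in it gives $E(\eta,\theta) \le E(f,g) = 0$ and $E(\theta,\eta) \le E(g,f) = 0$, hence $\eta \oplus \theta$ by Proposition \ref{decomposition}(a). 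So the first assertion is essentially a restatement of definitions.

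For the second assertion, assume $\eta \oplus \theta$. By Proposition \ref{upper semi}, the function $E(-,-)$ on $\Hom(\eta) \times \Hom(\theta)$ attains its minimum value $E(\eta,\theta) = 0$ on an open dense subset; call it $Y_1 = \{(f,g) \mid E(f,g) = 0\}$. Dually, viewing $E$ as a function on $\Hom(\theta) \times \Hom(\eta)$ and using upper semi-continuity again, the subset $Y_2 = \{(f,g) \in \Hom(\eta)\times\Hom(\theta) \mid E(g,f) = 0\}$ is open dense in $\Hom(\eta)\times\Hom(\theta)$. Then $X_{\eta,\theta} = Y_1 \cap Y_2$ is open dense, being the intersection of two open dense subsets of the irreducible variety $\Hom(\eta) \times \Hom(\theta)$. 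Now I would invoke the elementary fact that the image of an open dense subset of $U \times V$ (with $U, V$ irreducible varieties) under the first projection $\pi_1 \colon U \times V \to U$ contains an open dense subset of $U$: indeed, for a nonempty open set $W \subset U \times V$, the set of $u \in U$ with $W \cap (\{u\} \times V) \neq \emptyset$ is the complement of the projection of the closed set $(U \times V) \setminus W$ along $V$; more concretely one can use that $\pi_1$ restricted to $W$ is dominant since $W$ is dense and $U \times V$ is irreducible, and Chevalley's theorem guarantees the image is constructible, hence contains a dense open subset of its closure $= U$. Applying this to $W = X_{\eta,\theta}$ gives that $\pi_1(X_{\eta,\theta})$ contains an open dense subset of $\Hom(\eta)$.

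The only mildly delicate point is the dominance-of-projection step, but this is standard algebraic geometry over an algebraically closed field and requires no properties special to $A$; everything else is bookkeeping with Propositions \ref{decomposition} and \ref{upper semi}. Note that $E$-tameness of $A$ is not actually needed for this lemma; it will be used in the subsequent application to Theorem \ref{D f largest}, where one additionally needs that for general $f$ the fiber $\pi_1^{-1}(f) \cap X_{\eta,\theta}$ is itself dense in $\Hom(\theta)$ so that $D_f = D_\eta$, but that is beyond the present statement.
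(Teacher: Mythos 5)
Your proposal is correct and follows essentially the same route as the paper: deduce the equivalence from Proposition \ref{decomposition}(a), observe via Proposition \ref{upper semi} that $X_{\eta,\theta}$ is open dense once it is nonempty, and then apply Chevalley's theorem to conclude that the constructible, dense image $\pi_1(X_{\eta,\theta})$ contains an open dense subset of $\Hom(\eta)$. Your remark that $E$-tameness is not needed here is also accurate; the paper likewise states the lemma without that hypothesis.
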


\begin{proof}
The first assertion follows from Proposition \ref{decomposition}.
Now assume $X_{\eta,\theta} \ne \emptyset$. Then $X_{\eta,\theta}$ is open dense in $\Hom(\eta) \times \Hom(\theta)$, and hence $\pi_1(X_{\eta,\theta})$ is dense in $\Hom(\eta)$.
Since $\pi_1(X_{\eta,\theta})$ is a constructible subset of $\Hom(\eta)$ by Chevalley's Theorem, it contains an open dense subset of $\Hom(\eta)$.
\end{proof}

We prepare the following technical observation.

\begin{lemma}\label{D f larger}
Assume that $A$ is $E$-tame. Let $\eta\in K_0(\proj A)$ and $\theta_1,\ldots,\theta_m\in D_\eta\cap K_0(\proj A)$. Then $\theta_1,\ldots,\theta_m \in D_f$ holds for any general $f \in \Hom(\eta)$. 
\end{lemma}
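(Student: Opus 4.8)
The plan is to reduce the claim for $m$ elements to the already-available genericity statements for a single pair, and then intersect finitely many open dense subsets of the irreducible variety $\Hom(\eta)$. First I would fix $i \in \{1,\ldots,m\}$. Since $\theta_i \in D_\eta \cap K_0(\proj A)$, by Proposition \ref{direct sum D eta}(a) we have $\eta \oplus \theta_i$. Hence Lemma \ref{pi X} applies: the set $X_{\eta,\theta_i} = \{(f,g) \in \Hom(\eta)\times\Hom(\theta_i) \mid E(f,g)=E(g,f)=0\}$ is non-empty, so it is open dense in $\Hom(\eta)\times\Hom(\theta_i)$, and its image $\pi_1(X_{\eta,\theta_i})$ contains an open dense subset $U_i \subseteq \Hom(\eta)$.

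Next I would observe that for any $f \in U_i$ there exists $g \in \Hom(\theta_i)$ with $E(f,g) = E(g,f) = 0$; in particular $E(f,g)=0$, so by Proposition \ref{P Q[1]} we get $\TT_g \subseteq \overline{\TT}_f$ and $\FF_f \subseteq \overline{\FF}_g$. Combining with Proposition \ref{W_f and W_theta}, which gives $\TT_{\theta_i} \subseteq \TT_g$ and $\FF_f \supseteq \FF_{\theta_i}$... wait, more directly: $C_f \in \overline{\TT}_f \subseteq$? Let me instead argue via $D_f$ membership. From $E(f,g)=0=E(g,f)$ and Proposition \ref{P Q[1]} we obtain $\FF_f \subseteq \overline{\FF}_g$ and $\TT_g \subseteq \overline{\TT}_f$, i.e.\ $C_f \in \overline{\TT}_g$ wait that's the wrong direction. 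The clean route is: $E(f,g)=0$ gives (Proposition \ref{P Q[1]}) $C_g \in \overline{\TT}_f$ and $K_{\nu f} \in \overline{\FF}_g$; but I want $C_f$ and $K_{\nu f}$ controlled by $\theta_i$, not $g$. So I should use $E(g,f)=0$: this gives, by Proposition \ref{P Q[1]} applied with the roles of $f$ and $g$ swapped, $C_f \in \overline{\TT}_g$ and $K_{\nu g} \in \overline{\FF}_f$. Since $g \in \Hom(\theta_i)$, Proposition \ref{W_f and W_theta} gives $\overline{\TT}_g \subseteq \overline{\TT}_{\theta_i}$ and $\overline{\FF}_g \subseteq \overline{\FF}_{\theta_i}$, hence $C_f \in \overline{\TT}_{\theta_i}$; and from $E(f,g)=0$ we get $K_{\nu f} \in \overline{\FF}_g \subseteq \overline{\FF}_{\theta_i}$. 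Therefore $\theta_i \in D_f$ by the very definition of $D_f$ (Definition \ref{D f}, second description). Thus $U_i \subseteq \{f \in \Hom(\eta) \mid \theta_i \in D_f\}$.

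Finally I would take $U := U_1 \cap \cdots \cap U_m$. Since $\Hom(\eta)$ is irreducible, a finite intersection of open dense subsets is open dense, so $U$ is open dense, and for every $f \in U$ we have $\theta_1,\ldots,\theta_m \in D_f$, which is exactly the conclusion. The main point requiring care is getting the direction of the $E$-vanishing and the Serre-duality interpretation right in Proposition \ref{P Q[1]} — one must invoke both $E(f,g)=0$ and $E(g,f)=0$ (available because $A$ is $E$-tame and $\theta_i$ is tame, forcing $\eta \oplus \theta_i$ to be a genuine direct-sum relation with the symmetric vanishing from Lemma \ref{pi X}), and pair each with the correct monotonicity from Proposition \ref{W_f and W_theta} to land $C_f$ in $\overline{\TT}_{\theta_i}$ and $K_{\nu f}$ in $\overline{\FF}_{\theta_i}$ simultaneously. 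Everything else is the standard "finite intersection of dense opens in an irreducible variety is dense" argument, already used repeatedly in this section.
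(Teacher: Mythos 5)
Your argument is essentially the same as the paper's. The paper follows exactly the same outline: use Proposition \ref{direct sum D eta}(a) to obtain $\eta\oplus\theta_i$, apply Lemma \ref{pi X} to get the open dense subsets $\pi_1(X_{\eta,\theta_i})$, intersect, and then conclude $\theta_i\in D_f$ for $f$ in the intersection; the only difference is that where you unpack the last step by hand via Proposition \ref{P Q[1]} and Proposition \ref{W_f and W_theta} (correctly, after some visible self-correction), the paper simply cites the already-established equivalence (a)$\Leftrightarrow$(c) of Corollary \ref{C f T bar 2}, whose proof is precisely the unpacking you carried out.
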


\begin{proof}
By Proposition \ref{direct sum D eta}(a), $\eta\oplus\theta_i$ holds for each $i$. By Lemma \ref{pi X}, $\pi_1(X_{\eta,\theta_i})$ contains an open dense subset of $\Hom(\eta)$.
Thus $\bigcap_{i=1}^m \pi_1(X_{\eta,\theta_i})$ also contains an open dense subset $U$ of $\Hom(\eta)$. Then any $f \in U$ satisfies that $\theta_1,\ldots,\theta_m\in D_f$ by Corollary \ref{C f T bar 2}(c)$\Leftrightarrow$(a).
\end{proof}

We are ready to prove Theorem \ref{D f largest}.

\begin{proof}[Proof of Theorem \ref{D f largest}]
(a) By Lemma \ref{properties of D_f}(c), there are finitely many elements $f_1,\ldots,f_m \in \Hom(\eta)$ such that 
\begin{align*}\{D_f \mid f \in \Hom(\eta)\}=\{D_{f_1},\ldots,D_{f_m}\}.\end{align*}
For each $1\le i\le m$, since $D_{f_i}$ is a rational polyhedral cone, there exist $\theta_{i,1},\ldots,\theta_{i,\ell_i} \in K_0(\proj A)$ satisfying
$D_{f_i}=\cone\{\theta_{i,1},\ldots,\theta_{i,\ell_i}\}$.
Applying Lemma \ref{D f larger} to the set $S:=\{\theta_{i,j}\mid 1\le i\le m,1\le j\le\ell_i\}$, we obtain $S\subset D_f$ for any general $f \in \Hom(\eta)$. Clearly such $D_f$ coincides with $D_\eta$.

(b) Assume that $\theta\in K_0(\proj A)$ is TF equivalent to $\eta$. Then $D_\theta\cap K_0(\proj A)=D_\eta\cap K_0(\proj A)$ holds by Proposition \ref{direct sum D eta}(a) and Theorem \ref{summand not TF}(b). This implies $D_\theta=D_\eta$ since $D_\theta$ and $D_\eta$ are rational polyhedral cones by (a).
\end{proof}

In the later section, we need the following observation on the rational polyhedral cone $D_\eta$. 

\begin{proposition}\label{D f largest 2}
Assume that $A$ is $E$-tame. Let $\theta,\eta\in K_0(\proj A)$ such that $\eta$ is indecomposable and 
belongs to $D_\eta^\circ$.
Then the following conditions are equivalent.
\begin{enumerate}[\rm(a)]
\item $\theta=\eta \oplus (\theta-\eta)$.
\item $\theta\in D_\eta^\circ$.
\end{enumerate}
\end{proposition}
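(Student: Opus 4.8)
The plan is to combine three ingredients with one elementary fact from convex geometry. The ingredients are: Proposition~\ref{direct sum D eta}(a), which (using that $A$ is $E$-tame) identifies $D_\eta\cap K_0(\proj A)$ with $\{\mu\in K_0(\proj A)\mid\eta\oplus\mu\}$; Theorem~\ref{D f largest}(a), which guarantees that $D_\eta$ is a single rational polyhedral cone, in particular a \emph{closed} convex cone whose relative interior $D_\eta^\circ$ has affine hull $\R D_\eta$; and the uniqueness of canonical decompositions, Proposition~\ref{decomposition}(b), together with the compatibility statement Proposition~\ref{decompose_2step}(d). The convex-geometry fact is the ``absorbing'' property: if $C$ is a convex cone then $C^\circ+C\subseteq C^\circ$, whence $\cone^\circ(S)\subseteq C^\circ$ for every finite $S\subseteq C$ meeting $C^\circ$. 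I will also repeatedly use that, $A$ being $E$-tame, every element of $K_0(\proj A)$ is tame, so $\mu\oplus\mu$ always holds.

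For $\mathrm{(a)}\Rightarrow\mathrm{(b)}$, assume $\theta=\eta\oplus(\theta-\eta)$ and fix a canonical decomposition $\theta-\eta=\bigoplus_j\mu_j$. By Proposition~\ref{decompose_2step}(d) applied to $\theta=\eta\oplus(\theta-\eta)$, the element $\eta$ together with the $\mu_j$ constitute the canonical decomposition of $\theta$; in particular $\eta\oplus\mu_j$ for every $j$, and since also $\eta\oplus\eta$, Proposition~\ref{direct sum D eta}(a) gives $\ind\theta\subseteq D_\eta$. By hypothesis $\eta\in\ind\theta\cap D_\eta^\circ$, and $\theta\in\cone^\circ(\ind\theta)$ because all multiplicities in the canonical decomposition of $\theta$ are positive; hence the absorbing property yields $\theta\in\cone^\circ(\ind\theta)\subseteq D_\eta^\circ$, which is (b).

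For $\mathrm{(b)}\Rightarrow\mathrm{(a)}$, assume $\theta\in D_\eta^\circ$. Since $D_\eta$ is a cone we have $-\eta\in\R D_\eta$, so moving slightly from the relative-interior point $\theta$ in the direction $-\eta$ keeps us inside $D_\eta$: there is $\ell\in\Z_{\ge1}$ with $\theta-\tfrac1\ell\eta\in D_\eta$, and scaling by $\ell$ gives $\ell\theta-\eta\in D_\eta\cap K_0(\proj A)$. By Proposition~\ref{direct sum D eta}(a) this means $\eta\oplus(\ell\theta-\eta)$, i.e.\ $\ell\theta=\eta\oplus(\ell\theta-\eta)$. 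On the other hand, writing $\theta=\bigoplus_s\nu_s^{\oplus c_s}$ for the canonical decomposition of $\theta$ (distinct indecomposables $\nu_s$, $c_s\ge1$), tameness of each $\nu_s$ makes $\ell\theta=\bigoplus_s\nu_s^{\oplus\ell c_s}$ a canonical decomposition as well; and refining $\ell\theta-\eta$ into indecomposables and applying Proposition~\ref{decompose_2step}(d) again shows that $\ell\theta=\eta\oplus(\ell\theta-\eta)$ is also a canonical decomposition. Uniqueness (Proposition~\ref{decomposition}(b)) forces $\eta=\nu_{s_0}$ for some $s_0$, so $\eta\in\ind\theta$ and therefore $\theta=\eta\oplus(\theta-\eta)$, which is (a).

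The step requiring the most care is the handling of relative interiors of cones that may be lower-dimensional than the ambient space: it is essential to invoke Theorem~\ref{D f largest}(a) (rather than merely Lemma~\ref{properties of D_f}(b)) so that $D_\eta$ is known to be a single closed cone, and the perturbation $\theta-\tfrac1\ell\eta$ remains in $D_\eta$ precisely because $-\eta$ lies in the linear span $\R D_\eta$ and not just in the ambient space. The remaining points --- the absorbing property $C^\circ+C\subseteq C^\circ$, the membership $\theta\in\cone^\circ(\ind\theta)$, and the bookkeeping with Proposition~\ref{decompose_2step}(d) --- are routine and I would only record them briefly. Finally, the degenerate case $\theta=0$ is vacuous on both sides: $-\eta\notin D_\eta$ (otherwise $\eta\oplus(-\eta)$ would force $\eta=0$, impossible for indecomposable $\eta$), so $0\notin D_\eta^\circ$, and $\theta=0$ cannot satisfy (a) either.
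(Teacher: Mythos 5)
Your proof is correct and follows essentially the same route as the paper's: for $\mathrm{(a)}\Rightarrow\mathrm{(b)}$ both arguments reduce to $\theta-\eta\in D_\eta$ (via Proposition~\ref{direct sum D eta}(a)) together with the absorbing property $D_\eta^\circ+D_\eta\subseteq D_\eta^\circ$ (the paper applies it directly to $\theta=(\theta-\eta)+\eta$, whereas you first pass through $\cone^\circ(\ind\theta)$, which is a harmless detour); for $\mathrm{(b)}\Rightarrow\mathrm{(a)}$ both use the same perturbation $\ell\theta-\eta\in D_\eta$ for $\ell\gg0$, Proposition~\ref{direct sum D eta}(a), tameness to get $\ell\theta=\theta^{\oplus\ell}$, and indecomposability of $\eta$ with uniqueness of the canonical decomposition. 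Your observation that Theorem~\ref{D f largest}(a) is needed so that $D_\eta$ is a single convex cone (so that $D_\eta^\circ$ and the perturbation argument make sense) is a fair explicit justification of something the paper uses implicitly.
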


\begin{proof}
(a)$\Rightarrow$(b) By Proposition \ref{direct sum D eta}(a), $\theta-\eta\in D_\eta$. Since $\eta\in D_\eta^\circ$, we have $\theta=(\theta-\eta)+\eta\in D_\eta^\circ$.

(b)$\Rightarrow$(a) Since $\theta\in D_\eta^\circ$ and $\eta\in D_\eta$ hold, we have $\ell \theta-\eta \in D_\eta$ for sufficiently large $\ell$.
Thus Proposition \ref{direct sum D eta}(a) implies 
$\theta^{\oplus \ell}=\ell\theta=\eta \oplus (\ell\theta-\eta)$.
Since $\eta$ is indecomposable, $\theta$ has $\eta$ as a direct summand.
\end{proof}

It is an interesting question if $\eta\in D_\eta^\circ$ holds for each indecomposable $\eta\in K_0(\proj A)$.
The following observation gives a partial answer.  

\begin{proposition}\label{tame brick}
Let $\eta \in K_0(\proj A)$ be indecomposable.
\begin{enumerate}[\rm(a)] 
\item If $\eta$ is rigid, then $\eta\in D_\eta^\circ$.
\item If $A$ is representation-tame and $\eta$ is non-rigid,
then for any general $f \in \Hom(\eta)$, we have
\begin{align*}D_f=\Theta_{C_f}.\end{align*}
Moreover, $C_f$ is a simple object in $\WW_\eta$, $\dim D_f=|A|-1$ and $\eta$ belongs to $D_f^\circ \subset D_\eta^\circ$.
\end{enumerate}
\end{proposition}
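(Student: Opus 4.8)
\textbf{Proof proposal for Proposition \ref{tame brick}.}

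The plan is to treat the two parts separately, using the description of $D_f$ from Lemma \ref{properties of D_f} together with the silting-theoretic facts already established. For part (a), suppose $\eta$ is indecomposable rigid. Then by Example \ref{presilting case} there is an indecomposable $2$-term presilting complex $U$ with $[U]=\eta$, and for general $f \in \Hom(\eta)$ we have $P_f \simeq U$; in particular $\TT_f = \TT_\eta$ and $\FF_f = \FF_\eta$ by the remark following Example \ref{W^A and W^B 2} (rigid elements), so $D_\eta \supseteq D_f \ni \eta$. It remains to check that $\eta$ lies in the \emph{relative interior} of $D_\eta$. I would argue this using Proposition \ref{cone-TF}: the cone $C^\circ(U) = \cone^\circ\{\eta\} = \R_{>0}\eta$ is a full TF equivalence class, and it is the minimal face of $D_f$ that can contain $\eta$. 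More concretely, $D_f = \{\theta \mid C_f \in \overline{\TT}_\theta,\ K_{\nu f}\in\overline{\FF}_\theta\}$ is cut out by the inequalities $\theta(d)\ge 0$ for $d$ a dimension vector of a factor module of $C_f=H^0(U)$ and $\theta(d')\le 0$ for $d'$ a dimension vector of a submodule of $K_{\nu f}=H^{-1}(\nu U)$; since $\eta = [U]$ satisfies $\eta(H^0(U))>0$ — because $C_f\in\TT_\eta = \Fac H^0(U)$ forces $\eta$ to be strictly positive on all its nonzero factor modules when $U$ is presilting — and dually $\eta(H^{-1}(\nu U))<0$, the point $\eta$ satisfies strictly every defining inequality of $D_f$ that does not vanish identically on $\R\eta$; hence by the face description (cf.\ \eqref{X^circ} in Lemma \ref{simple and interior}(e)) $\eta$ lies in the relative interior. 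Combining, $\eta \in D_f^\circ \subseteq D_\eta^\circ$.

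For part (b), assume $A$ is representation-tame and $\eta$ is indecomposable non-rigid. By Proposition \ref{tameness}(b), for general $f \in \Hom(\eta)$ the modules $C_f$ and $K_{\nu f}$ are isomorphic bricks. Then Lemma \ref{properties of D_f}(d) gives immediately $D_f = \Theta_{C_f}$. Next, Lemma \ref{W_f and W_theta 2}(c) tells us that $C_f$ is a \emph{simple} object of $\WW_\eta$ (here $\eta = [f]$), and then Lemma \ref{simple and interior}(c) gives both $\dim \Theta_{C_f} = |A|-1$ and $\eta \in \Theta_{C_f}^\circ$. Thus $\dim D_f = |A|-1$ and $\eta \in D_f^\circ$. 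Finally, by Theorem \ref{D f largest}(a) (applicable since representation-tame algebras are $E$-tame by Proposition \ref{tameness}(a)), $D_f = D_\eta$ for general $f$, so in fact $D_f^\circ = D_\eta^\circ$ and in particular $\eta \in D_f^\circ \subseteq D_\eta^\circ$, as claimed.

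The step I expect to require the most care is the verification in part (a) that $\eta$ is in the relative interior of $D_f$ rather than merely in $D_f$ — one has to be sure that none of the ``strict'' defining inequalities of $D_f$ degenerates on the line $\R\eta$ in a way that pushes $\eta$ onto a proper face, which is exactly where one uses that $P_f$ is presilting so that $\TT_f = \Fac H^0(U)$ consists of modules on which $\eta$ is strictly positive (and dually for $\FF_f$). This is essentially a repackaging of the computation behind Proposition \ref{cone-TF}, so it should go through cleanly, but it is the point where the presilting hypothesis is genuinely used. Part (b), by contrast, is a more or less mechanical assembly of Lemmas \ref{properties of D_f}(d), \ref{W_f and W_theta 2}(c), \ref{simple and interior}(c) and Theorem \ref{D f largest}(a), with Proposition \ref{tameness}(b) supplying the crucial input that $C_f \simeq K_{\nu f}$ is a brick.
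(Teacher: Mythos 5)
Your proposal is correct and follows essentially the same route as the paper. For part (a), your concrete computation (that $\eta$ satisfies strictly all the nontrivial defining inequalities of $D_f$ because $H^0(U)\in\TT_\eta$ and $H^{-1}(\nu U)\in\FF_\eta$) is exactly the paper's argument, which obtains $D_\eta=\{\theta\mid H^0(U)\in\overline{\TT}_\theta,\ H^{-1}(\nu U)\in\overline{\FF}_\theta\}$ via Proposition~\ref{D f largest} and Example~\ref{presilting case} and then reads off the strictness. The only thing I would flag is your intermediate aside that $C^\circ(U)=\R_{>0}\eta$ is ``the minimal face of $D_f$ that can contain $\eta$'': this is not right as stated, since $D_f$ turns out to be full-dimensional and the minimal face of $D_f$ containing $\eta$ is $D_f$ itself, not the one-dimensional ray; fortunately your ``more concretely'' computation is what actually does the work and it is sound. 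Part (b) is essentially verbatim the paper's proof (Proposition~\ref{tameness}(b), Lemma~\ref{properties of D_f}(d), Lemma~\ref{W_f and W_theta 2}(c), Lemma~\ref{simple and interior}(c)), and your explicit appeal to Theorem~\ref{D f largest}(a) to get $D_f^\circ=D_\eta^\circ$ is a welcome clarification of a step the paper leaves implicit.
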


\begin{proof}
(a) Take $U \in \twopresilt A$ satisfies $[U]=\eta$, 
then Proposition \ref{D f largest} and Example \ref{presilting case} implies that 
$D_\eta=\{ \theta \in K_0(\proj A)_\R \mid H^0(U) \in \overline{\TT}_\theta, \ H^{-1}(\nu U) \in \overline{\FF}_\theta \}$.
Since $H^0(U) \in \TT_\eta$ and $H^{-1}(\nu U) \in \FF_\eta$, the assertion follows.

(b) By Proposition \ref{tameness}(b), $C_f\simeq K_{\nu f}$ are bricks for any general $f \in \Hom(\eta)$. By Proposition \ref{properties of D_f}(d), we have $D_f=\Theta_{C_f}$.
By Lemma \ref{W_f and W_theta 2}(c), $C_f$ is a simple object in $\WW_\eta$.
By Lemma \ref{simple and interior}(c), the last two assertions follow.
\end{proof}

\section{Hereditary algebras and TF equivalence classes}\label{Section_hered} 

In this section, we assume that $A$ is hereditary. 
We will prove that every canonical decomposition gives a TF equivalence class also in this case.

First, hereditary algebras satisfy the same properties as Proposition \ref{linear independence2}, but the proof is different.

\begin{proposition}\label{linear independence3}
Let $A$ be a finite dimensional hereditary algebra.
\begin{enumerate}[\rm(a)]
\item $A$ satisfies the ray condition.
\item Let $\theta=\theta_1\oplus\cdots\oplus\theta_\ell$ be a canonical decomposition such that $\theta_i \ne \theta_j$ for each  $i \ne j$. Then $\theta_1,\ldots,\theta_\ell$ are linearly independent.
In particular, $\ell\le |A|$ holds.
\end{enumerate}
\end{proposition}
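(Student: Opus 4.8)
The plan is to establish Proposition \ref{linear independence3} by reducing part (b) to part (a) via Proposition \ref{linear independence}, and then proving part (a) directly using the representation theory of hereditary algebras. Since Proposition \ref{linear independence} already gives that the ray condition implies linear independence of distinct canonical summands, once (a) is proven, (b) follows immediately; the bound $\ell \le |A|$ is then automatic since $K_0(\proj A)$ has rank $|A|$.

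For part (a), the key point is that over a hereditary algebra $A = kQ$, the presentation space $\Hom(\theta)$ has a clean interpretation. First I would observe that for $\theta = [P_0] - [P_1]$ with $\add P_0 \cap \add P_1 = 0$, a general morphism $f \in \Hom(\theta)$ is injective (this uses hereditariness: the map $\Hom_A(P_1, P_0) \to \Hom_A(P_1, P_0)$ behaves well, and a general element is a monomorphism when $P_1$ embeds generically). Thus $K_{\nu f}$ and more importantly the cokernel $C_f = \Cokernel f$ is a general representation of $A$ with dimension vector $\underline{\dim} C_f$ determined by $\theta$. This connects canonical decompositions of $\theta$ in $K_0(\proj A)$ with Kac's canonical decomposition of the dimension vector $\underline{\dim} C_f$ in the representation variety, and the indecomposability of $\theta$ corresponds to $P_f$ being indecomposable as a complex. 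For a wild indecomposable $\theta$, the endomorphism algebra of the general $P_f$ is local but $E(\theta,\theta) = \dim \Hom_{\KKK^{\bo}(\proj A)}(P_f, P_f[1]) > 0$, and by Proposition \ref{codim=E} this equals the codimension of the orbit $Gf$.

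The heart of the argument for (a) is then a dimension count: I want to show that if $\theta$ is indecomposable and wild, then $\ell\theta$ is indecomposable for all $\ell \ge 1$. Suppose not; then $\ell\theta = \eta_1 \oplus \cdots \oplus \eta_r$ is a canonical decomposition with $r \ge 2$. Using Proposition \ref{decompose_2step}(b), $E(\ell\theta, \ell\theta) = \sum_{i,j} E(\eta_i, \eta_j) \ge \sum_i E(\eta_i, \eta_i)$, and separately $E(\ell\theta, \ell\theta) = \ell^2 E(\theta,\theta)$ by biadditivity of $E$ along a direct sum decomposition into $\ell$ copies — wait, more carefully, since $\theta$ is \emph{not} tame, $\ell\theta$ need not decompose as $\theta^{\oplus \ell}$, so instead I would compare the dimension of $\Hom(\ell\theta)$ with the dimension of the locus of decomposable presentations. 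The cleanest route: for hereditary $A$, the quantity $E(\eta,\eta)$ relates to the Euler form via $E(\eta,\eta) = \max(0, -\langle \eta, \eta\rangle_A)$-type formulas adapted to presentation spaces (using that $\Ext^2$ vanishes), so indecomposability of $\theta$ forces $\langle \theta,\theta\rangle < 0$ in the wild case, hence $\langle \ell\theta, \ell\theta\rangle = \ell^2 \langle\theta,\theta\rangle < 0$, and a general presentation with class $\ell\theta$ is then forced to be indecomposable by the hereditary analogue of Kac's theorem (an imaginary root-type class whose general representation is a brick-like indecomposable).

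The main obstacle I anticipate is making the passage between canonical decompositions in the presentation space $K_0(\proj A)$ and the classical canonical decomposition of dimension vectors fully rigorous, including the subtlety that a general $f \in \Hom(\theta)$ need not be injective when $\theta$ is not sign-coherent in the naive sense, and handling the case where $P_1 \ne 0$ (so $C_f$ alone does not recover $\theta$). I would address this by working throughout with the two-term complex $P_f$ and its homology, invoking Corollary \ref{C f T bar} and the results of Section \ref{section 4} relating $E$-vanishing to containments of morphism torsion classes, and by using the hereditary Euler form $\langle -, -\rangle_A$ on $K_0(\mod A)$ pulled back appropriately; the positivity/negativity dichotomy of $\langle\theta,\theta\rangle$ for indecomposable $\theta$ is exactly the tame/wild dichotomy, and scaling preserves the sign, which is precisely what the ray condition needs.
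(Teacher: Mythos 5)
Your reduction of part (b) to part (a) via Proposition \ref{linear independence} is exactly what the paper does, and is correct.

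For part (a), however, your route diverges from the paper's and contains a genuine gap. The paper's entire proof of the ray condition for hereditary algebras is a citation of Schofield, \cite[Theorem~3.7]{S}: for a non-isotropic imaginary Schur root $\alpha$ of a quiver, $\ell\alpha$ is again a Schur root for every $\ell \ge 1$. You instead attempt to reprove this from scratch via the Euler form. The directional intuition is sound: for an indecomposable wild $\theta$ one does have $\langle\theta,\theta\rangle < 0$, and $\langle\ell\theta,\ell\theta\rangle = \ell^2\langle\theta,\theta\rangle < 0$ scales quadratically. But the load-bearing step in your sketch, namely ``a general presentation with class $\ell\theta$ is then forced to be indecomposable by the hereditary analogue of Kac's theorem,'' is not a consequence of Kac's theorem. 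Kac's theorem concerns existence of indecomposables and the dimension of the parameter space of indecomposables; it does \emph{not} assert that a \emph{general} representation of an imaginary root is indecomposable. That genericity statement for non-isotropic imaginary Schur roots, together with the stability under scaling, is precisely Schofield's Theorem~3.7, and it requires a genuine argument (Schofield's general subrepresentation theorems and the ``$\ext$'' calculus), not just the negativity of $\langle\ell\theta,\ell\theta\rangle$. So as written, your proof implicitly cites the theorem it set out to prove.

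Two further points are worth flagging. First, your aside that a general $f\in\Hom(\theta)$ is injective is false without sign conditions (even over the field $k$, a general map $k^a \to k^b$ with $a>b$ is not injective); you correctly acknowledge this as a subtlety, but any serious version of this argument needs the translation of Example \ref{real Schur} between indecomposable elements of $K_0(\proj A)$ and Schur roots, including the purely negative summand $Q[1]$ case, made precise. Second, your tentative formula ``$E(\eta,\eta) = \max(0,-\langle\eta,\eta\rangle_A)$-type'' glosses over the distinction between $E(f,f)$ (self-extensions of a single general module, giving $1-\langle\theta,\theta\rangle$ for a Schur root) and $E(\theta,\theta) = \min_{f,g} E(f,g)$, where $f$ and $g$ range independently; it is exactly this distinction that makes isotropic Schur roots \emph{tame} rather than wild (distinct general band-type bricks have no extensions between them), so the dichotomy ``wild $\Leftrightarrow \langle\theta,\theta\rangle<0$'' needs that observation spelled out rather than asserted. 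If you simply replace your ``Kac's theorem'' step with the citation of \cite[Theorem~3.7]{S} and handle the $K_0(\proj A)\to K_0(\mod A)$ translation carefully, the argument closes up and becomes the paper's proof.
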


\begin{proof}
(a) is \cite[Theorem 3.7]{S}. 
(b) follows from (a) and Proposition \ref{linear independence}.
\end{proof}

As written in \cite{DF}, canonical decompositions of elements of $K_0(\proj A)$ is a generalization of original canonical decompositions of dimension vectors of quiver representations introduced by \cite{Kac}.
These two kinds of canonical decompositions are related as follows
for all finite dimensional algebras.

\begin{proposition}\label{tau-reduced}\cite[Theorem 1.2]{P}
Let $A$ be a finite dimensional algebra.

Assume that $\theta \in K_0(\proj A)$ has no negative direct summand.
Then the canonical decomposition of each presentation space $\Hom(\theta)$ gives 
the canonical decomposition of the corresponding $\tau$-reduced component 
of the module variety.

Moreover, if $U=U_1\oplus\cdots\oplus U_\ell\in\twopresilt A$ with $U_i$ indecomposable
and no $[U_i]$ is negative, then each general element in 
the $\tau$-reduced component containing $H^0(U) \in \mod A$ is isomorphic to 
$H^0(U_1) \oplus \cdots \oplus H^0(U_\ell)$.
\end{proposition}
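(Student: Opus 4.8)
The plan is to obtain this by transporting the canonical decomposition of $\theta$ through Plamondon's dictionary between presentation spaces and $\tau$-reduced components, using the $E$-invariant as the bridge. First I would recall from \cite{P} (see also \cite{DIJ}) that sending $\theta\in K_0(\proj A)$ with no negative direct summand to
\[Z_\theta:=\overline{\GL(d)\cdot\{\,C_f\mid f\in\Hom(\theta)\,\}}\subseteq\mod(A,d),\qquad d:=\dimv C_f\ \text{for generic }f,\]
defines a bijection onto the set of $\tau$-reduced irreducible components of the module varieties of $A$; the no-negative-summand hypothesis is precisely what makes the generic cokernel have a constant dimension vector and $Z_\theta$ a genuine component. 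On the representation side, by \cite{CS} every $\tau$-reduced component $Z$ admits a canonical (generic) decomposition $Z=\overline{Z_1\oplus\cdots\oplus Z_m}$ into indecomposable $\tau$-reduced components, controlled by the generic value of $\dim_k\Hom_A(-,\tau\,-)$. Thus the statement reduces to checking that $\theta\mapsto Z_\theta$ sends direct sums in $K_0(\proj A)$ to direct sums of components, and sends indecomposable $\theta$ to indecomposable $Z_\theta$; the latter follows from the construction of the bijection in \cite{P}.

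The crux is the compatibility of the two notions of orthogonality. For $f\in\Hom(\eta)$ and $g\in\Hom(\theta)$, the Serre-duality isomorphism \eqref{Serre duality} gives $\Hom_{\DDD(A)}(P_f,P_g[1])\cong\Hom_A(C_g,K_{\nu f})$, and by the Auslander--Reiten formula (cf.\ Lemma \ref{Nakayama T_f}) one has $K_{\nu f}=\tau C_f$ whenever $f$ is a minimal projective presentation of $C_f$, which, under the no-negative-summand hypothesis, is the case for generic $f$. Passing to generic points on both sides and using upper semicontinuity (Proposition \ref{upper semi}) one obtains
\[E(\eta,\theta)=\min\{\,\dim_k\Hom_A(Y,\tau X)\mid X\in Z_\eta,\ Y\in Z_\theta\,\},\]
so the presentation-space invariant $E$ computes exactly the generic $\Hom$-to-$\tau$ between the associated components. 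Now if $\theta=\theta_1\oplus\cdots\oplus\theta_\ell$ is the canonical decomposition, then $E(\theta_i,\theta_j)=0$ for all $i\neq j$ by Proposition \ref{decomposition}(a); hence $Z_{\theta_1}\oplus\cdots\oplus Z_{\theta_\ell}$ is a legitimate direct sum of $\tau$-reduced components, its closure is irreducible of the expected dimension (the dimension counts being again governed by the $E$-invariants, e.g.\ via Proposition \ref{codim=E}), and it must coincide with $Z_\theta$; since each $Z_{\theta_i}$ is indecomposable, this is the canonical decomposition of $Z_\theta$.

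For the ``moreover'' part, let $U=U_1\oplus\cdots\oplus U_\ell\in\twopresilt A$ with each $U_i$ indecomposable and no $[U_i]$ negative. Since $H^0$ of a $2$-term complex is additive, $H^0(U)=H^0(U_1)\oplus\cdots\oplus H^0(U_\ell)$; moreover $H^0(U)$ is $\tau$-rigid, hence rigid, so it lies in a dense $\GL(d)$-orbit and is therefore the generic element of the $\tau$-reduced component containing it, which by Example \ref{presilting case} is $Z_{[U]}$. By the first part $Z_{[U]}=\overline{Z_{[U_1]}\oplus\cdots\oplus Z_{[U_\ell]}}$, and by Example \ref{presilting case} again the orbit of $U_i$ is dense in $\Hom([U_i])$, so $H^0(U_i)$ is the generic element of $Z_{[U_i]}$. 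Hence the generic element of the $\tau$-reduced component containing $H^0(U)$ is $H^0(U_1)\oplus\cdots\oplus H^0(U_\ell)$, as claimed.

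The main obstacle is the displayed identity $E(\eta,\theta)=\min_{X\in Z_\eta,\,Y\in Z_\theta}\dim_k\Hom_A(Y,\tau X)$: one must verify carefully that the no-negative-summand hypothesis really forces generic minimality of projective presentations (so that $K_{\nu f}$ is genuinely $\tau C_f$ and $\dimv C_f$ is constant), and that ``generic in $\Hom(\eta)\times\Hom(\theta)$'' is compatible with ``generic in $Z_\eta\times Z_\theta$''. This is exactly the content of \cite[Theorem 1.2]{P}, so an acceptable alternative is to invoke that theorem directly after checking that the present normalization of canonical decompositions of presentation spaces agrees with Plamondon's.
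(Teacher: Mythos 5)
The paper does not prove this proposition; it is stated as a direct citation of \cite[Theorem 1.2]{P}, with no internal argument given. Your proposal is therefore not an alternative to the paper's proof so much as a gloss on the cited result: you unpack Plamondon's dictionary $\theta\mapsto Z_\theta$, explain via Serre duality and $K_{\nu f}=\tau C_f$ why the $E$-invariant agrees with the generic $\dim\Hom(-,\tau-)$ between components, and then deduce that canonical decompositions correspond on both sides. That sketch is consistent with the content of \cite[Theorem 1.2]{P}, and you correctly flag the delicate points (generic minimality of the presentation, constancy of $\dimv C_f$, and the compatibility of ``generic'' on the two sides), concluding that one may simply invoke Plamondon's theorem after checking the normalization of canonical decompositions --- which is in fact exactly what the paper does. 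Two small cautions if you wanted to turn the sketch into a proof: (i) the assertion that generic $f\in\Hom(\theta)$ is a minimal projective presentation of $C_f$ under the no-negative-summand hypothesis does require an argument (it is where that hypothesis actually bites, and is not automatic from $P_0^\theta$ and $P_1^\theta$ having no common summand); and (ii) in the ``moreover'' part, $\tau$-rigidity of $H^0(U)$ gives a dense orbit only inside its own component, not in $\mod(A,d)$; you phrase this correctly but it is worth stating explicitly that one is passing through Example \ref{presilting case} to identify that component with $Z_{[U]}$.
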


$\tau$-reduced components were originally called strongly reduced components in \cite{P}.
We do not explain $\tau$-reduced components in this paper;
see \cite{P,GLFS,PY} for details.

If $A$ is hereditary, 
then the module variety $\mod(A,d)$ for each dimension vector $d$ is irreducible,
so the property above gives the following.

\begin{example}\label{real Schur}
Assume that $A$ is hereditary.
Proposition \ref{tau-reduced} gives a bijection
\begin{align*}\{\text{indecomposable elements of $K_0(\proj A)$}\}\simeq\{\text{Schur roots}\}\end{align*} 
which restricts to
\begin{align*}\{ \text{indecomposable non-negative rigid elements of $K_0(\proj A)$}\}
\simeq\{\text{real Schur roots}\}\end{align*}
given by $[U]\mapsto\dimv H^0(U)$ for each $U \in \twopresilt A$.
These are restrictions of the $\Z$-linear isomorphism $K_0(\proj A) \to K_0(\mod A)$ satisfying $[P(i)] \to [P(i)]$.
\end{example}

The following main result of this section shows that the Conjectures \ref{canonical and TF} and \ref{span TF and W_theta} hold for any hereditary algebra.

\begin{theorem}\label{describe TF for hereditary}
Assume that $A$ is a finite dimensional hereditary algebra over an algebraically closed field $k$. 
Let $\theta=\bigoplus_{i=1}^m \theta_i$ be a canonical decomposition in $K_0(\proj A)$
with $\theta_i \ne \theta_j$ if $i \ne j$. Then
\begin{align*}
\dim_\R W_\theta=n-m,\ W_\theta=\bigcap_{i=1}^m\Kernel\langle\theta_i,-\rangle\ {\rm and }\ [\theta]_{\rm TF}=\cone^\circ\{\theta_1,\ldots,\theta_m\}.
\end{align*}
\end{theorem}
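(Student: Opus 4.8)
The plan is to reduce the hereditary case to the $E$-tame case wherever possible, and to handle the genuinely wild part by a direct dimension-counting argument using the bilinear Euler form, which is exact (not just alternating) for hereditary algebras.

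First I would record the structural facts about canonical decompositions over a hereditary algebra $A$ that make the argument work. By Proposition~\ref{linear independence3}, the indecomposable summands $\theta_1,\dots,\theta_m$ are linearly independent, so $\dim\cone^\circ\{\theta_1,\dots,\theta_m\}=m$, and by Theorem~\ref{decomposition and T}(b) we already have $\cone^\circ\{\theta_1,\dots,\theta_m\}\subseteq[\theta]_{\rm TF}$. Thus two things remain: the reverse inclusion $[\theta]_{\rm TF}\subseteq\cone^\circ\{\theta_1,\dots,\theta_m\}$, and the identification of $W_\theta$. The key point for the latter is that for hereditary $A$ the Euler form $\langle-,-\rangle\colon K_0(\proj A)\times K_0(\mod A)\to\Z$ satisfies $\langle[P],[X]\rangle=\dim_k\Hom_A(P,X)-\dim_k\Ext^1_A(P,X)$ and restricts, via the isomorphism $K_0(\proj A)\simeq K_0(\mod A)$ of Example~\ref{real Schur}, to the classical Euler form on representations; in particular for an indecomposable $\theta_i$ corresponding to a Schur root $d_i$ one has $\langle\theta_i,d_i\rangle=1$ if $\theta_i$ is rigid (real Schur root) and $\langle\theta_i,d_i\rangle\le 0$ if $\theta_i$ is wild (imaginary Schur root), with $\langle\theta_i,d_j\rangle=0=\langle\theta_j,d_i\rangle$ for $i\ne j$ by $E(\theta_i,\theta_j)=0=E(\theta_j,\theta_i)$ and Proposition~\ref{P Q[1]}. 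From this orthogonality one shows that the general representation $M$ in the $\tau$-reduced component given by Proposition~\ref{tau-reduced}, namely $M=M_1\oplus\cdots\oplus M_m$ with $M_i$ general of dimension $d_i$, lies in $\WW_\theta$ (each $M_i$ is $\theta_i$-semistable hence $\theta$-semistable by Lemma~\ref{additivity}, and $\theta(M)=0$), and that the bricks appearing as composition factors of $M$ in $\WW_\theta$ span exactly $\bigl(\sum_i\R\theta_i\bigr)^\perp$ inside $K_0(\mod A)_\R$; counting dimensions gives $\dim_\R W_\theta=n-m$ and the displayed description $W_\theta=\bigcap_{i=1}^m\Kernel\langle\theta_i,-\rangle$.

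For the remaining inclusion $[\theta]_{\rm TF}\subseteq\cone^\circ\{\theta_1,\dots,\theta_m\}$, I would invoke Proposition~\ref{dim of wide}: since $A$ is hereditary it satisfies the ray condition (Proposition~\ref{linear independence3}(a)), so it suffices to verify condition~(b) there, i.e.\ $\dim_\R\langle[\theta]_{\rm TF}\rangle_\R=|\theta|$ for every $\theta$. But $\langle[\theta]_{\rm TF}\rangle_\R\subseteq W_\theta^\perp$ by Proposition~\ref{W filtration} together with $\WW_\eta\subseteq\WW_\theta$ forcing $[\eta]_{\rm TF}$ to annihilate $W_\theta$ (more precisely, any $\eta$ TF-equivalent to $\theta$ has $\WW_\eta=\WW_\theta$, so $\eta(X)=0$ for all $X\in\WW_\theta$, i.e.\ $\eta\in W_\theta^\perp$); and we have just computed $\dim W_\theta^\perp=m=|\theta|$, while $\cone^\circ\{\theta_1,\dots,\theta_m\}\subseteq[\theta]_{\rm TF}$ already forces $\dim\langle[\theta]_{\rm TF}\rangle_\R\ge m$. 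Hence equality holds and Proposition~\ref{dim of wide} (the implication (b)$\Rightarrow$(a)) delivers $[\theta]_{\rm TF}=\cone^\circ\{\theta_1,\dots,\theta_m\}$, which also re-proves $\dim_\R W_\theta=n-m$ via Proposition~\ref{dim of wide}'s equivalence (b)+(d)$\Leftrightarrow$(c) once (d) is checked, and (d) is precisely the statement $W_\theta=([\theta]_{\rm TF})^\perp$ that follows from the two inclusions $W_\theta\subseteq([\theta]_{\rm TF})^\perp$ and the dimension count.

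The main obstacle I anticipate is the precise computation that the general module $M=M_1\oplus\cdots\oplus M_m$ in the relevant $\tau$-reduced component actually realizes $\WW_\theta$ faithfully enough that its $\WW_\theta$-composition factors span all of $\bigl(\sum\R\theta_i\bigr)^\perp$ — equivalently, that $W_\theta$ is not strictly smaller than this orthogonal complement. For the rigid summands this is classical (the simple objects of $\WW_\theta$ near a real Schur root are controlled by the associated simple-minded collection, cf.\ the argument in Proposition~\ref{W_theta=A-theta}), but for a wild (imaginary-Schur-root) summand $\theta_i$ one must argue that $\WW_{\theta_i}$ still contributes the full orthogonal hyperplane to its span; here the hereditary hypothesis is essential, via either Schofield's results on general representations (the perpendicular category $\theta_i^\perp$ is again the module category of a smaller hereditary algebra of rank $n-1$) or an inductive reduction on $n$. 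I would organize the wild case as an induction on $|A|$, peeling off one indecomposable summand at a time and passing to the hereditary perpendicular subcategory, so that the rank-$2$ generalized-Kronecker case (where $W_\theta=\{0\}$ for the unique imaginary Schur root and the claim is elementary) serves as the base. Once this spanning statement is in hand, all the displayed equalities follow formally from the Euler-form orthogonality and Proposition~\ref{dim of wide}.
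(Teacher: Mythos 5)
Your reduction to showing $\dim_{\R} W_\theta \ge n-m$ via Propositions \ref{linear independence3} and \ref{dim of wide} is exactly right and matches the paper's opening move, but the route you then take has several concrete defects. First, the claimed orthogonality $\langle\theta_i,d_j\rangle=0=\langle\theta_j,d_i\rangle$ for $i\ne j$ does not follow from $E(\theta_i,\theta_j)=0=E(\theta_j,\theta_i)$: for hereditary $A$ and generic $M_i,M_j$ one has $E(\theta_i,\theta_j)=\dim\Ext^1_A(M_i,M_j)$, so $E$-vanishing in both directions gives $\langle\theta_i,d_j\rangle=\dim\Hom_A(M_i,M_j)\ge 0$, which need not vanish. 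Already for $A=k(1\to2\to3)$ with $\theta_1=[P(1)]$, $\theta_2=[P(2)]$ one has $\theta_1\oplus\theta_2$ but $\langle\theta_2,d_1\rangle=\dim\Hom_A(P(2),P(1))=1$. Second, your proposed base case is wrong: for a rank-$2$ generalized Kronecker algebra and an indecomposable wild $\theta$, the theorem asserts $\dim_\R W_\theta=2-1=1$, not $W_\theta=\{0\}$; indeed the general Schur module of the corresponding (imaginary) Schur root lies in $\WW_\theta$, so $W_\theta\neq 0$. Third, the crux of your argument — that the $\WW_\theta$-composition factors of the general module $M=M_1\oplus\cdots\oplus M_m$ span all of $\bigl(\sum_i\R\theta_i\bigr)^\perp$ — is precisely what needs proving and you acknowledge it without supplying a proof; the sketch of an induction via Schofield perpendicular categories would require verifying that passage to $\theta_1^\perp$ is compatible with both the semistable categories $\WW$ and the canonical decompositions, neither of which is routine.

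By contrast, the paper obtains the bound $\dim_\R W_\theta\ge n-m$ in one step from two pieces of machinery you did not invoke: Proposition \ref{W_f C_f K_f 2}, which (using that $\tau$ is an equivalence on non-projective/non-injective modules for hereditary $A$) embeds a linear image of the wall $\Theta_d$ into $W_\theta$, and Proposition \ref{wall codim}, which computes $\dim\Theta_d=n-m$ via Derksen--Weyman's theory of Schur sequences. The paper also handles separately the case where $\theta$ has negative direct summands by passing to the factor algebra $A/\langle e\rangle$; your proposal never addresses this reduction, and the identification of indecomposable summands with Schur roots (Example \ref{real Schur}) is only stated for non-negative elements, so this case cannot simply be absorbed. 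In short, the overall strategy of reducing everything to Proposition \ref{dim of wide} is sound, but the Euler-form computation at its heart is incorrect and the central spanning claim is left unproved; you would do better to use the $\Theta_d$–$W_\theta$ duality together with the Derksen--Weyman dimension count.
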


To prove this, we use the union $\Theta_d:=\bigcup_{X \in \mod(A,d)} \Theta_X$
associated to each dimension vector $d \in K_0(\mod A)$.
Then $\Theta_d=\Theta_X$ holds for general $X \in \mod(A,d)$
\cite[Lemma 5.2]{A} since $\mod(A,d)$ is irreducible.
Its dimension is given by the proof of \cite[Theorem 5.1]{DW1}.

To use Proposition \ref{wall codim} for canonical decompositions in $K_0(\proj A)$,
we need the following duality lemma.

\begin{lemma}\label{W_f C_f K_f}
Assume that $A$ is hereditary.
Let $f,g$ be morphisms in $\proj A$ such that 
$P_f$ has no positive direct summand and $P_g$ has no negative direct summand. 
Then $C_g\in\WW_f$ if and only if $K_{\nu f}\in\WW_g$.
\end{lemma}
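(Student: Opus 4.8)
The plan is to translate each of the two membership conditions into the vanishing of a pair of $\Hom$-spaces, and then to observe that these two pairs share one vanishing while the two remaining $\Hom$-spaces are isomorphic; the equivalence follows at once.

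First I would draw the structural consequences of the two hypotheses. Since $A$ is hereditary, $\Image f$ and $\Kernel f$ are projective, so $P_f\simeq(\Image f\hookrightarrow P_0)\oplus\Kernel f[1]$ in $\KKK^{\bo}(\proj A)$; the assumption that $P_f$ has no positive direct summand then forces $C_f$ to have no projective direct summand, since a nonzero projective summand $P''$ of $C_f$, lifted along $P_0\twoheadrightarrow C_f$, splits off a summand $0\to P''$ of $P_f$. Hence $\Hom_A(C_f,A)=0$, and applying $\Hom_A(-,A)$ and then $D$ to $0\to\Image f\to P_0\to C_f\to 0$ yields an exact sequence $0\to D\Ext^1_A(C_f,A)\to\nu\Image f\to\nu P_0\to D\Hom_A(C_f,A)=0$; thus $\nu$ of the inclusion $\Image f\hookrightarrow P_0$ is surjective, so $\nu P_f$ has zero cohomology in degree $0$, i.e. $\nu P_f\simeq K_{\nu f}[1]$ in $\DDD(A)$. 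Dually, as $P_g$ has no negative direct summand, $g$ is injective, so $P_g$ is quasi-isomorphic to $C_g$ and $K_{\nu g}=H^{-1}(\nu C_g)$; since $A$ is hereditary, $\nu C_g$ splits in $\DDD(A)$ as $K_{\nu g}[1]$ together with a module placed in degree $0$, and applying the autoequivalence $\nu^{-1}$ exhibits a direct sum decomposition $C_g\simeq M\oplus C_g'$ of $A$-modules with $M:=\nu^{-1}(K_{\nu g})[1]$ and $C_g'$ projective.

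Next I would apply Lemma \ref{Nakayama T_f}(a),(b). Applied to the morphism $f$ with $X=C_g$ it gives
\[ C_g\in\WW_f \iff \Hom_A(C_f,C_g)=0 \ \text{ and } \ \Hom_A(C_g,K_{\nu f})=0, \]
and applied to the morphism $g$ with $X=K_{\nu f}$ it gives
\[ K_{\nu f}\in\WW_g \iff \Hom_A(C_g,K_{\nu f})=0 \ \text{ and } \ \Hom_A(K_{\nu f},K_{\nu g})=0. \]
Both pairs contain the condition $\Hom_A(C_g,K_{\nu f})=0$, which is exactly $E(f,g)=0$ by \eqref{Serre duality}, so it remains to prove $\Hom_A(C_f,C_g)\simeq\Hom_A(K_{\nu f},K_{\nu g})$. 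Using $K_{\nu f}\simeq\nu P_f[-1]$ and that $\nu$ is an autoequivalence of $\DDD(A)$,
\[ \Hom_A(K_{\nu f},K_{\nu g})\simeq\Hom_{\DDD(A)}(\nu P_f[-1],K_{\nu g})\simeq\Hom_{\DDD(A)}(P_f,\nu^{-1}(K_{\nu g})[1])=\Hom_{\DDD(A)}(P_f,M)\simeq\Hom_A(C_f,M), \]
the last isomorphism by Lemma \ref{Nakayama T_f}(a) since $M$ is a module. Finally $\Hom_A(C_f,C_g')=0$, because over a hereditary algebra a nonzero morphism from a module to a projective splits off a projective summand and $C_f$ has none; hence $\Hom_A(C_f,C_g)\simeq\Hom_A(C_f,M)\simeq\Hom_A(K_{\nu f},K_{\nu g})$, and therefore $C_g\in\WW_f$ if and only if $K_{\nu f}\in\WW_g$.

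I expect the main obstacle to be the first step: checking carefully that the hypothesis on $f$ really removes the projective part of $C_f$ and produces the clean identity $\nu P_f\simeq K_{\nu f}[1]$, and dually that the hypothesis on $g$ turns $P_g$ into a genuine (length $\le 1$) projective resolution so that $K_{\nu g}$ becomes, up to $\nu$, a direct summand of $C_g$ complementary to a projective. Once these structural facts are in place, the two $\Hom$-space identifications are purely formal consequences of Serre duality and Lemma \ref{Nakayama T_f}.
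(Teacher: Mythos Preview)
Your argument is correct and follows the same line as the paper's: both reduce the two membership conditions to pairs of $\Hom$-vanishings sharing $\Hom_A(C_g,K_{\nu f})=0$, and then establish $\Hom_A(C_f,C_g)\simeq\Hom_A(K_{\nu f},K_{\nu g})$. The paper obtains this last isomorphism in one stroke via the equivalence $\tau\colon\mod_PA\simeq\mod_IA$ (using $K_{\nu g}=\tau C_g$ since $g$ is injective, and $K_{\nu f}\simeq\tau C_f$ modulo an injective summand which maps trivially into $K_{\nu g}\in\mod_IA$), whereas you unfold the same content through the derived Nakayama functor and the splitting $C_g\simeq M\oplus C_g'$.

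One claim you state without justification is that $C_g'$ is projective. This holds because $C_g'$ is a module (being a direct summand of the module $C_g$ in the Krull--Schmidt category $\DDD(A)$) whose derived Nakayama image $\nu C_g'\simeq H^0(\nu P_g)$ is also a module; for hereditary $A$ this forces $D\Ext^1_A(C_g',A)=H^{-1}(\nu C_g')=0$, hence $C_g'\in\proj A$. Equivalently, your $M$ is precisely the maximal summand of $C_g$ without projective direct summands, so $C_g'$ is its projective complement.
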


\begin{proof}
We recall that $X\in\WW_f$ if and only if $\Hom_A(C_f,X)=0=\Hom_A(X,K_{\nu f})$. 
Thus $C_g\in\WW_f$ holds if and only if
\begin{equation}\label{C_g in W_f}
\Hom_A(C_f,C_g)=0=\Hom_A(C_g,K_{\nu f}).
\end{equation} 
Since $A$ is hereditary, $\tau \colon \mod_PA\simeq\mod_IA$ is an equivalence, where $\mod_PA$ (respectively, $\mod_IA$) is a full subcategory of $\mod A$ consisting of $A$-modules without non-zero projective (respectively, injective) direct summands.
By our assumption, $\tau C_f \simeq K_{\nu f}$ and $\tau C_g \simeq K_{\nu g}$ hold, so \eqref{C_g in W_f} is equivalent to $\Hom_A(K_{\nu f},K_{\nu g})=0=\Hom_A(C_g,K_{\nu f})$. 
By the first remark again, this is equivalent to $K_{\nu f}\in\WW_g$.
\end{proof}

We denote by $\iota \colon K_0(\proj A) \to K_0(\mod A)$ the linear isomorphism corresponding to the equivalence $\KKK^{\bo}(\proj A)\simeq\DDD^{\bo}(\mod A)$.
Moreover, the Nakayama functor $\nu=-\Lotimes_ADA \colon \DDD^{\bo}(\mod A)\simeq\DDD^{\bo}(\mod A)$ induces an automorphism 
$\nu \colon K_0(\mod A) \to K_0(\mod A)$.

\begin{proposition}\label{W_f C_f K_f 2}
For each dimension vector $d\in K_0(\mod A)$, we have
\begin{align*}
\nu\circ\iota(\Theta_d)\subset W_{\iota^{-1}(d)}.
\end{align*}
\end{proposition}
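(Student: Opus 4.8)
The plan is to prove the slightly stronger statement that for \emph{every} $X\in\mod A$ with $[X]=d$ in $K_0(\mod A)$ and every $\theta\in\Theta_X$ one has $\nu\iota(\theta)\in W_{\iota^{-1}(d)}$; since $\Theta_d=\bigcup_{X\in\mod(A,d)}\Theta_X$ and $[X]=d$ for each such $X$, this yields the assertion. Fix such $X$ and $\theta$, and write $\eta:=\iota^{-1}(d)=\iota^{-1}([X])$. First I would extract a large subcategory of $\WW_\eta$ from a presentation of $X$: since $A$ is hereditary, choose a short exact sequence $0\to P_1\xrightarrow{f}P_0\to X\to0$ with $P_1,P_0\in\proj A$, so that $P_f\simeq X$ in $\DDD(A)$, $C_f=X$, and $[f]=[P_0]-[P_1]=\eta$. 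By Lemma \ref{Nakayama T_f} we then have $\overline{\TT}_f=\{N:\Ext^1_A(X,N)=0\}$ and $\overline{\FF}_f=\{N:\Hom_A(X,N)=0\}$, hence $\WW_f=X^\perp:=\{N\in\mod A:\Hom_A(X,N)=0=\Ext^1_A(X,N)\}$; and since $[f]=\eta$, \eqref{W_f and W_theta 3} gives $X^\perp=\WW_f\subseteq\WW^{\h}_\eta\subseteq\WW_\eta$. In particular $[N]\in W_\eta$ for all $N\in X^\perp$.

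Next, because $X\in\WW_\theta$, Theorem \ref{cup T_f} supplies $m\ge1$ and a morphism $g\colon Q_1\to Q_0$ in $\proj A$ with $[g]=m\theta$ and $X\in\WW_g$. The crux is to show that the two homologies $C_{\nu g}=\Cokernel(\nu g)=H^0(\nu P_g)$ and $K_{\nu g}=\Kernel(\nu g)=H^{-1}(\nu P_g)$ both lie in $X^\perp$. From $X\in\overline{\FF}_g=C_g^\perp$ and Lemma \ref{Nakayama T_f} we get $\Hom_A(C_g,X)=0$, i.e. $\Hom_{\DDD(A)}(P_g,X)=0$; Serre duality $\Hom_{\DDD(A)}(X,\nu P_g)\simeq D\Hom_{\DDD(A)}(P_g,X)$ then gives $\Hom_{\DDD(A)}(X,\nu P_g)=0$, and likewise $\Hom_{\DDD(A)}(X,\nu P_g[1])=0$. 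Applying $\Hom_{\DDD(A)}(X,-)$ to the truncation triangle $K_{\nu g}[1]\to\nu P_g\to C_{\nu g}\to K_{\nu g}[2]$ and using $\gl A\le1$ to annihilate $\Ext^{\ge2}_A(X,-)$, the resulting long exact sequence forces $\Hom_A(X,C_{\nu g})=0$, $\Ext^1_A(X,C_{\nu g})=0$ and $\Ext^1_A(X,K_{\nu g})=0$; combined with $\Hom_A(X,K_{\nu g})=0$, which is exactly the condition $X\in\overline{\TT}_g={}^\perp K_{\nu g}$, this proves $C_{\nu g},K_{\nu g}\in X^\perp$. (Alternatively, the vanishing $\Ext^1_A(X,K_{\nu g})=0$ can be obtained from Lemma \ref{W_f C_f K_f} applied to $g$ and $f$, once $g$ is chosen general enough that the sign-coherence hypotheses there hold.)

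Finally I would assemble the pieces: in $K_0(\mod A)$ one has $[\nu P_g]=[H^0(\nu P_g)]-[H^{-1}(\nu P_g)]=[C_{\nu g}]-[K_{\nu g}]$, which lies in $W_\eta$ by the first two steps, while $[\nu P_g]=\nu\iota([P_g])=\nu\iota(m\theta)=m\,\nu\iota(\theta)$; since $W_\eta$ is an $\R$-subspace of $K_0(\mod A)_\R$, it follows that $\nu\iota(\theta)\in W_\eta$, as required. I expect the main obstacle to be the cohomological bookkeeping in the second step — correctly matching the various $\Hom$ and $\Ext$ groups through Lemma \ref{Nakayama T_f} and the Serre functor, and using hereditarity to kill the higher $\Ext$ terms; the reduction in the first paragraph and the choice of $g$ via Theorem \ref{cup T_f} are comparatively routine.
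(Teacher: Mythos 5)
Your argument is correct. It shares the skeleton of the paper's proof---fix $X$ and $\theta\in\Theta_X$, use Theorem \ref{cup T_f} to obtain a morphism $g$ with $[g]=m\theta$ and $X\in\WW_g$, push the cohomology of $\nu P_g$ into $\WW_{\iota^{-1}(d)}$ by Serre duality, and take Euler characteristics using that $W_{\iota^{-1}(d)}$ is an $\R$-subspace---but implements the middle step differently. The paper first reduces to the case that your $\theta$ is indecomposable and not positive, via Proposition \ref{wall direct summand} and the symmetry $\WW_\theta=\WW_{-\theta}$; this allows it to pick $g$ with no positive direct summand, so that $C_{\nu g}=0$ and $\nu\iota(m\theta)=\pm[K_{\nu g}]$, and it then invokes the hereditary duality Lemma \ref{W_f C_f K_f} against the minimal projective presentation $f$ of $X$ to get $K_{\nu g}\in\WW_f\subseteq\WW_{\iota^{-1}(d)}$. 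You skip the reduction altogether: you identify $\WW_f=X^\perp$, show via Serre duality, the truncation triangle of $\nu P_g$ and $\gl A\le 1$ that both $C_{\nu g}$ and $K_{\nu g}$ lie in $X^\perp$, and compute $\nu\iota(m\theta)=[C_{\nu g}]-[K_{\nu g}]$. In effect you re-derive the substance of Lemma \ref{W_f C_f K_f} in situ and treat arbitrary $\theta$ uniformly, at the modest cost of the extra $\Ext$ computations for $C_{\nu g}$, which the paper's reduction renders vacuous. Two small remarks: the ``likewise'' for $\Hom_{\DDD(A)}(X,\nu P_g[1])=0$ hides a one-line degree count ($\Hom_{\DDD(A)}(P_g,X[-1])=0$ because the Hom complex is concentrated in positive degrees), which is worth recording; and the parenthetical alternative via Lemma \ref{W_f C_f K_f} needs $P_g$ to have no positive direct summand, which genericity alone does not ensure---it is exactly what the paper's reduction to non-positive $\theta$ supplies---but this aside plays no role in your main argument.
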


\begin{proof}
It suffices to show that $\nu\circ\iota(\eta)\in W_{\iota^{-1}(d)}$ holds for each $X\in\mod(A,d)$ and $\eta\in\Theta_X$.
By Proposition \ref{wall direct summand}, we can assume that $\eta$ is indecomposable.
Also we can assume that $\eta$ is not positive since $-\eta$ also belongs to $\Theta_X$ in this case.

Let $\theta=\iota^{-1}(d)$, and take a minimal projective presentation $g \in \Hom(\theta)$ of $X$ so that $C_g=X$.
Since $X\in\WW_\eta$, by Theorem \ref{cup T_f}, there exist $\ell \in \Z_{\ge 1}$ and $f_\eta \in \Hom_A(\ell\eta)$ such that
$C_g=X \in \WW_{f_\eta}$ and $f_\eta$ has no positive direct summand. By Lemma \ref{W_f C_f K_f},
we get $K_{\nu f_\eta} \in \WW_g \subset \WW_\theta$ and hence $[K_{\nu f_\eta}] \in W_\theta$.
Thus $\ell\nu\circ\iota(\eta)=\nu\circ\iota(\ell\eta)=[K_{\nu f_\eta}]\in W_\theta$ holds since $f_\eta$ has no positive direct summand.
Consequently $\nu\circ\iota(\eta)\in W_\theta$.
\end{proof}

We prepare some terminology.
Let $A$ be a finite dimensional hereditary algebra.
As in \cite[Definition 4.1]{DW1}, a sequence of dimension vectors $(d_1,d_2,\ldots,d_m)$ in $K_0(\mod A)$ is called a \textit{Schur sequence} if
\begin{enumerate}[\rm (a)]
\item
for any $i$, $d_i$ is a Schur root; and
\item
if $i<j$, then any general $X \in \mod(A,d_i+d_j)$ admits
a unique submodule $Y \subset X$ such that $Y \in \mod(A,d_i)$.
\end{enumerate}
Then the proof of the well-definedness of the map $\psi(r)$ of 
\cite[Theorem 5.1]{DW1} actually implies the following property.

\begin{lemma}\label{Schur face}
Let $A$ be a finite dimensional hereditary algebra, 
$d \in K_0(\mod A)$ be a dimension vector,
and $(d_1,d_2,\ldots,d_m)$ be a Schur sequence with $d \in \sum_{i=1}^m \Z_{\ge 1}d_i$.
Then $\bigcap_{i=1}^m \Theta_{d_i}$ is an $(n-m)$-dimensional face of $\Theta_d$.
\end{lemma}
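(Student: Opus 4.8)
The plan is to deduce Lemma \ref{Schur face} directly from the proof of \cite[Theorem 5.1]{DW1}, where Derksen-Weyman construct a piecewise-linear bijection between faces of the polytope $\Theta_d$ and certain combinatorial data attached to $d$. First I would recall that for a hereditary algebra, a Schur sequence $(d_1,\dots,d_m)$ with $d\in\sum_{i=1}^m\Z_{\ge 1}d_i$ is precisely the type of datum indexing an $(n-m)$-dimensional face in that correspondence: the intersection $\bigcap_{i=1}^m\Theta_{d_i}$ is cut out by $m$ independent linear conditions $\langle -,d_i\rangle=0$ (independence coming from the fact that Schur roots in a Schur sequence are linearly independent, which is exactly the content of \cite[Theorem 5.1]{DW1}), hence has dimension $\ge n-m$; and it is contained in $\Theta_d$ because $d$ is a non-negative combination of the $d_i$, so $X\in\WW_\theta$ for general $X$ of dimension $d_i$ forces membership in $\WW$ for $\theta\in\bigcap_i\Theta_{d_i}$ by additivity of the semistability inequalities under extensions in $\WW_\theta$.

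The key steps, in order, are: (1) establish that the $d_i$ are linearly independent as elements of $K_0(\mod A)_\R$, so that $\dim\bigcap_{i=1}^m\Theta_{d_i}=n-m$ provided this intersection is not contained in a smaller coordinate subspace — this is where I would invoke the Schur sequence hypothesis and the combinatorics of \cite[Definition 4.1, Theorem 5.1]{DW1}; (2) verify $\bigcap_{i=1}^m\Theta_{d_i}\subseteq\Theta_d$, using that a general module of dimension $d$ is filtered with subquotients among the $\mod(A,d_i)$ (this follows from the Schur sequence axiom (b) together with the canonical decomposition of $d$ as $\sum n_id_i$ being $n_1d_1\oplus\cdots\oplus n_md_m$ for hereditary $A$, cf.\ \cite[Theorem 4.4]{DW1}); (3) check that this intersection is genuinely a \emph{face} of $\Theta_d$, i.e.\ it is the locus where a supporting linear functional of $\Theta_d$ vanishes — here one takes the functional associated to an interior point of the face and uses that $\Theta_d$ is a rational polyhedral cone whose facet-defining inequalities are indexed by the Schur roots appearing in refinements of $d$. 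The cleanest route is to observe that the map $\psi(r)$ of \cite[Theorem 5.1]{DW1}, whose well-definedness proof we are citing, sends exactly such Schur sequences to faces of the correct codimension, so one extracts the statement rather than re-deriving it.

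The main obstacle I anticipate is matching conventions: \cite{DW1} works with dimension vectors and the canonical decomposition of quiver representations in the sense of Kac, whereas here $\Theta_d$ is being used in the Grothendieck group $K_0(\proj A)_\R$ via the Euler form, and one must be careful that the bijection of Example \ref{real Schur} (and more generally Proposition \ref{tau-reduced}) correctly transports Schur sequences to the canonical decomposition data on the presentation-space side. Concretely, the subtlety is that the face in \cite{DW1} is described in terms of the combinatorial boundary of a polytope, and one needs to confirm that ``$(n-m)$-dimensional face of $\Theta_d$'' in their language coincides with the geometric notion used here after applying $\iota$ and, where relevant, the twist by $\nu$. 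Once that dictionary is set up — which is essentially bookkeeping given Example \ref{real Schur} and Proposition \ref{tau-reduced} — the dimension count and the face property are immediate consequences of the cited proof, so I would keep the argument short and point precisely to the line in \cite[proof of Theorem 5.1]{DW1} that establishes well-definedness of $\psi(r)$.
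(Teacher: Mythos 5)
Your proposal matches the paper's approach exactly: the paper offers no proof of Lemma \ref{Schur face}, stating only (in the sentence introducing the lemma) that the well-definedness argument for $\psi(r)$ in \cite[Theorem 5.1]{DW1} already implies it, which is precisely the citation-extraction strategy you describe. One imprecision worth flagging in your sketch: in step (2) you justify $\bigcap_{i}\Theta_{d_i}\subseteq\Theta_d$ by asserting that $d=\sum n_id_i$ has canonical decomposition $n_1d_1\oplus\cdots\oplus n_md_m$, but this is false for a general Schur sequence --- the Schur sequence axiom (b) only demands a unique submodule of dimension $d_i$ in a general module of dimension $d_i+d_j$, which allows one-sided non-vanishing of extensions, strictly weaker than the mutual $\mathrm{ext}$-vanishing required for a canonical decomposition. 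Fortunately the inclusion holds without it: if $\theta\in\Theta_{d_i}$ for each $i$, pick $X_i\in\WW_\theta$ with $\dimv X_i=d_i$; then $\bigoplus_i X_i^{\oplus n_i}\in\WW_\theta$ has dimension $d$, so $\theta\in\Theta_d$ directly from the definition $\Theta_d=\bigcup_{X\in\mod(A,d)}\Theta_X$, no genericity needed.
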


Then we have the following result.

\begin{proposition}\label{wall codim}
Let $A$ be a finite dimensional hereditary algebra,
and $d \in K_0(\mod A)_{\ge 0}$ be a dimension vector.
If $d=\bigoplus_{i=1}^m d_i^{\oplus s_i}$ is the canonical decomposition of 
the dimension vector $d$,
then the dimension of $\Theta_d$ as a rational polyhedral cone is $n-m$.
\end{proposition}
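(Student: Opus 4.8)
The plan is to identify the cone $\Theta_d$ with the intersection of the walls attached to the distinct Schur summands of the canonical decomposition, and then read off its dimension directly from Lemma \ref{Schur face}. The whole argument is short modulo the combinatorics of canonical decompositions over hereditary algebras already isolated in \cite{DW1}.

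First I would pass to a general module. Since $A$ is hereditary, the variety $\mod(A,d)$ is irreducible, so \cite[Lemma 5.2]{A} gives $\Theta_d=\Theta_X$ for a general $X\in\mod(A,d)$. By the defining property of the canonical decomposition of a dimension vector, such a general $X$ is isomorphic to $\bigoplus_{i=1}^m X_i^{\oplus s_i}$ where each $X_i$ is a general (hence Schur, in particular brick) module of dimension vector $d_i$; applying \cite[Lemma 5.2]{A} again to each $d_i$ gives $\Theta_{X_i}=\Theta_{d_i}$. Using $\Theta_{Y\oplus Z}=\Theta_Y\cap\Theta_Z$ together with $\Theta_{Y^{\oplus s}}=\Theta_Y$ (both recalled in the preliminaries, $\WW_\theta$ being closed under direct sums and summands), this yields
\[\Theta_d=\Theta_X=\bigcap_{i=1}^m\Theta_{d_i},\]
which in particular shows $\Theta_d$ is a rational polyhedral cone.

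Next I would invoke the fact that, after a suitable reordering of the indices, $(d_1,\ldots,d_m)$ is a Schur sequence in the sense recalled just before Lemma \ref{Schur face}; producing such an ordering of the distinct summands of a canonical decomposition over a hereditary algebra is exactly the content of \cite[Section 5]{DW1} (equivalently Schofield's theory of general representations). Since $d=\sum_{i=1}^m s_id_i$ with all $s_i\ge 1$, we have $d\in\sum_{i=1}^m\Z_{\ge 1}d_i$, so Lemma \ref{Schur face} applies and tells us that $\bigcap_{i=1}^m\Theta_{d_i}$ is an $(n-m)$-dimensional face of $\Theta_d$. Combined with the displayed equality $\Theta_d=\bigcap_{i=1}^m\Theta_{d_i}$, this face is all of $\Theta_d$, hence $\dim\Theta_d=n-m$.

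The only genuinely delicate point is the third step: checking that the distinct summands of the canonical decomposition can be arranged in the order demanded by the definition of a Schur sequence (each $d_i$ a Schur root, and for $i<j$ a general module of dimension $d_i+d_j$ having a unique submodule of dimension $d_i$). I expect to dispose of this by citing \cite{DW1} directly, since this ordering is precisely the input that underlies the well-definedness argument already quoted for Lemma \ref{Schur face}; everything else in the proof is a formal manipulation of the wall operator $\Theta_{(-)}$ and the irreducibility of $\mod(A,d)$.
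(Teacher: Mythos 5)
Your proof is correct and follows essentially the same approach as the paper: both establish $\Theta_d=\bigcap_{i=1}^m\Theta_{d_i}$ and then invoke Lemma \ref{Schur face} after reordering $(d_1,\ldots,d_m)$ into a Schur sequence. The only difference is that you extract the dimension directly from this equality together with the lemma (which already asserts $\bigcap_i\Theta_{d_i}$ is $(n-m)$-dimensional), whereas the paper separately derives the upper bound $\dim\Theta_d\le n-m$ from linear independence of the $d_i$ and the lower bound from the existence of an $(n-m)$-dimensional face; your version shows the linear-independence step is redundant once the equality $\Theta_d=\bigcap_i\Theta_{d_i}$ is in place.
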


\begin{proof}
By the definition of canonical decompositions of dimension vectors,
we get $\Theta_d=\bigcap_{i=1}^m \Theta_{d_i}$.
Since $d_1,d_2,\ldots,d_m$ are linearly independent \cite[Corollary 4.12]{DW1}, 
we have the dimension of $\Theta_d$ is at most $n-m$.

Thus it remains to show that $\Theta_d$ has an $(n-m)$-dimensional face.
This follows from Lemma \ref{Schur face}
and that $(d_1,d_2,\ldots,d_m)$ can be reordered to a Schur sequence
by \cite[Remark 4.6]{DW1}.
\end{proof}

Now we are ready to prove Theorem \ref{describe TF for hereditary}.

\begin{proof}[Proof of Theorem \ref{describe TF for hereditary}]
By Propositions \ref{linear independence3} and \ref{dim of wide},
it suffices to prove $\dim_\R W_\theta \ge n-m$ holds for any $\theta \in K_0(\proj A)$.

(i)
We first consider the case that $\theta$ has no negative direct summand.
Let $d:=\iota(\theta)$. Then $d=\bigoplus_{i=1}^m \iota(\theta_i)^{\oplus s_i}$ is the canonical decomposition in $K_0(\mod A)$ by Example \ref{real Schur}.
Thus we have
\begin{align*}
\dim_\R W_\theta \stackrel{\text{Prop. \ref{W_f C_f K_f 2}}}{\ge} 
\dim_\R(\R\Theta_d) \stackrel{\text{Prop. \ref{wall codim}}}{=}|A|-|\theta|=n-m.
\end{align*}

(ii)
We consider general cases.
We set $\theta'$ as the maximal negative direct summand of $\theta$.
Then there uniquely exists an idempotent $e \in A$ such that $\theta' \in C^\circ(eA[1])$,
so consider the algebra $B:=A/\langle e \rangle$. 
By Theorem \ref{decomposition and T}, 
we have $\WW_\theta \subset \WW_{\theta'}=\mod B$, 
so Example \ref{pi theta} implies $\WW_\theta=\WW^B_{\theta \otimes B}$.
Since $B$ is hereditary and $\theta \otimes B$ has no negative direct summand,
we have
\begin{align*}
& \dim_\R W_\theta=\dim_\R \WW^B_{\theta \otimes B}
\stackrel{\text{(i)}}{\ge}|B|-|\theta \otimes B|=(|A|-|\theta'|)-(|\theta|-|\theta'|)
=n-m. \qedhere
\end{align*}
\end{proof}

\section{TF equivalence classes of preprojective algebras of type $\widetilde{\mathbb{A}}$}\label{Section_preproj}

\subsection{Our result}
In this section, we consider the complete preprojective algebra 
$\Pi$ of type $\widetilde{\mathbb{A}}_{n-1}$ with $n \ge 2$:
\begin{align*}
\begin{tikzpicture}[baseline=0pt,->]
\node (1) at ( 90:2cm) {$1$};
\node (2) at (150:2cm) {$2$};
\node (3) at (210:2cm) {$3$};
\node (4) at (270:2cm) {$\cdots$};
\node (5) at (330:2cm) {$n-1$};
\node (6) at ( 30:2cm) {$n$};
\draw [transform canvas={shift={(120:0.1cm)}}] (1) to [edge label'=$\scriptstyle \alpha_1$] (2);
\draw [transform canvas={shift={(180:0.1cm)}}] (2) to [edge label'=$\scriptstyle \alpha_2$] (3);
\draw [transform canvas={shift={(240:0.1cm)}}] (3) to [edge label'=$\scriptstyle \alpha_3$] (4);
\draw [transform canvas={shift={(300:0.1cm)}}] (4) to [edge label'=$\scriptstyle \alpha_{n-2}$] (5);
\draw [transform canvas={shift={(  0:0.1cm)}}] (5) to [edge label'=$\scriptstyle \alpha_{n-1}$] (6);
\draw [transform canvas={shift={( 60:0.1cm)}}] (6) to [edge label'=$\scriptstyle \alpha_n$] (1);
\draw [transform canvas={shift={(120:-0.1cm)}}] (2) to [edge label'=$\scriptstyle \beta_2$] (1);
\draw [transform canvas={shift={(180:-0.1cm)}}] (3) to [edge label'=$\scriptstyle \beta_3$] (2);
\draw [transform canvas={shift={(240:-0.1cm)}}] (4) to [edge label'=$\scriptstyle \beta_4$] (3);
\draw [transform canvas={shift={(300:-0.1cm)}}] (5) to [edge label'=$\scriptstyle \beta_{n-1}$] (4);
\draw [transform canvas={shift={(  0:-0.1cm)}}] (6) to [edge label'=$\scriptstyle \beta_n$] (5);
\draw [transform canvas={shift={( 60:-0.1cm)}}] (1) to [edge label'=$\scriptstyle \beta_1$] (6);
\end{tikzpicture}, \quad
\alpha_i\beta_{i+1}=\beta_{i+1}\alpha_i \quad (i \in \{1,2,\ldots,n\}),  
\end{align*}
where we set $\alpha_{i+n}:=\alpha_i$ and $\beta_{i+n}:=\beta_i$. 
It is well-known that $\Pi$ is infinite dimensional. The center of $\Pi$ is isomorphic to the simple surface singularity $k[[x,y,z]]/(x^{n+1}-yz)$ of type $A_n$, and $\Pi$ is its Auslander algebra, that is, the endomorphism algebra of direct sum of indecomposable Cohen-Macaulay $R$-modules.
More explicitly, $x$, $y$ and $z$ are given by
\begin{align*}x=\sum_{i=1}^n\alpha_i\beta_{i+1},\ y=\sum_{i=1}^n\alpha_i\alpha_{i+1}\cdots\alpha_{i+n-1}\ \mbox{ and }\ z=\sum_{i=1}^n\beta_i\beta_{i+1}\cdots\beta_{i+n-1}.\end{align*}
 In particular, $\KKK^{\bo}(\proj \Pi)$ is Krull-Schmidt (see also \cite[Corollary 4.6]{KM} for wild case), and an indecomposable decomposition of an object of $\KKK^{\bo}(\proj \Pi)$ is unique.
We refer to \cite{G,IK,Kimura,VG} for silting theory of Noetherian algebras.

We will determine the TF equivalence classes of $K_0(\proj \Pi)_\R$.
As usual, let $\proj \Pi$ be the category of finitely generated projective $\Pi$-modules, $K_0(\proj \Pi)$ is a free abelian group of rank $n$.
We set $\fl \Pi$ as the category of finite dimensional $\Pi$-modules, then $K_0(\fl \Pi)$ is also a free abelian group of rank $n$. The Euler form
\begin{align*}K_0(\proj\Pi)\times K_0(\fl\Pi)\to\Z,\ (X,Y)\mapsto\dim_k\Hom_\Pi(X,Y)\end{align*}
is non-degenerate, and we often regard $K_0(\proj \Pi)$ as the dual space of $K_0(\fl \Pi)$. For each $\theta\in K_0(\proj\Pi)_\R$, we have torsion pairs $(\overline{\TT}_\theta,\FF_\theta),(\TT_\theta,\overline{\FF}_\theta)$ in $\fl \Pi$ as in Definition \ref{define T_theta}, and we obtain the notion of \emph{TF equivalence} on $K_0(\proj\Pi)_\R$. 

We recall a classification of 2-term silting complexes in $\KKK^{\bo}(\proj \Pi)$ in terms of the Coxeter group $W$ of type $\widetilde{\mathbb{A}}_{n-1}$, see \cite{IR,BIRS,KM}.
Recall that $W$ is defined by generators $s_1,s_2,\ldots,s_n$ with relations $(s_is_j)^{m_{i,j}}=1_W$, where
\begin{align*}
m_{i,j}:=\begin{cases}
1 & (j=i) \\
3 & (n \ge 3, \ j=i\pm 1 + n\Z) \\
\infty & (n=2, \ j \ne i) \\
2 & \text{(otherwise)}
\end{cases}.
\end{align*}
For each $w \in W$, \cite[Theorem 3.1.9]{BIRS} constructed a tilting ideal $I_w \subset \Pi$, which we identify with its projective presentation as a $\Pi$-module.
Then we have a bijection \cite[Theorem 3.1]{KM}
\begin{equation}\label{WW}
W\sqcup W\simeq\twosilt\Pi
\end{equation}
given by maps
\begin{align*}W\to\twosilt\Pi,\ w\mapsto I_w\ \mbox{ and }\ W\to\twosilt\Pi,\ w\mapsto I_w^*[1]:=\RHom_{\Pi^{\op}}(I_w,\Pi)[1].\end{align*}
Notice that $I_w$ is a classical tilting $\Pi$-module, and $I_w^*[1]$ is a complex with $H^{-1}(I_w^*[1])=\Pi$ and $H^0(I_w^*[1])$ is a $\Pi$-module of finite length.
The bijection \eqref{WW} is compatible with
a canonical action of $W$ \cite[Subsection 4.2]{BB} on the Grothendieck group $K_0(\proj \Pi)_\R$, which is given by,
for each $1\le i,j\le n$,
\begin{align*}
s_j([P(i)])=\begin{cases}
-[P(i)]+[P(i-1)]+[P(i+1)] & (i=j) \\
[P(i)] & (i \ne j)
\end{cases}.
\end{align*}
As in \cite[Theorem 6.6]{IR} and \cite[Theorem 3.4]{KM}, we have
\begin{align*}C(I_w)=w(C(\Pi))\ \mbox{ and }\ C(I_w^*[1])=w(C(\Pi[1]))=-w(C(\Pi)).\end{align*}
We set
\begin{align*}h:=\sum_{i=1}^n [S(i)] \in K_0(\fl \Pi)\end{align*}
and the hyperplane
\begin{align*}H:=\Kernel \langle ?,h \rangle\subset K_0(\proj\Pi)_\R.\end{align*}
The argument in the proof of \cite[Proposition 3.6]{KM} actually shows that
\begin{align*}
H^+&:=\{ \theta \in K_0(\proj A)_\R \mid \theta(h)>0 \}=\bigcup_{w \in W}C(I_w)\setminus\{0\}, \\
H^-&:=\{ \theta \in K_0(\proj A)_\R \mid \theta(h)<0 \}=\bigcup_{w \in W}C(I_w^*[1])\setminus\{0\}.
\end{align*}
We give a description of TF equivalence classes of $K_0(\proj\Pi)_\R$ contained in $H^+\sqcup H^-$.
For each $J \subset \{1,2,\ldots,n\}$, we set $P_J:=\bigoplus_{j \in J} P(j)\in\proj\Pi$, and $W_J \subset W$ as the parabolic subgroup generated by $\{s_j\}_{j \notin J}$.

\begin{proposition}[{cf.\ \cite{IW}}]\label{half planes}
The following assertions hold.
\begin{enumerate}[\rm(a)]
\item We have a bijection
\begin{equation*}
\bigsqcup_{J \subset \{1,2,\ldots,n\}}(W/W_J\sqcup W/W_J)\simeq
\twopresilt\Pi
\end{equation*}
given by the maps
\begin{equation*}
W/W_J\to\twopresilt\Pi,\ w\mapsto P_J\otimes_\Pi I_w\ \mbox{ and }\ W/W_J\to\twopresilt\Pi,\ w\mapsto P_J\otimes_\Pi I_w^*[1].
\end{equation*}
\item We have a decomposition of $H^+\sqcup H^-$ into the TF equivalence classes
\begin{align*}
H^+ \sqcup H^-
=\bigsqcup_{\emptyset\neq J \subset \{1,2,\ldots,n\}, \ w \in W/W_J}w(C^\circ(P_J))\sqcup (-w(C^\circ(P_J)))=\bigsqcup_{U \in (\twopresilt \Pi) \setminus \{0\}}C^\circ(U).
\end{align*} 
\end{enumerate}
\end{proposition}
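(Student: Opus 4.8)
The plan is to deduce Proposition \ref{half planes} from the general machinery built in the paper, using the classification \eqref{WW} of 2-term silting complexes of $\Pi$ and the $W$-action on $K_0(\proj\Pi)_\R$ as input.

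\textbf{Part (a).} First I would recall that, by Remark \ref{cone basis}, the indecomposable summands of a 2-term presilting complex give part of a $\Z$-basis of $K_0(\proj\Pi)$, so each $U\in\twopresilt\Pi$ is a direct summand of some $T\in\twosilt\Pi$. By \eqref{WW}, $T$ is either $I_w$ or $I_w^*[1]$ for a unique $w\in W$, so $U$ is obtained from $I_w$ (resp.\ $I_w^*[1]$) by deleting some indecomposable summands; equivalently $U=P_J\otimes_\Pi I_w$ (resp.\ $U=P_J\otimes_\Pi I_w^*[1]$) for a subset $J\subset\{1,\dots,n\}$ recording the summands kept, using that $I_w$ is a tilting $\Pi$-module so $P_J\otimes_\Pi I_w$ makes sense as a partial silting. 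The content is then to show that two pairs $(J,w)$ and $(J',w')$ give isomorphic presilting complexes iff $J=J'$ and $wW_J=w'W_J$. That $J$ is determined is clear by counting summands and looking at $g$-vectors modulo those of the full silting complex. For the $W_J$-coset statement I would argue as in \cite{IW}: $s_j$ for $j\notin J$ acts on $I_w$ only in the deleted coordinates, so $P_J\otimes_\Pi I_w\cong P_J\otimes_\Pi I_{ws_j}$ for $j\notin J$, giving a well-defined surjection $W/W_J\to\{U\in\twopresilt\Pi: |U|=|J|, \text{positive type}\}$; injectivity follows because $C(I_w)=w(C(\Pi))$ and these chambers are distinct for distinct $w$, and the face $C^\circ(P_J\otimes_\Pi I_w)$ of $C(I_w)$ remembers $wW_J$. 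The $I_w^*[1]$ case is the image under $\theta\mapsto -\theta$ and is handled symmetrically. The two families are disjoint because $C^\circ(P_J\otimes I_w)\subset H^+$ and $C^\circ(P_J\otimes I_w^*[1])\subset H^-$ once $J\ne\emptyset$.

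\textbf{Part (b).} The first equality of the displayed decomposition, namely $H^+\sqcup H^-=\bigsqcup_{\emptyset\ne J,\ w\in W/W_J}w(C^\circ(P_J))\sqcup(-w(C^\circ(P_J)))$, I would get by intersecting the known decompositions $H^+=\bigcup_w C(I_w)\setminus\{0\}$ and $H^-=\bigcup_w C(I_w^*[1])\setminus\{0\}$ with the face structure: each cone $C(I_w)$ is the disjoint union of the relative interiors of its faces $C^\circ(P_J\otimes_\Pi I_w)=w(C^\circ(P_J))$ over $J\subset\{1,\dots,n\}$ (where $J=\emptyset$ gives $\{0\}$), and by Proposition \ref{cone-wall}(c) these chambers tile $H^+$ with disjoint interiors; a short argument using Remark \ref{cone basis} shows the faces of different top-dimensional chambers that meet also agree as relative-interior pieces, so the union is disjoint after removing $0$. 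The identification $w(C^\circ(P_J))=C^\circ(P_J\otimes_\Pi I_w)$ uses $C(I_w)=w(C(\Pi))$ together with $C(P_J\otimes_\Pi I_w)=w(C(P_J))$, which follows from the definition of $P_J\otimes_\Pi I_w$ and the fact that the $g$-vectors of $I_w$ are the $w$-translates of those of $\Pi$. Finally, the second equality $\bigsqcup_{U\ne 0}C^\circ(U)$ is just a reindexing via part (a), and the assertion that each $C^\circ(U)$ \emph{is} a TF equivalence class is exactly Proposition \ref{cone-TF}, so no new work is needed there.

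\textbf{Main obstacle.} I expect the genuinely delicate point to be the precise combinatorics of part (a): proving that $P_J\otimes_\Pi I_w\cong P_J\otimes_\Pi I_{w'}$ forces $wW_J=w'W_J$, i.e.\ that the cone $C^\circ(P_J\otimes_\Pi I_w)$ determines the coset and not just $w$ up to something coarser. This is where the hypothesis that $\Pi$ is the (completed) preprojective algebra of type $\widetilde{\mathbb A}$ really enters, via the faithfulness of the $W$-action on $K_0(\proj\Pi)_\R$ and the chamber geometry of the affine Coxeter arrangement; the analogous statement for the stability space is in \cite{IW}, and I would cite and adapt that. Everything else is bookkeeping built on Proposition \ref{cone-TF}, Proposition \ref{cone-wall}, Remark \ref{cone basis}, and the classification \eqref{WW}.
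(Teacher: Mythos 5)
Your proposal is essentially correct but inverts the logical order of the paper's argument, and in doing so it shoulders more work than necessary. The paper proves (b) \emph{first}: from \eqref{WW} plus Bongartz completion it records only a \emph{surjection} $\bigsqcup_J(W\sqcup W)\to\twopresilt\Pi$, computes $C^\circ(P_J\otimes_\Pi I_w)=w(C^\circ(P_J))$, and then invokes the Tits cone decomposition theorem (Humphreys, Section 5.13) to get $H^+=\bigsqcup_{\emptyset\neq J,\ w\in W/W_J}w(C^\circ(P_J))$ as a disjoint union, with $w(C^\circ(P_J))$ depending only on the coset $wW_J$. Because a presilting complex $U$ is determined by its cone $C^\circ(U)$, the surjection is automatically forced to be a bijection, and (a) falls out. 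You instead aim to prove (a) directly — showing $P_J\otimes_\Pi I_w\cong P_J\otimes_\Pi I_{w'}$ forces $wW_J=w'W_{J'}$, which you flag as the delicate step and propose to import from \cite{IW} — and then assemble (b) by arguing that the faces of the chambers $C(I_w)$ tile $H^\pm$ disjointly, via an unspecified ``short argument using Remark \ref{cone basis}''. That disjointness and the coset-determination are exactly the two facts packaged together by the Tits cone theorem, which you never name; your plan is in effect to re-derive that theorem in this special case. This is not wrong, but it is strictly harder than the paper's route, and the hand-wave in your (b) hides the real content. If you take the paper's order — cite Humphreys for the disjoint decomposition of $H^+$, note the surjection, and conclude the bijection from uniqueness of $C^\circ(U)$ — both the well-definedness on cosets and the disjointness come for free, and no appeal to \cite{IW} or a bespoke injectivity argument is needed.
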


\begin{proof}
By \eqref{WW} and Bongartz completion, we have a surjection
\begin{equation}\label{WW surjection}
\bigsqcup_{J \subset \{1,2,\ldots,n\}}(W\sqcup W)\to\twopresilt\Pi,
\end{equation}
which is, for each $J$, given by maps $W\to\twopresilt\Pi,\ w\mapsto P_J\otimes_\Pi I_w\ \mbox{ and }\ W\to\twopresilt\Pi,\ w\mapsto P_J\otimes_\Pi I_w^*[1]$.
We have $C^\circ(P_J\otimes_\Pi I_w)=w(C^\circ(P_J))$ and $C^\circ(P_J\otimes_\Pi I_w^*[1])=-w(C^\circ(P_J))$.
By \cite[Section 5.13, Theorem]{H}, we have a decomposition
\begin{equation}\label{decompose H^+}
H^+=\bigsqcup_{\emptyset\neq J \subset \{1,2,\ldots,n\}, \ w \in W/W_J}w(C^\circ(P_J)),
\end{equation}
where $w(C^\circ(P_J))$ depends only on the coset $wW_J$. Thus we have
\begin{equation}\label{decompose H^-}
H^-=\bigsqcup_{\emptyset\neq J \subset \{1,2,\ldots,n\}, \ w \in W/W_J}-w(C^\circ(P_J)).
\end{equation}
Since we have a surjection \eqref{WW surjection} and each element $U\in\twopresilt\Pi$ is uniquely determined by $C^\circ(U)$, the two equalities \eqref{decompose H^+} and \eqref{decompose H^-} imply our first claim (a). The second claim of (b) follows immediately.
\end{proof}

In the rest, we give an explicit description of the TF equivalence classes contained in $H$.

Our strategy is to use the factor algebra
\begin{align*}\Pi':=\Pi / \langle e_n \rangle,\end{align*}
which is the preprojective algebra of type $\mathbb{A}_{n-1}$, and the parabolic subgroup
\begin{align*}W':=\langle s_1,s_2,\ldots,s_{n-1}\rangle\subset W,\end{align*}
which is the Coxeter group of type $\mathbb{A}_{n-1}$ and hence isomorphic to the symmetric group of rank $n$.
We set $P'(i):=P_{\Pi'}(i) \in \proj \Pi'$ for $i \in \{1,2,\ldots,n-1\}$.
By \cite[Theorem 3.9]{M}, there exists a bijection
\begin{align*}W' \to \twosilt \Pi',\ w \mapsto I'_w:=I_w\Lotimes_{\Pi}\Pi'\end{align*}
such that $C(I'_w)=w(C(\Pi')) \subset K_0(\proj \Pi')$, where a canonical action of $W'$ on $K_0(\proj \Pi')_\R$ is given by, for each $1\le i,j\le n-1$,
\begin{align*}
s_j[P'(i)]:=\begin{cases}
-[P'(1)]+[P'(2)] & (i=j=1) \\
-[P'(n-1)]+[P'(n-2)] & (i=j=n-1) \\
-[P'(i)]+[P'(i-1)]+[P'(i+1)] & (i=j \notin \{1,n-1\}) \\
[P'(i)] & (i \ne j).
\end{cases}
\end{align*}
Since $\Pi'$ is $\tau$-tilting finite, $K_0(\proj \Pi')=\bigsqcup_{U \in \twopresilt \Pi'}C^\circ(U)$ holds.

We set $P'_J:=\bigoplus_{j \in J} P'(j)\in\proj\Pi'$, and $W'_J \subset W'$ as the parabolic subgroup generated by $\{s_j\}_{j \notin J,\ j \ne n}$
for each $J \subset \{1,2,\ldots,n-1\}$.
As in the case of $\Pi$, for each $U \in \twopresilt \Pi'$, there exist $w \in W'$ and $J \subset \{1,2,\ldots,n-1\}$ such that
$C^\circ(U)=w(C^\circ(P'_J))$, and we have the following description of TF equivalence classes.

\begin{proposition}\label{Pi'}
We have a decomposition of $K_0(\proj \Pi')_\R$ into the TF equivalence classes
\begin{align}\label{decompose Pi'}
K_0(\proj \Pi')_\R=\bigsqcup_{J \subset \{1,2,\ldots,n-1\}, \ w \in W'/W'_J}w(C^\circ(P'_J))=\bigsqcup_{U \in \twopresilt \Pi'}C^\circ(U).
\end{align}
\end{proposition}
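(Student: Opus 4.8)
The plan is to prove Proposition \ref{Pi'} by transporting the known classification of functorially finite torsion classes / $2$-term silting complexes for the $\tau$-tilting finite algebra $\Pi'$ into the language of TF equivalence classes, using the results of Section 2. First I would recall that since $\Pi'$ is $\tau$-tilting finite (it is the preprojective algebra of Dynkin type $\mathbb{A}_{n-1}$), Proposition \ref{g-fin} gives $\Cone = K_0(\proj\Pi')_\R$, hence $K_0(\proj\Pi')_\R = \bigcup_{T \in \twosilt\Pi'} C(T) = \bigcup_{U \in \twopresilt\Pi'} C^\circ(U)$, and by Proposition \ref{cone-TF} each $C^\circ(U)$ is a TF equivalence class. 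The remaining point for the second equality in \eqref{decompose Pi'} is that this union is \emph{disjoint}: two cones $C^\circ(U)$ and $C^\circ(U')$ with $U \ne U'$ are either equal or disjoint, and by Proposition \ref{cone-TF} they are equal as sets only if $\overline{\TT}_U = \overline{\TT}_{U'}$ and $\overline{\FF}_U = \overline{\FF}_{U'}$, which by Lemma \ref{func fin presilt}(a) forces $U = U'$. So the decomposition into TF equivalence classes indexed by $\twopresilt\Pi'$ is immediate from earlier results.

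Next I would establish the first equality in \eqref{decompose Pi'}, i.e.\ that the cones $C^\circ(U)$ for $U \in \twopresilt\Pi'$ are exactly the cones $w(C^\circ(P'_J))$ for $J \subset \{1,\dots,n-1\}$ and $w \in W'/W'_J$, with the indexing by cosets being well-defined and injective. The input here is the cited bijection $W' \to \twosilt\Pi'$, $w \mapsto I'_w$ with $C(I'_w) = w(C(\Pi'))$ (from \cite[Theorem 3.9]{M}), together with Bongartz completion: every $U \in \twopresilt\Pi'$ is a direct summand of some $T = I'_w \in \twosilt\Pi'$, and the indecomposable summands of $I'_w$ are $P'_J \otimes_{\Pi'} I'_w$-type complexes; concretely $C^\circ(U) = w(C^\circ(P'_J))$ where $J$ records which summands of $T$ lie in $U$. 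That $w(C^\circ(P'_J))$ depends only on the coset $wW'_J$ and that distinct cosets give distinct faces is the standard face-combinatorics of the Coxeter arrangement/Tits cone for the finite Coxeter group $W'$ of type $\mathbb{A}_{n-1}$ — this is exactly the analogue of \cite[Section 5.13, Theorem]{H} used for $\Pi$ in the proof of Proposition \ref{half planes}, applied now to the \emph{finite} Coxeter group $W'$ whose Tits cone is all of $K_0(\proj\Pi')_\R$. So \eqref{decompose Pi'} follows by combining the $\tau$-tilting finiteness (which fills the whole space) with this Coxeter-theoretic bookkeeping.

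I would organize the write-up as: (1) cite $\tau$-tilting finiteness of $\Pi'$ and invoke Propositions \ref{g-fin}, \ref{cone-wall}(c) (or \ref{cone-TF}) to get that $K_0(\proj\Pi')_\R$ is the disjoint union of the $C^\circ(T)$ over $T \in \twosilt\Pi'$ refined to $C^\circ(U)$ over $U \in \twopresilt\Pi'$, each a TF equivalence class; (2) use the bijection $W' \cong \twosilt\Pi'$ and the formula $C(I'_w) = w(C(\Pi'))$ to rewrite the maximal cones as $w(C(P'_{\{1,\dots,n-1\}}))$, and then pass to faces: every presilting $U$ corresponds to a face, which is $w(C^\circ(P'_J))$ for a unique pair $(J, wW'_J)$ by the Coxeter face combinatorics; (3) conclude \eqref{decompose Pi'}. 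The main obstacle I expect is step (2): making precise that the faces of the chambers $w(C(\Pi'))$ of the Coxeter fan are in natural bijection with pairs $(J, wW'_J)$ and that this matches the summand-combinatorics of the presilting complexes $P'_J \otimes_{\Pi'} I'_w$ — i.e.\ verifying the claim ``$C^\circ(U) = w(C^\circ(P'_J))$, depending only on the coset'' which the statement asserts just before the proposition. This is genuinely the content of the Coxeter-arrangement description of the silting fan for preprojective algebras of Dynkin type, and one must either cite it (e.g.\ from \cite{M}, or deduce it from \cite[Section 5.13]{H} as done for $\Pi$) or spell out the stabilizer computation showing the stabilizer of the face $C^\circ(P'_J)$ in $W'$ is exactly $W'_J$. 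Everything else — disjointness, the TF-class identification — is routine given the earlier sections.
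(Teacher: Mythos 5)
Your proof is correct and follows essentially the same route the paper intends: the paper asserts Proposition \ref{Pi'} on the strength of the preceding remarks ($\tau$-tilting finiteness of $\Pi'$ giving $K_0(\proj\Pi')_\R=\bigsqcup_{U}C^\circ(U)$ via Propositions \ref{g-fin}, \ref{cone-TF}, \ref{cone-wall}, plus the bijection $W'\simeq\twosilt\Pi'$ from \cite{M}) together with the phrase ``as in the case of $\Pi$'', i.e.\ the same Bongartz-completion and Coxeter-fan argument (\cite[Section~5.13]{H}) carried out explicitly in the proof of Proposition \ref{half planes}. Your write-up simply spells out that implicit argument in more detail, and correctly identifies the one nontrivial step --- matching faces of the Coxeter chambers with cosets $(J,wW'_J)$ and with the summand combinatorics of presilting complexes --- as the content one must cite or verify.
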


We will prove that \eqref{decompose Pi'} gives the decomposition of $H \subset K_0(\proj \Pi)_\R$. For this purpose, let
\begin{align*}\pi:-\otimes_\Pi\Pi'  \colon K_0(\proj \Pi)_\R \to K_0(\proj \Pi')_\R.\end{align*}
It restricts to an isomorphism $H\simeq K_0(\proj \Pi')_\R$, whose inverse is given by
\begin{align*}\iota \colon K_0(\proj \Pi')_\R \simeq H\subset K_0(\proj \Pi)_\R,\ [P'(i)] \to [P(i)]-[P(n)]\ \mbox{ for each }\ i \in \{1,2,\ldots,n-1\}.\end{align*}
The action of $W'(\subset W)$ commutes with $\pi$ and $\iota$.

We are ready to state our main result.

\begin{theorem}\label{preproj TF}
We have a decomposition of $H \subset K_0(\proj \Pi)_\R$ into the TF equivalence classes
\begin{align*}
H=\bigsqcup_{J \subset \{1,2,\ldots,n-1\}, \ w \in W'/W'_J}w
\left(\cone^\circ\{[P(j)]-[P(n)] \mid j \in J\}\right)=\bigsqcup_{U \in \twopresilt \Pi'}\iota(C^\circ(U)).
\end{align*}
\end{theorem}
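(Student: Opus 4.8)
The plan is to transport the decomposition of Proposition~\ref{Pi'} along the linear isomorphism $\pi|_H\colon H\xrightarrow{\ \sim\ }K_0(\proj\Pi')_\R$, whose inverse is $\iota$. Since $\{C^\circ(U)\mid U\in\twopresilt\Pi'\}$ is exactly the set of TF equivalence classes of $\Pi'$, the theorem follows once we prove that
\begin{equation*}
\theta\sim_{\rm TF}\eta\ \text{ in }K_0(\proj\Pi)_\R
\quad\Longleftrightarrow\quad
\pi\theta\sim_{\rm TF}\pi\eta\ \text{ in }K_0(\proj\Pi')_\R
\tag{$\ast$}
\end{equation*}
for all $\theta,\eta\in H$. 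Indeed, $(\ast)$ says that $\iota$ carries the TF equivalence classes of $\Pi'$ bijectively onto the TF equivalence classes of $\Pi$ contained in $H$, which is the first equality $H=\bigsqcup_{U\in\twopresilt\Pi'}\iota(C^\circ(U))$; the second equality then follows from Proposition~\ref{Pi'} together with the $W'$-equivariance of $\iota$ and $\iota([P'(j)])=[P(j)]-[P(n)]$, since $C^\circ(U)=w(C^\circ(P'_J))$ gives $\iota(C^\circ(U))=w\bigl(\cone^\circ\{[P(j)]-[P(n)]\mid j\in J\}\bigr)$.

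The implication ``$\Rightarrow$'' in $(\ast)$ is easy: if $\theta\sim_{\rm TF}\eta$, then $\WW_{\theta''}$ is constant as $\theta''$ runs over the segment $[\theta,\eta]\subseteq H$ by Proposition~\ref{Asai TF}, so by Example~\ref{pi theta} the family $\WW^{\Pi'}_{\pi\theta''}=\WW^{\Pi}_{\theta''}\cap\mod\Pi'$ is constant on $[\pi\theta,\pi\eta]=\pi([\theta,\eta])$, whence $\pi\theta\sim_{\rm TF}\pi\eta$. For the converse, assume $\pi\theta\sim_{\rm TF}\pi\eta$; by Proposition~\ref{Asai TF}(c) it suffices to show that no brick $S\in\fl\Pi$ has $\Theta^{\Pi}_S\cap[\theta,\eta]$ equal to a single point, and for this I will prove the stronger claim that, for \emph{every} brick $S\in\fl\Pi$, the set $\Theta^{\Pi}_S\cap H$ is a union of the closed cones $\iota\bigl(\overline{C^\circ(U)}\bigr)$, $U\in\twopresilt\Pi'$. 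Granting the claim, the hypothesis $\pi\theta\sim_{\rm TF}\pi\eta$ forces $[\theta,\eta]=\iota([\pi\theta,\pi\eta])$ to lie inside one of the open cones $\iota(C^\circ(U))$, so $\Theta^{\Pi}_S\cap[\theta,\eta]$ is either empty or all of $[\theta,\eta]$, hence (as $\theta\ne\eta$) not a single point, and $(\ast)$ follows. If $S$ already lies in $\mod\Pi'$, the claim is immediate: Example~\ref{pi theta} gives $S\in\WW^{\Pi}_{\theta'}\Leftrightarrow S\in\WW^{\Pi'}_{\pi\theta'}$ for all $\theta'\in H$, so $\Theta^{\Pi}_S\cap H=\iota(\Theta^{\Pi'}_S)$, and $\Theta^{\Pi'}_S$ is a union of $\Pi'$-TF cones because $\Pi'$ is $\tau$-tilting finite and $\WW^{\Pi'}$ is therefore constant on each $C^\circ(U)$.

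The remaining case — $S$ a brick of $\fl\Pi$ with $S(n)$ among its composition factors — is the heart of the matter and the step I expect to be the main obstacle. The plan is to describe $\Theta^{\Pi}_S$ hyperplane by hyperplane: since $\Theta^{\Pi}_S=\{\theta'\mid\theta'(Y)\ge 0$ for every factor module $Y$ of $S$, and $\theta'(Y')\le 0$ for every submodule $Y'$ of $S\}$, one must check that for each such $Y$ the locus $\{\theta'\in H\mid\theta'(Y)=0\}$ is either all of $H$ or a reflection hyperplane of the Weyl group $W'$ of type $\mathbb{A}_{n-1}$ acting on $K_0(\proj\Pi')_\R\cong H$; using Proposition~\ref{maximal wall} and Lemma~\ref{simple and interior}(e) one should be able to reduce to the case where $S$ is $\theta'$-stable with a codimension-one wall, keeping only the factor modules $Y$ with $\dimv Y\notin\R\dimv S$. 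The structural input to be verified is that, because $x$, $y$, $z$ act nilpotently on a finite-dimensional $\Pi$-module, the dimension vectors arising here are positive roots of $\widetilde{\mathbb{A}}_{n-1}$, hence — after subtracting a suitable multiple of the null root $h=\sum_i[S(i)]$, which changes nothing on $H$ since $\theta'(h)=0$ there and $\pi$ annihilates $[P(n)]$ — are indicator vectors of (possibly wrapping) cyclic intervals of $\{1,\dots,n\}$; for such an indicator $v$ the form $\theta'\mapsto\theta'(v)$ on $H$ is, transported by $\pi$ and up to sign, a positive root of $W'$, so its zero locus is the associated $W'$-reflection hyperplane (or all of $H$, when the interval is empty or full). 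Consequently $\Theta^{\Pi}_S\cap H$ is cut out by finitely many half-spaces bounded by $W'$-reflection hyperplanes, hence is a union of faces of the $W'$-Coxeter arrangement, that is, of the closed cones $\iota\bigl(\overline{C^\circ(U)}\bigr)$ — the claim used above — which completes the proof of Theorem~\ref{preproj TF}.
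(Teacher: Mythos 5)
Your reduction to the equivalence $(\ast)$, your proof of its easy direction, and the derivation of both displayed equalities from $(\ast)$ together with Proposition~\ref{Pi'}, are all correct, and $(\ast)$ is exactly what the paper isolates as Lemma~\ref{TF for Pi and Pi'}(c). The difficulty is in the converse direction, where you replace the paper's module-theoretic argument by a wall-geometric one. Your argument hinges on the claim that, for every brick $S \in \fl\Pi$, the set $\Theta^\Pi_S \cap H$ is a union of closed cells $\iota\bigl(\overline{C^\circ(U)}\bigr)$. As you note, this claim is essentially equivalent to the theorem itself (walls are always unions of TF equivalence classes), so an independent proof is indispensable, and the one you sketch has a genuine gap. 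The step where you invoke Lemma~\ref{simple and interior}(e) to describe facets and then assert that the relevant dimension vectors are positive roots of $\widetilde{\mathbb{A}}_{n-1}$ is not justified. A priori an arbitrary factor module $Y$ of a brick over the string algebra $A = \Pi/(x,y,z)$ need not have a root as its dimension vector, and you have not identified which $Y$ actually cut out facets of $\Theta_S \cap H$ nor argued that for those specific $Y$ the class $\dimv Y$ modulo $\R h$ is proportional to a root of $\mathbb{A}_{n-1}$. There is also a smaller omission: Proposition~\ref{Asai TF} and the wall/TF machinery are stated for a finite dimensional algebra, so one must first pass from $\Pi$ to $A$ via Proposition~\ref{infinite reduction}, a step the paper performs explicitly and your sketch elides.

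The paper's actual proof of $(\ast)$ sidesteps the global wall-structure question entirely. Given $\theta \not\sim_{\rm TF} \theta'$ in $H$, the $E$-tame machinery (Corollary~\ref{ind and TF} and Proposition~\ref{D f largest 2}) yields an indecomposable summand $\eta \in \ind\theta \setminus \ind\theta'$ with $\theta \in D_\eta^\circ \not\ni \theta'$; string-algebra theory (Proposition~\ref{string brick band}) identifies $D_\eta^\circ = \Theta_X^\circ$ for a brick band module $X = M(b,\lambda)$; and Proposition~\ref{1111} is the combinatorial heart: $\dimv X = (1,\ldots,1)$. Hence any proper nonzero factor module $Z$ of $X$ has $\dimv Z \in \{0,1\}^n$, the complementary submodule $Y$ has the complementary $0$-$1$ dimension vector, exactly one of $Y$, $Z$ vanishes at vertex $n$, and that one already lies in $\mod\Pi'$ and separates the semistable torsion (or torsion-free) classes of $\pi\theta$ and $\pi\theta'$. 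So where you want to control the shape of every wall in $H$, the paper constructs a single separating module whose dimension vector is forced to be a $0$-$1$ vector. Your approach would amount to proving a Coxeter-fan description of the wall-and-chamber structure on $H$, which is a nice and stronger statement, but it needs the unestablished root-dimension-vector input; I would either supply that input carefully or switch to the paper's separating-module strategy.
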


Let $n=3$ and $\eta_{i,j}:=[P(i)]-[P(j)]$ for $i,j \in \{1,2,3\}$ with $i \ne j$.
In the following picture,
the dashed hexagon is contained in the hyperplane $H$,
and $D_{\eta_{1,2}}$ is the gray region:
\begin{align*}
\begin{tikzpicture}[every node/.style={circle},baseline=0pt,->]
\node (0) [coordinate] at ( 0, 0) {};
\node (1) [coordinate,label=  0:{$\scriptstyle{ [P(1)]}$}] at (  0:2cm) {};
\node (2) [coordinate,label= 45:{$\scriptstyle{ [P(2)]}$}] at ( 45:2cm) {};
\node (3) [coordinate,label= 90:{$\scriptstyle{ [P(3)]}$}] at ( 90:2cm) {};
\node (-1)[coordinate,label=180:{$\scriptstyle{-[P(1)]}$}] at (180:2cm) {};
\node (-2)[coordinate,label=225:{$\scriptstyle{-[P(2)]}$}] at (225:2cm) {};
\node (-3)[coordinate,label=270:{$\scriptstyle{-[P(3)]}$}] at (270:2cm) {};
\node (12)[coordinate,label=270:{$\eta_{1,2}$}] at ($(1)-(2)$) {};
\node (13)[coordinate,label=315:{$\eta_{1,3}$}] at ($(1)-(3)$) {};
\node (23)[coordinate,label=  0:{$\eta_{2,3}$}] at ($(2)-(3)$) {};
\node (21)[coordinate,label= 90:{$\eta_{2,1}$}] at ($(2)-(1)$) {};
\node (31)[coordinate,label=135:{$\eta_{3,1}$}] at ($(3)-(1)$) {};
\node (32)[coordinate,label=180:{$\eta_{3,2}$}] at ($(3)-(2)$) {};
\draw (0) to (-1);
\draw (0) to (-2);
\draw (0) to (-3);
\draw[dashed] (12)--(13)--(23)--(21)--(31)--(32)--cycle;
\fill[black!20] (0)--(13)--(12)--(32)--cycle;
\draw (0) to (1);
\draw (0) to (2);
\draw (0) to (3);
\draw[very thick] (0) to (12);
\draw[very thick] (0) to (13);
\draw[thick] (0) to (23);
\draw[thick] (0) to (21);
\draw[thick] (0) to (31);
\draw[very thick] (0) to (32);
\end{tikzpicture}.
\end{align*}

\subsection{Proof of Theorem \ref{preproj TF}}

To study preprojective algebras of type $\widetilde{\mathbb{A}}$ in this subsection, we apply representation theory of string algebras, which are representation-tame. A classification of indecomposable modules over string algebras are given in \cite{BR,WW}, and homomorphisms between indecomposable modules are also known \cite{CB,Krause}.

A \emph{brick band} is a band $b$ such that the corresponding band module $M(b,\lambda)$ is a brick. For a band $b$, we denote by $P_1^b\to P_0^b\to M(b,\lambda)\to0$ a minimal projective presentation of $M(b,\lambda)$, and let $\eta^b:=[P_0^b]-[P_1^b]$. Then $P_i^b$ and $\eta^b$ are independent of a choice of parameter $\lambda\in k^\times$ \cite{Krause}.
For brick bands $b,b'$,
we write $b \sim b'$ if $b$ and $b'$ are isomorphic as bands;
more precisely, if $b'$ is a cyclic permutation of $b$ or $b^{-1}$.

\begin{proposition}\label{string brick band}
Let $A=KQ/I$ be a special biserial algebra.
\begin{enumerate}[\rm (a)]
\item
$A$ is $E$-tame.
\item
For any indecomposable rigid $\theta \in K_0(\proj A)$
which corresponds to $U \in \indtwopresilt A$,
$H^0(U)$ is not a band module.
\item Let $b$ be a brick band and $\eta=\eta^b$. For any general $f\in\Hom(\eta)$, there exists $\lambda_f\in k^\times$ such that $C_f\simeq K_{\nu f}\simeq M(b,\lambda_f)$.
In particular, $\eta$ is indecomposable non-rigid.
\item In (c), $D_\eta=\Theta_{M(b,\lambda)}$ holds. Moreover, $M(b,\lambda)$ is a simple object of $\WW_\eta$.
\item There exists a bijection
\begin{align*}\{\text{brick bands}\}/{\sim}\to\{\text{indecomposable non-rigid elements}\}
\quad \text{given by}\quad b\mapsto\eta^b.\end{align*}
\end{enumerate}
\end{proposition}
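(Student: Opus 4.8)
The plan is to treat the five parts in the order (a), (c), (d), (b), (e), since the later parts build on the earlier ones. For (a), the key observation is that a special biserial algebra $A=KQ/I$ is representation-tame: its indecomposable modules are classified into string modules and band modules by \cite{BR,WW}, and there are only finitely many band modules in each dimension (parametrized by $\P^1$). Hence Proposition \ref{tameness} applies directly, giving that $A$ is $E$-tame and moreover that for indecomposable non-rigid $\theta$ and general $f\in\Hom(\theta)$ the module $C_f\simeq K_{\nu f}$ is a brick. For (c), given a brick band $b$, I would first note that the band module $M(b,\lambda)$ has the same dimension vector for all $\lambda\in k^\times$ and, as recorded in the discussion before the statement (citing \cite{Krause}), the minimal projective presentation terms $P_i^b$ and hence $\eta=\eta^b$ are independent of $\lambda$. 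The locus $\{M(b,\lambda)\mid\lambda\in k^\times\}$ is a one-parameter family inside $\Hom(\eta)$ up to the $\Aut(P_1^\eta)\times\Aut(P_0^\eta)$-action; since $A$ is tame, general $f\in\Hom(\eta)$ has $C_f\simeq K_{\nu f}$ a brick of this dimension vector, and the classification of indecomposables over special biserial algebras forces it to be $M(b',\lambda_f)$ for some band $b'$ — one then argues that $b'\sim b$ because the dimension vector of $M(b,\lambda)$ together with the fact that it is a brick pins down the band up to $\sim$ (this uses that two brick bands with the same dimension vector realized generically in $\Hom(\eta)$ must agree; if two distinct bands both occurred generically they would have to coincide as the generic module is unique). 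The "indecomposable non-rigid" conclusion is then immediate: $\eta$ is indecomposable because $C_f$ is, and non-rigid because $C_f$ is a band module, not a string module, so $P_f$ is not presilting.

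For (d), I would invoke Proposition \ref{tame brick}(b): since $A$ is representation-tame and $\eta$ is indecomposable non-rigid, for general $f\in\Hom(\eta)$ we have $D_f=\Theta_{C_f}=\Theta_{M(b,\lambda_f)}$, that $C_f=M(b,\lambda_f)$ is a simple object of $\WW_\eta$, and that $D_\eta=D_f$ by Theorem \ref{D f largest}(a). Finally, since $\Theta_{M(b,\lambda)}$ depends only on the dimension vector and the brick structure and these are the same for all $\lambda\in k^\times$, we get $D_\eta=\Theta_{M(b,\lambda)}$ for every $\lambda$. Part (b) follows by contraposition combined with (c): if $\theta$ is indecomposable rigid corresponding to $U\in\indtwopresilt A$ with $H^0(U)$ a band module $M(b,\lambda)$, then $b$ is a brick band (a direct summand of a functorially finite torsion class is a brick by \cite[Subsection 4.1]{A}, or one argues $H^0(U)\in\WW_\theta$ is then not simple unless $b$ is a brick band — in any case $H^0(U)$ being a brick forces $b$ to be a brick band) and $\eta=[U]=\eta^b$ would be non-rigid by (c), contradicting rigidity of $\theta$.

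For (e), the map $b\mapsto\eta^b$ is well-defined on $\sim$-classes since isomorphic bands give isomorphic band modules hence the same presentation space element. Surjectivity onto indecomposable non-rigid elements: given such an element $\eta$, general $f\in\Hom(\eta)$ has $C_f\simeq K_{\nu f}$ a brick by (a)/Proposition \ref{tameness}(b); by the classification it is a brick string module or a brick band module, but a brick string module gives a rigid $\eta$ (string modules with trivial self-extensions correspond to presilting complexes), so $C_f\simeq M(b,\lambda)$ for a brick band $b$, whence $\eta=\eta^b$. Injectivity: if $\eta^b=\eta^{b'}$ then by (c) the generic module of $\Hom(\eta^b)$ is simultaneously $M(b,\lambda)$ and $M(b',\lambda')$ up to isomorphism, and since isomorphic band modules arise only from $\sim$-equivalent bands (and matching parameters), $b\sim b'$. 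The main obstacle I anticipate is the bookkeeping in (c)/(e) showing that the generic member of $\Hom(\eta^b)$ is genuinely a band module for the \emph{given} band $b$ (rather than some other band or a string module of the same dimension vector); this requires knowing that the family $\{M(b,\lambda)\}_\lambda$ is "large enough" to be generic in $\Hom(\eta^b)$, which should follow from a dimension count comparing $\dim\Hom(\eta^b)$, the dimension of the $\Aut\times\Aut$-orbit (codimension $E(\eta^b,\eta^b)=1$ by Proposition \ref{codim=E} and tameness giving a $1$-dimensional family), and the Krause/Butler–Ringel description of which modules have $\eta^b$ as their minimal presentation.
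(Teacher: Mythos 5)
Your outline matches the paper for parts (a), (d), and the general shape of (e), but both (b) and (c) contain genuine gaps that would need to be fixed.

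For (b), the paper's argument is a one-liner of a completely different flavor: every band module $X$ satisfies $X \simeq \tau X$, so $\Hom_A(X,\tau X)\neq 0$ and $X$ is not $\tau$-rigid; since $H^0(U)$ \emph{is} $\tau$-rigid, it cannot be a band module. Your contrapositive route via (c) relies on the claim that $H^0(U)$ is automatically a brick and hence the band is a brick band, but $\tau$-rigid modules are not in general bricks (for example $A=k[x]/(x^2)$ and $M=A$), and the assertions ``a direct summand of a functorially finite torsion class is a brick'' and ``$H^0(U)\in\WW_\theta$'' are both false in general. The $\tau$-fixed-point observation is exactly what you are missing here.

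For (c), there is a circularity: you invoke Proposition \ref{tameness}(b) to conclude that general $f\in\Hom(\eta)$ has $C_f\simeq K_{\nu f}$ a brick of the expected dimension vector, but that proposition takes ``$\eta$ indecomposable non-rigid'' as a hypothesis, and proving exactly that is the point of (c). You acknowledge at the end that a dimension count is needed but do not supply it. The paper closes the gap constructively: writing $b=p_1^{-1}q_1\cdots p_\ell^{-1}q_\ell$, it builds an explicit $f_\lambda\in\Hom(\eta)$ (a cyclic bidiagonal matrix of $p_i$'s and $q_i$'s with a $\lambda q_\ell$ in the corner) with $C_{f_\lambda}\simeq M(b,\lambda)$, observes via Proposition \ref{codim=E} and Serre duality that $\operatorname{codim} Gf_\lambda = E(f_\lambda,f_\lambda)=\dim_k\End_A(M(b,\lambda))=1$, and then shows the morphism $G\times k^\times\to\Hom(\eta)$, $((g,h),\lambda)\mapsto(g,h)f_\lambda$, has dense image because its image strictly contains a single codimension-one orbit and has irreducible source. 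This gives the generic description of $\Hom(\eta)$ directly, and indecomposability and non-rigidity of $\eta$ then fall out. Your claim that the dimension vector together with the brick property ``pins down the band up to $\sim$'' is also not true in general for special biserial algebras; the paper avoids needing it by the explicit parametrization.

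The remaining parts are fine: (a) is exactly Proposition \ref{tameness}(b), (d) correctly combines Theorem \ref{D f largest}(a) and Proposition \ref{tame brick}(b), and (e) proceeds as in the paper once (c) is established, with the one caveat that the surjectivity step tacitly uses (as the paper also does) that a brick $X$ with $X\simeq\tau X$ must be a band module.
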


\begin{proof}
(a) Since $A$ is representation-tame, it is $E$-tame by Proposition \ref{tameness}(b).

(b) Each band module $X$ satisfies $X\simeq \tau X$. Since $H^0(U)$ is $\tau$-rigid, it is not a band module.

(c) Let $\eta:=\eta^b$, and write $b=p_1^{-1}q_1p_2^{-1}q_2\cdots p_\ell^{-1}q_\ell$ for paths $p_i$ and $q_i$ of length $\ge 1$ in the quiver $Q$.
For each $i \in \{1,2,\ldots,\ell\}$,
if $p_i$ is a path starting at $x_i$ and ending at $y_i$, 
then we set $P_{0,i}=P(x_i)$ and $P_{1,i}=P(y_i)$.
Then $p_i$ and $q_i$ give morphisms $p_i \colon P_{1,i}\to P_{0,i}$ and $q_i \colon P_{1,i+1}\to P_{0,i}$, where $\ell+1:=1$.
We can check $P_s^b=\bigoplus_{j=1}^\ell P_{s,j}$ holds for $s=0,1$.
Each $\lambda\in k^\times$ gives a morphism
\begin{align*}\Hom(\eta)\ni f_\lambda:=\begin{bmatrix}
p_1&q_1&0&\cdots&0&0\\
0&p_2&q_2&\cdots&0&0\\
\vdots&\vdots&\vdots&\ddots&\vdots&\vdots\\
0&0&0&\cdots&p_{\ell-1}&q_{\ell-1}\\
\lambda q_\ell&0&0&\cdots&0&p_\ell
\end{bmatrix}\colon P_1^b=\bigoplus_{j=1}^\ell P_{1,j}\to P_0^b=\bigoplus_{j=1}^\ell P_{0,j}\end{align*}
such that $C_{f_\lambda}\simeq M(b,\lambda)$.
Then $K_{\nu f_\lambda}=\tau M(b,\lambda)\simeq M(b,\lambda)$ holds.

Consider the action of $G=\Aut_A(P_0^b)\times\Aut_A(P_1^b)$ on $\Hom(\eta)$ given by $(g,h)f:=gfh^{-1}$. Then
\begin{equation}\label{codim}
{\rm codim}\,Gf_\lambda\stackrel{\text{Prop. \ref{codim=E}}}{=}E(f_\lambda,f_\lambda)\stackrel{\eqref{Serre duality}}{=}\dim\Hom_A(C_{f_\lambda},K_{\nu f_\lambda})=\dim_k\End_A(M(b,\lambda))=1.
\end{equation}
Consider a morphism of algebraic varieties $F \colon G\times k^\times\to\Hom(\eta^b)$, $F((g,h),\lambda):=(g,h)f_\lambda$.
Since $G\times k^\times$ is irreducible, we have irreducible closed subsets
\begin{align*}\overline{Gf_\lambda}\subsetneq X:=\overline{F(G\times k^\times)}\subset\Hom(\eta).\end{align*}
By \eqref{codim}, we obtain $X=\Hom(\eta)$.

(d) By Theorem \ref{D f largest}(a) and Proposition \ref{tame brick}(b), we have $D_\eta=D_f=\Theta_{M(b,\lambda_f)}$.
Since the dimension vectors of submodules of $M(b,\lambda)$ are independent of $\lambda \in k^\times$ by \cite{Krause}, we have $D_\eta=\Theta_{M(b,\lambda)}$ for each $\lambda\in k^\times$.
The last assertion follows from Proposition \ref{tame brick}(b).

(e) The map is well-defined by (c). 
The injectivity also follows from (c); more explicitly, if two brick bands $b,b'$ satisfy $\eta^b=\eta^{b'}=:\eta$,
then (c) implies that, for any general $f \in \Hom(\eta)$, there exist some $\lambda_f,\lambda'_f \in k^\times$ such that $M(b,\lambda_f) \simeq C_f \simeq M(b',\lambda'_f)$,
which yields $b \sim b'$.

To prove the surjectivity, let $\eta$ be an indecomposable non-rigid element in $K_0(\proj A)$.
By Proposition \ref{tameness}(b), for any general $f\in\Hom(\eta)$, $C_f\simeq K_{\nu f}$ are bricks. Then there exist a brick band $b$ and $\lambda_f\in k^\times$ such that $C_f\simeq K_{\nu f}\simeq M(b,\lambda_f)$.
Since any general $f\in\Hom(\eta)$ is a minimal projective presentation of $C_f \simeq M(b,\lambda_f)$,
we have $\eta=\eta^b$ as desired.
\end{proof}

Now we define factor algebras of $\Pi$ and $\Pi'$ by
\begin{align*}A:=\Pi/\langle x,y,z\rangle,\quad A':=\Pi'/\langle x,y,z\rangle.\end{align*}
In terms of quiver with relations, $A$ is obtained from $\Pi$ by factoring out the following relations:
\begin{itemize}
\item
$\alpha_i\alpha_{i+1}\cdots\alpha_{i+n-1}=\beta_i\beta_{i+1}\cdots\beta_{i+n-1}=0$,
\item
$\alpha_i\beta_{i+1}=\beta_{i+1}\alpha_i=0$
\end{itemize}
for $i \in \{1,2,\ldots,n\}$. Using an isomorphism
\begin{align*}-\otimes_\Pi A:K_0(\proj \Pi)_\R\simeq K_0(\proj A)_\R,\end{align*}
we identify $K_0(\proj \Pi)_\R$ and $K_0(\proj A)_\R$. 

The reduction theorem by \cite{Kimura} (cf.~\cite{EJR,IK,VG}) allows us to treat $A$ instead of $\Pi$ as follows.

\begin{proposition}\label{infinite reduction}
Under the setting above, we have the following properties. 
\begin{enumerate}[\rm (a)]
\item
\cite[Theorem 5.4]{Kimura}
The torsion classes in $\operatorname{\mathsf{fl}} \Pi$ 
bijectively correspond to those in $\mod A$ preserving inclusions;
namely $\TT \mapsto \TT \cap \mod A$.
Similarly, the torsion classes in $\operatorname{\mathsf{fl}} \Pi'$ 
bijectively correspond to those in $\mod A'$ preserving inclusions;
namely $\TT \mapsto \TT \cap \mod A'$.
\item
The TF equivalence classes on $K_0(\proj \Pi)_\R$ coincide with those on $K_0(\proj A)_\R$.
Similarly, the TF equivalence classes on $K_0(\proj \Pi')_\R$ coincide with those on $K_0(\proj A')_\R$.
\end{enumerate}
\end{proposition}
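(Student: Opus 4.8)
\textbf{Proof proposal for Proposition \ref{infinite reduction}.}
Part (a) is quoted from \cite{Kimura}, so the task is to deduce part (b) from it. Since the TF equivalence class $[\theta]_{\rm TF}$ is defined purely in terms of the semistable torsion class $\overline{\TT}_\theta$ and the semistable torsion-free class $\overline{\FF}_\theta$, the plan is to show that the poset bijection of part (a) between torsion classes in $\fl\Pi$ and torsion classes in $\mod A$ identifies the semistable torsion (resp.\ torsion-free) classes on the two sides, under the identification $K_0(\proj\Pi)_\R\simeq K_0(\proj A)_\R$ given by $-\otimes_\Pi A$. The argument will apply verbatim to the pair $(\Pi',A')$, noting that $\fl\Pi'=\mod\Pi'$ since $\Pi'$ is finite dimensional.

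The computational heart is the claim that, for every $\theta\in K_0(\proj\Pi)_\R$,
\begin{align*}
\overline{\TT}_\theta^{\fl\Pi}\cap\mod A&=\overline{\TT}_{\theta\otimes_\Pi A}^{\mod A}, & \overline{\FF}_\theta^{\fl\Pi}\cap\mod A&=\overline{\FF}_{\theta\otimes_\Pi A}^{\mod A}.
\end{align*}
To see this I would record two elementary facts. First, since $A=\Pi/\langle x,y,z\rangle$, any $\Pi$-submodule or $\Pi$-factor module of an $A$-module is again an $A$-module; hence for $X\in\mod A$ the factor modules (resp.\ submodules) of $X$ computed in $\fl\Pi$ coincide with those computed in $\mod A$. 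Second, restriction of scalars gives $\Hom_\Pi(P,X)\simeq\Hom_A(P\otimes_\Pi A,X)$ for $P\in\proj\Pi$, $X\in\mod A$, so that $\langle\theta,[X]\rangle=\langle\theta\otimes_\Pi A,[X]\rangle$ for all $X\in\mod A$. Substituting both into Definition \ref{define T_theta} yields the claim at once: $X\in\overline{\TT}_\theta^{\fl\Pi}\cap\mod A$ if and only if $X\in\mod A$ and $(\theta\otimes_\Pi A)(X')\ge0$ for every factor module $X'$ of $X$ in $\mod A$, i.e.\ $X\in\overline{\TT}_{\theta\otimes_\Pi A}^{\mod A}$; the torsion-free statement is dual.

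Given the claim, part (b) is immediate: by part (a) the assignment $\TT\mapsto\TT\cap\mod A$ is injective on torsion classes of $\fl\Pi$, hence $\overline{\TT}_\theta^{\fl\Pi}=\overline{\TT}_\eta^{\fl\Pi}$ if and only if $\overline{\TT}_{\theta\otimes_\Pi A}^{\mod A}=\overline{\TT}_{\eta\otimes_\Pi A}^{\mod A}$, and likewise $\overline{\FF}_\theta^{\fl\Pi}=\overline{\FF}_\eta^{\fl\Pi}$ if and only if $\overline{\FF}_{\theta\otimes_\Pi A}^{\mod A}=\overline{\FF}_{\eta\otimes_\Pi A}^{\mod A}$; therefore $\theta$ and $\eta$ are TF equivalent in $K_0(\proj\Pi)_\R$ exactly when they are in $K_0(\proj A)_\R$, and the $(\Pi',A')$-case is identical. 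I do not expect a genuine obstacle here: all the substantive content is concentrated in Kimura's reduction theorem quoted as part (a), and the remainder is bookkeeping. The only points that warrant a line of care are that part (a)'s poset isomorphism really is implemented on the nose by the restriction $\TT\mapsto\TT\cap\mod A$ (so that its injectivity may be invoked as above) and that the identification of $K_0(\proj\Pi)_\R$ with $K_0(\proj A)_\R$ in force throughout Section \ref{Section_preproj} is the one induced by $-\otimes_\Pi A$; both are already part of the setup.
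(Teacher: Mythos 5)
Your proof takes essentially the same route as the paper's: establish the identities $\overline{\TT}^{\fl\Pi}_\theta\cap\mod A=\overline{\TT}^{\mod A}_{\theta\otimes_\Pi A}$ together with the torsion-free analogue (the paper cites Example \ref{pi theta}(b); you verify them directly from Definition \ref{define T_theta}), and then transfer equality of these subcategories across the bijection of part (a). The only cosmetic difference is that the paper tests TF equivalence via the two \emph{torsion} classes $\overline{\TT}_\theta$ and $\TT_\theta$, so that the injectivity in (a) applies verbatim, whereas your use of $\overline{\FF}_\theta$ also needs the dual bijection on torsion-free classes, which is a formal consequence of (a) via the torsion-pair correspondence but is worth a line of justification.
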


\begin{proof}
(a)
We can regard $\Pi$ and $\Pi'$ as $k[[x,y,z]]$-algebras.
Thus we can apply \cite[Theorem 5.4]{Kimura}.

(b) 
For any $\theta \in K_0(\proj A)_\R$, we have
$\overline{\TT}_\theta \cap \mod A=\overline{\TT}^A_\theta$ and
$\TT_\theta \cap \mod A=\TT^A_\theta$ (cf. Example \ref{pi theta}(b)). Thus (a) implies the first assertion. The proof of the second assertion is the same.
\end{proof}

Since $A$ is a string algebra, we can use Proposition \ref{string brick band}.
The following combinatorial observation is crucial.

\begin{proposition}\label{1111}
Let $b$ be a brick band for the string algebra $A$.
Then $\dimv M(b,\lambda)=(1,1,\ldots,1)$ for any $\lambda \in k^\times$.
\end{proposition}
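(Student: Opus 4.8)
The plan is to analyze the structure of a brick band $b$ for the string algebra $A=\Pi/\langle x,y,z\rangle$ using the explicit presentation of $A$ as a string algebra coming from the quiver of type $\widetilde{\mathbb{A}}_{n-1}$. Recall that $A$ is obtained from $\Pi$ by killing all paths $\alpha_i\cdots\alpha_{i+n-1}$, $\beta_i\cdots\beta_{i+n-1}$, and all mixed products $\alpha_i\beta_{i+1}$, $\beta_{i+1}\alpha_i$. A band word in the string algebra alternates between "direct" and "inverse" letters, each maximal direct (resp.\ inverse) subword being a path in the arrows $\{\alpha_i\}$ or $\{\beta_i\}$ but never mixing $\alpha$'s and $\beta$'s (by the mixed relations), and of length $<n$ (by the monomial relations). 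First I would observe that such a band, being cyclic, must wind around the cycle $1\to 2\to\cdots\to n\to 1$ (in the $\alpha$-direction) some net number $d$ of times, where $d\in\Z$ is the total $\alpha$-degree minus $\beta$-degree of the word divided by $n$.

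The key step is to show that for a \emph{brick} band one necessarily has $|d|=1$ and moreover the band visits each vertex exactly once, forcing $\dimv M(b,\lambda)=(1,1,\ldots,1)$. For the dimension vector of a band module $M(b,\lambda)$, the multiplicity at vertex $i$ equals the number of times the band word passes through $i$. I would argue: if the band passes through some vertex $i$ at least twice, then (because at each vertex of the quiver there are exactly two arrows in and two arrows out, one of each type $\alpha,\beta$) one can locate a proper rotation/substring of $b$ that is itself a closed walk, i.e.\ $b$ factors, contradicting either that $b$ is a band (primitive, not a power) or — more to the point — producing a nontrivial endomorphism of $M(b,\lambda)$, contradicting that it is a brick. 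Concretely, the band modules over string algebras have endomorphism ring of dimension equal to (number of "rotational symmetries" of $b$) $\times$ (size of the Jordan block), so being a brick forces the underlying walk to pass through no vertex twice. Combined with the winding-number constraint — a closed walk that is a genuine cycle in the quiver and visits each vertex at most once must visit each vertex exactly once and wind around exactly once — this gives $\dimv M(b,\lambda)=(1,1,\ldots,1)$.

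The main obstacle I expect is the careful combinatorial bookkeeping needed to rule out the possibility that a brick band passes through some vertices more than once while still having trivial endomorphism ring — one must genuinely use the string-algebra description of $\End_A(M(b,\lambda))$ (via graph maps / the Crawley-Boevey or Krause description cited in the excerpt) rather than just the naive "primitivity" of the band word. A related subtlety is handling the case $n=2$ separately, since then $A$ is a string algebra with the $\alpha,\beta$ letters but the quiver is the doubled $A_1$ (two vertices, two arrows each way), and the bands are of the form $\alpha_1\beta_2^{-1}\cdots$ type; there one checks directly that the only brick bands have dimension vector $(1,1)$. After establishing the combinatorial claim, I would phrase the argument as: every direct (resp.\ inverse) segment of $b$ is a non-self-intersecting path of length $<n$ in the $\alpha$- (resp.\ $\beta$-) arrows, the concatenation is a closed non-self-intersecting walk in the doubled cycle, hence a rotation of the full $\alpha$-cycle or the full $\beta$-cycle or an $\alpha/\beta$-mixed Hamiltonian cycle, in every case passing through each of $1,\ldots,n$ exactly once; therefore $\dimv M(b,\lambda)=(1,1,\ldots,1)$.
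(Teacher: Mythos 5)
Your overall strategy is genuinely different from the paper's, and it has a gap at the crucial step. The paper does not argue combinatorially via $\End_A(M(b,\lambda))$. Instead it first observes (as you also do) that the cyclic structure of the quiver of $A$ and the mixed relations $\alpha_i\beta_{i+1}=\beta_{i+1}\alpha_i=0$ force every band to move consistently in one direction around the cycle, so that $\dimv M(b,\lambda)=m(1,\ldots,1)$ for some $m\ge1$. It then introduces the sets $I_\pm\subset\{1,\ldots,n\}$ recording at which vertices the band uses the $\alpha$-arrow versus the (inverse of the) $\beta$-arrow to step forward, and shows $I_+\cap I_-=\emptyset$ by a \emph{stability} argument: if $i\in I_+\cap I_-$ one finds a substring $s$ of $b$ whose string module $M(s)$ is a proper nonzero submodule of $M(b,\lambda)$ with $\dimv M(s)\in\R\dimv M(b,\lambda)$, hence $\eta^b(M(b,\lambda)/M(s))=0$, contradicting the simplicity of $M(b,\lambda)$ in $\WW_{\eta^b}$ established in Proposition~\ref{string brick band}(d). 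Once $I_+\cap I_-=\emptyset$, the band is periodic with period $n$, i.e.\ $b=c^m$, and primitivity of bands forces $m=1$.

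By contrast you want to conclude directly that a brick band visits each vertex at most once, citing the description of $\End_A(M(b,\lambda))$ by graph maps. That is the step where your argument is incomplete. Passing through a vertex $i$ twice does \emph{not} by itself decompose $b$ as a power (the two closed subwalks at $i$ can be distinct), nor does it by itself produce a graph map: a nontrivial graph map requires a genuine overlap pattern between two passages of the band, not merely a shared vertex. You acknowledge this yourself when you write that ``one must genuinely use the string-algebra description of $\End_A(M(b,\lambda))$'' — but that bookkeeping is exactly what is missing, and it is not obviously easier than the paper's route. In fact the paper's argument can be read as doing precisely this work for you, but packaged homologically: the existence of the substring $s$ when $I_+\cap I_-\ne\emptyset$ is the relevant overlap, and ``not simple in $\WW_{\eta^b}$'' is a cleaner contradiction to extract from it than ``$\End_A\ne k$.''

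Two further inaccuracies in your sketch. First, your ``winding number $d$'' (total $\alpha$-degree minus $\beta$-degree, divided by $n$) is not the number of times the band visits each vertex; the relevant count is the number of \emph{forward} steps (each contributed by an $\alpha_i$ \emph{or} by a $\beta_{i+1}^{-1}$) divided by $n$. Second, your concluding case analysis lists ``a rotation of the full $\alpha$-cycle or the full $\beta$-cycle'' as possible outcomes, but the words $\alpha_i\alpha_{i+1}\cdots\alpha_{i+n-1}$ and $\beta_i\cdots\beta_{i+n-1}$ are killed by the monomial relations in $A$, so these are not valid strings, let alone bands; only genuinely mixed cyclic words of period $n$ survive, which is exactly what the condition $I_+\cap I_-=\emptyset$ together with $I_+\cup I_-=\{1,\ldots,n\}$ and $I_\pm\ne\emptyset$ encodes.
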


\begin{proof}
By the definition of the string algebra $A$,
there exists $m \in \Z_{\ge 1}$ such that $\dimv M(b,\lambda)=m(1,1,\ldots,1)$. We need to show $m=1$. We may assume that $b$ consists only of arrows in $\{\alpha_i,\beta_i^{-1}\mid 1\le i\le n\}$. We define $I_\pm \subset \{1,2,\ldots,n\}$ by
\begin{align*}
I_+&:=\{ i \in \{1,2,\ldots,n\} \mid \text{$b$ contains $\alpha_i \colon i \to i+1$}\}, \\
I_-&:=\{ i \in \{1,2,\ldots,n\} \mid \text{$b$ contains $\beta_i^{-1} \colon i \to i+1$}\}.
\end{align*}
Since the quiver of $A$ is a double of $\widetilde{\mathbb A}_{n-1}$, we have $I_+ \cup I_-=\{1,2,\ldots,n\}$.

We prove $I_+ \cap I_-=\emptyset$.
If $i \in I_+ \cap I_-$, then there exists some string $s$ such that $\alpha_i s \beta_i^{-1}$ is a substring of $b$. Then the string module $M(s)$ corresponding to $s$ is a proper submodule of 
the band module $M(b,\lambda)$ with $\dimv M(s)=m'(1,1,\ldots,1)$ for some $m'$. Thus $M(b,\lambda)$ is not a simple object of $\WW_{\eta^b}$, a contradiction to Proposition \ref{string brick band}(d). Thus $I_+ \cap I_-=\emptyset$ holds.

Consequently, $b$ is of the form $c^m$, where $c$ is a string of length $n$. This implies $m=1$.
\end{proof}

We also need the following observations.
 
\begin{lemma}\label{TF for Pi and Pi'}
In $K_0(\proj A)_\R$, the following assertions hold.
\begin{enumerate}[\rm(a)] 
\item Each indecomposable element in $H\cap K_0(\proj A)$ is non-rigid.
\item For each $\theta\in H\cap K_0(\proj A)$, we have $\ind\theta\subset H$.
\item Let $\theta,\theta' \in H$. Then $\theta$ and $\theta'$ are TF equivalent if and only if $\pi(\theta)$ and $\pi(\theta')$ are TF equivalent in $K_0(\proj A')_\R$.
\end{enumerate}
\end{lemma}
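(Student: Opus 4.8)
The plan is to prove the three parts in order, the last being the substantial one; throughout we use the identification $K_0(\proj\Pi)_\R=K_0(\proj A)_\R$ and work with the string algebra $A$. For parts (a) and (b), first observe that $\langle\theta,h\rangle$ is the number of indecomposable summands of $P_0^\theta$ minus that of $P_1^\theta$, since $\langle[P(j)],[S(i)]\rangle=\delta_{ij}$. Take the canonical decomposition $\theta=\theta_1\oplus\cdots\oplus\theta_\ell$. By Proposition \ref{string brick band}(e) each $\theta_i$ is either rigid or equals $\eta^b$ for a brick band $b$; in the latter case $\dimv M(b,\lambda)=(1,\dots,1)=h$ by Proposition \ref{1111}, and since $M(b,\lambda)\in\WW_{\theta_i}$ forces $\langle\theta_i,h\rangle=\theta_i(M(b,\lambda))=0$, we get $\theta_i\in H$. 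Now suppose some $\theta_i$ is rigid. The rigid summands are pairwise $\oplus$, so by Lemma \ref{silting direct sum} they are the indecomposable summands of a single $2$-term presilting complex over $A$; by Proposition \ref{infinite reduction} the TF equivalence classes of $A$ and of $\Pi$ coincide, hence so do the $g$-vectors of $2$-term presilting complexes, and the lifted complex over $\Pi$ is $P_J\otimes_\Pi I_w$ or $P_J\otimes_\Pi I_w^*[1]$ by Proposition \ref{half planes}(a). Since $W$ fixes $h$ and $\langle[P(j)],h\rangle=1$, in the first case all rigid $\theta_i$ have $\langle\theta_i,h\rangle=1$ and in the second all have $\langle\theta_i,h\rangle=-1$; together with $0=\langle\theta,h\rangle=\sum_i\langle\theta_i,h\rangle$ (the band summands contributing $0$) this is impossible unless there are no rigid summands. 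Hence $\ind\theta\subseteq H$, proving (b); and (a) is the special case where $\theta$ itself is indecomposable.

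For part (c), the implication ``$\Rightarrow$'' is immediate: since $A\to A'$ is surjective, Example \ref{pi theta}(b) gives $\overline{\TT}^{A'}_{\pi(\theta)}=\overline{\TT}^A_\theta\cap\mod A'$ and $\overline{\FF}^{A'}_{\pi(\theta)}=\overline{\FF}^A_\theta\cap\mod A'$, so TF equivalence descends. For ``$\Leftarrow$'', assume $\pi(\theta)$ and $\pi(\theta')$ are TF equivalent. By Proposition \ref{Asai TF} it suffices to show that no brick $S$ over $A$ has $\Theta_S\cap[\theta,\theta']$ a single point, where $[\theta,\theta']\subseteq H$ because $H$ is a linear subspace. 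If $S\in\mod A'$, then by Example \ref{pi theta}(b) the membership $S\in\WW^A_{\theta''}$ depends only on $\pi(\theta'')$, so $\Theta_S^A=\pi^{-1}(\Theta_S^{A'})$; intersecting with $H$ and using $\iota=(\pi|_H)^{-1}$ gives $\Theta_S^A\cap[\theta,\theta']=\iota(\Theta_S^{A'}\cap[\pi(\theta),\pi(\theta')])$, and a single point here would contradict the TF equivalence of $\pi(\theta),\pi(\theta')$ via Proposition \ref{Asai TF}.

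The remaining case, $e_nS\ne 0$ (so $S\notin\mod A'$), is the main obstacle: one must show the wall $\Theta_S$ cannot meet $[\theta,\theta']\subseteq H$ in a single point. The plan is to exploit that $e_nAe_n=k$, so $A$ is a one-point (co)extension of $A'$, together with the string/band combinatorics of type $\widetilde{\mathbb A}_{n-1}$. Concretely, for $\theta''\in H$ one has $\theta''_n=-\sum_{i<n}\theta''_i$, so the conditions $\theta''(Y)\ge 0$ (for factor modules $Y$ of $S$) and $\theta''(Y')\le 0$ (for submodules $Y'$) cutting out $\Theta_S^A\cap H$ become linear conditions on $\pi(\theta'')$ with defining vectors $(\dim_iY-\dim_nY)_{i<n}$. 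The aim is to identify $\iota^{-1}(\Theta_S^A\cap H)$ as a cone cut out by walls of bricks over $A'$ — equivalently, to show that at any hypothetical one-point crossing $p$, some composition factor of $S$ in $\WW^A_p$ that already lies in $\mod A'$ witnesses the crossing (reducing to such factors via Proposition \ref{maximal wall} and Lemma \ref{simple and interior}, possibly after applying an element of the $W'$-action, which preserves $H$ and the wall structure). Carrying out this reduction via the explicit description of the sub/factor lattices of the bricks classified in Proposition \ref{string brick band}, together with Proposition \ref{1111}, is the step I expect to be the longest and most delicate.
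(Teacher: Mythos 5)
Your parts (a) and (b) are correct but take a longer, more computational route than the paper. The paper's (a) is a one-liner: a rigid indecomposable $\theta$ lies in $C^\circ(U)\subset\Cone$ for some indecomposable $U\in\twopresilt A$, and Proposition \ref{half planes} gives $\Cone\cap H=\{0\}$, so $\theta\notin H$. For (b), the paper observes that $\cone^\circ\{\eta,\theta_i\}$ is a TF equivalence class (Theorem \ref{describe TF for E-tame}) and that, by Proposition \ref{half planes}, the closure of a TF equivalence class meeting $H^\pm$ stays in $H^\pm\cup\{0\}$, so all indecomposable summands of a fixed $\theta$ lie on the same side of $H$ (or on $H$). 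Your computation with $\langle\theta_i,h\rangle\in\{0,\pm1\}$ and the $W$-classification of presilting complexes proves the same thing, and as a bonus it shows directly that every non-rigid indecomposable lies in $H$, but it is substantially longer and it leans on part (a) of Proposition \ref{half planes} rather than just part (b). Your ``only if'' direction in (c) coincides with the paper's.

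The ``if'' direction of (c) is where your proof has a genuine gap, and you say so yourself: the case of a brick $S$ with $e_n S\ne0$ is left as a ``plan'' you ``expect to be the longest and most delicate.'' The route you sketch (apply Proposition \ref{Asai TF}, split on whether $S\in\mod A'$, and for $S\notin\mod A'$ try to transfer the crossing to composition factors of $S$ in $\mod A'$ using the $W'$-action and sub/factor combinatorics of string and band modules) is not carried out, and it is also more roundabout than necessary. The paper avoids quantifying over all bricks by exploiting $E$-tameness: assuming $\theta\not\sim_{\rm TF}\theta'$, Corollary \ref{ind and TF} gives an indecomposable $\eta\in\ind\theta\setminus\ind\theta'$; by (a)+(b), $\eta\in H$ is non-rigid, so Proposition \ref{string brick band} produces a single brick band module $X=M(b,\lambda)$ with $D_\eta=\Theta_X$ and, via Proposition \ref{D f largest 2}, $\theta\in\Theta_X^\circ\not\ni\theta'$. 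The decisive observation you are missing then kills the problem in one stroke: since $\dimv X=(1,\dots,1)$ by Proposition \ref{1111} and $\theta'\in H$ gives $\theta'(X)=0$, Lemma \ref{simple and interior}(e) supplies an exact sequence $0\to Y\to X\to Z\to0$ with $\dimv Y\notin\R\dimv X$, $\theta'(Y)\ge0\ge\theta'(Z)$ while $\theta(Y)<0<\theta(Z)$, and because every entry of $\dimv X$ equals $1$, at least one of $Y,Z$ has $n$-th dimension entry $0$, hence lies in $\mod A'$; that module then separates $\pi(\theta)$ from $\pi(\theta')$ via Example \ref{pi theta}. Without this step your argument is incomplete, and the combinatorial reduction you outline is neither proved nor obviously shorter than the paper's direct argument.
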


\begin{proof}
(a) Since $\Cone\cap H=\{0\}$ holds by Proposition \ref{half planes}, the assertion holds.

(b) Let $\eta,\theta\in K_0(\proj A)$ be indecomposable elements such that $\eta\oplus\theta$. It suffices to show that $\eta\in H^+$ implies $\theta\in H^+$, and $\eta\in H^-$ implies $\theta\in H^-$.
We only prove the assertion for $H^+$ since the proof of $H^-$ is the same. 
Proposition \ref{half planes} implies that, if the closure $\overline{C}$ of a TF equivalence class $C$ intersects with $H^+$, then $\overline{C}\subset H^+\cup\{0\}$. 
Let $\eta\in H^+$. Since $C:=\cone^\circ\{\eta,\theta\}$ is a TF equivalence class by Theorem \ref{describe TF for E-tame},
we have $\overline{C} \cap H^+ \ne \emptyset$ and hence $\theta\in\overline{C}\subset H^+\cup\{0\}$.

(c) We prove the ``only if'' part. If $\theta$ and $\theta'$ are TF equivalent, then $\overline{\TT}^A_\theta=\overline{\TT}^A_{\theta'}$ and $\overline{\FF}^A_\theta=\overline{\FF}^A_{\theta'}$. By Example \ref{pi theta}, $\overline{\TT}^{A'}_{\pi(\theta)}=\overline{\TT}^{A'}_{\pi(\theta')}$ and $\overline{\FF}^{A'}_{\pi(\theta)}=\overline{\FF}^{A'}_{\pi(\theta')}$. Thus $\pi(\theta)$ and $\pi(\theta')$ are TF equivalent.

It remains to show the ``if'' part. Assume that $\theta$ and $\theta'$ are not TF equivalent. By Corollary \ref{ind and TF}, we have $\ind\theta\neq\ind\theta'$. Without loss of generality, we can assume that there exists $\eta\in\ind\theta\setminus\ind\theta'$. By Proposition \ref{D f largest 2}, we have $\theta\in D_\eta^\circ\not\ni\theta'$.
On the other hand, $\eta\in H$ holds by (b) and hence $\eta$ is non-rigid by (a). Take a brick band $b$ in Proposition \ref{string brick band}(e) satisfying $\eta=\eta^b$, and let $X:=M(b,\lambda)$ for a fixed $\lambda\in k^\times$. Then
\begin{align*}\theta\in \Theta_X^\circ\not\ni\theta'\end{align*}
holds by Proposition \ref{string brick band}(d). Note that $\dimv X=(1,1,\ldots,1)$ holds by Proposition \ref{1111}, and hence $\theta'\in H$ implies $\theta'(X)=0$. By Lemma \ref{simple and interior}(e), there exists a factor module $Z=X/Y$ of $X$ such that $\dimv Y\notin\R\dimv X$, $\theta'(Y)\ge0$ and $\theta'(Z)\le0$. Thus we have
\begin{align*}Y\in\overline{\FF}^A_\theta\setminus\overline{\FF}^A_{\theta'}\ \mbox{ and }\ Z\in\overline{\TT}^A_\theta\setminus\overline{\TT}^A_{\theta'}.\end{align*}
Since $\dimv X=(1,1,\ldots,1)$, the $n$th entry of either $Y$ or $Z$ is zero. Thus $Y$ or $Z$ belongs to $\mod A'$. It gives an object of $\overline{\FF}^{A'}_{\pi(\theta)}\setminus\overline{\FF}^{A'}_{\pi(\theta')}$ or $\overline{\TT}^{A'}_{\pi(\theta)}\setminus\overline{\TT}^{A'}_{\pi(\theta')}$ by Example \ref{pi theta}. Thus $\pi(\theta)$ and $\pi(\theta')$ are not TF equivalent.
\end{proof}

Then we can prove Theorem \ref{preproj TF}.

\begin{proof}[Proof of Theorem \ref{preproj TF}]
By Propositions \ref{Pi'}, \ref{infinite reduction}(b) and Lemma \ref{TF for Pi and Pi'}(c), the TF equivalence classes in $H$ can be written as $\iota(w(C^\circ(P'_J)))$ for some $J\subset\{1,2,\ldots,n-1\}$ and $w\in W'/W'_J$. Since $\iota(w(C^\circ(P'_J)))=w\left(\cone^\circ\{[P(j)]-[P(n)] \mid j \in J\}\right)$ holds, we obtain the assertion.
\end{proof}

\end{document}